\newcommand{\arxiv}[1]{\href{http://arxiv.org/abs/#1}{\tt arXiv:\nolinkurl{#1}}}
\newcommand{\arXiv}[1]{\href{http://arxiv.org/abs/#1}{\tt arXiv:\nolinkurl{#1}}}
\newcommand{\googlebooks}[1]{(preview at \href{http://books.google.com/books?id=#1}{google books})}
\definecolor{dark-red}{rgb}{0.7,0.25,0.25}
\definecolor{dark-blue}{rgb}{0.15,0.15,0.55}
\definecolor{medium-blue}{rgb}{0,0,.8}
\definecolor{DarkGreen}{RGB}{0,150,0}
\definecolor{rho}{named}{red}
\theoremstyle{plain}
\newtheorem{thm}{Theorem}[section]
\newtheorem*{thm*}{Theorem}
\newtheorem{thmalpha}{Theorem}
\newtheorem{cor}[thm]{Corollary}
\newtheorem*{cor*}{Corollary}
\newtheorem{conj}[thm]{Conjecture}
\newtheorem*{conj*}{Conjecture}
\newtheorem{lem}[thm]{Lemma}
\newtheorem{prop}[thm]{Proposition}
\newtheorem*{quest*}{Question}
\newtheorem*{claim*}{Claim}
\theoremstyle{definition}
\newtheorem{defn}[thm]{Definition}
\newtheorem{ex}[thm]{Example}
\newtheorem{sub-ex}[thm]{Sub-Example}
\newtheorem{rem}[thm]{Remark}
\newtheorem*{rem*}{Remark}
\DeclareMathOperator{\Ad}{Ad}
\DeclareMathOperator{\Aut}{Aut}
\DeclareMathOperator{\Bim}{Bim}
\DeclareMathOperator{\coev}{coev}
\DeclareMathOperator{\End}{End}
\DeclareMathOperator{\ev}{ev}
\DeclareMathOperator{\Hom}{Hom}
\DeclareMathOperator{\Int}{Int}
\DeclareMathOperator{\Out}{Out}
\DeclareMathOperator{\Mod}{Mod}
\DeclareMathOperator{\id}{id}
\DeclareMathOperator{\Irr}{Irr}
\DeclareMathOperator{\Tr}{Tr}
\DeclareMathOperator{\tr}{tr}
\newcommand{\comment}[1]{\textcolor{red}{[stuff commented out]}}
\newcommand{\be}{\begin{enumerate}[label=(\arabic*)]}
\newcommand{\ee}{\end{enumerate}}
\renewcommand{\b}{\mathfrak{b}}
\newcommand{\Cstar}{{\rm C^*}}
\newcommand{\Wstar}{{\rm W^*}}
\def\semicolon{;}
\def\applytolist#1{
    \expandafter\def\csname multi#1\endcsname##1{
        \def\multiack{##1}\ifx\multiack\semicolon
            \def\next{\relax}
        \else
            \csname #1\endcsname{##1}
            \def\next{\csname multi#1\endcsname}
        \fi
        \next}
    \csname multi#1\endcsname}
\def\calc#1{\expandafter\def\csname c#1\endcsname{{\mathcal #1}}}
\def\bbc#1{\expandafter\def\csname bb#1\endcsname{{\mathbb #1}}}
\def\bfc#1{\expandafter\def\csname bf#1\endcsname{{\mathbf #1}}}
\def\sfc#1{\expandafter\def\csname s#1\endcsname{{\sf #1}}}
\renewcommand{\Vec}{{\sf Vec}}
\newcommand{\Hilb}{{\sf Hilb}}
\newcommand{\vN}{{\sf vN}}
\newcommand{\noshow}[1]{}
\newcommand{\MR}[1]{}
\tikzset{
	super thick/.style={line width=3pt}
}
\tikzstyle{shaded}=[fill=red!10!blue!20!gray!30!white]
\tikzstyle{unshaded}=[fill=white]
\tikzstyle{empty box}=[circle, draw, thick, fill=white, opaque, inner sep=2mm]
\tikzstyle{annular}=[scale=.7, inner sep=1mm, baseline]
\tikzstyle{rectangular}=[scale=.75, inner sep=1mm, baseline=-.1cm]
\tikzstyle{mid>}=[decoration={markings, mark=at position 0.5 with {\arrow{>}}}, postaction={decorate}]
\tikzstyle{mid<}=[decoration={markings, mark=at position 0.5 with {\arrow{<}}}, postaction={decorate}]
\tikzstyle{over}=[double, draw=white, super thick, double=]
\newcommand{\roundNbox}[6]{
	\draw[rounded corners=5pt, very thick, #1] ($#2+(-#3,-#3)+(-#4,0)$) rectangle ($#2+(#3,#3)+(#5,0)$);
	\coordinate (ZZa) at ($#2+(-#4,0)$);
	\coordinate (ZZb) at ($#2+(#5,0)$);
	\node at ($1/2*(ZZa)+1/2*(ZZb)$) {#6};
}
  \newcommand{\tikzmath}[2][]
     {\vcenter{\hbox{\begin{tikzpicture}[#1]#2
                     \end{tikzpicture}}}
     }
\newcommand{\alphacolor}{blue}
\newcommand{\betacolor}{orange}
\newcommand{\gammacolor}{yellow}
\newcommand{\ROneColor}{white}
\newcommand{\RTwoColor}{gray!20}
\newcommand{\RThreeColor}{gray!55}
\newcommand{\RFourColor}{gray!90}
\begin{document}
\title{Representations of fusion categories and their commutants}
\author{Andr\'{e} Henriques and David Penneys}
\date{\today}
\maketitle
\begin{abstract}
A bicommutant category is a higher categorical analog of a von Neumann algebra.
We study the bicommutant categories which arise as the commutant $\mathcal{C}'$ of a fully faithful representation $\mathcal{C}\to\operatorname{Bim}(R)$ of a unitary fusion category $\mathcal{C}$.
Using results of Izumi, Popa, and Tomatsu about existence and uniqueness of representations of unitary (multi)fusion categories,
we prove that if $\mathcal{C}$ and $\mathcal{D}$ are Morita equivalent unitary fusion categories, then their commutant categories $\mathcal{C}'$ and $\mathcal{D}'$
are equivalent as bicommutant categories. In particular, they are equivalent as tensor categories:
\[
\Big(\,\,\mathcal{C}\,\,\simeq_{\text{Morita}}\,\,\mathcal{D}\,\,\Big)
\qquad\Longrightarrow\qquad
\Big(\,\,\mathcal{C}'\,\,\simeq_{\text{tensor}}\,\,\mathcal{D}'\,\,\Big).
\]
This categorifies the well-known result according to which the commutants (in some representations) of Morita equivalent finite dimensional $\rm C^*$-algebras 
are isomorphic von Neumann algebras, provided the representations are `big enough'.

We also introduce a notion of positivity for bi-involutive tensor categories.
For dagger categories, positivity is a property (the property of being a $\rm C^*$-category).
But for bi-involutive tensor categories, positivity is extra structure.
We show that unitary fusion categories and $\operatorname{Bim}(R)$ admit distinguished positive structures,
and that fully faithful representations $\mathcal{C}\to\operatorname{Bim}(R)$ automatically respect these positive structures.

\end{abstract}

\tableofcontents


\section{Introduction}

Given a von Neumann algebra $R$, 
we write $\Bim(R)$ for its category of separable bimodules.
We view
\[
\Bim(R)=\End(R\text{-Mod})
\]
as a categorical analog of $B(H)$, the $*$-algebra of bounded operators on a separable Hilbert space (and $R\text{-Mod}$ as a categorical analog of a Hilbert space).
The category of $R$-$R$-bimodules is a \emph{bi-involutive tensor category} \cite[Def.~\ref{def of bi-involutive tensor category}]{MR3663592}; the two involutions are given by the dagger structure $f\mapsto f^*$,
and by the functor which sends a bimodule to its complex conjugate (with left and right actions given by $a\bar \xi b=\overline{b^*\xi a^*}$).

If $\cC$ is a semisimple rigid $\Cstar$-tensor category or if $\cC=\Bim(R)$ for some von Neumann algebra $R$,
then it comes naturally equipped with cones
$\cP_{a,b}\,\subset\,\Hom(a\otimes\bar a, b\otimes\bar b)$ for every $a, b\in\cC$.
We call such a collection of cones  $\cP_{a,b}$ (subject to various axioms) a \emph{positive structure} on $\cC$ (Def.~\ref{def positive structure on bi-involutive tensor category}).

A \emph{representation} of a $\Cstar$-tensor category $\cC$ is a $\Cstar$-tensor functor
\[
\alpha : \cC \to \Bim(R).
\]
We call the representation \emph{fully faithful} if the underlying functor $\alpha$ is fully faithful.
If $\cC$ is equipped with bi-involutive and positive structures, and if $\alpha$ is compatible with those structures,
then we call $\alpha$ a \emph{positive representation}.

One of our results is that when $\cC$ is a rigid $\Cstar$-tensor category, and when we restrict to fully faithful representations, then the notions of representation and of positive representation are equivalent:

\begin{thmalpha}[Thm.~\ref{thm: (i) and (iii) are equivalent}]
\label{THM-A}
Let $\cC$ be a semisimple rigid $\Cstar$-tensor category.
Then every fully faithful representation $\alpha:\cC\to \Bim(R)$ extends uniquely to a positive representation.
Moreover, every isomorphism of representations is an isomorphism of positive representations.
\end{thmalpha}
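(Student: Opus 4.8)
The plan is to treat the ``positive representation'' structure as the datum of the underlying $\Cstar$-tensor functor $\alpha$ together with a coherent family of unitaries $\nu_a\colon \alpha(\bar a)\to \overline{\alpha(a)}$ intertwining the conjugation functors, subject to the requirement that $\alpha$ carry each cone $\cP_{a,b}$ into $\cP_{\alpha(a),\alpha(b)}$. The forgetful direction (a positive representation is a representation) is tautological, so all of the content lies in producing this extra data from a bare fully faithful $\Cstar$-tensor functor and showing it is forced. The conceptual engine behind the whole argument is that, on both $\cC$ and on $\Bim(R)$, the positive cones are not an independent choice but are \emph{canonically determined} by the $\Cstar$-tensor-conjugation structure: they are read off from the standard (balanced) solutions of the conjugate equations, which are unique in a semisimple rigid $\Cstar$-tensor category. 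I would isolate this canonicity as the first lemma, since everything else reduces to the observation that $\alpha$, being a $\Cstar$-tensor functor, preserves exactly the ingredients out of which the cones are built.

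To construct $\nu$, I would apply $\alpha$ to a standard solution $(\coev_a,\ev_a)$ of the conjugate equations exhibiting $\bar a$ as a dual of $a$ in $\cC$; because $\alpha$ is a tensor functor, the images $(\alpha(\coev_a),\alpha(\ev_a))$ exhibit $\alpha(\bar a)$ as a dual of $\alpha(a)$ in $\Bim(R)$. On the other hand $\overline{\alpha(a)}$ carries its own canonical duality data. Since duals in a rigid category are unique up to a unique compatible isomorphism, there is a unique $\nu_a\colon\alpha(\bar a)\to\overline{\alpha(a)}$ matching the two solutions; it is unitary because both solutions are standard and $\alpha$ is a $\Cstar$-functor. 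Naturality in $a$ and compatibility with the tensor structure (the comparison between $\nu_{a\otimes b}$ and $\nu_a,\nu_b$ through the tensorator of $\alpha$) I would verify directly, each amounting to the uniqueness clause applied to a composite solution of the conjugate equations.

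With $\nu$ in hand, preservation of the positive cones becomes automatic: by the first lemma $\cP_{a,b}$ and $\cP_{\alpha(a),\alpha(b)}$ are each defined purely from the $\Cstar$-structure, the tensor product, the conjugation, and the standard duality morphisms, all of which $\alpha$ preserves and $\nu$ intertwines, so $\alpha$ sends $\cP_{a,b}$ into $\cP_{\alpha(a),\alpha(b)}$; full faithfulness moreover forces this inclusion to be an equality onto the relevant cone in the image. Uniqueness of the whole extension then follows because $\nu_a$ was never a choice---it was forced by the requirement of matching standard solutions---so any two positive structures extending $\alpha$ coincide.

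Finally, for the statement about isomorphisms, let $\eta\colon\alpha\Rightarrow\beta$ be an isomorphism of representations, i.e.\ a unitary monoidal natural isomorphism. Both $\alpha$ and $\beta$ acquire their bi-involutive comparisons $\nu^\alpha,\nu^\beta$ canonically by the construction above, and the required compatibility---that $\nu^\beta_a\circ\eta_{\bar a}=\overline{\eta_a}\circ\nu^\alpha_a$---is again an instance of the uniqueness of duality-matching isomorphisms, since both sides match the same pair of standard solutions transported along $\eta$. Hence $\eta$ is automatically an isomorphism of positive representations. The one genuinely substantial step, and the place I expect the real work to be, is the first lemma on canonicity of the positive cones: pinning down that the cone $\cP_{a,b}$ depends only on the $\Cstar$-tensor-conjugation data rests on the uniqueness of the unitary spherical (standard) structure in a rigid $\Cstar$-tensor category, and it is there that the deeper input---rather than formal diagram-chasing---is needed.
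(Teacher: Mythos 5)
Your existence construction follows the paper's own route (Lemma~\ref{lem:UniquePositiveCoherenceForTensorDaggerFunctor}): define $\chi_c:\alpha(\overline c)\to\overline{\alpha(c)}$ by matching the image of a standard solution in $\cC$ against the standard solution in $\Bim(R)$, and then observe that the cones are built from data this comparison intertwines. But two steps are defective. First, unitarity of $\chi_c$ is \emph{not} a consequence of ``both solutions are standard and $\alpha$ is a $\Cstar$-functor'': a general dagger tensor functor does not send balanced solutions to balanced solutions (the paper's counterexample is the left regular action $\cC\to\End(\cC)$ of a fusion category on itself), and it is precisely fullness that rescues this, via Lemma~\ref{lem:FullyFaithfulPreservesStandardPairings}; your proposal invokes full faithfulness only for an irrelevant point about surjectivity onto cones. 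Second, and more seriously, your uniqueness argument is circular. A positive representation is only required to carry the cones $\cP_{a,b}$ into the cones of $\Bim(R)$; it is \emph{not} required to match standard solutions, and bi-involutivity alone does not pin $\chi$ down --- one can rescale the $\chi_c$ by suitable signs/phases (e.g.\ by a character of the universal grading group fixed under conjugation) and obtain a different bi-involutive structure satisfying all coherences. What forces $\chi_c$ to equal the canonical unitary is positivity itself: in the paper's argument, $\tilde\coev_{\alpha(c)}:=(\id\otimes\chi_c)\circ\mu^{-1}_{c,\overline c}\circ\alpha(\coev_c)$ is simultaneously one half of a balanced solution and a cp map; for $c$ simple (and $\alpha(c)$ simple by full faithfulness) the standard coevaluations compatible with a fixed conjugate differ by a scalar $\lambda\in U(1)$, and \emph{exactly one} of these is cp, whence $\tilde\coev_{\alpha(c)}=\coev_{\alpha(c)}$. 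This phase-fixing step is the entire content of uniqueness and is absent from your proposal.

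The second assertion of the theorem is also not what you prove. By Definition~\ref{def:Cstar-representation}, an isomorphism $\alpha\cong\beta$ is \emph{not} a unitary monoidal natural isomorphism; it is an invertible bimodule ${}_S\Phi_R$ between possibly different algebras, together with unitaries $\phi_c:\Phi\boxtimes_R\alpha(c)\to\beta(c)\boxtimes_S\Phi$ satisfying the half-braiding condition \eqref{eq: half-braiding condition}. The coherence to be verified, \eqref{forgotten coherence - invertible case}, involves $\overline\Phi$, $\overline{\phi_c}$, and the duality data of $\Phi$ itself --- an object lying in the image of neither functor --- so there is no natural transformation $\eta$ with components $\alpha(c)\to\beta(c)$ along which anything can be ``transported'', and the two sides of the coherence are not both duality-matching isomorphisms for a single pair of standard solutions. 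The paper's Lemma~\ref{lem: coherence automatically satisfied} proves the coherence by a string-diagram computation that substitutes the forced formulas for $\chi^\alpha_c$ and $\chi^\beta_c$ (in terms of $\alpha(\coev_c)$, $\beta(\coev_c)$ and standard duality in $\Bim$) and then uses \eqref{eq: half-braiding condition} together with naturality of $\phi$ to move $\alpha(\coev_c)$ across $\Phi$ into $\beta(\coev_c)$; some such use of the half-braiding condition is unavoidable, and your outline never invokes it.
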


Monoidal categories categorify monoids, and fusion categories categorify finite dimensional semisimple algebras (indeed, if $\cC$ is a fusion category, then its Grothendieck ring $K_0(\cC) \otimes_\bbZ \bbC$ is a multimatrix algebra \cite[1.2(a)]{MR933415}).
Similarly, \emph{bicommutant categories}, a notion due to the first author \cite{MR3747830}, are categorical analogs of von Neumann algebras.%
\footnote{In the original definition of bicommutant categories \cite{MR3747830}, positive structures were not mentioned. We believe that this was a mistake.
We fix this by slightly altering the definition.}

Given a category $\cC$ equipped with a positive representation $\alpha:\cC \to \Bim(R)$,
its \emph{commutant category}
\[
\cC' = \big\{(X, e_X) \,\big|\, X\in \Bim(R),\, \text{$e_X$ exhibits $X$ as commuting with the image of $\cC$} \big\}
\]
is the category whose objects are pairs $(X, e_X)$ with $X\in \Bim(R)$, and
\[
e_X = \big\{ e_{X,c} : X\boxtimes_R \alpha(c) \stackrel{\cong}\longrightarrow \alpha(c)\boxtimes_R X\big\}_{c\in\cC}
\]
is a unitary half-braiding.
The commutant category is again a bi-involutive tensor category with positive structure, and the forgetful functor $(X, e_X)\mapsto X$ provides a positive representation $\cC' \to \Bim(R)$.
Iterating, we can form the double commutant $\cC'' := (\cC')'$.
We call $(\cC, \alpha)$ a \emph{bicommutant category} if the canonical inclusion $\iota: \cC \to \cC''$ is an equivalence (Def.~\ref{def:bicommutant category}).

In \cite{MR3663592}, given a fully faithful representation $\cC \to \Bim(R)$ of a unitary fusion category, we proved that the double commutant $\cC''$ is isomorphic to $\cC\otimes_{\Vec}\Hilb$, and that it is a bicommutant category.
We also proved \cite[Lem~6.1 and Thm~A]{MR3663592} that $\cC'$ is a bicommutant category.
Note that, unlike with the usual von Neumann bicommutant theorem, given $\cC$ and  $\alpha$ as above there is no formal argument guaranteeing that $\cC''$ is a bicommutant category.

In the present article, we further study these examples of bicommutant categories.
Our main theorem is a categorical analog of the following well known result about finite dimensional Neumann algebras: if $H$ is a separable infinite dimensional Hilbert space, and $A,B\subset B(H)$ are finite dimensional subalgebras which are Morita equivalent, and whose central projections are infinite, then~$A'\cong B'$.
 
\begin{thmalpha}[Thm.~\ref{thm: main thm last section}]
\label{thm:MoritaEquivalentCommutantsEquivalent}
Let $\cC_0$ and $\cC_1$ be Morita equivalent unitary fusion categories.
Let $\alpha_0:\cC_0\to \Bim(R_0)$ and $\alpha_1:\cC_1\to \Bim(R_1)$
be fully faithfully representations, where $R_0$ and $R_1$ are hyperfinite factors which are either both of type ${\rm II}$ or both of type ${\rm III}_1$.
And let $\cC_i'$ be the commutant category of $\cC_i$ inside $\Bim(R_i)$.

Then $\cC_0'$ and $\cC_1'$ are equivalent as bicommutant categories.
In particular, they are equivalent as tensor categories:
\[
\Big(\,\,\cC_0\,\,\simeq_{\text{Morita}}\,\,\cC_1\,\,\Big)
\qquad\,\,\Longrightarrow\qquad\,\,
\Big(\,\,\cC_0'\,\,\simeq_{\text{tensor}}\,\,\cC_1'\,\,\Big).
\]
\end{thmalpha}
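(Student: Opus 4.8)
The plan is to realize the Morita equivalence by a single unitary multifusion category that simultaneously ``contains'' $\cC_0$ and $\cC_1$, to represent it on a multifactor, and to exhibit both commutants $\cC_0'$ and $\cC_1'$ as corner-compressions of one and the same commutant category. Concretely, a Morita equivalence between $\cC_0$ and $\cC_1$ is encoded by a unitary multifusion category $\cM$ whose unit decomposes as $1=p_0\oplus p_1$ with $p_i\cM p_i\simeq\cC_i$, and with the off-diagonal corners $\cM_{01}=p_0\cM p_1$ and $\cM_{10}=p_1\cM p_0$ invertible bimodule categories implementing the equivalence. First I would set up this linking category and record that its off-diagonal simple objects are invertible and dual to one another.

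Next I would invoke the existence and uniqueness results of Izumi--Popa--Tomatsu, together with Theorem~\ref{THM-A}, to produce a fully faithful positive representation $\alpha:\cM\to\Bim(M)$ on the multifactor $M=R_0\oplus R_1$ whose restrictions to the corners $p_i\cM p_i$ are equivalent to the given representations $\alpha_i:\cC_i\to\Bim(R_i)$. This is exactly where the hypotheses enter: the two corner factors must be hyperfinite of the same type, so that the multifactor $M$ supports a representation of $\cM$, and uniqueness of representations then lets me identify the corner commutants of $\alpha$ with the commutants $\cC_i'$ attached to the given $\alpha_i$.

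The heart of the argument is then a direct analysis of $\cM'$. An object $(X,e_X)\in\cM'$ is an $M$-$M$-bimodule $X=\bigoplus_{i,j}X_{ij}$ equipped with a unitary half-braiding; testing the half-braiding against an invertible object $c\in\cM_{01}$ and comparing corners forces $X_{01}=X_{10}=0$, so $X=X_{00}\oplus X_{11}$ is block-diagonal, and it produces a linking isomorphism $X_{00}\boxtimes_{R_0}\alpha(c)\cong\alpha(c)\boxtimes_{R_1}X_{11}$. Since $\alpha(c)$ is invertible, this exhibits $X_{11}$ (and all the remaining half-braiding data) as determined, up to canonical isomorphism, by $X_{00}$ together with its half-braiding against the diagonal $\cC_0$. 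Consequently the corner-projection functor $\cM'\to\cC_0'$, $(X,e_X)\mapsto(X_{00},e_{X,-}|_{\cC_0})$, is essentially surjective (transport a given $Y\in\cC_0'$ across the invertible bimodule to build its partner $X_{11}$) and fully faithful (a bimodule map is block-diagonal, with its $(1,1)$-component forced by its $(0,0)$-component through the linking isomorphism). The same holds for $\cM'\to\cC_1'$, and by Theorem~\ref{THM-A} both functors automatically respect the bi-involutive and positive structures.

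Chaining these equivalences gives $\cC_0'\simeq\cM'\simeq\cC_1'$, and since the corner-projections intertwine the forgetful representations to $\Bim(R_i)$ with the inclusion $\Bim(R_i)\hookrightarrow\Bim(M)$, the resulting equivalence is one of positive bi-involutive tensor categories compatible with the representations; promoting it to an equivalence of bicommutant categories then follows from the fact, already established for commutants of unitary fusion categories, that each $\cC_i'$ is a bicommutant category. The step I expect to be the main obstacle is the middle one: arranging the multifusion representation $\alpha$ on $M=R_0\oplus R_1$ so that its corners match the prescribed $\alpha_i$, which rests on the operator-algebraic existence/uniqueness theorems and on the matching of factor types, rather than on any formal categorical manipulation.
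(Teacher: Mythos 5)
Your high-level architecture matches the paper's: pass to the linking $2\times 2$ multifusion category, use existence (Popa) and uniqueness (Izumi--Popa--Tomatsu, via Theorem~\ref{THM-A}) to identify the corner restrictions with the given $\alpha_i$, and then show that the two corner-projection functors out of the commutant of the linking category are equivalences. However, the step you call the heart of the argument contains a genuine gap: you test the half-braiding against an \emph{invertible} object $c\in\cM_{01}$ and use invertibility of $\alpha(c)$ to transport $X_{00}$ to $X_{11}$. In a unitary $2\times 2$ multifusion category, an object $c\in\cM_{01}$ is invertible if and only if $d_c=1$, and such an object exists if and only if the Morita equivalence is induced by an actual tensor equivalence $\cC_0\simeq\cC_1$ (conjugation $x\mapsto \bar c\otimes x\otimes c$). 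In the interesting cases of the theorem --- e.g.\ $\cC_0=\Vec_G$, $\cC_1={\sf Rep}(G)$ for nonabelian $G$, where $\cM_{01}$ has a single simple object of dimension $\sqrt{|G|}$ --- there is no invertible object in $\cM_{01}$ at all, so your transport argument for essential surjectivity and fullness of $\cM'\to\cC_0'$ collapses: the unitary $X_{00}\boxtimes_{R_0}\alpha(c)\cong\alpha(c)\boxtimes_{R_1}X_{11}$ for a non-invertible $c$ does not determine $X_{11}$ from $X_{00}$, nor does it let you build $X_{11}$ from a given object of $\cC_0'$.

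This is precisely why the paper's proof of Theorem~\ref{thm: C' --> C_i' is an equivalence of categories} is not a formal cancellation but a computation: the inverse of the corner projection $\Pi_1$ is built by an induction-type functor
$\Delta_1(X)=\bigoplus_{j}\bigoplus_{c\in\Irr(\cC_{j1})} c\boxtimes X\boxtimes\overline{c}$
cut down by an explicit projector $p_X$ (a dimension-weighted average over irreducibles, see \eqref{eq:ProjectorC1}), and the identities $\Pi_1\circ\Phi_1\simeq\id$ and $\Phi_1\circ\Pi_1\simeq\id$ are verified with the graphical relations of Lemmas~\ref{lem: graph calc with pairs of nodes} and~\ref{lem:Sumdxdy} (Fusion, I=H, and the global dimension identity $\sum_{x\in\Irr(\cC_{ij})}d_x^2=D$). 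So your outline identifies the correct statements to prove, and the reduction steps using existence/uniqueness are sound, but the mechanism you propose for the key equivalence only works in the degenerate case $\cC_0\simeq_{\text{tensor}}\cC_1$; the general case requires the averaging construction (or some equivalent device for transporting along the invertible bimodule \emph{category} $\cM_{01}$ rather than a single invertible object).
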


Here, two unitary fusion categories $\cC_0$ and $\cC_1$ are said to be \emph{Morita equivalent} if they can be realised as corners inside a unitary $2\times 2$ multifusion category
\begin{equation*}
\cC 
= 
\begin{pmatrix}
\cC_0 & \cM
\\
\cM^* & \cC_1
\end{pmatrix}.
\end{equation*}
The proof of Theorem \ref{thm:MoritaEquivalentCommutantsEquivalent} goes along the following lines.
We first use Popa's reconstruction theorem \cite[Thm.~3.1]{MR1334479} to construct a fully faithful representation
$
\alpha:\cC \to\Bim(R^{\oplus 2})
$,
where $R$ is a factor isomorphic to $R_1$.
We then use Izumi--Popa--Tomatsu's uniqueness theorem \cite{MR1339767, MR1055708, MR3635673,1812.04222} to show that 
$\alpha|_{\cC_i}$ and $\alpha_i$ are isomorphic representations of $\cC_i$.
In particular, the commutant category of $\cC_i$ inside $\Bim(R)$ is equivalent to the commutant category of $\cC_i$ inside $\Bim(R_i)$.
To finish, we invoke the following general theorem about the commutant categories of unitary $k\times k$ multifusion categories:

\begin{thmalpha}[Thm.~\ref{thm: C' --> C_i' is an equivalence of categories}]
\label{thm:CommutantsEquivalent}
Let $\cC$ be a unitary $k\times k$ multifusion category, and let
\[
\cC\longrightarrow \Bim(R_1\oplus\ldots\oplus R_k)
\]
be a fully faithful representation, where the $R_i$ are factors.
Let $\cC'$ be the commutant of $\cC$ inside $\Bim(R_1\oplus\ldots\oplus R_k)$, and
$\cC_i'$ the commutant of the $i$th corner $\cC_i\subset \cC$ inside $\Bim(R_i)$.
Then the obvious functor $\cC'\to\cC_i'$ is an equivalence of categories.
\end{thmalpha}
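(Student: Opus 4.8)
The plan is to exploit the matrix structure of $\Bim(R)$ for $R=R_1\oplus\cdots\oplus R_k$. Writing $p_i$ for the central projection onto $R_i$, every $R$-$R$-bimodule $X$ decomposes as $X=\bigoplus_{j,l}X_{jl}$ with $X_{jl}:=p_jXp_l$ an $R_j$-$R_l$-bimodule, so that $\Bim(R)$ is the $k\times k$ matrix category with $(j,l)$-entry $\Bim(R_j,R_l)$; under $\alpha$ the corner $\cC_{jl}$ lands in the $(j,l)$-entry, and since $\alpha$ is a tensor functor $\alpha(1_i)=R_i$ is the identity bimodule in the $(i,i)$-corner (a simple object, because $R_i$ is a factor). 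First I would feed each $1_i$ into the half-braiding: for $(X,e_X)\in\cC'$ the isomorphism $e_{X,1_i}$ identifies the $i$-th column $Xp_i=\bigoplus_j X_{ji}$ with the $i$-th row $p_iX=\bigoplus_j X_{ij}$. As bimodule maps are graded by the central projections, comparing supports forces $X_{jl}=0$ for $j\neq l$, so every object of $\cC'$ is block diagonal, $X=\bigoplus_j X_{jj}$. Restricting $e_X$ along the corner inclusion $\cC_i=\cC_{ii}\hookrightarrow\cC$ equips $X_{ii}$ with a half-braiding with $\cC_i$, and this defines the ``obvious functor'' $F_i:\cC'\to\cC_i'$, $(X,e_X)\mapsto(X_{ii},e_X|_{\cC_i})$.

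The engine of the argument is the family of off-diagonal half-braidings. For $c\in\cC_{ji}$ the object $\alpha(c)$ is an $R_j$-$R_i$-bimodule, and block-diagonality makes $e_{X,c}$ an isomorphism $X_{jj}\boxtimes_{R_j}\alpha(c)\cong\alpha(c)\boxtimes_{R_i}X_{ii}$, natural in $c$. Because $\cC$ is indecomposable, the corners are linked through the nonzero off-diagonal pieces $\cC_{ji}$ (each an invertible $\cC_j$-$\cC_i$-bimodule category), so these isomorphisms transport data from the $i$-th block to every other block. For faithfulness: a morphism $f=\bigoplus_j f_{jj}$ in $\cC'$ must intertwine every $e_{X,c}$, and naturality in $c\in\cC_{ji}$ together with rigidity expresses $f_{jj}$ in terms of $f_{ii}$, so $f_{ii}=0$ forces $f=0$. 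For fullness: given $g:X_{ii}\to X'_{ii}$ in $\cC_i'$ I would define $f_{jj}$ by the same transport formula and verify, using naturality and the monoidal coherence $e_{X,c\otimes d}=(\id_{\alpha(c)}\boxtimes e_{X,d})\circ(e_{X,c}\boxtimes\id_{\alpha(d)})$ of the half-braidings, that $f=\bigoplus_j f_{jj}$ is a bimodule map commuting with all of $e_X$.

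Essential surjectivity is the heart of the matter. Given $(Y,f)\in\cC_i'$ I would build $(X,e_X)\in\cC'$ with $X_{ii}\cong Y$ by \emph{inducing} along the invertible bimodule categories $\cC_{ji}$: the block $X_{jj}$ is characterized by $X_{jj}\boxtimes_{R_j}\alpha(c)\cong\alpha(c)\boxtimes_{R_i}Y$ naturally in $c\in\cC_{ji}$, and its existence uses invertibility of $\cC_{ji}$ (a dual corner $\cC_{ij}$ with $\cC_{ji}\boxtimes_{\cC_i}\cC_{ij}\simeq\cC_j$), which lets me solve for $X_{jj}$ as the image of $Y$ under the induced equivalence. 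The crucial observation is that the half-braiding $f$ on $Y$ is exactly the coherence datum making this induction consistent: comparing the two ways of braiding $X_{jj}$ past $\alpha(m)\boxtimes_{R_i}\alpha(c)$ for $m\in\cC_{ji}$, $c\in\cC_{ii}$, the discrepancy is measured by $f_c$, so the naturality and monoidality constraints on the sought $e_X$ reduce precisely to the half-braiding axioms already satisfied by $f$. I would then assemble $e_{X,c}$ for all $c\in\cC_{jl}$ from these data, check unitarity and naturality, verify the hexagon, and confirm $F_i(X,e_X)\cong(Y,f)$.

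The main obstacle I anticipate is precisely this coherence bookkeeping in essential surjectivity: showing that the induced half-braiding is well defined independently of the auxiliary objects of $\cC_{ji}$ used to transport, and that it satisfies the \emph{full} list of half-braiding axioms across all pairs of corners rather than only those visible from the $i$-th block. The inputs that tame this are the monoidality of the original half-braiding on $Y$ and the invertibility of the bimodule categories $\cC_{ji}$, which guarantees the induction is an equivalence and hence preserves all the relevant structure; full faithfulness, by contrast, is comparatively formal once the transport isomorphisms are in hand.
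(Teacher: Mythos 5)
Your opening moves coincide with the paper's: block-diagonality of objects of $\cC'$ is proved by exactly the paper's argument (Lemma~\ref{lem:Diagonal}: braid $X_{ij}$ past $1_i$ to get $X_{ij}\cong X_{ij}\boxtimes 1_i\boxtimes 1_j=0$), and your $F_i$ is the paper's forgetful functor $\Pi_i$ of \eqref{eq: obvious forgetful functor}; your faithfulness sketch via transport along a nonzero $c\in\cC_{ji}$ is also sound. The genuine gap is in essential surjectivity, which you yourself flag as the heart of the matter. The claim that invertibility of $\cC_{ji}$ as a $\cC_j$-$\cC_i$-bimodule category ``lets me solve for $X_{jj}$ as the image of $Y$ under the induced equivalence'' is not an argument: the Morita invertibility $\cC_{ji}\boxtimes_{\cC_i}\cC_{ij}\simeq\cC_j$ is a statement about finite semisimple module categories, and it does not by itself produce an operation on objects of $\Bim(R_i)$ carrying half-braidings, nor the isomorphisms $X_{jj}\boxtimes_{R_j}\alpha(c)\cong\alpha(c)\boxtimes_{R_i}Y$. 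To extract such an operation from invertibility you would need, e.g., an equivalence $\Bim(R_j)\simeq\cC_{ji}\boxtimes_{\cC_i}\Bim(R_i)$ of module $\Wstar$-categories together with Morita invariance of relative commutants --- machinery whose construction is essentially equivalent to the theorem you are trying to prove. Deferring the construction and all coherences to ``bookkeeping'' therefore leaves the proof without its core step.

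What fills this gap in the paper is an explicit construction that also dissolves the well-definedness worry you raise (dependence on the transporting object): rather than transporting along a single $c$, the paper forms $\Delta_1(Y):=\bigoplus_{j}\bigoplus_{c\in\Irr(\cC_{j1})}c\boxtimes Y\boxtimes\overline{c}$, which carries a manifest half-braiding \eqref{eq: with half-braiding} with all of $\cC$, and then cuts it down by the projector $p_Y\in\End_{\cC'}(\Delta_1(Y))$ of \eqref{eq:ProjectorC1}, whose formula uses only the given half-braiding of $Y$ with $\cC_1$. The identities $p_Y=p_Y^*=p_Y^2$, the fact that $p_Y$ is a morphism in $\cC'$, and the unitary natural isomorphisms $\Pi_1\circ\Phi_1\simeq\id$ and $\Phi_1\circ\Pi_1\simeq\id$ (via explicit isometries $u$, as in Lemma~\ref{lem:Projector}) are all verified with the graphical relations of Lemmata~\ref{lem: graph calc with pairs of nodes} and~\ref{lem:Sumdxdy}. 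Note that this route produces a two-sided inverse functor $\Phi_1$ outright, so the separate faithfulness and fullness steps of your plan are never needed. If you insist on your single-object transport, you would have to write down, for a chosen simple $m\in\cC_{ji}$, the idempotent on $\alpha(m)\boxtimes Y\boxtimes\alpha(\overline{m})$ cutting out $X_{jj}$ purely in terms of the given half-braiding (thread $\overline{m}\otimes m\in\cC_i$ through it and cap off with $\coev_m$), prove independence of $m$, and then assemble and verify the full half-braiding --- which is the same computation the paper performs, organized less symmetrically.
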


%

\paragraph{Acknowledgements.}
The authors would like to thank Reiji Tomatsu for helping us understand his article \cite{1812.04222}.

We thank the Isaac Newton Institute for Mathematical Sciences, Cambridge, for support and hospitality during the programme Operator Algebras: Subfactors and their Applications where work on this paper was undertaken. 
This work was supported by EPSRC grant no EP/K032208/1.
Andr\'e Henriques was supported by the Leverhulme trust and the EPSRC grant ``Quantum Mathematics and Computation''.
David Penneys was partially supported by NSF DMS grants 1500387/1655912 and 1654159.


\section{Unitary multifusion categories}

\subsection{Tensor categories}\label{sec: Tensor categories}

We briefly discuss the various types of categories that appear in our article.
For more details, we refer the reader to 
\cite{MR808930,MR2767048,MR3242743}
and
\cite[\S2-3]{MR3663592}.
All our categories are assumed to be linear over $\bbC$ (i.e., enriched over complex vector spaces), admit direct sums, and be idempotent complete, and
all our functors are assumed to be linear.\medskip

\noindent
A category $\cC$ is called:
\begin{itemize}
\item
a \emph{dagger category} if for every $x,y\in \cC$ we are given an antilinear map $*: \cC(x, y) \to \cC(y, x)$, called the adjoint, satisfying $f^{**} = f$ and $(f\circ g)^* = g^* \circ f^*$.
A morphism $f$ is called \emph{unitary} if $f^*=f^{-1}$.
\item
a \emph{$\Cstar$-category} if it is a dagger category and admits a faithful dagger functor $\cC \to \Hilb$ whose image is norm closed at the level of hom-spaces.
\item
a \emph{tensor category} if we are given a bifunctor $\otimes : \cC\times \cC \to \cC$ together with a unit object $1\in\cC$, an associator and left and right unitors isomorphisms which are natural and satisfy the pentagon and triangle axioms.
We will suppress these coherence isomorphisms whenever possible.
\item
a \emph{dagger tensor category} if $\cC$ has both the structures of a dagger category and of a tensor category, the associators and unitors are unitary, and $(f\otimes g)^* = f^* \otimes g^*$. 
\item
a \emph{$\Cstar$-tensor category} if $\cC$ is a dagger tensor category whose underlying dagger category is a $\Cstar$-category.
\end{itemize}

\noindent
A functor $F: \cC \to \cD$ is called:
\begin{itemize}
\item
a \emph{dagger functor} between dagger categories if $F(f^*) = F(f)^*$ for all $f\in \cC(x,y)$.

\item
a \emph{tensor functor} between tensor categories if $F$ is equipped with natural isomorphisms $\mu_{x,y} : F(x) \otimes F(y) \to F(x\otimes y)$ and $i: 1_\cD\to F(1_\cC)$ satisfying
\begin{align*}
&\mu_{x,y\otimes z}\circ(\id_{F(x)}\otimes\mu_{y,z}) = \mu_{x\otimes y, z}\circ (\mu_{x,y}\otimes \id_{F(z)})
\quad\text{and}
\\ &\mu_{1,x}\circ (i\otimes \id_{F(x)}) = \id_{F(x)} = \mu_{x,1}\circ(\id_{F(x)}\otimes i).
\end{align*}
\item
an \emph{anti-tensor functor} if it is equipped with natural isomorphisms $\nu_{x,y} : F(x)\otimes F(y) \to F(y\otimes x)$ and $j: 1_\cD\to F(1_\cC)$ satisfying similar conditions.
Equivalently, this is a tensor functor $\cC^{\mathrm{mp}} \to \cD$, where $\cC^{\mathrm{mp}}$ denotes the tensor category with opposite monoidal structure.
\item
a \emph{dagger tensor functor} between dagger tensor categories if $F$ is both a dagger functor and a tensor functor, and the natural isomorphisms $\mu$ and $i$ are unitary.
(There is an analogous definition for a dagger anti-tensor functor.)
\end{itemize}

A tensor category $\cC$ is called \emph{rigid} if every object $x\in \cC$ admits both a left and a right dual, that is, if there exist objects ${x\!\!\;\text{\v{}}}$ and ${\text{\v{}}\!x}$, and morphisms $\ev_x:{x\!\!\;\text{\v{}}}\otimes x\to 1$, $\coev_x:1\to x\otimes {x\!\!\;\text{\v{}}}$, $\ev_{\text{\v{}}\!x}:x\otimes {\text{\v{}}\!x}\to 1$, $\coev_{\text{\v{}}\!x}:1\to {\text{\v{}}\!x} \otimes x$ satisfying the duality equations $(\id\otimes \ev)(\coev\otimes \id)=\id$ and $(\ev\otimes \id)(\id\otimes \coev)=\id$.

If $\cC$ is a rigid $\Cstar$-tensor category, then the above equations only determine ${x\!\!\;\text{\v{}}}$ and ${\text{\v{}}\!x}$ up to canonical isomorphism (as opposed to canonical unitary isomorphism).
However, when the unit object of $\cC$ admits a direct sum decomposition into simple objects, the above issue can be remedied.

Let $\cC$ be a rigid $\Cstar$-tensor category whose unit object is a sum of simple objects $1=\bigoplus 1_i$ (also known as a semisimple rigid $\Cstar$-tensor category), and
let $p_i:1\to 1$ be the corresponding orthogonal projections.
Given a morphism $f:x\to y$ in $\cC$, we write $f_{ij}:x\to y$ for $p_i\otimes f\otimes p_j$.

\begin{lem}[{\cite[Thms.~4.12 and 4.22]{MR3342166}}]\label{lem : BDH Theorem 4.12 and Theorem 4.22}
Let $\cC$ be as above.
Then, for every object $x\in \cC$, there exists an object $\overline x$ along with morphisms $\ev_x:\overline x\otimes x\to 1$ and $\coev_x:1\to x\otimes \overline x$ satisfying
the duality equations\footnote{We omit various unitors and associators to keep the equations compact.}
\begin{equation}\label{eq: duality equations (unnormalized)}
(\id\otimes \ev_x)(\coev_x\otimes \id)=\id\qquad
(\ev_x\otimes \id)(\id\otimes \coev_x)=\id
\end{equation}
and the balancing condition (which is a version of sphericality): for all $f:x\to x$, if
\begin{equation}\label{eq: duality equations (balancing condition)}
\ev_x(\id\otimes f_{ij})\ev^*_x = \lambda p_j
\quad\text{and}\quad
\coev_x^*(f_{ij} \otimes \id)\coev_x = \lambda' p_i
\quad\text{then}\quad \lambda=\lambda'.
\end{equation}
The object $\overline x$ is determined up to unique unitary isomorphism by $\ev_x$ and $\coev_x$ subject to the above equations.
\end{lem}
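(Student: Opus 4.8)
The plan is to follow the standard-solution strategy of Longo--Roberts, adapted to the blockwise structure coming from the decomposition $1=\bigoplus 1_i$. Rigidity already supplies, for each $x$, an object $\overline x$ together with morphisms solving the zig-zag equations \eqref{eq: duality equations (unnormalized)}; in a $\Cstar$-category one may moreover arrange that $\coev_x$ and $\ev_x$ are related by adjunction so that $\ev_x\ev_x^*$ and $\coev_x^*\coev_x$ are positive. The genuine content is therefore two-fold: (i) achieving the balancing condition \eqref{eq: duality equations (balancing condition)}, and (ii) proving that the resulting data is unique up to \emph{unitary} isomorphism. I would establish existence by reducing to simple objects via semisimplicity, and uniqueness by a torsor argument for the space of solutions.

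For existence, first note that if $x$ is simple then the orthogonal idempotents $p_i\otimes\id_x\otimes p_j$ live in $\End(x)=\bbC\id_x$, so exactly one of them is $\id_x$; thus $x$ is supported in a single block $(i,j)$, and the only instance of \eqref{eq: duality equations (balancing condition)} to check is $f=\id_x$, where $\ev_x\ev_x^*=\lambda p_j$ and $\coev_x^*\coev_x=\lambda' p_i$. Starting from any solution, the rescaling $\ev_x\mapsto t\,\ev_x$, $\coev_x\mapsto t^{-1}\coev_x$ preserves \eqref{eq: duality equations (unnormalized)} while sending $(\lambda,\lambda')$ to $(|t|^2\lambda,\,|t|^{-2}\lambda')$; choosing $|t|^4=\lambda'/\lambda$ balances them and produces a well-defined dimension $d_x=\lambda=\lambda'$ that is manifestly an isomorphism invariant. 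For general $x$ I would fix a decomposition $x\cong\bigoplus_k x_k$ into simples, transport one chosen standard solution per isomorphism class along the isomorphisms (so isomorphic summands receive identical data), and assemble $\ev_x,\coev_x$ from the summand data and the direct-sum (co)evaluations. The assembled pair satisfies \eqref{eq: duality equations (unnormalized)} summand-by-summand. To verify \eqref{eq: duality equations (balancing condition)} for arbitrary $f:x\to x$, I would use that both $f\mapsto\ev_x(\id\otimes f_{ij})\ev_x^*$ and $f\mapsto\coev_x^*(f_{ij}\otimes\id)\coev_x$ are blockwise traces on $\End(x)$: they therefore vanish on off-diagonal matrix units relative to $\bigoplus_k x_k$ and agree on the diagonal by the simple-object normalization together with naturality.

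For uniqueness, suppose $(\overline x,\ev_x,\coev_x)$ and $(\overline x',\ev_x',\coev_x')$ are two solutions. Using the zig-zag equations I would set $a:=(\ev_x\otimes\id)(\id\otimes\coev_x'):\overline x\to\overline x'$ and check, again via \eqref{eq: duality equations (unnormalized)}, that $a$ is invertible, that $\ev_x'=\ev_x(a^{-1}\otimes\id)$ and $\coev_x'=(\id\otimes a)\coev_x$, and that $a$ is the \emph{unique} morphism with these intertwining properties. It then remains to show that standardness of both solutions forces $a$ to be unitary: substituting the intertwining relations into \eqref{eq: duality equations (balancing condition)} shows that the trace functionals induced by $a^*a$ coincide blockwise with those induced by $\id$, so that $a^*a$ (and symmetrically $aa^*$) is a scalar in each block; positivity together with the matched normalizations gives $a^*a=\id$. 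The uniqueness of $a$ then yields the asserted uniqueness up to unique unitary isomorphism.

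The main obstacle I anticipate is the balancing condition in the non-connected (multifusion) setting: unlike the classical case of a simple unit, one must carry the blockwise scalars attached to each $p_i$ throughout and ensure the two trace functionals agree block-by-block for \emph{all} $f$, including components within a single isomorphism class of simple summands. Getting this bookkeeping right -- in particular checking that the dimension attached to a simple object is a genuine isomorphism invariant, so that the assembly over a direct sum is internally consistent -- is the delicate point. Throughout, the $\Cstar$-structure is essential: positivity of $\ev_x\ev_x^*$ and $\coev_x^*\coev_x$ underlies the rescaling in the existence step, and the positivity argument on $a^*a$ is exactly what upgrades the canonical invertible intertwiner to a unitary in the uniqueness step.
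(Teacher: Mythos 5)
The first thing to say is that the paper contains no proof of Lemma~\ref{lem : BDH Theorem 4.12 and Theorem 4.22}: it is imported wholesale from \cite[Thms.~4.12 and 4.22]{MR3342166}, so your argument can only be measured against the standard proof in the literature. What you have written is essentially that proof --- the classical Longo--Roberts ``standard solutions'' argument adapted to a non-simple unit --- and its skeleton is sound. For simple $x$: the projections $p_i\otimes\id_x\otimes p_j$ are orthogonal idempotents in $\End(x)=\bbC$ summing to $\id_x$, so $x$ indeed sits in a single block $\cC_{ij}$, the rescaling $(\ev_x,\coev_x)\mapsto(t\,\ev_x,t^{-1}\coev_x)$ balances the two scalars, and the balanced value $\sqrt{\lambda\lambda'}$ is independent of the chosen solution (relating two solutions by the canonical intertwiner $a$ multiplies $\lambda$ by the scalar $(a^*a)^{-1}$ and $\lambda'$ by $a^*a$, leaving the product unchanged). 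For general $x$: assembling summandwise balanced solutions works, and the balancing for arbitrary $f$ holds because the off-diagonal components of $f$ are annihilated by the orthogonality of the conjugate isometries, leaving a sum of simple-object computations. The canonical intertwiner in the uniqueness step, its intertwining relations, and its uniqueness are all correctly identified.

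The one place where your sketch papers over a real difficulty is the end of the uniqueness argument. To make ``the trace functionals induced by $a^*a$'' precise you must slide $h:=a^*a\in\End(\overline x)$ across the cap and cup, i.e.\ use $\ev_x(k\otimes\id_x)=\ev_x(\id_{\overline x}\otimes k^{t})$ and $(\id_x\otimes k)\coev_x=(k^{t}\otimes\id_{\overline x})\coev_x$, where $k^{t}$ is the transpose determined by $(\ev_x,\coev_x)$; the balancing of the second solution then reads $\tau_{ij}\bigl((h^{t})^{-1}f\bigr)=\tau_{ij}\bigl(f\,h^{t}\bigr)$ for all $f$, where $\tau_{ij}$ denotes the balancing functional of the first solution. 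If $\tau_{ij}$ is a faithful trace, this gives $(h^{t})^{2}=\id$, hence $h^{2}=\id$, hence $h=\id$ by positivity (your intermediate ``scalar in each block'' step is not even needed). But the trace property of these functionals is \emph{not} known a priori for an arbitrary balanced solution --- it is itself a consequence of the theorem --- so the argument as written for two arbitrary solutions is circular at exactly this point. The repair is easy and should be made explicit: compare an arbitrary balanced solution against the \emph{assembled} solution from your existence step, whose functionals are faithful traces by construction ($\tau_{ij}=\sum_s d_s\tr_s$ over iso-classes $s$ of simples of $\cC_{ij}$ occurring in $x$), conclude that the canonical intertwiner to the assembled solution is unitary, and then obtain the statement for two arbitrary solutions by transitivity, using that a composite of unitary intertwiners is the unique canonical intertwiner between the outer pair. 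With that reorganization your proof is complete and is, in substance, the argument underlying the cited theorems.
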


The object $\overline x$ is called the \emph{conjugate} of $x$.
The conjugate $\overline x$ is canonically isomorphic to ${x\!\!\;\text{\v{}}}$ and to ${\text{\v{}}\!x}$ (and it is meaningless to ask whether the isomorphisms $\overline x\to {x\!\!\;\text{\v{}}}$ and $\overline x\to {\text{\v{}}\!x}$ are unitary given that $x\!\!\;\text{\v{}}$ and $\text{\v{}}\!x$ are only well defined up to canonical isomorphism).

Let us now consider the situation of a dagger tensor functor $F:\cC\to \cD$ between rigid tensor $\Cstar$-categories, and let us
assume as before that the unit objects of $\cC$ and of $\cD$ are direct sums of simples.
In general, for $x$ an object of $\cC$, the morphisms\footnote{We omit the structure isomorphisms $\mu$ and $i$ for better readability.} $F(\ev_x)$ and $F(\coev_x)$ do not exhibit $F(\overline x)$ as the conjugate of $F(x)$, because they might fail the balancing condition.
A typical example where this problem occurs is the functor $\cC\to \mathrm{End}(\cC)$ given by the left action of a unitary fusion category $\cC$ on itself.

However, if $F$ is full (i.e.\ surjective at the level of morphisms)\footnote{Note that, by Lemma~\ref{lem: faithfulness is automatic}, such functors are almost always fully faithful.}, then 
$F$ sends balanced solutions to balanced solutions, and the canonical isomorphism $\chi_x:F(\overline x)\to \overline{F(x)}$ is unitary:

\begin{lem}
\label{lem:FullyFaithfulPreservesStandardPairings}
Let $F:\cC\to \cD$ be a full dagger tensor functor between semisimple rigid $\Cstar$-tensor category. Then:
\begin{itemize}
\item
For every $x\in\cC$, with $\ev_x:\overline x\otimes x\to 1_\cC$ and $\coev_x:1_\cC\to x\otimes \overline x$ as in Lemma~\ref{lem : BDH Theorem 4.12 and Theorem 4.22},
\begin{gather*}
\,\,\,\tilde\ev_{F(x)}\,:=F(\ev_x)\circ \mu_{\overline{x},x}:F(\overline x)\otimes F(x)\to 1_\cD\quad\\
\text{and}\,\,\,\,\tilde\coev_{F(x)}:=\mu^{-1}_{x, \overline{x}}\circ F(\coev_x):1_\cD\to F(x)\otimes F(\overline x)
\end{gather*}
form balanced solutions of the duality equations (are solutions of \eqref{eq: duality equations (unnormalized)} and \eqref{eq: duality equations (balancing condition)}).
\item
The isomorphism
\[
\chi_x :=(\tilde\ev_{F(x)}\otimes\id_{\overline{F(x)}})\circ(\id_{F(\overline x)}\otimes\coev_{F(x)}):F(\overline x)\to \overline{F(x)}
\]
is unitary, and is also given by
$\chi_x=(\id_{\overline{F(x)}}\otimes \tilde\coev^*_{F(x)})\circ(\ev^*_{F(x)}\otimes\id_{F(\overline x)})$.
\end{itemize}\end{lem}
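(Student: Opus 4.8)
The plan is to establish the two bullet points in turn, treating the balancing condition as the real content and deducing unitarity of $\chi_x$ formally from the uniqueness clause of Lemma~\ref{lem : BDH Theorem 4.12 and Theorem 4.22}. That the pair $(\tilde\ev_{F(x)},\tilde\coev_{F(x)})$ solves the duality equations \eqref{eq: duality equations (unnormalized)} is immediate and uses no hypothesis beyond tensor functoriality: applying $F$ to the two snake identities for $(\ev_x,\coev_x)$, inserting the structure isomorphisms $\mu$ and $i$, and using the coherence axioms for $\mu$ and $i$ converts them into the corresponding identities for the tilde-maps. The substance of the first bullet is therefore the balancing condition \eqref{eq: duality equations (balancing condition)}.

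For the balancing condition I would first reduce to the image of $F$. Given $g\colon F(x)\to F(x)$, fullness lets me write $g=F(f)$ for some $f\colon x\to x$. The core computation is that, for every such $f$,
\[
\tilde\ev_{F(x)}\circ(\id_{F(\overline x)}\otimes F(f))\circ\tilde\ev^*_{F(x)}
= i^{-1}\circ F\big(\ev_x\circ(\id_{\overline x}\otimes f)\circ\ev_x^*\big)\circ i,
\]
and symmetrically for $\tilde\coev_{F(x)}$. This follows by expanding the tilde-maps, using naturality of $\mu$ to slide $F(f)$ through $\mu_{\overline x,x}$, and then cancelling $\mu_{\overline x,x}\mu_{\overline x,x}^{-1}$; here $\mu$ and $i$ being unitary and $F$ being a dagger functor give $\tilde\ev^*_{F(x)}=\mu_{\overline x,x}^{-1}\circ F(\ev_x^*)\circ i$. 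Thus the traces computed on the $\cD$-side are literally the $F$-images of those computed on the $\cC$-side.

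It then remains to match this with the block structure in \eqref{eq: duality equations (balancing condition)}. The decomposition $1_\cC=\bigoplus_k 1_k^\cC$ is carried by $F$ to an orthogonal family $i^{-1}F(p_k^\cC)i$ of projections summing to $\id_{1_\cD}$; since $1_k^\cC$ is simple and $F$ is full, $\End_\cD(F(1_k^\cC))$ is a quotient of $\bbC$, so each $F(1_k^\cC)$ is simple and these projections are among the canonical $p^\cD_\bullet$. Moreover, writing $F(x)\cong F(1_\cC)\otimes F(x)$ and using the multifusion orthogonality $1_a^\cD\otimes 1_b^\cD=\delta_{ab}1_a^\cD$, the object $F(x)$ is supported only on the summands in the image of $F$; hence the block $F(f)_{ij}$ vanishes unless both $i,j$ lie in that image, in which case it is the $(\mu,i)$-conjugate of $F(f_{kl})$. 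Combining with the displayed identity, if $\ev_x(\id\otimes f_{kl})\ev_x^*=\lambda p_l^\cC$ and $\coev_x^*(f_{kl}\otimes\id)\coev_x=\lambda' p_k^\cC$, the corresponding $\cD$-expressions equal $\lambda p_j^\cD$ and $\lambda' p_i^\cD$; since $\lambda=\lambda'$ by balancing of $(\ev_x,\coev_x)$, the condition holds for $g=F(f)$, hence for all $g$.

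For the second bullet, both $(\tilde\ev_{F(x)},\tilde\coev_{F(x)})$ (just proved) and the standard $(\ev_{F(x)},\coev_{F(x)})$ are balanced solutions of the duality equations for $F(x)$, so by the uniqueness clause of Lemma~\ref{lem : BDH Theorem 4.12 and Theorem 4.22} they are related by a unique unitary; checking that the snake map $\chi_x$ is exactly this canonical comparison shows $\chi_x$ is unitary. The alternative formula then follows by noting that the reverse comparison $(\ev_{F(x)}\otimes\id_{F(\overline x)})(\id_{\overline{F(x)}}\otimes\tilde\coev_{F(x)})$ equals $\chi_x^{-1}$, whence $\chi_x=(\chi_x^{-1})^*=(\id_{\overline{F(x)}}\otimes\tilde\coev^*_{F(x)})(\ev^*_{F(x)}\otimes\id_{F(\overline x)})$. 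I expect the main obstacle to be precisely the bookkeeping around the units in the multifusion setting in the third paragraph: one must verify carefully that $F$ sends the simple summands of $1_\cC$ to simple summands of $1_\cD$ and that $F(x)$ has no components outside their image, so that the block-diagonal scalars in \eqref{eq: duality equations (balancing condition)} on the two sides genuinely correspond under $F$. Once this correspondence is in place, both the trace transfer and the unitarity of $\chi_x$ are formal.
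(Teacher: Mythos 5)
Your proposal is correct and follows essentially the same route as the paper's proof: the balancing condition is transferred through $F$ by writing $g=F(f)$ (fullness) and sliding $F(f_{ij})$ through the unitary structure maps so that the $\cD$-side scalars are literally $F$-images of the $\cC$-side ones, unitarity of $\chi_x$ comes from the uniqueness-up-to-unique-unitary clause of Lemma~\ref{lem : BDH Theorem 4.12 and Theorem 4.22}, and the second formula follows because the reverse snake is $\chi_x^{-1}=\chi_x^*$. The only difference is one of detail: you justify the correspondence between minimal projections of $\End(1_\cC)$ and $\End(1_\cD)$ (via the surjection of commutative $\Cstar$-algebras induced by fullness, under which the nonzero images $F(p_k)$ form a complete orthogonal family of minimal projections), a point the paper simply asserts with ``let $p_i, p_j$ be the corresponding minimal projections satisfying $F(p_i)=q_i$.''
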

\begin{proof}
It is easy to see that $\tilde\ev_{F(x)}$ and $\tilde\coev_{F(x)}$ satisfy condition \eqref{eq: duality equations (unnormalized)}.
To check \eqref{eq: duality equations (balancing condition)}, let $g:F(x) \to F(x)$ be an endomorphism and let $g_{ij}:=q_i\otimes g\otimes q_j$,
where $q_i, q_j\in \mathrm{End}(1_\cD)$ are minimal projections.
Let $p_i, p_j\in \mathrm{End}(1_\cC)$ be the corresponding minimal projections in $\cC$, satisfying $F(p_i)=q_i$ and $F(p_j)=q_j$.
Since $F$ is full, $g=F(f)$ for some $f: x\to x$.
By assumption, we have
$\ev_x(\id_{\overline x}\otimes f_{ij})\ev^*_x = \lambda p_j$ and $\coev_x^*(f_{ij} \otimes \id_{\overline x})\coev_x = \lambda' p_i$
with $\lambda=\lambda'$.
It follows that
\begin{align*}
\tilde\ev_{F(x)}
\circ
(\id_{F(\overline{x})}\otimes g_{ij})
\circ
\tilde\ev_{F(x)}^*
&=
(F(\ev_x)\circ \mu_{\overline{x},x})
\circ
(\id_{F(\overline{x})}\otimes F(f_{ij}))
\circ
((F(\ev_x)\circ \mu_{\overline{x},x})^*
\\&=
F(\ev_x \circ (\id_{\overline{x}}\otimes f_{ij}) \circ \ev_x^*)
=F(\lambda p_j)=\lambda q_j
\end{align*}
and
\begin{align*}
\tilde\coev_{F(x)}^*  \circ (g_{ij}\otimes \id_{F(\overline{x})}) \circ \tilde\coev_{F(x)}
&=
F(\mu^{-1}_{x, \overline{x}}\circ \coev_x)^*  \circ (F(f_{ij})\otimes \id_{F(\overline{x})}) \circ (\mu^{-1}_{x, \overline{x}}\circ \coev_x)
\\&=
F(\coev_x^* \circ (f\otimes \id_{\overline{x}}) \circ \coev_x)
= F(\lambda' p_i)=\lambda' q_i
\end{align*}
with $\lambda=\lambda'$, as desired.

The isomorphism $\chi_x =(\tilde\ev_{F(x)}\otimes\id_{\overline{F(x)}})\circ(\id_{F(\overline x)}\otimes\coev_{F(x)})$ is unitary by \cite[Thm.~4.22]{MR3342166}. 
It is equal to
$(\id_{\overline{F(x)}}\otimes \tilde\coev^*_{F(x)})\circ(\ev^*_{F(x)}\otimes\id_{F(\overline x)})$
since the latter is visibly the inverse of $\chi_x^*$.
\end{proof}

\subsection{Unitary multifusion categories}\label{sec: Unitary multifusion categories}

A category is called \emph{semisimple} if every object is a finite direct sum of simple objects.

\begin{defn}
A semisimple rigid tensor category with finitely many isomorphism classes of simple objects, and whose unit object is simple is called a \emph{fusion category}.
If we drop the condition that the unit object is simple, then we get the notion of of a multifusion category:
a \emph{multifusion category} is a tensor category which is rigid, semisimple, and which has finitely many isomorphism classes of simple objects.

A \emph{unitary (multi)fusion category} is a dagger tensor category whose underlying dagger category is a $\Cstar$-category, and whose underlying tensor category is a (multi)fusion category.
\end{defn}

A multifusion category is called \emph{indecomposable} if it is not the direct sum of two non-trivial multifusion categories.

Let $\cC$ be an indecomposable multifusion category and let $1=\bigoplus_{i=1}^k 1_i$ be the decomposition of its unit object into simple objects.
Then the subcategories $\cC_{ij} := 1_i\otimes \cC \otimes 1_j$ are all non-zero, and we may write $\cC=\bigoplus_{ij} \cC_{ij}$.
The tensor product $\otimes:\cC_{ij}\times \cC_{j'k}\to \cC$ takes values in $\cC_{ik}$, and is non-zero if and only if $j=j'$.
Each $\cC_i:=\cC_{ii}$ is a fusion category, and each $\cC_{ij}$ is an invertible bimodule (a Morita equivalence) between $\cC_i$ and $\cC_j$ \cite[Def.~4.2]{MR1966524} \cite[Prop.~4.2]{MR2677836}.
We call $\cC_{i}$ a \emph{corner} of $\cC$.

There is an analogy between indecomposable multifusion categories and matrix algebras.
Inspired by that analogy, if $\cC$ is an indecomposable multifusion category whose unit object is a sum of $k$ irreducible objects, then we call $\cC$ a \emph{$k\times k$ multifusion category},

The datum of a $2\times 2$ multifusion category is equivalent to the data of a fusion category $\cC_0$, a fusion category $\cC_1$, and a Morita equivalence $\cM$ between $\cC_0$ and $\cC_1$.
Letting $\cM^*$ be the inverse bimodule of $\cM$ (which is unique up to contractible choice), the associated multifusion category is given by
\begin{equation}\label{eq: 2x2 multifusion}
\cC 
=
\begin{pmatrix}
\cC_0 & \cM
\\
\cM^* & \cC_1
\end{pmatrix}.
\end{equation}
Its unit object $1_\cC = 1_0 \oplus 1_1$ is the sum of the unit object of $\cC_0$ and that of $\cC_1$.

When $\cC_0$ and $\cC_1$ are unitary fusion categories, we define a \emph{unitary Morita equivalence} to be a unitary $2\times 2$ multifusion category as in \eqref{eq: 2x2 multifusion}.
This is the analog of \cite[Def.~5.15]{MR1966524} within the context of tensor $\Cstar$-categories (M\"{u}ger only discusses dagger and pivotal tensor categories).

Let $\cC$ be a unitary multifusion category.
Then the \emph{dimension} of an object $x\in \cC_{ij}$ is the unique number  $d_x$ that satisfies
\begin{equation}\label{eq: d is the number which satisfies...}
\ev_x \circ\ev^*_x = d_x p_j
\qquad\text{and}\qquad
\coev_x^*\circ\coev_x = d_x p_i.
\end{equation}
It is such that $d_x=0$ iff $x=0$, $d_x\ge 1$ for $x\not=0$, $d_x=1$ iff $x$ is invertible, $d_{x\oplus y}=d_x+d_y$ for $x,y\in \cC_{ij}$, and $d_{x\otimes y}=d_x\cdot d_y$ for $x\in \cC_{ij}$ and $y\in \cC_{jk}$ \cite[Prop.~5.2]{MR3342166} (see also \cite{MR1444286}).
Extending the assignments $x\mapsto d_x$ by additivity to all the objects of $\cC$, we get a ring homomorphism $K_0(\cC) \to M_n(\bbR)$.

The \emph{global dimension} of a fusion category is the sum of the squares of the dimensions of its simple objects.
By \cite[Prop.~5.17]{MR1966524}, Morita equivalent unitary fusion categories always have same global dimension.
As a consequence, if $\cC$ is an indecomposable unitary multifusion category, then all its corners $\cC_i$ have same global dimension.
We denote this common quantity by $D=D(\cC)$ and call it the global dimension of $\cC$.

The following result generalises \cite[Prop.~5.17]{MR1966524} and appears as \cite[Prop.~2.17]{MR2183279}.
For every $i$ and $j$, let $\Irr(\cC_{ij})$ be a set of representatives of the irreducible objects of $\cC_{ij}$.

\begin{lem}\label{lem: Dim(C_ij)}
Let $\cC$ be an indecomposable unitary multifusion category.
Then, for every $i$ and $j$, we have
\[
\sum_{x\in \Irr(\cC_{ij})} d_x^2=D(\cC).
\]
\end{lem}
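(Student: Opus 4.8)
The plan is to show that the matrix $\Lambda_{ij} := \sum_{x\in\Irr(\cC_{ij})} d_x^2$ is constant in its first index, and then to read off its value from the diagonal. First I would introduce, for each pair $(i,j)$, the dimension-weighted element
\[
R_{ij} := \sum_{x\in\Irr(\cC_{ij})} d_x\,[x] \ \in\ K_0(\cC)\otimes_{\bbZ}\bbR,
\]
and observe that the scalar-dimension homomorphism $d\colon K_0(\cC)\otimes_{\bbZ}\bbR\to\bbR$, which is additive and multiplicative on products $\cC_{ab}\otimes\cC_{bc}\to\cC_{ac}$, sends $R_{ij}$ to $\sum_{x} d_x\cdot d_x = \Lambda_{ij}$. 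The whole argument then reduces to a single eigenvector identity for these elements.

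The key step is to prove that for every $y\in\Irr(\cC_{ab})$ one has
\[
[y]\cdot R_{bc} = d_y\,R_{ac}.
\]
Expanding the left-hand side gives $\sum_{x\in\Irr(\cC_{bc})} d_x\,[y\otimes x] = \sum_{z\in\Irr(\cC_{ac})}\big(\sum_{x} d_x N_{yx}^z\big)\,[z]$, where $N_{yx}^z=\dim\Hom(z, y\otimes x)$. By Frobenius reciprocity, using the conjugate $\overline y\in\cC_{ba}$ supplied by Lemma~\ref{lem : BDH Theorem 4.12 and Theorem 4.22}, together with the symmetry of $\dim\Hom(\,\cdot\,,\,\cdot\,)$ in a semisimple category, $N_{yx}^z$ equals the multiplicity of $x$ in $\overline y\otimes z$. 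Hence $\sum_{x\in\Irr(\cC_{bc})} d_x N_{yx}^z = d_{\overline y\otimes z} = d_{\overline y}\,d_z = d_y d_z$, so the coefficient of $[z]$ is $d_y d_z$ and the right-hand side equals $d_y R_{ac}$. I expect this identity --- essentially the assertion that the vector of dimensions is a common Perron--Frobenius eigenvector for left multiplication --- to be the main obstacle, as it requires keeping careful track of the corner indices and the correct application of rigidity.

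Finally I would apply $d$ to the identity $[y]\cdot R_{bc}=d_y R_{ac}$. Multiplicativity gives $d_y\,\Lambda_{bc} = d_y\,\Lambda_{ac}$, and since $d_y\geq 1>0$ we may cancel to obtain $\Lambda_{bc}=\Lambda_{ac}$. Because $\cC$ is indecomposable, every corner $\cC_{ab}$ is nonzero, so a simple object $y\in\cC_{ab}$ exists for all $a,b$; thus $\Lambda_{bc}$ does not depend on its first index. Specializing $a=c$ gives $\Lambda_{bc}=\Lambda_{cc}$, and $\Lambda_{cc}=\sum_{x\in\Irr(\cC_{cc})} d_x^2$ is by definition the global dimension of the corner $\cC_c$, namely $D(\cC)$. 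Therefore $\Lambda_{ij}=D(\cC)$ for every $i$ and $j$, as claimed.
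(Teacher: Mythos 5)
Your proof is correct, but it takes a genuinely different route from the paper's. You work in the Grothendieck ring: the identity $[y]\cdot R_{bc}=d_y R_{ac}$, proved via Frobenius reciprocity and multiplicativity of dimensions, is the classical Perron--Frobenius eigenvector argument, and your steps check out --- in particular $\sum_x d_x N_{yx}^z = d_{\overline y\otimes z}=d_y d_z$ is valid because every simple summand of $\overline y\otimes z$ lies in $\cC_{bc}$, and indecomposability guarantees each $\cC_{ab}$ contains a simple $y$ with $d_y\ge 1$, so the cancellation is legitimate. The paper instead argues diagrammatically: it evaluates the two-bigon sum \eqref{eq: two bigons} in two ways using the {\rm (I=H)} relation of Lemma~\ref{lem: graph calc with pairs of nodes}, obtaining $\sum_{x\in \Irr(\cC_{ij})}d_x^2=\sum_{x\in \Irr(\cC_{jk})}d_x^2$ for all $i,j,k$. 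Each approach buys something. The paper's computation simultaneously proves the diagrammatic identity of Lemma~\ref{lem:Sumdxdy}, which is needed later (in Lemma~\ref{lem:Projector} and in the proof of Theorem~\ref{thm: C' --> C_i' is an equivalence of categories}), and it is self-contained: its equalities include the diagonal entries, so it independently re-proves that all corners have equal global dimension, which is why the paper can say the lemma \emph{generalises} \cite[Prop.~5.17]{MR1966524}. Your argument is more elementary (no graphical calculus), but it only establishes constancy of $\Lambda_{ij}$ in the \emph{first} index, and then anchors to the diagonal through the definition of $D(\cC)$ --- a definition which in this paper rests on the cited result of M\"uger that the corners have a common global dimension. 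That reliance is legitimate in context, but it means you use the special case rather than re-prove it; if you want full self-containedness, note that conjugation $x\mapsto \overline x$ gives a dimension-preserving bijection $\Irr(\cC_{ij})\to\Irr(\cC_{ji})$, whence $\Lambda_{ij}=\Lambda_{ji}$, and combined with your first-index constancy this gives $\Lambda_{ii}=\Lambda_{ji}=\Lambda_{ij}=\Lambda_{jj}$ (alternatively, run the symmetric right-multiplication argument).
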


\noindent
We will provide a proof of this lemma in the next section for completeness, and for the convenience of the reader.

\subsection{Graphical calculus for unitary multifusion categories}

Recall that, in the familiar string diagram formalism for tensor categories, objects are denoted by strands and morphisms are denoted by coupons
\cite{MR1113284,MR2767048}. 
There is an analogous string diagram formalism for 2-categories, where objects are denoted by shaded regions, 1-morphisms by strands, and 2-morphisms by coupons \cite[\S8]{MR2767048}\cite[\S2]{MR3342166}.
A multifusion category can be thought of as a 2-category whose objects are the irreducible summands of $1$, and whose morphisms are given by $\Hom(1_i,1_j)=\cC_{ij}$.
As such, we can apply  the graphical calculus for 2-categories to depict objects and morphisms in a multifusion category.
The shadings of the regions denote the various irreducible summands of the unit object:
$$
\tikz[baseline=.1cm]{\draw[fill=\ROneColor, rounded corners=5, very thin, baseline=1cm] (0,0) rectangle (.5,.5);}=1_i
\qquad
\tikz[baseline=.1cm]{\draw[fill=\RTwoColor, rounded corners=5, very thin, baseline=1cm] (0,0) rectangle (.5,.5);}=1_j
\qquad
\tikz[baseline=.1cm]{\draw[fill=\RThreeColor, rounded corners=5, very thin, baseline=1cm] (0,0) rectangle (.5,.5);}=1_k
\qquad
\tikz[baseline=.1cm]{\draw[fill=\RFourColor, rounded corners=5, very thin, baseline=1cm] (0,0) rectangle (.5,.5);}=1_\ell
$$
A line between shading $i$ and shading $j$ indicates an object of $\cC_{ij}$, and a coupon 
$
\begin{tikzpicture}[scale=.8, baseline=-.4cm]
	\fill [\RTwoColor] (0,-.8+.05) rectangle (.5,.2-.05);
	\draw (0,.2-.05) -- (0,-.8+.05);
	\draw[fill=\betacolor] (0,-.3) circle (.05cm);	
	\roundNbox{unshaded}{(0,-.3)}{.22}{.1}{.1}{$\scriptstyle a$};
\end{tikzpicture}
$
is a morphism in that category.

A coupon
$
\begin{tikzpicture}[scale=.8, baseline=-.4cm]
	\fill [\RFourColor] (.5,-.8+.07) -- (.15,-.8+.07) -- (.15,-.3) -- (.18,-.3) -- (.18,.2-.07) -- (.5,.2-.07);
	\fill [\RThreeColor] (-.5,-.8+.07) -- (-.15,-.8+.07) -- (-.15,-.3) -- (-.18,-.3) -- (-.18,.2-.07) -- (-.5,.2-.07);
	\fill [\RTwoColor] (.15,-.8+.07) rectangle (-.15,-.3);
	\fill [\RFourColor] (-.18,.-.3) rectangle (0,.2-.07);
	\fill [\RTwoColor] (.18,.-.3) rectangle (0,.2-.07);
	\draw (.15,-.8+.07) -- (.15,-.4) (-.15,-.4) -- (-.15,-.8+.07);
	\draw (0,.2-.07) -- (0,-.4);
	\draw (.18,.2-.07) -- (.18,-.4)(-.18,.2-.07) -- (-.18,-.4);
	\draw[fill=\betacolor] (0,-.3) circle (.07cm);	
	\roundNbox{unshaded}{(0,-.3)}{.22}{.1}{.1}{$\scriptstyle a$};
\end{tikzpicture}
$
with multiple lines on the bottom and on the top denotes a morphism from a tensor product of objets of $\cC$ (each of them living in some $\cC_{ij}$) to another such tensor product.

Let now $\cC$ be a unitary multifusion category. 
If $x\in \cC_{ij}$, $y\in \cC_{jk}$, and $z\in \cC_{ki}$ are irreducible objects, we let 
$e_\alpha\in\mathrm{Hom}(1,x\otimes y\otimes z)$
and 
$e^\alpha\in\mathrm{Hom}(1,\overline z\otimes \overline y\otimes \overline x)$ denote dual bases, and consider the canonical element
\[
\sqrt{d_xd_yd_z}\cdot
\sum_\alpha e_\alpha\otimes e^\alpha.
\]
As in \cite[\S2.5]{MR3663592}, we use the convention that a pair of colored nodes denotes a labelling by the above canonical element:
\begin{equation}\label{eq:pair of shaded nodes}
\begin{tikzpicture}[baseline=.2cm]
	\fill [\RThreeColor] (.4,.6) -- (.2,.6) -- (0,.3) -- (0,0) -- (.4,0);
	\fill [\RTwoColor] (.2,.6) -- (0,.3) -- (-.2,.6);
	\draw (.2,.6) -- (0,.3) -- (-.2,.6);
	\draw (0,0) -- (0,.3);
	\draw[fill=\betacolor] (0,.3) circle (.05cm);	
	\node at (-.2,.8) {\scriptsize{$x$}};
	\node at (.2,.8) {\scriptsize{$y$}};
	\node at (0,-.2) {\scriptsize{$z$}};
\end{tikzpicture}
\,\,
\otimes
\begin{tikzpicture}[baseline=-.4cm]
	\fill [\RThreeColor] (.4,-.6) -- (.2,-.6) -- (0,-.3) -- (0,0) -- (.4,0);
	\fill [\RTwoColor] (.2,-.6) -- (0,-.3) -- (-.2,-.6);
	\draw (.2,-.6) -- (0,-.3) -- (-.2,-.6);
	\draw (0,0) -- (0,-.3);
	\draw[fill=\betacolor] (0,-.3) circle (.05cm);	
	\node at (-.2,-.8) {\scriptsize{$x$}};
	\node at (.2,-.8) {\scriptsize{$y$}};
	\node at (0,.2) {\scriptsize{$z$}};
\end{tikzpicture}
:=
\sqrt{d_xd_yd_z}\cdot
\sum_\alpha\,\,\,
\begin{tikzpicture}[baseline=.2cm]
	\fill [\RThreeColor] (.5,.8) -- (.15,.8) -- (.15,.3) -- (0,.3) -- (0,-.2) -- (.5,-.2);
	\fill [\RTwoColor] (.15,.8) -- (.15,.3) -- (-.15,.3) -- (-.15,.8);
	\draw (.15,.8) -- (.15,.4) (-.15,.4) -- (-.15,.8);
	\draw (0,-.2) -- (0,.4);
	\draw[fill=\betacolor] (0,.3) circle (.05cm);	
	\node at (-.15,1) {\scriptsize{$x$}};
	\node at (.15,1) {\scriptsize{$y$}};
	\node at (0,-.4) {\scriptsize{$z$}};
	\roundNbox{unshaded}{(0,.3)}{.3}{0}{0}{$e_\alpha$};
\end{tikzpicture}
\,\otimes\,
\begin{tikzpicture}[baseline=-.4cm]
	\fill [\RThreeColor] (.5,-.8) -- (.15,-.8) -- (.15,-.3) -- (0,-.3) -- (0,.2) -- (.5,.2);
	\fill [\RTwoColor] (.15,-.8) -- (.15,-.3) -- (-.15,-.3) -- (-.15,-.8);
	\draw (.15,-.8) -- (.15,-.4) (-.15,-.4) -- (-.15,-.8);
	\draw (0,.2) -- (0,-.4);
	\draw[fill=\betacolor] (0,-.3) circle (.05cm);	
	\node at (-.15,-1) {\scriptsize{$x$}};
	\node at (.15,-1) {\scriptsize{$y$}};
	\node at (0,.4) {\scriptsize{$z$}};
	\roundNbox{unshaded}{(0,-.3)}{.3}{0}{0}{$e^\alpha$};
\end{tikzpicture}
\end{equation}

The following lemma is identical to \cite[Lemma 2.16]{MR3663592}.
We include it without proof.
Let $N_{x,y}^z:=\dim(\Hom(x\otimes y,z))$.

\begin{lem}\label{lem: graph calc with pairs of nodes}
Let $\tikz[baseline=.1cm]{\draw[fill=\ROneColor, rounded corners=5, very thin, baseline=1cm] (0,0) rectangle (.5,.5);}=1_i$,
$\tikz[baseline=.1cm]{\draw[fill=\RTwoColor, rounded corners=5, very thin, baseline=1cm] (0,0) rectangle (.5,.5);}=1_j$,
$\tikz[baseline=.1cm]{\draw[fill=\RThreeColor, rounded corners=5, very thin, baseline=1cm] (0,0) rectangle (.5,.5);}=1_k$,
$\tikz[baseline=.1cm]{\draw[fill=\RFourColor, rounded corners=5, very thin, baseline=1cm] (0,0) rectangle (.5,.5);}=1_\ell$.
Then the following relations hold:\medskip\\
\centerline{\begin{tabular}{rclc}
\begin{tikzpicture}[baseline=-.1cm]
	\fill[fill=white] (-.3,-.6) rectangle (0,.6);
	\fill[fill=\RThreeColor] (0,-.6) rectangle (.5,.6);
	\fill[fill=\RTwoColor] (0,-.3) .. controls ++(45:.2cm) and ++(-45:.2cm) .. (0,.3) .. controls ++(225:.2cm) and ++(235:.2cm) .. (0,-.3);
	\draw (0,-.6) -- (0,-.3);
	\draw (0,-.3) .. controls ++(45:.2cm) and ++(-45:.2cm) .. (0,.3);
	\draw (0,-.3) .. controls ++(135:.2cm) and ++(225:.2cm) .. (0,.3);
	\draw (0,.6) -- (0,.3);
	\draw[fill=\betacolor] (0,-.3) circle (.05cm);
	\draw[fill=\betacolor] (0,.3) circle (.05cm);	
	\node at (0,-.8) {\scriptsize{$z$}};
	\node at (.3,0) {\scriptsize{$y$}};
	\node at (-.3,0) {\scriptsize{$x$}};
	\node at (0,.8) {\scriptsize{$z$}};
\end{tikzpicture}
&$=$&
$\displaystyle \sqrt{d_xd_yd_z^{-1}}\cdot N_{x,y}^z$
\,
\begin{tikzpicture}[baseline=-.1cm]
	\fill[fill=\RThreeColor] (-.2,-.6) rectangle (.1,.6);
	\draw (-.2,-.6) -- (-.2,.6);
	\node at (-.2,-.8) {\scriptsize{$z$}};
\end{tikzpicture}
&
\rm (Bigon 1)
\\
\begin{tikzpicture}[baseline=-.1cm]
	\fill[fill=white] (-.3,-.6) rectangle (0,.6);
	\fill[fill=\RThreeColor] (0,-.6) rectangle (.5,.6);
	\fill[fill=\RTwoColor] (0,-.3) .. controls ++(45:.2cm) and ++(-45:.2cm) .. (0,.3) .. controls ++(225:.2cm) and ++(235:.2cm) .. (0,-.3);
	\draw (0,-.6) -- (0,-.3);
	\draw (0,-.3) .. controls ++(45:.2cm) and ++(-45:.2cm) .. (0,.3);
	\draw (0,-.3) .. controls ++(135:.2cm) and ++(225:.2cm) .. (0,.3);
	\draw (0,.6) -- (0,.3);
	\draw[fill=\betacolor] (0,-.3) circle (.05cm);
	\draw[fill=\alphacolor] (0,.3) circle (.05cm);	
	\node at (0,-.8) {\scriptsize{$z$}};
	\node at (.3,0) {\scriptsize{$y$}};
	\node at (-.3,0) {\scriptsize{$x$}};
	\node at (0,.8) {\scriptsize{$z$}};
\end{tikzpicture}
$\otimes$
\begin{tikzpicture}[baseline=.2cm]
	\fill [\RThreeColor] (.4,.6) -- (.2,.6) -- (0,.3) -- (0,0) -- (.4,0);
	\fill [\RTwoColor] (.2,.6) -- (0,.3) -- (-.2,.6);
	\draw (.2,.6) -- (0,.3) -- (-.2,.6);
	\draw (0,0) -- (0,.3);
	\draw[fill=\alphacolor] (0,.3) circle (.05cm);	
	\node at (-.2,.8) {\scriptsize{$x$}};
	\node at (.2,.8) {\scriptsize{$y$}};
	\node at (0,-.2) {\scriptsize{$z$}};
\end{tikzpicture}
$\otimes$
\begin{tikzpicture}[baseline=-.4cm]
	\fill [\RThreeColor] (.4,-.6) -- (.2,-.6) -- (0,-.3) -- (0,0) -- (.4,0);
	\fill [\RTwoColor] (.2,-.6) -- (0,-.3) -- (-.2,-.6);
	\draw (.2,-.6) -- (0,-.3) -- (-.2,-.6);
	\draw (0,0) -- (0,-.3);
	\draw[fill=\betacolor] (0,-.3) circle (.05cm);	
	\node at (-.2,-.8) {\scriptsize{$x$}};
	\node at (.2,-.8) {\scriptsize{$y$}};
	\node at (0,.2) {\scriptsize{$z$}};
\end{tikzpicture}
&$=$&
$\displaystyle \sqrt{d_xd_yd_z^{-1}}\cdot$
\begin{tikzpicture}[baseline=-.1cm]
	\draw (-.2,-.6) -- (-.2,.6);
	\node at (-.2,-.8) {\scriptsize{$z$}};
\end{tikzpicture}
$\otimes$
\begin{tikzpicture}[baseline=.2cm]
	\fill [\RThreeColor] (.4,.6) -- (.2,.6) -- (0,.3) -- (0,0) -- (.4,0);
	\fill [\RTwoColor] (.2,.6) -- (0,.3) -- (-.2,.6);
	\draw (.2,.6) -- (0,.3) -- (-.2,.6);
	\draw (0,0) -- (0,.3);
	\draw[fill=\betacolor] (0,.3) circle (.05cm);	
	\node at (-.2,.8) {\scriptsize{$x$}};
	\node at (.2,.8) {\scriptsize{$y$}};
	\node at (0,-.2) {\scriptsize{$z$}};
\end{tikzpicture}
$\otimes$
\begin{tikzpicture}[baseline=-.4cm]
	\fill [\RThreeColor] (.4,-.6) -- (.2,-.6) -- (0,-.3) -- (0,0) -- (.4,0);
	\fill [\RTwoColor] (.2,-.6) -- (0,-.3) -- (-.2,-.6);
	\draw (.2,-.6) -- (0,-.3) -- (-.2,-.6);
	\draw (0,0) -- (0,-.3);
	\draw[fill=\betacolor] (0,-.3) circle (.05cm);	
	\node at (-.2,-.8) {\scriptsize{$x$}};
	\node at (.2,-.8) {\scriptsize{$y$}};
	\node at (0,.2) {\scriptsize{$z$}};
\end{tikzpicture}\qquad\quad
&
\rm (Bigon 2)
\end{tabular}}				
\centerline{\begin{tabular}{rclc}	
$\displaystyle \sum_{z\in \Irr(\cC_{ik})}
\sqrt{d_{z}}$
\begin{tikzpicture}[baseline=-.1cm]
	\fill [\RTwoColor] (.2,.6) -- (0,.3) -- (-.2,.6);
	\fill [\RTwoColor] (.2,-.6) -- (0,-.3) -- (-.2,-.6);
	\fill [\RThreeColor] (.2,-.6) -- (0,-.3) -- (0,.3) -- (.2,.6) -- (.4,.6) -- (.4,-.6);
	\draw (.2,-.6) -- (0,-.3) -- (-.2,-.6);
	\draw (.2,.6) -- (0,.3) -- (-.2,.6);
	\draw (0,-.3) -- (0,.3);
	\draw[fill=\betacolor] (0,-.3) circle (.05cm);
	\draw[fill=\betacolor] (0,.3) circle (.05cm);	
	\node at (-.2,-.8) {\scriptsize{$x$}};
	\node at (.2,-.8) {\scriptsize{$y$}};
	\node at (-.2,.8) {\scriptsize{$x$}};
	\node at (.2,.8) {\scriptsize{$y$}};
	\node at (.2,0) {\scriptsize{$z$}};
\end{tikzpicture}
&$=$&
$\displaystyle \sqrt{d_xd_y}\cdot$
\begin{tikzpicture}[baseline=-.1cm]
	\fill [\RTwoColor] (-.2,-.6) rectangle (.2,.6);
	\fill [\RThreeColor] (.2,-.6) rectangle (.5,.6);
	\draw (.2,-.6) -- (.2,.6);
	\draw (-.2,-.6) -- (-.2,.6);
	\node at (-.2,-.8) {\scriptsize{$x$}};
	\node at (.2,-.8) {\scriptsize{$y$}};
\end{tikzpicture}
&
\rm (Fusion)
\\
$\displaystyle \sum_{v\in \Irr(\cC_{i\ell})}$
\begin{tikzpicture}[baseline=-.1cm]
	\fill [\RTwoColor] (.2,-.6) -- (0,-.3) -- (-.2,-.6);
	\fill [\RThreeColor] (.2,.6) -- (0,.3) -- (-.2,.6);
	\fill [\RFourColor] (.2,-.6) -- (0,-.3) -- (0,.3) -- (.2,.6) -- (.4,.6) -- (.4,-.6);
	\draw (.2,-.6) -- (0,-.3) -- (-.2,-.6);
	\draw (.2,.6) -- (0,.3) -- (-.2,.6);
	\draw (0,-.3) -- (0,.3);
	\draw[fill=\betacolor] (0,-.3) circle (.05cm);
	\draw[fill=\alphacolor] (0,.3) circle (.05cm);
	\node at (-.2,-.8) {\scriptsize{$w$}};
	\node at (.2,-.8) {\scriptsize{$x$}};
	\node at (-.2,.8) {\scriptsize{$y$}};
	\node at (.2,.8) {\scriptsize{$z$}};
	\node at (.2,0) {\scriptsize{$v$}};
\end{tikzpicture}
$\otimes$
\begin{tikzpicture}[baseline=-.1cm]
	\fill [\RTwoColor] (.2,-.6) -- (0,-.3) -- (-.2,-.6);
	\fill [\RThreeColor] (.2,.6) -- (0,.3) -- (-.2,.6);
	\fill [\RFourColor] (-.2,-.6) -- (0,-.3) -- (0,.3) -- (-.2,.6) -- (-.4,.6) -- (-.4,-.6);
	\draw (.2,-.6) -- (0,-.3) -- (-.2,-.6);
	\draw (.2,.6) -- (0,.3) -- (-.2,.6);
	\draw (0,-.3) -- (0,.3);
	\draw[fill=\betacolor] (0,-.3) circle (.05cm);
	\draw[fill=\alphacolor] (0,.3) circle (.05cm);
	\node at (-.2,-.8) {\scriptsize{$\overline x$}};
	\node at (.2,-.8) {\scriptsize{$\overline w$}};
	\node at (-.2,.8) {\scriptsize{$\overline z$}};
	\node at (.2,.8) {\scriptsize{$\overline y$}};
	\node at (.2,0) {\scriptsize{$\overline v$}};
\end{tikzpicture}
&$=$&
$\displaystyle \sum_{u\in \Irr(\cC_{jk})}$
\begin{tikzpicture}[baseline=-.1cm, rotate=90]
	\fill [\RTwoColor] (-.4,-.5) -- (-.03,-.3) -- (.03,.3) -- (-.4,.5);
	\fill [\RThreeColor] (.4,-.5) -- (-.03,-.3) -- (.03,.3) -- (.4,.5);
	\fill [\RFourColor] (.4,-.7) -- (.4,-.5) -- (0,-.3) -- (-.4,-.5) -- (-.4,-.7);
	\draw (.4,-.5) -- (-.03,-.3) -- (-.4,-.5);
	\draw (.4,.5) -- (.03,.3) -- (-.4,.5);
	\draw (-.03,-.3) -- (.03,.3);
	\draw[fill=\betacolor] (-.03,-.3) circle (.05cm);
	\draw[fill=\alphacolor] (.03,.3) circle (.05cm);
	\node at (-.6,-.5) {\scriptsize{$x$}};
	\node at (.6,-.5) {\scriptsize{$z$}};
	\node at (-.6,.5) {\scriptsize{$w$}};
	\node at (.6,.5) {\scriptsize{$y$}};
	\node at (.2,0) {\scriptsize{$u$}};
\end{tikzpicture}
$\otimes$
\begin{tikzpicture}[baseline=-.1cm, rotate=90]
	\fill [\RTwoColor] (-.4,-.5) -- (.03,-.3) -- (-.03,.3) -- (-.4,.5);
	\fill [\RThreeColor] (.4,-.5) -- (.03,-.3) -- (-.03,.3) -- (.4,.5);
	\fill [\RFourColor] (.4,.7) -- (.4,.5) -- (0,.3) -- (-.4,.5) -- (-.4,.7);
	\draw (.4,-.5) -- (.03,-.3) -- (-.4,-.5);
	\draw (.4,.5) -- (-.03,.3) -- (-.4,.5);
	\draw (.03,-.3) -- (-.03,.3);
	\draw[fill=\alphacolor] (.03,-.3) circle (.05cm);
	\draw[fill=\betacolor] (-.03,.3) circle (.05cm);
	\node at (-.6,-.5) {\scriptsize{$\overline w$}};
	\node at (.6,-.5) {\scriptsize{$\overline y$}};
	\node at (-.6,.5) {\scriptsize{$\overline x$}};
	\node at (.6,.5) {\scriptsize{$\overline z$}};
	\node at (.2,0) {\scriptsize{$\overline u$}};
\end{tikzpicture}\qquad\quad\;\!
&
\rm (I=H)
\end{tabular}}
\end{lem}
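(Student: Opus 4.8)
The proof is a sequence of diagrammatic computations, and I would organise it around four inputs: the definition of the canonical element encoded by the pair-of-nodes convention \eqref{eq:pair of shaded nodes}; the isotopy invariance of diagrams, which is not formal but rests on the balancing condition \eqref{eq: duality equations (balancing condition)} of Lemma~\ref{lem : BDH Theorem 4.12 and Theorem 4.22}; Schur's lemma together with semisimplicity; and the loop normalisation $d_x$ recorded in \eqref{eq: d is the number which satisfies...}. Throughout, two nodes carrying the same colour are \emph{partners}: they jointly stand for $\sqrt{d_xd_yd_z}\sum_\alpha e_\alpha\otimes e^\alpha$, with $e_\alpha$ placed at one node and the dual basis vector $e^\alpha$ at the other, the sum being over a basis of the relevant trivalent vertex space. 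I would prove the two bigon relations first (both are Schur-type arguments), then the two completeness relations, following the standard treatment in \cite{MR3342166} and \cite[\S2.5]{MR3663592}.

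For \textrm{(Bigon 1)} the two nodes are both coloured $\beta$, hence form a single partner pair; substituting the canonical element leaves $\sqrt{d_xd_yd_z}\sum_\alpha$ times the diagram in which the $x$- and $y$-strands of $e_\alpha$ are closed against those of $e^\alpha$. This closed loop is an endomorphism of the irreducible $z$, so by Schur it is a scalar multiple of $\id_z$; the scalar is the pairing of the dual bases $\{e_\alpha\},\{e^\alpha\}$, contributing $1$ per basis vector and hence $N_{x,y}^z=\dim\Hom(x\otimes y,z)$ in total, while the loop value of $z$ and the prefactor combine to $\sqrt{d_xd_yd_z^{-1}}$. Relation \textrm{(Bigon 2)} is a refinement: here the top of the bigon is coloured $\alpha$ and the bottom $\beta$, so the convention produces a \emph{double} sum over the $\alpha$- and $\beta$-indices. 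The bigon is again an endomorphism of the irreducible $z$, and by Schur together with the orthogonality of the dual bases it is proportional to a Kronecker delta between the two indices times $\id_z$; this collapses the double sum to a single diagonal sum, which is exactly the canonical element of the remaining ($\beta$) colour. Thus the surviving cup and cap become $\beta$-partners, and tracking constants produces the stated factor $\sqrt{d_xd_yd_z^{-1}}$.

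Relation \textrm{(Fusion)} I would derive from the completeness relation supplied by semisimplicity, namely that $\id_{x\otimes y}$ equals the sum over $z\in\Irr(\cC_{ik})$ and over a basis of $\Hom(x\otimes y,z)$ of $(\mathrm{split})\circ(\mathrm{fuse})$; rewriting each matched $\beta$-pair as the canonical element and balancing the $\sqrt{d_z}$ on the left against the $\sqrt{d_xd_y}$ on the right turns this into the asserted identity. Finally, \textrm{(I=H)} is the recoupling identity: I would resolve the identity endomorphism of the relevant four-strand configuration by applying \textrm{(Fusion)} in the two distinct fusion channels — summing the intermediate object over $v\in\Irr(\cC_{i\ell})$ on one side and over $u\in\Irr(\cC_{jk})$ on the other — and then equate the two resolutions. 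That the two channel sums agree follows from associativity (the pentagon axiom) together with the isotopy invariance already invoked, after which the normalisation constants match term by term.

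The conceptual ingredients here — Schur's lemma, the completeness relation, and sphericality — are routine, so the real work lies entirely in the bookkeeping. The step I expect to be most delicate is justifying that every isotopy performed (closing loops, sliding vertices past cups and caps, rotating partner pairs) is legitimate: this is precisely the rotation-invariance of the canonical element, which is guaranteed by the balancing condition \eqref{eq: duality equations (balancing condition)} of Lemma~\ref{lem : BDH Theorem 4.12 and Theorem 4.22} and by nothing weaker. Keeping the $\sqrt{d}$-factors consistent through these moves, and confirming that the dual-basis pairing is the one induced by closing strands, is where the constants $\sqrt{d_xd_yd_z^{-1}}$, $\sqrt{d_z}$ and $\sqrt{d_xd_y}$ must be pinned down with care; for \textrm{(I=H)} the additional subtlety is matching the $v$- and $u$-channel decompositions term by term.
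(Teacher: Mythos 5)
Your treatment of the first three relations is correct and is the standard argument: (Bigon 1) and (Bigon 2) follow from Schur's lemma together with the dual-basis pairing (with the constants working out exactly as you say), and (Fusion) is the semisimplicity completeness relation rewritten in the normalisation of \eqref{eq:pair of shaded nodes}. Note that the paper itself states this lemma \emph{without proof}, deferring to \cite[Lem.~2.16]{MR3663592}, so the comparison here is with the standard argument rather than with an in-paper one.

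The genuine gap is in your proof of (I=H). Neither side of (I=H) is an endomorphism of an object of $\cC$: both sides are elements of a tensor product of two Hom-spaces, say $V\otimes \overline{V}$ with $V=\Hom_\cC(w\otimes x,\,y\otimes z)$, and, crucially, the colored nodes are paired \emph{across} the two tensor factors, not within a single diagram. This is the (Bigon 2)-type situation, not the (Bigon 1)-type one. Consequently one cannot produce either side by ``applying (Fusion) in a fusion channel'': the relation (Fusion) yields sums in which the two partner nodes sit inside one and the same diagram, so it simply does not apply to the configuration at hand, and there is no identity endomorphism of a ``four-strand object'' whose two resolutions could be equated. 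Moreover, the claim that ``the normalisation constants match term by term'' cannot be correct even in principle: the individual $v$-terms and $u$-terms are genuinely different elements of $V\otimes\overline{V}$, related by a non-trivial unitary change of basis (the $6j$/$F$-symbols); only the aggregate sums coincide. The pentagon axiom is also not the operative ingredient --- it is already subsumed in the consistency of the graphical calculus.

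The correct argument is the one you implicitly use for (Bigon 2): basis-independence of the canonical element. With the normalisations of \eqref{eq:pair of shaded nodes}, the I-shaped diagrams (indexed by $v\in\Irr(\cC_{i\ell})$ and bases of the multiplicity spaces) form a basis of $V$ whose conjugate diagrams form the dual basis with respect to the non-degenerate pairing given by closing diagrams; verifying this duality is a (Bigon 1)-type computation and uses sphericality, i.e.\ the balancing condition \eqref{eq: duality equations (balancing condition)} of Lemma~\ref{lem : BDH Theorem 4.12 and Theorem 4.22}. The same holds for the H-shaped diagrams indexed by $u\in\Irr(\cC_{jk})$. Hence each side of (I=H) equals the canonical element $\sum_i f_i\otimes f^i\in V\otimes\overline{V}$ (equivalently, the element corresponding to $\id_V$ under the identification $V\otimes\overline{V}\cong\End(V)$ induced by the pairing), which is independent of the choice of basis; the two sides are therefore equal. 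Repairing your proof requires replacing the ``two resolutions of the identity via (Fusion)'' step by this dual-basis/canonical-element argument.
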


The next lemma is a version of \cite[Lemma 2.17]{MR3663592}.

\begin{lem}
\label{lem:Sumdxdy}
Let $\tikz[baseline=.1cm]{\draw[fill=\ROneColor, rounded corners=5, very thin, baseline=1cm] (0,0) rectangle (.5,.5);}=1_i$,
$\tikz[baseline=.1cm]{\draw[fill=\RTwoColor, rounded corners=5, very thin, baseline=1cm] (0,0) rectangle (.5,.5);}=1_j$,
$\tikz[baseline=.1cm]{\draw[fill=\RThreeColor, rounded corners=5, very thin, baseline=1cm] (0,0) rectangle (.5,.5);}=1_k$.
Then the following relation holds:
\begin{equation}\label{eq: two bigons}
\sum_{\substack{a\in\Irr(\cC_{ij})\\b\in\Irr(\cC_{jk})}}\begin{tikzpicture}[baseline=-.1cm]
	\fill[fill=\RThreeColor] (0,-.6) -- (0,.6) -- (.5,.6) -- (.5,-.6);
	\fill[fill=\RTwoColor] (0,-.3) .. controls ++(45:.2cm) and ++(-45:.2cm) .. (0,.3) .. controls ++(-135:.2cm) and ++(135:.2cm) .. (0,-.3);
	\draw (0,-.6) -- (0,-.3);
	\draw (0,-.3) .. controls ++(45:.2cm) and ++(-45:.2cm) .. (0,.3);
	\draw (0,-.3) .. controls ++(135:.2cm) and ++(225:.2cm) .. (0,.3);
	\draw (0,.6) -- (0,.3);
	\draw[fill=\betacolor] (0,-.3) circle (.05cm);
	\draw[fill=\alphacolor] (0,.3) circle (.05cm);	
	\node at (0,-.8) {\scriptsize{$x$}};
	\node at (.27,.02) {\scriptsize{$b$}};
	\node at (-.3,0) {\scriptsize{$a$}};
	\node at (0,.8) {\scriptsize{$y$}};
\end{tikzpicture}
\otimes
\begin{tikzpicture}[baseline=-.1cm]
	\fill[fill=\RThreeColor] (0,-.6) -- (0,.6) -- (-.5,.6) -- (-.5,-.6);
	\fill[fill=\RTwoColor] (0,-.3) .. controls ++(45:.2cm) and ++(-45:.2cm) .. (0,.3) .. controls ++(-135:.2cm) and ++(135:.2cm) .. (0,-.3);
	\draw (0,-.6) -- (0,-.3);
	\draw (0,-.3) .. controls ++(45:.2cm) and ++(-45:.2cm) .. (0,.3);
	\draw (0,-.3) .. controls ++(135:.2cm) and ++(225:.2cm) .. (0,.3);
	\draw (0,.6) -- (0,.3);
	\draw[fill=\betacolor] (0,-.3) circle (.05cm);
	\draw[fill=\alphacolor] (0,.3) circle (.05cm);	
	\node at (0,-.8) {\scriptsize{$\overline x$}};
	\node at (.3,0) {\scriptsize{$\overline a$}};
	\node at (-.3,.05) {\scriptsize{$\overline b$}};
	\node at (0,.83) {\scriptsize{$\overline y$}};
\end{tikzpicture}
\,=\,\,
D
\cdot\delta_{x,y}
\begin{tikzpicture}[baseline=-.1cm]
	\fill[fill=\RThreeColor] (-.2,-.6) -- (-.2,.6) -- (.1,.6) -- (.1,-.6);
	\draw (-.2,-.6) -- (-.2,.6);
	\node at (-.2,-.8) {\scriptsize{$x$}};
\end{tikzpicture}
\otimes
\begin{tikzpicture}[baseline=-.1cm]
	\fill[fill=\RThreeColor] (-.2,-.6) -- (-.2,.6) -- (-.5,.6) -- (-.5,-.6);
	\draw (-.2,-.6) -- (-.2,.6);
	\node at (-.2,-.8) {\scriptsize{$\overline x$}};
\end{tikzpicture}
\end{equation}
\end{lem}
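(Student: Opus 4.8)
The plan is to read \eqref{eq: two bigons} as an identity in $\Hom(x,y)\otimes\Hom(\overline x,\overline y)$, where $x,y\in\Irr(\cC_{ik})$, and to reduce it to a scalar computation by Schur's lemma. Since $x$ and $y$ are irreducible, $\Hom(x,y)=\delta_{x,y}\,\bbC\,\id_x$ and $\Hom(\overline x,\overline y)=\delta_{x,y}\,\bbC\,\id_{\overline x}$; hence the left-hand side of \eqref{eq: two bigons} automatically vanishes unless $x\cong y$ (this produces the factor $\delta_{x,y}$), and for $x=y$ it must equal $\lambda\cdot(\id_x\otimes\id_{\overline x})$ for some scalar $\lambda$. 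It then remains only to show $\lambda=D$.

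To compute $\lambda$, I would expand both pairs of colored nodes using the convention \eqref{eq:pair of shaded nodes}. For fixed $a\in\Irr(\cC_{ij})$ and $b\in\Irr(\cC_{jk})$, the bottom node-pair contributes a factor $\sqrt{d_xd_ad_b}$ together with a sum over dual bases of $\Hom(x,a\otimes b)$, and the top node-pair contributes $\sqrt{d_yd_ad_b}$ together with a sum over dual bases of $\Hom(a\otimes b,y)$. The key point is that the second tensor factor is built from the \emph{dual} bases, so the pairing between the two factors forces the bottom and top basis indices to agree; the resulting double multiplicity-sum collapses to a single sum of length $N_{a,b}^x=\dim\Hom(x,a\otimes b)$. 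This is the paired analogue of the Bigon relations of Lemma~\ref{lem: graph calc with pairs of nodes}, and it shows that the $(a,b)$-summand equals $\tfrac{d_ad_b}{d_x}\,N_{a,b}^x\cdot(\id_x\otimes\id_{\overline x})$.

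Summing over $a$ and $b$ then gives $\lambda=\sum_{a\in\Irr(\cC_{ij}),\,b\in\Irr(\cC_{jk})}\tfrac{d_ad_b}{d_x}\,N_{a,b}^x$. Performing the sum over $b$ first and using additivity and multiplicativity of the dimension, namely $\sum_{b}d_b\,N_{a,b}^x=d_{\overline a\otimes x}=d_ad_x$, this reduces to $\tfrac1{d_x}\sum_{a}d_a\,(d_ad_x)=\sum_{a\in\Irr(\cC_{ij})}d_a^2$, which is exactly $D$ by Lemma~\ref{lem: Dim(C_ij)}. This yields $\lambda=D$ and completes the proof.

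The genuine content is thus only two inputs---Schur's lemma (giving $\delta_{x,y}$ and reducing to a scalar) and the dimension identity of Lemma~\ref{lem: Dim(C_ij)}---while the main thing to get right is the normalization bookkeeping: one must track the $\sqrt{d_xd_ad_b}$ factors coming from \eqref{eq:pair of shaded nodes} and, crucially, check that the dual-basis pairing collapses the double sum over multiplicities to a single factor $N_{a,b}^x$ (first power). It is precisely this first power, rather than $(N_{a,b}^x)^2$, that makes the final sum equal $D$ rather than a larger quantity.
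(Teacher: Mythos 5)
Your proof is correct, but it takes a genuinely different route from the paper's. The paper proves Lemma \ref{lem:Sumdxdy} and Lemma \ref{lem: Dim(C_ij)} \emph{simultaneously}: it applies the (I=H) relation of Lemma \ref{lem: graph calc with pairs of nodes} to the paired bigons, which trades the internal edge $b$ for a new edge (ranging over $\Irr(\cC_i)$) connecting the strand to a closed loop labelled $a$; since a closed loop with one trivalent vertex on it is a morphism between $b$ and the unit $1_i$, Schur's lemma kills every term except $b\cong 1_i$, and the surviving terms are loop values contributing $\sum_{a\in\Irr(\cC_{ij})}d_a^2$. Running the same argument with the roles of $a$ and $b$ exchanged gives $\sum_{b\in\Irr(\cC_{jk})}d_b^2$, and comparing the two answers (plus indecomposability) is precisely how the paper proves Lemma \ref{lem: Dim(C_ij)} — it is an output of the computation, not an input. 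You instead expand the canonical elements in dual bases and reduce everything to fusion-ring arithmetic. Your bookkeeping is right: the $(a,b)$-summand is indeed $\tfrac{d_ad_b}{d_x}N_{a,b}^x(\id_x\otimes\id_{\overline x})$, where the collapse to the \emph{first} power of $N_{a,b}^x$ is the statement that $\sum_{\alpha,\beta}|c_{\beta\alpha}|^2=N_{a,b}^x/d_x^2$ for the matrix of scalars $f_\beta\circ e_\alpha=c_{\beta\alpha}\id_x$ (clearest if one takes the top dual basis to consist of the adjoints of the bottom one, making $c_{\beta\alpha}$ proportional to $\delta_{\alpha\beta}$; the sum is basis-independent), and $\sum_b d_bN_{a,b}^x=d_{\overline a\otimes x}=d_ad_x$ is a correct use of Frobenius reciprocity and multiplicativity of dimension. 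The one caveat: you quote Lemma \ref{lem: Dim(C_ij)}, whose only proof \emph{inside the paper} is this very joint computation, so your argument as written leans on that lemma's external citation rather than being self-contained. This is easily repaired from within your own proof: summing over $a$ first instead of $b$ gives $\lambda=\sum_{b\in\Irr(\cC_{jk})}d_b^2$, so your computation also establishes $\sum_{a\in\Irr(\cC_{ij})}d_a^2=\sum_{b\in\Irr(\cC_{jk})}d_b^2$, which together with indecomposability reproves Lemma \ref{lem: Dim(C_ij)}. With that one remark added, your more elementary argument (Schur plus dual bases, no I=H needed) does everything the paper's graphical proof does.
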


\begin{proof}[Proof of Lemma \ref{lem: Dim(C_ij)} and of Lemma \ref{lem:Sumdxdy}]

We use the I=H relation to rewrite the left hand side of (\ref{eq: two bigons}) as
\[
\sum_{\substack{a\in\Irr(\cC_{ij})\\b\in\Irr(\cC_{i})}}
\begin{tikzpicture}[baseline=-.1cm]
	\fill[fill=\RThreeColor] (0,-.5) rectangle (.25,.5);
	\draw (0,-.5) -- (0,.5) (0,0) -- (-.4,0);
	\draw[fill=\RTwoColor] (-.4,0) .. controls ++(180+50:.8cm) and ++(180-50:.8cm) .. (-.4,0);
	\draw[fill=\betacolor] (0,0) circle (.05cm);
	\draw[fill=\alphacolor] (-.4,0) circle (.05cm);	
	\node at (0,-.7) {\scriptsize{$x$}};
	\node at (-.21,.19) {\scriptsize{$b$}};
	\node at (-.95,0) {\scriptsize{$a$}};
	\node at (0,.7) {\scriptsize{$y$}};
\end{tikzpicture}
\otimes
\begin{tikzpicture}[baseline=-.1cm, xscale=-1]
	\fill[fill=\RThreeColor] (0,-.5) rectangle (.25,.5);
	\draw (0,-.5) -- (0,.5) (0,0) -- (-.4,0);
	\draw[fill=\RTwoColor] (-.4,0) .. controls ++(180+50:.8cm) and ++(180-50:.8cm) .. (-.4,0);
	\draw[fill=\betacolor] (0,0) circle (.05cm);
	\draw[fill=\alphacolor] (-.4,0) circle (.05cm);	
	\node at (0,-.7) {\scriptsize{$\overline x$}};
	\node at (-.21,.19+.02) {\scriptsize{$\overline b$}};
	\node at (-.95,0) {\scriptsize{$\overline a$}};
	\node at (0,.73) {\scriptsize{$\overline y$}};
\end{tikzpicture}
\]
The only terms which contribute to the sum are the ones with $b = 1_i$. We are left with
\[
\delta_{x,y}\sum_{a\in\Irr(\cC_{ij})}
\begin{tikzpicture}[baseline=-.1cm]
	\draw[fill=\RTwoColor] circle(.2);
	\node at (0,-.39) {\scriptsize{$a$}};
\end{tikzpicture}
\cdot
\begin{tikzpicture}[baseline=-.1cm]
	\draw[fill=\RTwoColor] circle(.2);
	\node at (0,-.4) {\scriptsize{$\overline a$}};
\end{tikzpicture}
\cdot\!
\begin{tikzpicture}[baseline=-.1cm]
	\fill[fill=\RThreeColor] (0,-.5) rectangle (.25,.5);
	\draw (0,-.5) -- (0,.5);
	\node at (0,-.8) {\scriptsize{$x$}};
\end{tikzpicture}
\otimes
\begin{tikzpicture}[baseline=-.1cm]
	\fill[fill=\RThreeColor] (0,-.5) rectangle (-.25,.5);
	\draw (0,-.5) -- (0,.5);
	\node at (0,-.8) {\scriptsize{$\overline x$}};
\end{tikzpicture}
\,=\,\,
\delta_{x,y}\Big(\sum_{a\in \Irr(\cC_{ij})} d_a^2\Big)\cdot\!
\begin{tikzpicture}[baseline=-.1cm]
	\fill[fill=\RThreeColor] (0,-.5) rectangle (.25,.5);
	\draw (0,-.5) -- (0,.5);
	\node at (0,-.8) {\scriptsize{$x$}};
\end{tikzpicture}
\otimes
\begin{tikzpicture}[baseline=-.1cm]
	\fill[fill=\RThreeColor] (0,-.5) rectangle (-.25,.5);
	\draw (0,-.5) -- (0,.5);
	\node at (0,-.8) {\scriptsize{$\overline x$}};
\end{tikzpicture}.
\]
By symmetry, the left hand side of (\ref{eq: two bigons}) is also equal to
\[
\delta_{x,y}\Big(\sum_{b\in \Irr(\cC_{jk})} d_b^2\Big)\cdot\!
\begin{tikzpicture}[baseline=-.1cm]
	\fill[fill=\RThreeColor] (0,-.6) rectangle (.25,.6);
	\draw (0,-.6) -- (0,.6);
	\node at (0,-.8) {\scriptsize{$x$}};
\end{tikzpicture}
\otimes
\begin{tikzpicture}[baseline=-.1cm]
	\fill[fill=\RThreeColor] (0,-.6) rectangle (-.25,.6);
	\draw (0,-.6) -- (0,.6);
	\node at (0,-.8) {\scriptsize{$\overline x$}};
\end{tikzpicture}.
\]
It follows that $\sum_{x\in \Irr(\cC_{ij})}d_x^2=\sum_{x\in \Irr(\cC_{jk})}d_x^2$.

For every $i,j,k,\ell$, we conclude (using that $\cC$ is indecomposable, i.e., that none of the $\cC_{ij}$ is zero) that
\[
\sum_{x\in \Irr(\cC_{ij})}d_x^2=\sum_{x\in \Irr(\cC_{jk})}d_x^2=\sum_{x\in \Irr(\cC_{k\ell})}d_x^2.
\qedhere
\]
\end{proof}


\section{Representations of unitary multifusion categories}
\label{sec:Representations}

A representation of a $\Cstar$-algebra $A$ is a $*$-homomorphism $A\to B(H)$, for $H$ some Hilbert space.
We take the perspective that a $\Cstar$-tensor category $\cC$ is a higher categorical analog of $\Cstar$-algebra,
and that a good higher categorical analog of a Hilbert space is a $\Wstar$-category, i.e., a category of the form $R\text{-Mod}$, for $R$ some von Neumann algebra.
A representation of $\cC$ on $R\text{-Mod}$ is then a dagger tensor functor $\cC\to \End(R\text{-Mod})=\Bim(R)$ into the category of $R$-$R$-bimodules.

\subsection{Representations of \texorpdfstring{$\Cstar$}{C*}-tensor categories}\label{sec: Representations of Cstar-tensor categories}

Given a von Neumann algebra $R$ with separable predual, let us write $\Bim(R)$ for the category of bimodues whose underlying Hilbert space is separable.
It is a $\Cstar$-tensor category when equipped with the \emph{Connes fusion product} \cite{MR703809},
\cite[Appendix B.$\delta$]{MR1303779},
\cite{1705.05600}
\[
\boxtimes_R:\,\Bim(R)\times \Bim(R) \,\to\, \Bim(R).
\]
Here, given a right module $X$ and a left module $Y$, their fusion $X\boxtimes_R Y$ is the completion of the vector space
$\Hom_R(L^2R,X)\otimes_R Y$ with respect to the inner product
\begin{equation*} 
\langle a_1\otimes y_1,a_2\otimes y_2\rangle:=\langle\langle a_2|a_1\rangle_{\!R\,} y_1,y_2\rangle,
\end{equation*}
where $\langle a_2|a_1\rangle_{\!R\,}:=a_2^*\circ a_1\in \Hom(L^2R_R,L^2R_R)=R.$
The fusion can be equivalently described as a completion of
$X\otimes_R \Hom_R(L^2R,Y)$, and as a completion of $\Hom_R(L^2R,X)\otimes_RL^2R\otimes_R \Hom_R(L^2R,Y)$.

The unit object of the above monoidal structure is provided by the standard form of the von Neumann algebra $L^2R\in\Bim(R)$ \cite{MR0407615,MR1943006}.
It is an $R$-$R$-bimodule which, for every faithful state $\varphi$, is canonically isomorphic to the GNS Hilbert space $L^2(R,\varphi)$.
The image of $1\in R\hookrightarrow L^2(R,\varphi) \cong L^2(R)$ is denoted $\sqrt{\varphi}\in L^2(R)$.

\begin{defn}\label{def:Cstar-representation}
Let $\cC$ be a $\Cstar$-tensor category.
A \emph{$\Cstar$-representation} of $\cC$ is a dagger tensor functor
\[
\cC\to\Bim(R)
\]
for some von Neumann algebra $R$.

Given two $\Cstar$-representations $\alpha : \cC\to \Bim(R)$ and $\beta : \cC \to \Bim(S)$,
a \emph{morphism} from $\alpha$ to $\beta$ consists of a bimodule ${}_S\Phi_R$ along with unitary isomorphisms
$\phi_c: \Phi\boxtimes_R \alpha(c) \to \beta(c)\boxtimes_S \Phi$, natural in $c\in\cC$,
which satisfy the following half-braiding condition:
\begin{equation}\label{eq: half-braiding condition}
\begin{tikzcd}
\Phi \boxtimes_{R} \alpha(c) \boxtimes_{R} \alpha(d)
\ar[d, "\id{\scriptscriptstyle\boxtimes}\mu^{\alpha}"]
\ar[r, "\phi_c{\scriptscriptstyle\boxtimes}\id"]
&
\beta(c)\boxtimes_{S} \Phi \boxtimes_{R} \alpha(d)
\ar[r, "\id{\scriptscriptstyle\boxtimes}\phi_d"]
&
\beta(c)\boxtimes_{S} \beta(d) \boxtimes_S \Phi
\ar[d, "\mu^\beta{\scriptscriptstyle\boxtimes}\id"]
\\
\Phi\boxtimes_{R}\alpha(c\otimes d)
\ar[rr, "\phi_{c\otimes d}"]
&&
\beta(c\otimes d) \boxtimes_S \Phi.
\end{tikzcd}\quad
\end{equation}
An \emph{isomorphism} between $\alpha$ and $\beta$ is a morphism $(\Phi,\phi):\alpha\to\beta$, where $\Phi$ is an invertible bimodule.
\end{defn}

\begin{ex}
Let $R$ be a factor and $g$ an outer automorphism that squares to the identity.
Let $\alpha:\Hilb_{\rm fd}[\bbZ/2]\to\Bim(R)$ be the trivial representation, and let $\beta:\Hilb_{\rm fd}[\bbZ/2]\to\Bim(R)$ be the representation induced by $g$.
Then $\Phi=L^2R\oplus L^2R_g$, together with the obvious isomorphisms $\phi_x:\Phi\boxtimes_R\alpha(x)\to\beta(x)\boxtimes_R\Phi$ is a morphism from $\alpha$ to $\beta$.
\end{ex}

Recall that a functor $F:\cC\to\cD$ is \emph{faithful} if the maps
$\Hom_\cC(x,y) \to \Hom_\cD(F(x), F(y))$
are injective, 
and \emph{fully faithful} if these maps are isomorphisms.
A $\Cstar$-representation $\alpha : \cC \to \Bim(R)$ 
is called fully faithful if the functor $\alpha$ is fully faithful.
When $\cC$ is semisimple, this is equivalent to the condition that the
simple objects of $\cC$ remain simple in $\Bim(R)$, and that non-isomorphic objects of $\cC$ remain non-isomorphic in $\Bim(R)$.

\begin{lem}
\label{lem:NonFullyFaithfulCriterion}
Suppose $\cC$ and $\cD$ are semisimple rigid tensor categories with simple units, and $F: \cC \to \cD$ is a tensor functor.
Then $F$ is fully faithful if and only if
\[
\nexists c\in\Irr(\cC)\setminus \{1_\cC\}\text{ such that $F(c)$ contains $1_\cD$ as a summand.}
\]
\end{lem}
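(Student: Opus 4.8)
The plan is to prove both implications, with essentially all the content in the converse. For the forward implication, suppose $F$ is fully faithful and let $c\in\Irr(\cC)\setminus\{1_\cC\}$. Since $c$ is simple and not isomorphic to $1_\cC$, Schur's lemma gives $\Hom_\cC(1_\cC,c)=0$, so by full faithfulness $\Hom_\cD(F(1_\cC),F(c))=0$. As $F$ is a tensor functor, $F(1_\cC)\cong 1_\cD$, and because $1_\cD$ is simple this says precisely that the multiplicity of $1_\cD$ in $F(c)$ vanishes; hence $F(c)$ does not contain $1_\cD$ as a summand.

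For the converse I would first reduce to simple objects: since $\cC$ and $\cD$ are semisimple and $F$, being linear, preserves the idempotent decompositions that define finite direct sums, $F$ is fully faithful if and only if $\Hom_\cC(a,b)\to\Hom_\cD(F(a),F(b))$ is an isomorphism for all $a,b\in\Irr(\cC)$. Writing $m_d(X):=\dim\Hom(d,X)$ for the multiplicity of a simple object $d$ in $X$, the engine of the argument is the sublemma that for every $X\in\cC$,
\[
m_{1_\cD}(F(X))=m_{1_\cC}(X).
\]
This follows by decomposing $X=\bigoplus_{c\in\Irr(\cC)}c^{\oplus n_c}$, so that $F(X)=\bigoplus_c F(c)^{\oplus n_c}$: the hypothesis kills $m_{1_\cD}(F(c))$ for $c\neq 1_\cC$, while $m_{1_\cD}(F(1_\cC))=m_{1_\cD}(1_\cD)=1$, leaving $m_{1_\cD}(F(X))=n_{1_\cC}=m_{1_\cC}(X)$.

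Next I would pass from the unit to arbitrary hom-spaces using rigidity. The adjunction isomorphisms $\Hom_\cC(a,b)\cong\Hom_\cC(1_\cC,b\otimes\overline a)$ and $\Hom_\cD(F(a),F(b))\cong\Hom_\cD(1_\cD,F(b)\otimes\overline{F(a)})$, combined with the tensor structure $F(b\otimes\overline a)\cong F(b)\otimes F(\overline a)$ and the canonical isomorphism $F(\overline a)\cong\overline{F(a)}$ (valid for any tensor functor between rigid categories by uniqueness of duals), give
\[
\dim\Hom_\cD(F(a),F(b))=m_{1_\cD}(F(b\otimes\overline a))=m_{1_\cC}(b\otimes\overline a)=\dim\Hom_\cC(a,b).
\]
To finish: if $a\not\cong b$ both sides are $0$ and the comparison map is trivially an isomorphism; if $a=b$, both sides equal $1$, and since the map sends $\id_a$ to $\id_{F(a)}$, which is a nonzero vector (the equality of dimensions forces $\End_\cD(F(a))$ to be one-dimensional, so in particular $F(a)\neq 0$), a nonzero linear map between one-dimensional spaces is an isomorphism. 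In particular each $F(a)$ is simple.

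The computation is essentially forced once the sublemma is in place, so I do not expect a serious obstacle. The two points demanding care are the reduction of full faithfulness to simple objects (using that a linear functor respects the direct-sum decompositions of hom-spaces) and the identification $F(\overline a)\cong\overline{F(a)}$, which is what lets a count of unit-multiplicities control every hom-space at once. A pleasant feature of this route is that it establishes faithfulness and fullness simultaneously, so it needs no separate faithfulness input.
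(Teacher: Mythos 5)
Your proof is correct, but it takes a genuinely different route from the paper's. The paper argues by contrapositive with a case analysis: if $F$ fails to be fully faithful, then either some simple $c$ has non-simple image, or two non-isomorphic simples $c,d$ have isomorphic images; in the first case the object $c\otimes c^\vee\ominus 1_\cC$ (which contains no copy of $1_\cC$) has image containing $1_\cD$, and in the second case $c\otimes d^\vee$ does, so in either case some non-unit simple summand violates the stated condition. You instead prove the converse implication directly: your multiplicity-preservation sublemma $\dim\Hom_\cD(1_\cD,F(X))=\dim\Hom_\cC(1_\cC,X)$, combined with Frobenius reciprocity and the canonical isomorphism $F(\overline a)\cong\overline{F(a)}$, shows that $F$ preserves the dimension of every hom-space between simples, and the observation that the comparison map sends $\id_a$ to $\id_{F(a)}\neq 0$ upgrades equality of dimensions to an isomorphism. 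Both arguments run on the same engine --- tensor functors preserve duals, and $\Hom_\cD(F(a),F(b))$ is controlled by the unit multiplicity of $F(b\otimes\overline a)$ --- but yours replaces the paper's case analysis by a uniform dimension count. What your route buys is a stronger intermediate statement (all hom-dimensions are preserved, so fullness and faithfulness come simultaneously, and each $F(a)$ is shown simple along the way); what the paper's route buys is brevity, since it needs only the two explicitly constructed objects above and can leave the reduction of full faithfulness to simple objects implicit.
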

\begin{proof}
If $F$ is fully faithful, then the conclusion of the lemma is clearly satisfied.
If $F$ is not fully faithful, then either a simple object in $\cC$ has non-simple image in $\cD$, or two non-isomorphic simples in $\cC$ have isomorphic images in $\cD$.
If $c\in \cC$ is simple but $F(c)$ is not simple, then
$c\otimes {c\!\!\;\text{\v{}}} \ominus 1_\cC \mapsto F(c) \otimes {F(c)\!\!\;\text{\v{}}} \ominus 1_\cD$, and the latter contains $1_\cD$ since $F(c)$ is not simple.
If $c,d\in \cC$ are non-isomorphic simples with $F(c) \cong F(d)$, then 
$1_\cC$ is not a summand of $c\otimes {d\!\!\;\text{\v{}}}$, but
$F(c\otimes {d\!\!\;\text{\v{}}})\cong F(c) \otimes {F(c)\!\!\;\text{\v{}}}$ contains $1_\cD$.
\end{proof}

\begin{lem} \label{lem: faithfulness is automatic}
Let $\cC$ be an indecomposable multifusion category (or, more generally, an indecomposable semisimple rigid $\Cstar$-tensor category) and let $F:\cC\to\cD$ be a tensor functor, where $\cD$ is not the zero category.
Then $F$ is a faithful functor.
\end{lem}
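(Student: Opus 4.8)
The plan is to reduce faithfulness to the triviality of the ``kernel'' of $F$, and then to combine rigidity with indecomposability to show this kernel is trivial. Throughout I only use that $\cC$ is semisimple with $\End_\cC(s)=\bbC$ for each simple $s$, that $F$ is $\bbC$-linear (hence additive) and monoidal, and that $\cD\neq 0$; in particular I will \emph{not} assume anything about $\cD$ beyond this.

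First I would record a general principle. Since $\cC$ is semisimple with absolutely simple objects and $F$ is additive, decomposing objects into simples reduces the injectivity of $\Hom_\cC(x,y)\to\Hom_\cD(F(x),F(y))$ to the hom-spaces between simple objects. Between non-isomorphic simples this space vanishes, while on $\End_\cC(s)=\bbC\cdot\id_s$ the map sends $\id_s$ to $\id_{F(s)}$, which is nonzero precisely when $F(s)\neq 0$. Writing $\ker F:=\{x\in\cC : F(x)\cong 0\}$, it follows that $F$ is faithful if and only if $F(s)\neq 0$ for every simple $s$, which (since every nonzero object has a simple summand) is equivalent to $\ker F=\{0\}$. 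So it suffices to prove $\ker F=\{0\}$.

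Next I would observe that $\ker F$ is a two-sided tensor ideal closed under direct summands: closure under summands is immediate because a summand of $0$ is $0$ and $F$ is additive, and closure under tensoring with arbitrary objects follows from the monoidal isomorphism $F(x\otimes y)\cong F(x)\otimes F(y)$ together with $0\otimes a\cong 0\cong a\otimes 0$. Moreover $\ker F$ is \emph{proper}: the structure isomorphism $i$ gives $F(1_\cC)\cong 1_\cD$, and $1_\cD\neq 0$ since $\cD$ is not the zero category (were $1_\cD=0$, then every object $a\cong 1_\cD\otimes a\cong 0$), so $1_\cC\notin\ker F$.

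It then remains to prove the purely internal statement that an indecomposable multifusion category has no proper nonzero two-sided tensor ideal; applying this to $\ker F$ forces $\ker F=\{0\}$ and finishes the proof. This is where rigidity and indecomposability enter, and where I expect the only real work. Suppose $\mathcal I\neq\{0\}$ is such an ideal; passing to a simple summand of any nonzero object, we may take a simple $x\in\mathcal I$, say $x\in\cC_{ij}$, with dual $\overline x\in\cC_{ji}$ as in Lemma~\ref{lem : BDH Theorem 4.12 and Theorem 4.22}. Then $\overline x\otimes x\in\cC_{jj}$ satisfies $\Hom(1_j,\overline x\otimes x)\neq 0$ (it contains $\coev_{\overline x}$), so by semisimplicity the simple object $1_j$ is a summand of $\overline x\otimes x$, and summand-closure of the ideal gives $1_j\in\mathcal I$. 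Now I propagate across corners: because $\cC$ is indecomposable, every corner $\cC_{jk}$ is nonzero, so choosing a nonzero $w\in\cC_{jk}$ we have $w\cong 1_j\otimes w\in\mathcal I$, and the same summand argument applied to $\overline w\otimes w\supseteq 1_k$ yields $1_k\in\mathcal I$. Letting $k$ range over all indices gives $1_\cC=\bigoplus_k 1_k\in\mathcal I$, whence $z\cong 1_\cC\otimes z\in\mathcal I$ for every object $z$, i.e.\ $\mathcal I=\cC$, the desired contradiction. The main obstacle is precisely this propagation: since $\cD$ is not assumed rigid or semisimple, one cannot argue in $\cD$ at all, and the whole ``connectedness'' argument must be carried out inside $\cC$, using that all corners of an indecomposable multifusion category are nonzero. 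For the more general case of an indecomposable semisimple rigid $\Cstar$-tensor category, one replaces ``all corners nonzero'' by connectedness of the index set under the relation $\cC_{ij}\neq 0$ (which is symmetric under duals) and argues identically.
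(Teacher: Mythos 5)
Your proof is correct and follows essentially the same route as the paper's: reduce faithfulness to non-vanishing of $F$ on simple objects, use (co)evaluation to place unit summands $1_j$ in the kernel, propagate across the nonzero corners $\cC_{jk}$ of the indecomposable category, and conclude from $1_\cD\neq 0$. Your packaging of this as ``$\ker F$ is a proper tensor ideal closed under summands, and an indecomposable multifusion category has no proper nonzero such ideal'' is a tidier organization of the very same computation that the paper carries out directly by contradiction.
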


\begin{proof}
Decomposing each object of $\cC$ as a direct sum of simples, it is easy to check that the map
$\Hom_\cC(x,y)\to\Hom_\cD(F(x),F(y))$ is injective for every $x,y\in \cC$ if and only if it is injective for every $x,y\in \Irr(\cC)$.
Let $x,y\in\Irr(\cC)$ be simple objects.
If $x\not= y$, there is nothing to show.
If $x=y$, we need to show that $F(\id_x)\not =0$.
Equivalently, we need to show that $F(x)\not\cong 0$ in $\cD$.

Let $x\in \Irr(\cC_{ij})$ be a simple object.
We assume by contradiction that $F(x) \cong 0$, and show that $\cD$ is the zero category.
The coevaluation map $1_i\to x\otimes {x\!\!\;\text{\v{}}}$ exhibits $1_i$ as a direct summand of $x\otimes {x\!\!\;\text{\v{}}}$.
Therefore $F(1_i)$ is a direct summand of $F(x\otimes {x\!\!\;\text{\v{}}})$.
The latter is isomorphic to $F(x)\otimes F({x\!\!\;\text{\v{}}})\cong 0\otimes F({x\!\!\;\text{\v{}}})\cong 0$, so $F(1_i) \cong 0$.
For every object $y\in \cC_{ki}$, we therefore have
\[
F(y) \cong F(y\otimes 1_i) \cong F(y)\otimes F(1_i) \cong F(y)\otimes 0 \cong 0.
\]
Now, $1_k$ is a direct summand of $y\otimes {y\!\!\;\text{\v{}}}$, so $F(1_k)$ is a direct summand of $F(y\otimes {y\!\!\;\text{\v{}}})$.
The latter is zero because $F(y)\cong 0$, so $F(1_k)$ is zero.
All the $1_k$'s go to zero.
It follows that $1_\cD \cong F(\bigoplus_k 1_k) \cong \bigoplus_kF(1_k)\cong 0$, so $\cD$ is the zero category.
\end{proof}

\subsection{Uniqueness of representations of unitary fusion categories}
\label{sec:UniquenessFor1x1Representations}

Given a von Neumann algebra $M$ and a normal $*$-endomorphisms $\rho:M\to M$, let ${}_\rho L^2M\in \Bim(M)$ denote the bimodule obtained from $L^2M$ by twisting its left action by $\rho$.
It is well known that when $M$ is a factor of type ${\rm III}$ or ${\rm II}_\infty$, the map $\rho\mapsto {}_\rho L^2M$ induces an equivalence of $\Cstar$-tensor categories
\begin{equation}
\label{eq:EndomorphismsEquivalentToBimodules}
\End(M)\cup\{0\} \stackrel{\simeq}\longrightarrow \Bim(M).
\end{equation}
Here, $\End(M)$ is the category whose objects are normal $*$-endomorphisms, whose morphisms are given by
\[
\Hom(\rho,\sigma):=\big\{a\in M\,\big|\,a\rho(x)=\sigma(x)a\,\, \forall x\in M\big\},
\]
with $*$-operation $a\mapsto a^*$,
and tensor product operation given by the composition of endomorphisms: $\sigma\otimes \rho:=\rho\circ \sigma$.
More generally, for any properly infinite von Neumann algebra~$M$, $\Bim(M)$ is equivalent to the idempotent completion of $\End(M)$.

In \cite[Thm.\,\,2.2]{MR3635673}, Izumi uses Popa's subfactor uniqueness theorem \cite{MR1339767} to prove that representations of unitary fusion categories as endomorphisms of a hyperfinite type ${\rm III}_1$ factor with separable predual are unique up to isomorphism (for the same notion of isomorphism as the one in Definition \ref{def:Cstar-representation}).
We could use Popa's uniqueness theorem for hyperfinite finite depth $\rm II_1$ subfactors \cite{MR1055708} to prove the analogous result for hyperfinite $\rm II_1$ and $\rm II_\infty$ factors.
We will instead give a unified proof for uniquness of representations into hyperfinite $\rm II_1$, $\rm II_\infty$, and $\rm III_1$ factors 
based on the following powerful theorem recently proven by Tomatsu (we phrase it in the special case when $M$ is a factor):

\begin{thm}[{\cite[Thm.~D]{1812.04222}}]
\label{thm:Tomatsu}
Let $\cC$ be an amenable rigid $\rm C^*$-tensor category. 
Let $\alpha$ and $\beta$ be centrally free cocycle actions of $\cC$ on a properly infinite 
factor 
$M$ with separable predual. 
Suppose that $\alpha(c)$ and $\beta(c)$ are approximately unitarily equivalent for all $c\in \cC$. 
Then $\alpha$ and $\beta$ are strongly cocycle conjugate.
\end{thm}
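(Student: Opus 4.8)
The plan is to treat this as a classification result in the Connes--Ocneanu--Popa tradition, now adapted to actions of a rigid $\Cstar$-tensor category in place of a group, and to reduce \emph{strong cocycle conjugacy} to two structural inputs: a Rohlin-type property for centrally free cocycle actions, and a cohomology vanishing statement. Once these are in place, an Evans--Kishimoto style intertwining argument produces the conjugacy. Throughout, amenability of $\cC$ is what makes every averaging argument converge: it supplies Følner sequences in the fusion algebra $K_0(\cC)$ (equivalently, the condition that the global dimension equals the $\ell^2$-norm of the fusion matrices), and these Følner sets are the combinatorial engine for all the approximations below.

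First I would establish a Rohlin property in the central sequence algebra. Passing to the asymptotic centralizer $M_\omega$ (or an ultrapower $M^\omega$), central freedom of $\alpha$ says precisely that the induced action on the central sequence algebra is sufficiently free---no nontrivial object of $\cC$ acts centrally trivially---so that one can solve a system of approximate equivariance equations. Concretely, for a Følner set $F\subset\Irr(\cC)$ one constructs a partition of unity by projections in the central sequence algebra that is approximately permuted by the $\alpha(c)$ for $c\in F$, the error being controlled by the Følner ratio. This is the categorical analog of Ocneanu's Rohlin tower, and its proof is where the bulk of the analytic work sits.

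With the Rohlin property in hand, the next step is cohomology vanishing: any unitary $2$-cocycle for a centrally free cocycle action is a coboundary up to arbitrarily small error, i.e.\ it can be untwisted by a sequence of unitaries. One averages the cocycle against the Rohlin projections; amenability guarantees that the averaged corrections converge, and the cocycle identity forces the limit to trivialize the twist. This both lets one replace ``cocycle action'' by ``genuine action'' at the level of approximation and, at the end, upgrades an ordinary cocycle conjugacy to a \emph{strong} one.

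The hypothesis of the theorem feeds into the final intertwining argument. Approximate unitary equivalence of $\alpha(c)$ and $\beta(c)$ gives, for each $c$ and each $\varepsilon$, a unitary $u_c$ with $\Ad(u_c)\circ\alpha(c)$ within $\varepsilon$ of $\beta(c)$ in the relevant pointwise-ultraweak topology; coherence of these over a Følner set (again using amenability) shows that $\beta$ and $\alpha$ differ by an approximately inner equivalence. One then alternately corrects $\alpha$ toward $\beta$ and $\beta$ toward $\alpha$ by such inner perturbations, using the Rohlin property at each stage to keep the correcting unitaries summably close to the identity, so that the infinite composition converges to an honest invertible bimodule implementing a cocycle conjugacy $\alpha\simeq\beta$. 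The cohomology vanishing step then absorbs the residual cocycle and promotes this to strong cocycle conjugacy. The main obstacle, and the heart of the proof, is the Rohlin property itself: extracting approximately $\cC$-equivariant Rohlin towers from central freedom together with amenability, since all of the later steps become essentially formal once that averaging machinery is available.
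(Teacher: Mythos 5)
This statement is not proved in the paper at all: it is Tomatsu's Theorem~D, imported verbatim (specialized to factors) from the cited reference, and the authors use it as a black box in the proof of Theorem~\ref{thm:UniqueRepresentation}. So there is no in-paper argument to compare yours against; the only question is whether your sketch would stand on its own as a proof of the cited result.

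It would not, as written. You have correctly identified the architecture that Tomatsu (following Ocneanu, Masuda--Tomatsu, and Evans--Kishimoto) actually uses --- Rohlin-type towers extracted from central freedom plus amenability via F{\o}lner sets in the fusion algebra, a $2$-cocycle vanishing theorem, and an alternating intertwining argument --- but every one of these steps is asserted rather than established, and you say yourself that ``the bulk of the analytic work'' sits in the Rohlin property. Acknowledging the hard step does not discharge it: for categorical actions the Rohlin tower construction is genuinely delicate (the projections must be approximately permuted compatibly with the fusion rules, not just with a group structure), and nothing in your sketch indicates how central freedom yields it. Two further points need repair even at the level of the outline. First, the conjugacy in the cocycle-action picture is implemented by an automorphism $\Phi$ of $M$ together with a unitary monoidal equivalence as in \eqref{eq: equiv or reps into End(M)}, not by ``an invertible bimodule''; the bimodule formulation is this paper's translation under \eqref{eq:EndomorphismsEquivalentToBimodules}, not what the intertwining argument produces. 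Second, the adverb \emph{strongly} means precisely that $\Phi$ can be taken approximately inner; this comes from the Evans--Kishimoto construction itself (the automorphism is built as a limit of inner perturbations), not from ``absorbing the residual cocycle'' by cohomology vanishing, so your last sentence attributes the key property to the wrong mechanism.
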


\noindent
Let us first explain the terms which appear in the above theorem.

Here, \emph{cocycle actions} are exactly dagger tensor functors $\cC \to \End(M)\cup\{0\}$, and two such being \emph{cocycle conjugate} \cite[Def.~5.6]{1812.04222} means that there exists an automorphism $\Phi$ of $M$ and a unitary monoidal equivalence $\phi:\Ad(\Phi)\circ \alpha\Rightarrow \beta$
\begin{equation}\label{eq: equiv or reps into End(M)}
\begin{matrix}\begin{tikzpicture}
\node (1) at (-2,0) {$\cC$};
\node[inner ysep=2] (2) at (1,.7) {$\End(M)\cup\{0\}$};
\node[inner ysep=2] (3) at (1,-.7) {$\End(M)\cup\{0\}$};
\draw[->] ($(1.east)+(0,.07)$) --node[above]{$\scriptstyle\alpha$} ($(2.west)+(0,-.12)$);
\draw[->] ($(1.east)+(0,-.07)$) --node[below]{$\scriptstyle\beta$} ($(3.west)+(0,.12)$);
\draw[->] (2) --node[right]{$\scriptstyle\Ad(\Phi)$} (3);
\node[rotate=55] at (.3-.05,0) {$\Leftarrow$};
\node at (-.05-.05,0) {$\scriptstyle\phi$};
\end{tikzpicture}\end{matrix}
\end{equation}
(The adverb \emph{strongly} in \emph{strongly cocycle conjugate} means we can take $\Phi$ to be approximately inner; this condition is not necessary for our purposes here.)
The definition of an \emph{amenable} rigid $\rm C^*$-tensor category appears in \cite{MR1644299} as a certain F{\o}lner type condition, and unitary fusion categories are visibly amenable. 

An endomorphism $\rho \in \End(M)$ is called \emph{centrally trivial} if the induced endomorphism $\rho^\omega$ on the ultrapower $M^\omega$ restricts to the identity endomorphism on the central sequence algebra $M_\omega$ \cite[\S2.4 and Def.~4.1]{MR2483716}.
It is called \emph{properly centrally non-trivial} if no direct summand of $\rho$ is centrally trivial.\footnote{%
The equivalence between this definition and the one presented in \cite[Def.~2.16]{1812.04222} follows along the same lines as the proof of \cite[Lem.~8.3]{MR2322913}.}
Finally, a dagger tensor functor $\alpha: \cC \to \End(M)\cup \{0\}$ is \emph{centrally free} if $\alpha(c)$ is properly centrally non-trivial for every $c\in \Irr(\cC)\setminus \{1_\cC\}$.
The following result is an immediate corollary of Lemma \ref{lem:NonFullyFaithfulCriterion}:

\begin{lem}\label{lem: centrally free}
Let $\cC$ be a rigid $\rm C^*$-tensor category, $M$ an infinite factor, and $\alpha: \cC \to \End(M)\cup \{0\}$ a $\Cstar$-representation.
Then $\alpha$ is centrally free if and only if it is fully faithful and
for every $c\in \Irr(\cC)\setminus\{1_\cC\}$, its image $\alpha(c)\in \End(M)$ is not centrally trivial. \hfill $\square$
\end{lem}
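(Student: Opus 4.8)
The plan is to deduce both implications from Lemma~\ref{lem:NonFullyFaithfulCriterion} together with one elementary observation: the unit object $1_\cD=\id_M$ of $\End(M)\cup\{0\}$ is centrally trivial. Indeed, the induced endomorphism $\id^\omega$ of the ultrapower $M^\omega$ is the identity, so it restricts to the identity on the central sequence algebra $M_\omega$. I will also use the trivial remark that a properly centrally non-trivial endomorphism is in particular not centrally trivial, since an endomorphism is always a (nonzero) direct summand of itself. To legitimately invoke Lemma~\ref{lem:NonFullyFaithfulCriterion} I would note that $\cC$ has simple unit, and that the relevant target is the semisimple rigid $\Cstar$-tensor subcategory of dualizable objects of $\End(M)\cup\{0\}$ (the images $\alpha(c)$ are all dualizable because $\cC$ is rigid), whose unit is simple because $M$ is a factor, so that $\Hom(\id_M,\id_M)=Z(M)=\bbC$.

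For the forward implication, suppose $\alpha$ is centrally free. The ``not centrally trivial'' conclusion is then immediate from the remark above applied to each $\alpha(c)$. For full faithfulness I would argue by contradiction: if $\alpha$ were not fully faithful, Lemma~\ref{lem:NonFullyFaithfulCriterion} would furnish some $c\in\Irr(\cC)\setminus\{1_\cC\}$ with $1_\cD=\id_M$ appearing as a direct summand of $\alpha(c)$. Since $\id_M$ is centrally trivial, $\alpha(c)$ would then possess a centrally trivial direct summand, contradicting the proper central non-triviality demanded by central freeness. Hence no such $c$ exists, and $\alpha$ is fully faithful.

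For the converse, suppose $\alpha$ is fully faithful and each $\alpha(c)$, for $c\in\Irr(\cC)\setminus\{1_\cC\}$, is not centrally trivial. Full faithfulness together with simplicity of $1_\cC$ gives $\End(\alpha(c))\cong\End_\cC(c)=\bbC$, so $\alpha(c)$ is irreducible; its only nonzero direct summand up to isomorphism is therefore $\alpha(c)$ itself, which by hypothesis is not centrally trivial. Thus $\alpha(c)$ is properly centrally non-trivial for every non-trivial irreducible $c$, i.e.\ $\alpha$ is centrally free, completing the equivalence.

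The argument is entirely formal once the key link is identified, so there is no serious analytic obstacle here; the crux is purely conceptual. The point requiring care is the recognition that, as far as full faithfulness is concerned, a centrally trivial summand is detected exactly by the appearance of the unit object $\id_M$ — this single observation is what allows central freeness to simultaneously control the preservation of irreducibility (hence full faithfulness) and the non-triviality of the images. Everything else reduces to the bookkeeping of verifying the hypotheses of Lemma~\ref{lem:NonFullyFaithfulCriterion}, which hinges only on $M$ being a factor.
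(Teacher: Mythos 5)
Your proof is correct and follows exactly the route the paper intends: the paper states this lemma as an ``immediate corollary'' of Lemma~\ref{lem:NonFullyFaithfulCriterion} with no written proof, and your argument --- that the unit $\id_M$ is centrally trivial, so central freeness forbids it from appearing as a summand of any $\alpha(c)$ (giving full faithfulness), while conversely full faithfulness makes each $\alpha(c)$ irreducible so that ``not centrally trivial'' and ``properly centrally non-trivial'' coincide --- is precisely the intended one. Your care in restricting to the semisimple subcategory of dualizable objects (with simple unit since $M$ is a factor) to legitimately invoke Lemma~\ref{lem:NonFullyFaithfulCriterion} is a welcome bit of rigor the paper leaves implicit.
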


The following result of Masuda and Tomatsu characterises centrally trivial endomorphims of hyperfinite factors.

\begin{thm}[{\cite[Thm.~4.12, Lem. 4.10]{MR2483716}}]\label{thm:Masuda-Tomatsu}
Let $R$ be a hyperfinite factor of type $\rm II_\infty$ or $\rm III_\lambda$ for $\lambda\in (0,1]$.
Then the set of irreducible centrally trivial endomorphisms of $R$ is exactly the set of automorphisms of the form $Ad(u) \circ \sigma^\phi_t$, where $\phi\in R_*$ is a normal state, and $(\sigma^\phi_t)_{t\in\bbR}$ denotes the modular automorphism group.
\end{thm}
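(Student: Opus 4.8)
The plan is to prove the asserted equality of subsets of $\End(R)\cup\{0\}$ by establishing the two inclusions separately. The reverse inclusion, that every $\operatorname{Ad}(u)\circ\sigma^\phi_t$ is irreducible and centrally trivial, is the routine half. Irreducibility is automatic: for any automorphism $\alpha$ of a factor we have $\Hom(\alpha,\alpha)=\alpha(R)'\cap R=R'\cap R=\bbC$, so $\alpha$ is irreducible as an endomorphism. For central triviality, recall that the centrally trivial automorphisms form a group $\operatorname{Cnt}(R)\trianglelefteq\Aut(R)$, so it suffices to treat the two factors $\operatorname{Ad}(u)$ and $\sigma^\phi_t$ individually. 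An inner automorphism $\operatorname{Ad}(u)$ is centrally trivial because any bounded central sequence $(x_n)$ asymptotically commutes with the fixed unitary $u$, whence $\operatorname{Ad}(u)^\omega$ fixes $R_\omega$ pointwise. That the modular flow $\sigma^\phi_t$ of a faithful normal state is centrally trivial is a standard fact going back to Connes: $\sigma^\phi_t$ and $\id$ induce the same endomorphism of the central sequence algebra $R_\omega$.

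The forward inclusion is the substantive half, and I would organise it in two steps. First, show that an irreducible centrally trivial endomorphism $\rho$ is necessarily an automorphism, i.e.\ that its statistical dimension satisfies $d_\rho=1$. The finite-index centrally trivial endomorphisms are closed under composition, conjugation, and passage to subobjects, so they assemble into a rigid $\Cstar$-tensor subcategory of $\End(R)\cup\{0\}$; the content of this step (Lemma~4.10 of the cited reference) is that, over a hyperfinite factor, every simple object of this subcategory has dimension one. This forces $\overline\rho\rho\cong\id\cong\rho\overline\rho$, making $\rho$ an automorphism. The mechanism is that a proper, non-surjective centrally trivial endomorphism would, through its canonical endomorphism and the associated minimal conditional expectation, manufacture a nontrivial central-sequence obstruction contradicting central triviality.

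Second, classify the centrally trivial \emph{automorphisms}. Here I would pass to the appropriate crossed-product structure: the discrete decomposition $R\cong N\rtimes_\theta\bbZ$ with core $N$ hyperfinite of type ${\rm II}_\infty$ in the type ${\rm III}_\lambda$ case ($\lambda\in(0,1)$), the continuous decomposition $R\cong N\rtimes_\theta\bbR$ in the type ${\rm III}_1$ case, and $R=R_{0,1}\otimes B(H)$ in the type ${\rm II}_\infty$ case. In every case the core is hyperfinite of type ${\rm II}$, for which Connes' theorem gives $\operatorname{Cnt}=\operatorname{Int}$. The residual freedom is recorded by the Connes--Takesaki module $\operatorname{mod}(\rho)$, an automorphism of the flow of weights; central triviality forces $\operatorname{mod}(\rho)$ to lie in the range of the modular flow, and tracking this through the decomposition identifies $\rho$, up to an inner automorphism, with some $\sigma^\phi_t$. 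In the type ${\rm II}_\infty$ case the modular automorphisms are already inner (any faithful normal state is $\tau(h\,\cdot\,)$ with $\sigma^\phi_t=\operatorname{Ad}(h^{it})$), so the statement there collapses to $\operatorname{Cnt}(R)=\operatorname{Int}(R)$.

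The main obstacle is the forward inclusion, and within it the dimension-one step: excluding irreducible centrally trivial endomorphisms of dimension $>1$ genuinely requires hyperfiniteness (amenability of the tensor category of centrally trivial sectors), and cannot be obtained by formal tensor-categorical manipulation alone. The subsequent classification then rests on Connes' structural analysis comparing $\operatorname{Cnt}$ with $\operatorname{Int}$ and on the flow of weights, so the argument is ultimately a refinement of that machinery rather than a self-contained computation.
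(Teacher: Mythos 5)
The first thing to note is that the paper does not prove this statement at all: it is imported verbatim from Masuda--Tomatsu, with the citation (their Thm.~4.12 and Lem.~4.10) carrying the entire content, so there is no in-paper argument to measure your proposal against. Judged on its own terms, your outline is a faithful reconstruction of how the proof is organised in that reference. The reverse inclusion is handled correctly and completely: irreducibility of automorphisms via $\alpha(R)'\cap R=\bbC$, central triviality of $\Ad(u)$ via asymptotic commutation of central sequences with a fixed unitary, and the standard fact that modular automorphisms act trivially on $M_\omega$. You also split the forward inclusion into exactly the two steps the citation points to: Lemma~4.10 (an irreducible centrally trivial finite-index endomorphism has statistical dimension one, hence is an automorphism) and Theorem~4.12 (the classification of centrally trivial automorphisms of injective factors via $\operatorname{Cnt}=\Int$ for the type ${\rm II}$ core, the Connes--Takesaki module, and the flow of weights), and your remark that the type ${\rm II}_\infty$ case collapses to Connes' $\operatorname{Cnt}(R)=\Int(R)$ is correct.

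That said, as a proof the proposal has two gaps, and they sit exactly where the content lies. The dimension-one step is only gestured at (``a proper centrally trivial endomorphism would manufacture a nontrivial central-sequence obstruction''); no mechanism is actually produced, and, as you yourself observe, none can come from formal tensor-categorical manipulation: for a non-Gamma factor the central sequence algebra is trivial, so every endomorphism is vacuously centrally trivial, and irreducible finite-index endomorphisms of dimension $>1$ abound --- so some McDuff/amenability input is indispensable and must be exploited, not merely invoked. Likewise, the assertion that ``central triviality forces $\operatorname{mod}(\rho)$ to lie in the range of the modular flow'' is not an available input; it is essentially the conclusion of the Connes and Kawahigashi--Sutherland--Takesaki classification on which Theorem~4.12 rests. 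A smaller point: your dimension-one step is formulated for finite-index (dualizable) endomorphisms, which matches Masuda--Tomatsu's framework of $\End(M)_0$ but is silently narrower than the statement as quoted, which does not mention an index restriction. In sum, what you have written is an accurate road map of the literature rather than a self-contained argument --- which, to be fair, puts it on the same footing as the paper itself, since the paper likewise defers the entire proof to the cited reference.
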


As a corollary of Lemma~\ref{lem: centrally free} and Theorem~\ref{thm:Masuda-Tomatsu}, we get:

\begin{cor}
\label{cor:UFC centrally free}
Let $\cC$ be a unitary fusion category, $R$ a hyperfinite factor of type $\rm II_\infty$ or $\rm III_1$, and $\alpha: \cC \to \End(R)\cup \{0\}$ a $\Cstar$-representation.
Then $\alpha$ is centrally free if and only if it is fully faithful. 
\end{cor}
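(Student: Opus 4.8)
The plan is to deduce this directly from the two results it is advertised as combining, namely Lemma~\ref{lem: centrally free} and Theorem~\ref{thm:Masuda-Tomatsu}. By Lemma~\ref{lem: centrally free}, centrally free is equivalent to the conjunction of (a) fully faithful, and (b) for every $c\in\Irr(\cC)\setminus\{1_\cC\}$, the image $\alpha(c)$ is not centrally trivial. So the whole corollary reduces to showing that, under the standing hypotheses, condition (b) is automatic once $\alpha$ is fully faithful — that is, a fully faithful $\Cstar$-representation of a unitary fusion category into $\End(R)$ can never send a nontrivial simple to a centrally trivial endomorphism. The forward direction (centrally free $\Rightarrow$ fully faithful) is immediate from Lemma~\ref{lem: centrally free}, so all the content is in the reverse direction.

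First I would fix a fully faithful $\alpha$ and a simple $c\in\Irr(\cC)\setminus\{1_\cC\}$, and suppose for contradiction that $\alpha(c)$ \emph{is} centrally trivial. The image $\alpha(c)$ is an irreducible endomorphism of $R$, since full faithfulness preserves simplicity of objects, so Theorem~\ref{thm:Masuda-Tomatsu} applies and forces $\alpha(c)$ to be of the form $\Ad(u)\circ\sigma^\phi_t$ for some unitary $u$, normal state $\phi$, and $t\in\bbR$. The key point is that any such endomorphism is an \emph{automorphism}, hence has categorical dimension $d_{\alpha(c)}=1$, i.e. it is an invertible object of $\Bim(R)$ under the equivalence \eqref{eq:EndomorphismsEquivalentToBimodules}.

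Next I would transport this back to $\cC$. Since $\alpha$ is a fully faithful dagger tensor functor, it is compatible with duals in the sense of Lemma~\ref{lem:FullyFaithfulPreservesStandardPairings}, so it preserves the standard solutions of the duality equations and therefore preserves categorical dimensions: $d_{\alpha(c)}=d_c$. Combined with the previous step this gives $d_c=1$. But by the dimension facts recorded after \eqref{eq: d is the number which satisfies...}, an object of a unitary (multi)fusion category has $d_c=1$ if and only if $c$ is invertible. In a \emph{fusion} category (simple unit) the only invertible simple object that $\alpha$ could map to an automorphism while staying fully faithful is $1_\cC$ — more carefully, an invertible $c$ is automatically sent to an invertible object, and I would argue that a centrally trivial image is incompatible with full faithfulness for $c\neq 1_\cC$, since a nontrivial invertible $c$ still satisfies $c\otimes\bar c\cong 1_\cC$ with $c\not\cong 1_\cC$, and one checks via the modular/inner form that such an $\alpha(c)$ would collapse the distinction between $c$ and $1_\cC$ at the level of central sequences, contradicting \ref{lem:NonFullyFaithfulCriterion}.

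The main obstacle I anticipate is precisely this last step: ruling out nontrivial \emph{invertible} simple objects being sent to centrally trivial automorphisms. The dimension argument alone only shows $d_c=1$, which does not yet contradict full faithfulness, since a unitary fusion category may well contain nontrivial invertible objects. To close the gap I would appeal to the structural content of Theorem~\ref{thm:Masuda-Tomatsu} more carefully: a centrally trivial automorphism is inner modulo the modular flow, and one must show that the subgroup of $\Irr(\cC)$ mapping into such automorphisms is forced to be trivial under full faithfulness — intuitively, because the modular automorphisms and inner automorphisms are exactly the ones invisible to the central sequence algebra, whereas a genuine nontrivial grouplike object in $\cC$ must remain ``visible'' if $\alpha$ is to separate it from the unit. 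I expect the cleanest route is to invoke that $\alpha$ fully faithful and $\alpha(c)$ centrally trivial together force $\alpha(c)\cong\alpha(1_\cC)=\id$ as objects of $\Bim(R)$ (since both are then automorphisms distinguished only by data that central triviality erases), and then full faithfulness yields $c\cong 1_\cC$, the desired contradiction.
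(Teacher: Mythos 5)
Your reduction via Lemma~\ref{lem: centrally free} and Theorem~\ref{thm:Masuda-Tomatsu}, and the dimension argument showing that a problematic simple $c$ would have to be invertible, all match the paper's strategy and are correct. The genuine gap is your final step: the claim that $\alpha(c)$ centrally trivial forces $\alpha(c)\cong\id$ as objects of $\End(R)$, ``since both are then automorphisms distinguished only by data that central triviality erases.'' This is true when $R$ is of type $\rm II_\infty$: there the modular automorphisms $\sigma^\phi_t$ are inner, so by Theorem~\ref{thm:Masuda-Tomatsu} every centrally trivial irreducible endomorphism is inner, hence isomorphic to $\id_R=\alpha(1_\cC)$, and full faithfulness gives the contradiction. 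But it is false when $R$ is the hyperfinite type $\rm III_1$ factor: there Connes' $T$-invariant is trivial, so $\sigma^\phi_t$ is \emph{outer} for every $t\neq 0$, and yet $\Ad(u)\circ\sigma^\phi_t$ is centrally trivial by Theorem~\ref{thm:Masuda-Tomatsu}. Such an automorphism is a genuinely nontrivial object of $\End(R)$, not isomorphic to $\id$, so at this point there is no conflict with full faithfulness or with Lemma~\ref{lem:NonFullyFaithfulCriterion}; central triviality does not erase the distinction from the identity.

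What is missing is the finiteness argument the paper uses to handle exactly this case. You have already shown $c$ is invertible; since a fusion category has only finitely many isomorphism classes of simple objects, $c$ has finite order, say $c^{\otimes N}\cong 1_\cC$ with $N\geq 1$. Applying $\alpha$ and using that $\alpha$ is a tensor functor, $\big(\Ad(u)\circ\sigma^\phi_t\big)^{N}\cong\id$ in $\End(R)$, i.e.\ $\sigma^\phi_{Nt}$ is inner. Since $\sigma^\phi_t$ has infinite order in $\Out(R)$ for a $\rm III_1$ factor, this forces $t=0$; then $\alpha(c)=\Ad(u)$ is inner, hence $\alpha(c)\cong\alpha(1_\cC)$, and now full faithfulness yields $c\cong 1_\cC$, the desired contradiction. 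This step --- ``$\cC$ has only finitely many simple objects, so no nontrivial $\sigma^\phi_t$ can lie in the image of $\alpha$'' --- is precisely the paper's proof of the $\rm III_1$ case, and it is the ingredient your proposal cannot do without.
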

\begin{proof}
When $R$ is of type $\rm II_\infty$, the automorphisms $\sigma^\phi_t$ are all inner, so the conditions in Lemma~\ref{lem: centrally free} are trivially satisfied.
When $R$ is of type $\rm III_1$, the automorphisms $\sigma^\phi_t$ have infinite order in $\Out(M)$.
Since $\cC$ has only finitely many types of simple objects, no non-trivial $\sigma^\phi_t$ can be in the image of $\alpha$.
\end{proof}

For the definition of \emph{approximately unitarily equivalent}, we refer the reader to \cite[\S2.4]{1812.04222}.
We will only use this concept in the following lemma:

\begin{lem}
\label{lem:ApproxUnitaryEq}
Let $R$ be a hyperfinite factor with separable predual, and let 
$\sigma, \rho\in\End(R)$ be dualizable finite depth\footnote{An endomorphism $\rho\in\End(R)$ has \emph{finite depth} if the $\Cstar$-tensor category generated by $\rho$ is fusion.} endomorphisms.
\begin{itemize}
\item 
If $R$ is of type $\rm III_1$, then $\sigma$ and $\rho$ are always approximately unitarily equivalent.
\item
If $R$ is of type $\rm II_\infty$, $\sigma$ and $\rho$ are irreducible and their dimensions agree $d_\sigma = d_\rho$, then $\sigma$ and $\rho$ are approximately unitarily equivalent.
\end{itemize}
\end{lem}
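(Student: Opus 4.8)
The plan is to reduce approximate unitary equivalence to a statement about the invariants that classify endomorphisms of hyperfinite factors up to this equivalence, namely their behaviour on the modular/flow data together with their dimension. The key tool should be the definition of approximate unitary equivalence from \cite[\S2.4]{1812.04222}: two endomorphisms $\sigma,\rho$ are approximately unitarily equivalent if there is a sequence of unitaries $u_n\in R$ with $\operatorname{Ad}(u_n)\circ\sigma\to\rho$ in the appropriate pointwise topology. I would first unpack this definition and record the standard fact that, for dualizable finite depth endomorphisms, approximate unitary equivalence is detected by a short list of invariants.

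\emph{Type $\mathrm{III}_1$ case.} First I would use that a hyperfinite type $\mathrm{III}_1$ factor has a trivial flow of weights and, crucially, only one value of the modular spectrum obstruction. For such factors the Connes--Takesaki module (the obstruction to the modular automorphism groups of $\sigma$ and $\rho$ being approximately intertwined) vanishes, so the only invariant left is the statistical dimension. Since $\sigma$ and $\rho$ are assumed merely dualizable and finite depth --- not irreducible, and with no dimension hypothesis --- the claim that they are \emph{always} approximately unitarily equivalent is strongest here, and I expect it to rest on the fact that in type $\mathrm{III}_1$ the relevant ultrapower/central sequence arguments make any two endomorphisms of the same ``shape'' conjugate by inner unitaries in the approximate sense. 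I would cite the appropriate classification result from \cite{1812.04222} (or from Masuda--Tomatsu \cite{MR2483716}, whose Theorem~\ref{thm:Masuda-Tomatsu} already isolates the modular automorphisms as the only centrally trivial irreducibles) to conclude that in type $\mathrm{III}_1$ there is no surviving invariant beyond what is automatically matched.

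\emph{Type $\mathrm{II}_\infty$ case.} Here the flow of weights is nontrivial (the dilation/trace scaling), so approximate unitary equivalence retains the dimension as a genuine invariant; this is exactly why the hypotheses $d_\sigma=d_\rho$ and irreducibility are now imposed. I would argue that for irreducible dualizable endomorphisms of the hyperfinite $\mathrm{II}_\infty$ factor, the trace-scaling module is determined by $d_\sigma$ (via $\operatorname{Tr}\circ\sigma=d_\sigma^2\operatorname{Tr}$ on the relevant corner, or the analogous relation for the standard left/right actions), so matching dimensions forces matching modules. Once the modules agree and both endomorphisms are irreducible of the same dimension, the classification of \cite{1812.04222}/\cite{MR2483716} again yields approximate unitary equivalence.

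The main obstacle, and the step I would be most careful about, is pinning down \emph{precisely} which classification statement from \cite{1812.04222} is being invoked and verifying that its hypotheses are met in each case --- in particular, checking that ``dualizable finite depth'' supplies enough regularity (finite statistical dimension, discreteness of the generated fusion category) for the approximate-equivalence classification to apply, and correctly identifying the invariant that trivializes in type $\mathrm{III}_1$ but persists in type $\mathrm{II}_\infty$. Rather than reprove the classification, the cleanest route is to phrase both bullets as direct corollaries of the cited results, doing only the small computation that matches the flow-of-weights/module invariant (trivial in $\mathrm{III}_1$; equal to a function of $d_\sigma$ in $\mathrm{II}_\infty$).
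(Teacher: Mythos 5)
Your high-level strategy (defer to the Tomatsu and Masuda--Tomatsu classification and match invariants) is the same in spirit as the paper's, but the concrete mechanism you propose in the type $\rm II_\infty$ case is wrong, and that is where the real content of the lemma lies. You claim the trace-scaling (Connes--Takesaki) module of an irreducible dualizable endomorphism is determined by its dimension, so that $d_\sigma=d_\rho$ forces the modules to agree. This is false: trace-scaling automorphisms of the hyperfinite $\rm II_\infty$ factor have dimension $1$ but realize every value in $\bbR_{>0}$ as their module. What actually kills the module is the \emph{finite depth} hypothesis, not the dimension: $\Mod$ is multiplicative, so the modules of the objects in the fusion category generated by $\rho$ form a finite subgroup of $\bbR_{>0}$, which must be trivial; hence $\Mod(\rho)=\Mod(\sigma)=1$. (Consistently, trace-scaling automorphisms with $\Mod\neq 1$ are not finite depth, since their powers are pairwise inequivalent simples.) The dimension hypothesis enters at a different point: once both modules are trivial, \cite[Lem.~2.15]{MR2483716} places $\rho$ and $\sigma$ in the set $\overline{\Int}_r(R)$ of approximately inner endomorphisms of rank $r$ with $r=d_\rho=d_\sigma$, and \cite[Prop.~2.7]{1812.04222} states that any two approximately inner endomorphisms of the \emph{same} rank are approximately unitarily equivalent. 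This pair of statements is the pivot your sketch never pins down; without it, ``cite the appropriate classification'' is not yet a proof.

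The same pivot also repairs your type $\rm III_1$ paragraph, which as written is internally inconsistent: you first assert that the statistical dimension is the one surviving invariant, and then need the conclusion to hold with no dimension hypothesis at all. The resolution is \cite[Cor.~3.16(4)]{MR2483716}: for hyperfinite type $\rm III_1$ factors the set $\overline{\Int}_r(R)$ is independent of $r$ and equals the set of all dualizable endomorphisms, i.e.\ the rank (dimension) is simply not an invariant of approximate unitary equivalence in that case. With that statement in hand, both bullets follow immediately from \cite[Prop.~2.7]{1812.04222}, exactly as in the paper's proof.
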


\begin{proof}
Let $\overline{\Int}_r(R)\subset\End(R)$ denote the set of approximately inner endomorphisms of rank~$r$ \cite[Def.~2.2]{1812.04222}.
If we can show that $\rho$ and $\sigma$ are approximately inner of rank $r$ for the same number $r\in \bbR_{>0}$, then we are finished by \cite[Prop.~2.7]{1812.04222}, which states any two approximately inner endomorphisms of rank $r$ are approximately unitarily equivalent.

If $R$ is of type $\rm III_1$, then by \cite[Cor.~3.16(4)]{MR2483716} the set $\overline{\Int}_r(R)$ is independent of $r$, and equal to the set of dualizable endomorphisms $R$.
The result follows.

If $R$ is of type $\rm II_\infty$, fix a faithful normal semifinite trace $\Tr$ on $R$.
By \cite[Cor.~4.4 and Rem.~4.6]{MR1953517}, both $\rho$ and $\sigma$ admit Connes-Takesaki modules $\Mod(\rho), \Mod(\sigma)\in \bbR_{>0}$.
The Connes-Takesaki module is multiplicative \cite[Prop.~4.2(1)]{MR1953517}
so, by the finite depth assumption, $\Mod(\rho)$ and $\Mod(\sigma)$ generate finite subgroups of $\bbR_{>0}$, hence trivial.
Thus, $\rho$ and $\sigma$ both have trivial Connes-Takesaki modules.
Finally, by \cite[Lem~2.15]{MR2483716}, we have $\rho,\sigma \in \overline{\Int}_{r}(R)$ for $r=d_\sigma=d_\rho$.
\end{proof}

With the above preliminaries in place, the following result
is an immediate corollary of Tomatsu's Theorem (Thm.~\ref{thm:Tomatsu}):

\begin{thm}
\label{thm:UniqueRepresentation}
Let $\cC$ be a unitary fusion category, and let
\[
\alpha: \cC \to \Bim(R),
\qquad\,\,\, 
\beta: \cC \to \Bim(S)
\]
be fully faithful $\rm C^*$-representations, where $R$ and $S$ are hyperfinite factors with separable preduals that are either both of type $\rm II$ (either $\rm II_1$ or $\rm II_\infty$), or both of type $\rm III_1$.
Then there is an isomorphism $(\Phi, \phi): \alpha \to \beta$.
\end{thm}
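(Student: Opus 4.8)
The plan is to push both $\alpha$ and $\beta$ into the language of endomorphism actions and then read off the conclusion from Tomatsu's Theorem~\ref{thm:Tomatsu}. \textbf{Step 1 (reduction to a common factor).} Since $\Bim(R)$ depends only on the $\Wstar$-category $R\text{-Mod}$, it is unchanged, up to equivalence of $\Cstar$-tensor categories, when $R$ is replaced by any Morita equivalent von Neumann algebra. When $R$ is of type $\rm II_1$ I would stabilise it to $R\otimes B(\ell^2)$, which is hyperfinite of type $\rm II_\infty$; doing likewise to $S$, I may assume that $R$ and $S$ are properly infinite and either both of type $\rm II_\infty$ or both of type $\rm III_1$. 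By uniqueness of the hyperfinite factor of each type, $R\cong S=:M$, and through the equivalence \eqref{eq:EndomorphismsEquivalentToBimodules} I may regard $\alpha$ and $\beta$ as cocycle actions, i.e.\ dagger tensor functors, $\cC\to\End(M)\cup\{0\}$.

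\textbf{Step 2 (verifying the hypotheses of Theorem~\ref{thm:Tomatsu}).} First, $\cC$ is amenable, being a unitary fusion category. Next, because $\alpha$ and $\beta$ are fully faithful and $M$ is hyperfinite of type $\rm II_\infty$ or $\rm III_1$, Corollary~\ref{cor:UFC centrally free} shows that both are centrally free. It then remains to check that $\alpha(c)$ and $\beta(c)$ are approximately unitarily equivalent for every $c\in\cC$. In the type $\rm III_1$ case this is automatic by the first bullet of Lemma~\ref{lem:ApproxUnitaryEq}. In the type $\rm II_\infty$ case, for a simple object $c$ full faithfulness forces $\alpha(c)$ and $\beta(c)$ to be irreducible, and since a dagger tensor functor preserves dimensions (Lemma~\ref{lem:FullyFaithfulPreservesStandardPairings}) I have $d_{\alpha(c)}=d_c=d_{\beta(c)}$, so the second bullet of Lemma~\ref{lem:ApproxUnitaryEq} applies. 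For a general $c=\bigoplus_i c_i$ with $c_i$ simple, I would then deduce approximate unitary equivalence of $\alpha(c)$ and $\beta(c)$ from that of the summands, using that approximate unitary equivalence is preserved under direct sums.

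\textbf{Step 3 (conclusion and translation).} Theorem~\ref{thm:Tomatsu} then gives that $\alpha$ and $\beta$ are (strongly) cocycle conjugate: there is an automorphism $\Phi\in\Aut(M)$ and a unitary monoidal natural isomorphism $\phi:\Ad(\Phi)\circ\alpha\Rightarrow\beta$, as in the diagram \eqref{eq: equiv or reps into End(M)}. Passing back through \eqref{eq:EndomorphismsEquivalentToBimodules}, the automorphism $\Phi$ corresponds to the invertible bimodule $\Psi:={}_\Phi L^2M$, and the component $\phi_c$ becomes a unitary $\psi_c:\Psi\boxtimes_M\alpha(c)\to\beta(c)\boxtimes_M\Psi$; the monoidality of $\phi$ translates exactly into the half-braiding condition \eqref{eq: half-braiding condition}. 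This yields an isomorphism $(\Psi,\psi):\alpha\to\beta$ of representations into $\Bim(M)$ in the sense of Definition~\ref{def:Cstar-representation}. Finally, I would compose $\Psi$ with the invertible Morita bimodules implementing $\Bim(R)\simeq\Bim(M)\simeq\Bim(S)$ to transport $(\Psi,\psi)$ into an isomorphism $(\Phi,\phi):\alpha\to\beta$ whose underlying invertible bimodule is a $\,{}_S$-$R$ bimodule.

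\textbf{Where the work lies.} Because the hard analytic content is already encapsulated in Theorem~\ref{thm:Tomatsu}, Corollary~\ref{cor:UFC centrally free}, and Lemma~\ref{lem:ApproxUnitaryEq}, the remaining obstacles are organisational rather than analytic: reducing the $\rm II_1$ case to $\rm II_\infty$ by stabilisation, extending approximate unitary equivalence from simple objects to all objects in the $\rm II_\infty$ case, and—most delicately—running the dictionary in Step 3 between cocycle conjugacy of endomorphism actions and isomorphism of bimodule representations, including the transport back to the possibly distinct factors $R$ and $S$. I expect that last translation to be the step requiring the most care.
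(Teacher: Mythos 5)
Your proposal is correct and follows essentially the same route as the paper's proof: reduce to a single hyperfinite factor via Morita equivalence and uniqueness of the hyperfinite ${\rm II}_\infty$ and ${\rm III}_1$ factors, pass to endomorphism actions through the equivalence \eqref{eq:EndomorphismsEquivalentToBimodules}, verify central freeness (Corollary~\ref{cor:UFC centrally free}) and approximate unitary equivalence (Lemma~\ref{lem:ApproxUnitaryEq}), and conclude with Tomatsu's Theorem~\ref{thm:Tomatsu} before translating back to bimodules. The extra details you flag (stability of approximate unitary equivalence under direct sums, and transporting the isomorphism back across the Morita equivalences to the original $R$ and $S$) are points the paper treats implicitly, and your handling of them is sound.
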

\begin{proof}
The hyperfinite $\rm II_1$ and $\rm II_\infty$ factors are Morita equivalent, so we may assume that $R$ and $S$ are either both of type $\rm II_\infty$, or both of type $\rm III_1$.
By the uniqueness of the hyperfinite $\rm II_\infty$ and $\rm III_1$ factors (\cite{MR0009096,MR0454659,MR880070,1606.03156}), we may furthermore assume without loss of generality that $R=S$.

View $\alpha$ and $\beta$ as representations $\cC\to\End(R)\cup\{0\}$ under the equivalence \eqref{eq:EndomorphismsEquivalentToBimodules}.
By Corollary \ref{cor:UFC centrally free}, $\alpha$ and $\beta$ are centrally free.
By Lemma \ref{lem:ApproxUnitaryEq}, for every $c\in \Irr(\cC)$, $\alpha(c)$ and $\beta(c)$ are approximately unitarily equivalent.
By Tomatsu's Theorem (Thm.~\ref{thm:Tomatsu}), $\alpha$ and $\beta$ are therefore strongly cocycle conjugate.
In particular, 
there exists $\Phi\in \Aut(R)$ and a unitary monoidal equivalence $\phi:\Ad(\Phi)\circ \alpha\Rightarrow \beta$, as in \eqref{eq: equiv or reps into End(M)}.
Applying the equivalence \eqref{eq:EndomorphismsEquivalentToBimodules} once again, we are finished.
\end{proof}

\subsection{Existence of representations of \texorpdfstring{$2\times 2$}{2x2} unitary fusion categories}

In \cite[\S3.1]{MR3663592}, we explained how results of Popa \cite[Thm.~3.1]{MR1334479} (see also \cite[Thm.\,4.1]{MR3028581}) 
can be used to construct a fully faithful representation $\cC\to \Bim(R)$
of a unitary fusion category $\cC$ on a hyperfinite factor $R$ which is not type ${\rm I}$ (or, more generally, any factor which tensorially absorbs the hyperfinite ${\rm II}_1$ factor).
In this section, we extend this to $2\times 2$ unitary multifusion categories.

Let $A\subset B$ be an inclusion of factors.
Recall from \cite[Def.~5.1 and 5.10]{MR3342166} that its \emph{index} $[B: A]$ is the square of the dimension of the bimodule ${}_AL^2(B)_B$. By definition, the latter is specified by the conditions \eqref{eq: d is the number which satisfies...} when ${}_AL^2(B)_B$ is dualizble, and is otherwise infinite.
By \cite[Cor.~7.14]{MR3342166}, unless $A$ and $B$ are finite dimensional, the above index is equal to the index of Longo's \emph{minimal conditional expectation} $E_0 : B \to A$ (defined in \cite[Thm.~5.5]{MR1027496}).

Let us now assume that $A\subset B$ has finite index.
In the diagrammatic calculus, denoting $A$ and $B$ by the regions
$$
\tikz[baseline=.1cm]{\draw[fill=\ROneColor, rounded corners=5, very thin, baseline=1cm] (0,0) rectangle (.5,.5);}=A
\qquad
\qquad
\tikz[baseline=.1cm]{\draw[fill=\RThreeColor, rounded corners=5, very thin, baseline=1cm] (0,0) rectangle (.5,.5);}=B,
$$
the standard solutions of the duality equations (\ref{eq: duality equations (unnormalized)}, \ref{eq: duality equations (balancing condition)})
are denoted by the shaded/unshaded cups and caps:
\begin{align*}
R
&:=\,\,\,
\begin{tikzpicture}[baseline = -.1cm]
	\filldraw[fill=\RThreeColor] (-.3,.2) arc (-180:0:.3cm);
\end{tikzpicture}
\,\,:
L^2(A) \to {}_A L^2(B) \boxtimes_B L^2(B) {}_ A
&
R^*
&:=\,\,\,
\begin{tikzpicture}[baseline = -.1cm, yscale = -1]
	\filldraw[fill=\RThreeColor] (-.3,.2) arc (-180:0:.3cm);
\end{tikzpicture}\\
S
&:=
\begin{tikzpicture}[baseline = -.1cm]
	\fill[\RThreeColor, rounded corners = 5pt] (-.5, -.3) rectangle (.5,.2);
	\fill[fill=\ROneColor] (-.3,.3) -- (-.3,.2) arc (-180:0:.3cm) -- (-.3,.2);
	\draw (-.3,.2) arc (-180:0:.3cm);
\end{tikzpicture}
:
L^2(B) \to {}_B L^2(B) \boxtimes_A L^2(B) {}_ B
&
S^*
&:=
\begin{tikzpicture}[baseline = -.1cm, yscale = -1]
	\fill[\RThreeColor, rounded corners = 5pt] (-.5, -.3) rectangle (.5,.2);
	\fill[fill=\ROneColor] (-.3,.3) -- (-.3,.2) arc (-180:0:.3cm) -- (-.3,.2);
	\draw (-.3,.2) arc (-180:0:.3cm);
\end{tikzpicture}
\end{align*}
The dimension $d=[B:A]^{1/2}$ is then specified by the balancing equations $R^*\circ R = d \id_{L^2(A)}$ and $S^* \circ S = d\id_{L^2(B)}$.

It was shown in \cite[\S5]{MR3178106} that the 
collection $\cP_\bullet=\{\cP_{n,\pm}\}$ of finite dimensional $\rm C^*$-algebras
\begin{align*}
\cP_{2n,+}  &:=  \End_{A\text{-}A}   \left( L^2(B)^{\boxtimes_{\!A} n} \right)             &
\cP_{2n+1,+}  &:=  \End_{A\text{-}B}   \left( L^2(B)^{\boxtimes_{\!A} n+1} \right)     \\
\cP_{2n,-}  &:=  \End_{B\text{-}B}   \left( L^2(B)^{\boxtimes_{\!A} n+1} \right)          &
\cP_{2n+1,-}  &:=  \End_{B\text{-}A}  \left( L^2(B)^{\boxtimes_{\!A} n+1} \right)
\end{align*}
has the structure of a $\rm C^*$-\emph{planar algebra} \cite[Def.~1.37]{math.QA/9909027}.
Note that the planar algebra structure on $\cP_\bullet$ depends on the choice of $R$ and $S$ above.
Since they were chosen to satisfy the balancing condition \eqref{eq: duality equations (balancing condition)}, the planar algebra $\cP_\bullet$ is \emph{spherical}, i.e., it satisfies
\[
\tikzmath{
\fill[\ROneColor, rounded corners=5pt] (-.5,-.8) rectangle (.8,.8);
\filldraw[fill=\RThreeColor] (0,.3) arc (180:0:.3cm) -- (.6,-.3) arc (0:-180:.3cm);
\roundNbox{fill=white}{(0,0)}{.3}{0}{0}{$f$}
}
=
\tikzmath{
\fill[\RThreeColor, rounded corners=5pt] (-.8,-.8) rectangle (.5,.8);
\filldraw[fill=\ROneColor] (0,.3) arc (0:180:.3cm) -- (-.6,-.3) arc (-180:0:.3cm);
\roundNbox{fill=white}{(0,0)}{.3}{0}{0}{$f$}
}
\qquad\qquad
\forall f \in \End_{A\text{-}B}(L^2(B)).
\]

\begin{rem}
\label{rem:DGG planar algebra is Jones for II_1 extremal}
When $A\subset B$ is a finite index $\rm II_1$ subfactor which is \emph{extremal} (i.e., the traces $\tr_{A'}$ and $\tr_B$ agree on the relative commutant $A'\cap B$),
Jones \cite[Thm.~4.2.1]{math.QA/9909027} gave another construction of a spherical $\rm C^*$-planar algebra from $A\subset B$,
commonly referred to as the \emph{standard invariant} of the subfactor.
It was shown in \cite[Proof of Thm.~5.4 and Rem.~5.5]{MR3178106} that, in this setting, the $\rm C^*$-planar algebras defined in \cite[Thm.~4.2.1]{math.QA/9909027} and in \cite[\S5]{MR3178106} are isomorphic.
\end{rem}

The construction in \cite[\S5]{MR3178106} can be applied in greater generality.
Let $\cC$ be a rigid $\rm C^*$-tensor category whose unit decomposes as  a direct sum of two simple objects $1_\cC=1_+\oplus 1_-$, and let $X= 1_+ \otimes X \otimes 1_-$ be an object that generates $\cC$ (i.e., such that every object of $\cC$ is isomorphic to a direct sum of direct summands of alternating tensor powers of $X$ and $\overline{X}$).
We may then set 
$$
\cP_{n,+}:=\Hom_\cC(1_+, (X\otimes \overline{X})^{\otimes n})
\qquad
\text{and}
\qquad
\cP_{n,-}:=\Hom_\cC(1_-, (\overline{X} \otimes X)^{\otimes n}).
$$
Using the balanced solutions of the duality equations (\ref{eq: duality equations (unnormalized)}, \ref{eq: duality equations (balancing condition)}), the graphical calculus for $\cC$
equips $\cP_\bullet = \{\cP_{n,\pm}\}$ with the structure of a spherical $\rm C^*$-planar algebra.

One can recover $\cC$ from the $\rm C^*$-planar algebra $\cP_\bullet$ in the following way.
Consider the non-idempotent complete $\Cstar$ tensor category $\cC^\circ$, whose objects are the symbols
$\scriptstyle X\otimes\overline X\otimes X\otimes\ldots \otimes X$, 
$\scriptstyle X\otimes\overline X\otimes X\otimes\ldots \otimes \overline X$, 
$\scriptstyle \overline X\otimes X\otimes\overline  X\otimes\ldots \otimes X$,
$\scriptstyle \overline X\otimes X\otimes\overline  X\otimes\ldots \otimes \overline X$, 
and whose hom-spaces are given by the $\cP_{n,\pm}$.
Then the idempotent completion of $\cC^\circ$ is $\cC$
(the obvious inclusion functor $\cC^\circ\to \cC$ exhibits $\cC$ as the idempotent completion of $\cC^\circ$).
This construction is known as the \emph{category of projections} of $\cP_\bullet$.
The generating object $X$ is the strand of the planar algebra in $\cP_{1,+}$, and the action of cups and caps from the planar algebra give balanced solutions of the duality equations.

The above two constructions are each other's inverses \cite[Thm.~C and \S4]{1808.00323} (see also~\cite{1607.06041}):

\begin{thm}
\label{thm:PlanarAlgebraTensorCategoryCorrespondence}
There is an equivalence of categories\footnote{Similarly to \cite[Lem.~3.5]{1607.06041}, the collection of pairs $(\cC,X)$ forms a 2-category which is equivalent to a 1-category.}
\[
\left\{\, 
\parbox{5cm}{\rm 
Spherical $\Cstar$-planar algebras $\cP_\bullet$ 
with each $\cP_{n,\pm}$ finite dimensional
and $\cP_{0,\pm} \cong \bbC$
}\,\left\}
\,\,\,\,\cong\,\,
\left\{\,\parbox{8.5cm}{\rm 
Pairs $(\cC, X)$ with $\cC$ a rigid $\rm C^*$ tensor category 
with $1_\cC = 1_+\oplus 1_-$ a simple decomposition
and a generator $X\in \cC$ such that $X = 1_+\otimes X \otimes 1_-$}\,\right\}.
\right.\right.
\]
\end{thm}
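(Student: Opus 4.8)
The plan is to exhibit the two constructions described above as mutually quasi-inverse functors and to check that each round-trip recovers the original data up to canonical isomorphism. Write $F$ for the functor sending a pair $(\cC,X)$ to the planar algebra with $\cP_{n,+}=\Hom_\cC(1_+,(X\otimes\overline X)^{\otimes n})$ and $\cP_{n,-}=\Hom_\cC(1_-,(\overline X\otimes X)^{\otimes n})$, with planar operations supplied by the graphical calculus of $\cC$ together with the balanced duality data $\ev_X,\coev_X$ of Lemma~\ref{lem : BDH Theorem 4.12 and Theorem 4.22}; and write $G$ for the category-of-projections functor sending a planar algebra $\cP_\bullet$ to the idempotent completion of $\cC^\circ$. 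I would verify $F\circ G\cong\id$, $G\circ F\cong\id$, and that both assignments are functorial for the appropriate morphisms (planar algebra homomorphisms on one side, generator-preserving dagger tensor functors on the other).

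For $F\circ G\cong\id$: given $\cP_\bullet$, the generator $X$ of $G(\cP_\bullet)$ is the single strand in $\cP_{1,+}$, and by construction the hom-spaces of $\cC^\circ$ between the alternating words in $X,\overline X$ are literally the $\cP_{n,\pm}$. Since idempotent completion does not alter hom-spaces between the original objects, one gets $\Hom_{G(\cP_\bullet)}(1_\pm,(X\otimes\overline X)^{\otimes n})=\cP_{n,\pm}$ on the nose; the remaining point is that the planar operations recovered from the graphical calculus of $G(\cP_\bullet)$ — multiplication, the cups and caps, and the rotation — coincide with those of $\cP_\bullet$, which holds because the duality morphisms of $G(\cP_\bullet)$ are by definition the cups and caps of $\cP_\bullet$.

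For $G\circ F\cong\id$: given $(\cC,X)$, let $\cC^\circ\subset\cC$ be the (non-idempotent-complete) full subcategory on the alternating tensor words in $X$ and $\overline X$. Using rigidity and Frobenius reciprocity, every hom-space $\Hom_\cC(w_1,w_2)$ between such words is reconstructed from the spaces $\cP_{n,\pm}$ via the planar (cup/cap) structure; thus the inclusion $\cC^\circ\hookrightarrow\cC$ is fully faithful and exhibits $\cC^\circ$ as the non-complete category underlying the category of projections of $F(\cC,X)$. Because $X$ generates $\cC$, every object of $\cC$ is a direct summand of an alternating tensor power, so the idempotent completion of $\cC^\circ$ recovers $\cC$, giving an equivalence of pairs $G(F(\cC,X))\simeq(\cC,X)$ carrying generator to generator.

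The main obstacle is establishing that $G(\cP_\bullet)$ really is a rigid $\Cstar$-tensor category rather than merely a dagger category, i.e.\ that all of the structure is present and coherent. Concretely one must: build the tensor product on $\cC^\circ$ from horizontal juxtaposition of planar tangles and verify associativity and the interchange law; derive rigidity from the cups and caps, checking that the \emph{sphericality} of $\cP_\bullet$ yields exactly the balancing condition \eqref{eq: duality equations (balancing condition)} so that these are standard conjugates in the sense of Lemma~\ref{lem : BDH Theorem 4.12 and Theorem 4.22}; and — the genuinely analytic input — use the $\Cstar$/positivity axiom of the planar algebra to equip each hom-space with a positive-definite inner product, so that idempotent completion produces a $\Cstar$-category and not just a $\bbC$-linear one. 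At the level of morphisms, a planar algebra homomorphism induces a dagger tensor functor of pairs and conversely, and the footnoted fact that such generator-preserving functors admit no nontrivial natural automorphisms collapses the a priori $2$-categorical comparison to a plain equivalence of $1$-categories; assembling these checks, together with naturality of the two round-trip isomorphisms, completes the argument (cf.~\cite{1808.00323,1607.06041}).
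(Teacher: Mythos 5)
Your proposal is sound and follows exactly the route the paper endorses: the paper itself gives no proof of this theorem, merely presenting the two constructions (the planar algebra of a pair $(\cC,X)$, and the category of projections of $\cP_\bullet$) and citing \cite[Thm.~C and \S4]{1808.00323} (see also \cite{1607.06041}) for the fact that they are mutually inverse. Your outline---verifying both round-trips, recognizing that sphericality of $\cP_\bullet$ is precisely what makes its cups and caps into balanced (standard) solutions of the duality equations \eqref{eq: duality equations (balancing condition)}, and that the $\Cstar$/positivity axiom is the analytic input needed for the category of projections to be a $\Cstar$-category rather than merely $\bbC$-linear---is a faithful reconstruction of what those cited references actually carry out.
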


To summarize, in \cite{math.QA/9909027}, 
Jones constructed a map from extremal finite index $\rm II_1$ subfactors $A\subset B$ to $\rm C^*$-planar algebras.
In \cite{MR3178106} 
(see also \cite{1808.00323}) a map was constructed from pairs $(\cC,X)$ as above to $\rm C^*$-planar algebras.
As explained in Remark~\ref{rem:DGG planar algebra is Jones for II_1 extremal}, Jones' construction factors through this second map, by taking $\cC \subset \Bim(A \oplus B)$ to be the subcategory generated by ${}_AL^2B{}_B$, and $X={}_AL^2B{}_B$.
So we have a commutative diagram.
\begin{equation}\label{eq: Subfactor--planarAlg--TensorCat}
\begin{tikzcd}
\left\{
\parbox{3cm}{\rm 
Finite index 
subfactors $A\subset B_{\phantom{\bullet}}$
}
\right\}
\arrow[r]
\arrow[dr]
&
\left\{
\parbox{3.5cm}{\rm 
Spherical $\rm C^*$-planar algebras $\cP_\bullet$
}
\right\}
\\
&
\left\{
\parbox{4.7cm}{\rm 
Rigid $\rm C^*$ tensor categories $\cC$ with generator $X$
}
\right\}
\arrow[u, leftrightarrow, swap,"\cong"]
\end{tikzcd}
\end{equation}

\begin{defn}
A $\rm C^*$-planar algebra is called \emph{finite depth} if the corresponding rigid $\rm C^*$ tensor category $\cC$
is multifusion (has finitely many isomorphism classes of simple objects).

A finite index subfactor $A\subset B$ is called \emph{finite depth} if its standard invariant is finite depth, equivalently, if only finitely many isomorphism classes of $A$-$A$ (equivalently $A$-$B$, $B$-$A$, or $B$-$B$) bimodules occur as direct summands of $\bigoplus_{n\in \bbN}L^2(B)^{\boxtimes_A n}$.
\end{defn}

\begin{rem}
A finite index, finite depth $\rm II_1$ subfactor is automatically extremal \cite[3.7.1]{MR1055708}.
\end{rem}

By \cite[Thm.~3.1]{MR1334479}, as explained in \cite[Proof of Thm.~4.3.1]{math.QA/9909027}, the horizontal map in \eqref{eq: Subfactor--planarAlg--TensorCat} is surjective.
Specifically, given a spherical $\rm C^*$-planar algebra $\cP_\bullet$, there exists an extremal ${\rm II}_1$ subfactor $A\subset B$ whose standard invariant is isomorphic to $\cP_\bullet$.
Moreover, when $\cP_\bullet$ has finite depth, then $A$ and $B$ can be taken to be hyperfinite.

\begin{thm}
\label{thm:ExistsRepresenation}
Let $R$ be any hyperfinite non type $\rm I$ factor.
Then every $2\times 2$ unitary multifusion category $\cC$ admits a fully faithful representation into $\Bim(R^{\oplus 2})$.
\end{thm}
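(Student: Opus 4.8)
The plan is to reduce the existence of a representation of a $2\times 2$ unitary multifusion category $\cC$ to the subfactor reconstruction theory assembled in diagram \eqref{eq: Subfactor--planarAlg--TensorCat}. Write $1_\cC = 1_0\oplus 1_1$ for the decomposition of the unit into simples, so that $\cC = \left(\begin{smallmatrix}\cC_0 & \cM\\ \cM^* & \cC_1\end{smallmatrix}\right)$, and choose a generating object $X = 1_0\otimes X\otimes 1_1 \in \cM$. (Such a generator exists: since $\cC$ is indecomposable and $\cM$ is an invertible bimodule, the tensor powers of $X$ and $\overline X$ exhaust all four corners — this is the only place one needs to check that a single object in the off-diagonal corner suffices, which follows from indecomposability.) Applying the construction preceding Theorem~\ref{thm:PlanarAlgebraTensorCategoryCorrespondence} to the pair $(\cC, X)$ produces a spherical $\rm C^*$-planar algebra $\cP_\bullet$, which is finite depth because $\cC$ is multifusion.

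Next I would invoke Popa's reconstruction theorem: by \cite[Thm.~3.1]{MR1334479} (as formulated after diagram \eqref{eq: Subfactor--planarAlg--TensorCat}), the finite depth spherical planar algebra $\cP_\bullet$ is realised as the standard invariant of an extremal, finite depth, finite index subfactor $A\subset B$, and moreover $A$ and $B$ can be taken hyperfinite. By the commutativity of \eqref{eq: Subfactor--planarAlg--TensorCat}, the rigid $\rm C^*$-tensor category generated by ${}_AL^2(B)_B$ inside $\Bim(A\oplus B)$, with generator $X = {}_AL^2(B)_B$, is equivalent to $(\cC, X)$. Unpacking the convention that a $2\times 2$ multifusion category is a tensor category whose unit has two simple summands, the category $\Bim(A\oplus B)$ carries a block decomposition $\left(\begin{smallmatrix}\Bim(A) & {}_A\Bim_B\\ {}_B\Bim_A & \Bim(B)\end{smallmatrix}\right)$ matching the corner structure of $\cC$, and the inclusion of the generated subcategory is a fully faithful dagger tensor functor $\cC \hookrightarrow \Bim(A\oplus B)$ (fullness is what the equivalence of Theorem~\ref{thm:PlanarAlgebraTensorCategoryCorrespondence} supplies; faithfulness is then automatic by Lemma~\ref{lem: faithfulness is automatic}).

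It remains to arrange that both factors equal a prescribed hyperfinite non-type-$\rm I$ factor $R$, so that the target becomes $\Bim(R^{\oplus 2})$ rather than $\Bim(A\oplus B)$. Since $A\subset B$ is a finite index hyperfinite subfactor, $A$ and $B$ are Morita equivalent hyperfinite factors of the same type; the hyperfinite $\rm II_1$ and $\rm II_\infty$ factors are themselves Morita equivalent, and one is free to amplify. Concretely, I would pass from $A\subset B$ to $A\otimes B(\ell^2)\subset B\otimes B(\ell^2)$ (and, if $R$ is $\rm III$, use the corresponding uniqueness of the hyperfinite $\rm III_\lambda$ factor) to identify both $A$ and $B$ with $R$; the category of bimodules is invariant under these moves, so the representation descends to a fully faithful dagger tensor functor $\cC\to\Bim(R^{\oplus 2})$.

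The main obstacle I anticipate is not any single deep input — Popa's theorem does the heavy lifting — but rather the bookkeeping that matches the abstract block structure of the $2\times 2$ multifusion category $\cC$ to the concrete block structure of $\Bim(A\oplus B)$, and the verification that one may choose the two corners' factors to both be the given $R$ while preserving full faithfulness. In particular one must confirm that the generating object lives in the correct off-diagonal corner and that the spherical structure coming from the balanced duality data of Lemma~\ref{lem : BDH Theorem 4.12 and Theorem 4.22} is exactly the spherical structure used in Jones' construction (Remark~\ref{rem:DGG planar algebra is Jones for II_1 extremal}), so that the two sides of \eqref{eq: Subfactor--planarAlg--TensorCat} genuinely agree on the nose.
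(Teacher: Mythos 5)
Your proposal follows the paper's own route up to and including the reconstruction step: pick a generator $X\in\cC_{01}$ (the paper takes $X=\bigoplus_{c\in\Irr(\cC_{01})}c$), form the finite depth spherical $\rm C^*$-planar algebra of the pair $(\cC,X)$, invoke Popa's theorem \cite[Thm.~3.1]{MR1334479} to produce a hyperfinite $\rm II_1$ subfactor $A\subset B$ whose standard invariant is this planar algebra, and use the commutativity of \eqref{eq: Subfactor--planarAlg--TensorCat} to obtain a fully faithful representation $\cC\to\Bim(A\oplus B)\cong\Bim(R_{\rm II_1}^{\oplus 2})$, where $R_{\rm II_1}$ is the hyperfinite $\rm II_1$ factor. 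Up to this point your argument is correct and essentially identical to the paper's.

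The gap is in your final step, where you pass from $R_{\rm II_1}$ to the prescribed factor $R$. Amplification, i.e.\ replacing $A\subset B$ by $A\otimes B(\ell^2)\subset B\otimes B(\ell^2)$, only moves you between types $\rm II_1$ and $\rm II_\infty$; it can never produce a type $\rm III$ factor, so there is nothing for the uniqueness theorem of hyperfinite $\rm III_\lambda$ factors to act on --- that theorem compares two factors which are already of type $\rm III_\lambda$, whereas $A\otimes B(\ell^2)$ is of type $\rm II_\infty$. Worse, the statement allows $R$ to be \emph{any} hyperfinite non type $\rm I$ factor, including type $\rm III_0$, where uniqueness fails outright (there are many non-isomorphic hyperfinite $\rm III_0$ factors), so no argument based on ``identifying'' $A$ and $B$ with $R$ via a uniqueness theorem can work in the stated generality. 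What is needed, and what the paper does, is to tensor the entire representation with $R$: the absorption $R\,\overline{\otimes}\,R_{\rm II_1}\cong R$ holds for every hyperfinite non type $\rm I$ factor $R$ \cite[Lem.~3.4]{MR3663592}, so the functor $-\,\overline{\otimes}\,R$ gives $\Bim(R_{\rm II_1}^{\oplus 2})\to\Bim\big((R_{\rm II_1}\overline{\otimes}R)^{\oplus 2}\big)\cong\Bim(R^{\oplus 2})$, and composing with the representation constructed above yields the desired fully faithful representation $\cC\to\Bim(R^{\oplus 2})$. This single move handles all types uniformly and eliminates the case analysis your sketch relies on.
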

\begin{proof}
Let $X \in \cC_{01}$ be any object which generates $\cC$ as an involutive category (i.e., such that every simple object of $\cC$ appears as a direct summand of either 
$\scriptstyle X\otimes\overline X\otimes X\otimes\ldots \otimes X$, 
$\scriptstyle X\otimes\overline X\otimes X\otimes\ldots \otimes \overline X$, 
$\scriptstyle \overline X\otimes X\otimes\overline  X\otimes\ldots \otimes X$, or
$\scriptstyle \overline X\otimes X\otimes\overline  X\otimes\ldots \otimes \overline X$).
For example, we may take $X = \bigoplus_{c\in \Irr(\cC_{01})} c$.
Let $\cP_\bullet$ be the finite depth spherical $\rm C^*$-planar algebra associated to the pair $(\cC,X)$.
By Popa's Theorem (\cite[Thm.~3.1]{MR1334479}, \cite[Proof of Thm.~4.3.1]{math.QA/9909027}),
there exists a hyperfinite $\rm II_1$ subfactor $A\subset B$ whose standard invariant is isomorphic to $\cP_\bullet$.

By the commutativity of \eqref{eq: Subfactor--planarAlg--TensorCat}, the map $X\mapsto {}_AL^2B_B$ extends to a fully faithful representation
$\cC \to \Bim(A\oplus B)\cong \Bim(R_{\rm II_1}^{\oplus 2})$,
where $R_{\rm II_1}$ denotes the hyperfinite $\rm II_1$ factor.
By \cite[Lem.~3.4]{MR3663592}, since $R$ is hyperfinite and not of type $\rm{I}$, $R\,\overline{\otimes}\, R_{\rm II_1} \cong R$.
We get the desired fully faithful representation by tensoring with $R$:
\[
\cC \longrightarrow \Bim(A\oplus B)\cong \Bim(R_{\rm II_1}^{\oplus 2}) \xrightarrow{-\,\overline{\otimes}\, R\,}\Bim(R^{\oplus 2}). \qedhere
\]
\end{proof}

\begin{cor}
\label{cor:ExistsRepresentation}
Let $\cC$ be a unitary fusion category, and let $R$ be a hyperfinite factor which is not of type ${\rm I}$.
Then there exists a fully faithful representation $\cC \to \Bim(R)$.
\end{cor}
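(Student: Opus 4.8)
The plan is to exhibit $\cC$ as a diagonal corner of a $2\times 2$ unitary multifusion category and then invoke Theorem~\ref{thm:ExistsRepresenation}. The motivation is that the reconstruction machinery of Theorem~\ref{thm:PlanarAlgebraTensorCategoryCorrespondence}, and hence Theorem~\ref{thm:ExistsRepresenation}, is phrased for categories with $1 = 1_+\oplus 1_-$ a sum of \emph{two} simple objects, whereas a fusion category has simple unit; the cheapest way to produce such a decomposition is to pass to the matrix amplification
\[
\widetilde\cC = \begin{pmatrix} \cC & \cC \\ \cC & \cC \end{pmatrix},
\]
the unitary $2\times 2$ multifusion category both of whose corners are $\cC$ and whose off-diagonal Morita equivalence $\cM = \cC$ is the regular $\cC$-bimodule. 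First I would check that $\widetilde\cC$ really is a unitary $2\times 2$ multifusion category: its unit $1_{\widetilde\cC} = 1_0\oplus 1_1$ is a sum of two simple objects, it is rigid, semisimple, has finitely many simple objects, and is indecomposable because $\cC$ has simple unit, so none of the four blocks vanishes. This is routine.

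Next I would apply Theorem~\ref{thm:ExistsRepresenation} to $\widetilde\cC$, taking the hyperfinite non type-$\rm I$ factor appearing there to be the given factor $R$. This produces a fully faithful dagger tensor functor $\alpha\colon\widetilde\cC\to\Bim(R^{\oplus 2})$. Here I would record that the proof of Theorem~\ref{thm:ExistsRepresenation} genuinely returns the \emph{same} factor $R$ in both summands: it builds a representation into $\Bim(A\oplus B)$ for hyperfinite $\rm II_1$ factors $A,B$ and then tensors with $R$, and $A\,\overline{\otimes}\,R\cong B\,\overline{\otimes}\,R\cong R$ since $A,B\cong R_{\rm II_1}$ and $R$ absorbs $R_{\rm II_1}$.

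Finally I would restrict $\alpha$ to the top-left corner. Writing $R^{\oplus 2} = R_0\oplus R_1$ with $R_0 = R_1 = R$, an object of the corner $\cC_0 = 1_0\otimes\widetilde\cC\otimes 1_0 \simeq \cC$ is sent by $\alpha$ to an $R_0$-$R_0$-bimodule, so $\alpha|_{\cC_0}$ takes values in the corner $\Bim(R_0) = \Bim(R)$ of $\Bim(R^{\oplus 2})$. Since Connes fusion of $R_0$-$R_0$-bimodules inside $\Bim(R^{\oplus 2})$ agrees with their fusion in $\Bim(R)$, the restriction $\alpha|_{\cC_0}\colon\cC\to\Bim(R)$ is again a dagger tensor functor, and it is full and faithful because for $x,y\in\cC_0$ one has $\Hom_{\widetilde\cC}(x,y) = \Hom_\cC(x,y)$ and $\Hom_{\Bim(R^{\oplus 2})}(\alpha x,\alpha y) = \Hom_{\Bim(R)}(\alpha x,\alpha y)$, which $\alpha$ identifies. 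This yields the desired fully faithful representation $\cC\to\Bim(R)$.

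Beyond this bookkeeping there is essentially no content to prove, so the corollary is genuinely easy; the only points requiring care are the observation in the second paragraph that Theorem~\ref{thm:ExistsRepresenation} returns a single factor on both summands, together with the formal fact that restricting a fully faithful dagger tensor functor on a $2\times 2$ multifusion category to a diagonal corner lands in, and is fully faithful into, the corresponding single-factor bimodule category.
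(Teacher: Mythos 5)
Your proposal is correct and follows exactly the paper's route: the paper's proof is the one-line ``apply Theorem~\ref{thm:ExistsRepresenation} to $\big(\!\begin{smallmatrix}\cC & \cC\\ \cC & \cC\end{smallmatrix}\!\big)$,'' and your argument is the same, merely spelling out the corner-restriction bookkeeping (that $\alpha(1_0)$ picks out one summand $L^2R$, that the restriction lands in $\Bim(R)$ with the correct fusion, and that full faithfulness passes to the corner) which the paper leaves implicit.
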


\begin{proof}
Apply the previous theorem to the $2\times 2$ multifusion category
$
\Big(\!\!\;\begin{smallmatrix}
\textstyle \cC & \textstyle \cC
\\[.5mm]
\textstyle \cC & \textstyle \cC
\end{smallmatrix}\!\!\;\Big)$.
\end{proof}

It should be possible to extend the above result to the case of $k\times k$ unitary multifusion categories,
but the argument is a bit tricky so, for now, we are content to make the following conjecture, which we leave to a future joint article.

\begin{conj} 
Let $\cC$ be a $k\times k$ unitary multifusion category, and let $R$ be a hyperfinite factor which is not of type ${\rm I}$.
Then there exists a fully faithful representation $\cC \to \Bim(R^{\oplus k})$.
Moreover, if $\alpha: \cC \to \Bim(R^{\oplus k})$ and $\beta: \cC \to \Bim(S^{\oplus k})$ are fully faithful representations, where $R$ and $S$ are either both of type $\rm II$ or both of type $\rm III_1$, then $\alpha$ and $\beta$ are isomorphic in the sense of Definition~\ref{def:Cstar-representation}.
\end{conj}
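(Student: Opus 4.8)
The plan is to bootstrap from the already-established $2\times 2$ case (Theorem~\ref{thm:ExistsRepresenation}) together with fusion uniqueness (Theorem~\ref{thm:UniqueRepresentation}), gluing corner by corner along a chain. Throughout, fix the decomposition $1_\cC=\bigoplus_{i=1}^k 1_i$ and use that $\cC$ indecomposable forces every $\cC_{i,i+1}\neq 0$.

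\emph{Existence.} I would realize the corners on copies of a single factor inductively. First realize the $2\times 2$ block $\left(\begin{smallmatrix}\cC_1 & \cC_{12}\\ \cC_{21} & \cC_2\end{smallmatrix}\right)$ on $\Bim(R\oplus R)$ via Theorem~\ref{thm:ExistsRepresenation}, obtaining fully faithful representations of $\cC_1$ and $\cC_2$ and a realization of the Morita bimodule $\cC_{12}$. Next realize $\left(\begin{smallmatrix}\cC_2 & \cC_{23}\\ \cC_{32} & \cC_3\end{smallmatrix}\right)$ on another copy of $\Bim(R\oplus R)$; its restriction to $\cC_2$ is, by Theorem~\ref{thm:UniqueRepresentation}, isomorphic to the representation of $\cC_2$ already chosen, so transporting along that isomorphism (which is implemented by an invertible $R$-$R$-bimodule, hence keeps the factor $R$ fixed) lets me extend the representation to $\cC_3$ and $\cC_{23}$ without changing slot $2$. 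Iterating along the chain $1,2,\dots,k$ produces fully faithful representations of every corner $\cC_i$ on a copy of $R$ together with realizations of the connecting bimodules $\cC_{i,i+1}$. The remaining off-diagonal blocks are then forced: since Morita equivalences compose, each $\cC_{ij}$ with $|i-j|>1$ is recovered from the chain $\cC_{i,i+1},\dots,\cC_{j-1,j}$, so the tensor structure determines the representation on all of $\cC$. It remains to check that this data assembles into a genuine dagger tensor functor $\cC\to\Bim(R^{\oplus k})$ and that it is fully faithful; faithfulness is automatic by Lemma~\ref{lem: faithfulness is automatic}, and fullness follows from fullness on the generating corners and connecting objects.

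\emph{Uniqueness.} As in the proof of Theorem~\ref{thm:UniqueRepresentation}, the Morita equivalence of the hyperfinite $\rm II_1$ and $\rm II_\infty$ factors and the uniqueness of the hyperfinite $\rm II_\infty$ and $\rm III_1$ factors let me assume $R=S$ is a fixed properly infinite factor. Writing $R^{\oplus k}=R_1\oplus\cdots\oplus R_k$ with each $R_i=R$, the restrictions $\alpha|_{\cC_i},\beta|_{\cC_i}:\cC_i\to\Bim(R_i)$ are fully faithful representations of the fusion category $\cC_i$, hence isomorphic by Theorem~\ref{thm:UniqueRepresentation}; choose invertible bimodules $\Phi_i$ and half-braidings $\phi^{(i)}$ implementing these. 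Set $\Phi=\bigoplus_i\Phi_i$, an invertible $R^{\oplus k}$-$R^{\oplus k}$-bimodule. It then remains to produce the off-diagonal components $\phi_c:\Phi_i\boxtimes_R\alpha(c)\to\beta(c)\boxtimes_R\Phi_j$ for $c\in\cC_{ij}$, natural in $c$ and compatible with the corner half-braidings and with fusion $\cC_{ij}\times\cC_{jl}\to\cC_{il}$. For this I would view $\alpha|_{\cC_{ij}}$ and $c\mapsto \Phi_i^{-1}\boxtimes_R\beta(c)\boxtimes_R\Phi_j$ as two module functors out of the invertible $\cC_i$-$\cC_j$-bimodule category $\cC_{ij}$ that already agree on the corners; invertibility of $\cC_{ij}$ rigidifies such module functors enough that a matching exists and is unique up to scalar, producing the desired $\phi_c$. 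Verifying the half-braiding condition \eqref{eq: half-braiding condition} across all blocks is the coherence check that completes the isomorphism $(\Phi,\phi):\alpha\to\beta$.

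The hard part, in both halves, is exactly this coherence and gluing: passing between blocks forces one to control the associativity and half-braiding constraints, and the scalar ambiguities in the corner identifications organize into a $2$-cocycle that must be trivialized. The invertibility of the bimodule categories $\cC_{ij}$ should constrain the relevant automorphisms enough to make this obstruction vanish, but doing so while arranging the simultaneous choices along the chain so that the non-tree blocks $\cC_{ij}$ with $|i-j|>1$ receive consistent data is where the real work lies — and is presumably why the authors defer the statement to a conjecture.
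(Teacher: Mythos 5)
The statement you are trying to prove is precisely the one the authors do \emph{not} prove: in the paper it appears as a conjecture, explicitly deferred to a future article because ``the argument is a bit tricky.'' So there is no proof in the paper to compare against, and your proposal must stand on its own --- which it does not, because at the two decisive points you substitute a description of the difficulty for an argument. In the existence half, after realizing the corners $\cC_i$ and the connecting bimodules $\cC_{i,i+1}$ along the chain, the claim that ``the tensor structure determines the representation on all of $\cC$'' is exactly what has to be proven: objects of $\cC_{ij}$ with $|i-j|>1$ are only direct summands of tensor products of the connecting bimodules, so extending the assignment requires splitting idempotents in $\Bim(R^{\oplus k})$ and then constructing the coherence data $\mu_{x,y}$ on \emph{all} pairs of blocks and verifying associativity, naturality, and unitarity; nothing in the chain construction guarantees these choices can be made consistently. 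Note also that when you ``transport along the isomorphism'' furnished by Theorem~\ref{thm:UniqueRepresentation} to match the two copies of $\cC_2$, that isomorphism is a pair $(\Phi,\phi)$ in which $\Phi$ is an invertible $R$-$R$-bimodule and $\phi$ satisfies the half-braiding condition only for objects of $\cC_2$; the realization of the second block is therefore determined only up to this gauge, and that ambiguity is what later obstructs the gluing.

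In the uniqueness half the gap is the same but sharper. The isomorphisms $\Phi_i$ supplied corner by corner by Theorem~\ref{thm:UniqueRepresentation} are each well defined only up to tensoring by an invertible object of the commutant category $\cC_i'$ --- not merely up to scalar --- and the existence of off-diagonal components $\phi_c$, $c\in\cC_{ij}$, satisfying \eqref{eq: half-braiding condition} across blocks is an honest obstruction problem in this gauge group. Your assertion that ``invertibility of $\cC_{ij}$ rigidifies such module functors enough that a matching exists and is unique up to scalar'' is not an argument; it is essentially a restatement of the uniqueness claim of the conjecture for the block $\cC_{ij}$. Since you yourself concede that the trivialization of the resulting cocycle and the consistency of the non-tree blocks are ``where the real work lies,'' what you have is a plausible plan of attack --- quite possibly the same plan the authors have in mind --- but not a proof.
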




\section{Bi-involutive tensor categories}
\subsection{Bi-involutive tensor categories}
\label{sec: Bi-involutive tensor categories}

An \emph{involutive} tensor category \cite{MR2861112} is a tensor category $\cC$ equipped with an anti-linear anti-tensor functor
\[
\overline{\,\cdot\,}: \cC \to \cC,\,\,\,\,\, \nu_{x,y} : \overline{x}\otimes \overline{y} \to \overline{y\otimes x},\,\,\,\,\, r: 1\to \overline{1}
\]
which squares to the identity in the sense that we are given isomorphisms $\varphi_x: x \to \overline{\overline{x}}$, natural in $x$, satisfying 
$\overline{\varphi_x} = \varphi_{\overline{x}}$,
$\varphi_1=\overline r\circ r$, and 
$\varphi_{x \otimes y}=\overline{\nu_{y,x}}\circ\nu_{\overline x,\overline y}\circ(\varphi_x\otimes\varphi_y)$.
The object $\overline x$ is called the \emph{conjugate} of $x$.

\begin{defn}\label{def of bi-involutive tensor category}
A \emph{bi-involutive} tensor category \cite[\S2.1]{MR3663592} is a dagger tensor category equipped with an involutive structure 
such that the functor $\overline{\,\cdot\,}$ is a dagger functor, and the structure isomorphisms $\nu$, $r$, $\varphi$ are unitary.

A bi-involutive functor $F: \cC \to \cD$ between bi-involutive tensor categories is a dagger tensor functor with unitary isomorphisms $\chi_x : F(\overline{x}) \to \overline{F(x)}$, natural in $x$, satisfying
\begin{align*}
\chi_{\overline x}\; &= \overline{\chi_{x}}^{-1}\circ\varphi_{F(x)}\circ F(\varphi_x)^{-1},\\
\chi_{1_\cC}\, &= \overline i\circ r_\cD\circ i^{-1}\circ F(r_\cC)^{-1},\\
\text{and}\qquad\chi_{x\otimes y}&=
\overline{\mu_{x,y}}\circ\nu_{F(y),F(x)}\circ (\chi_y\otimes\chi_x)\circ\mu_{\overline y,\overline x}^{-1}\circ F(\nu_{y,x})^{-1}.
\end{align*}
\end{defn}

\begin{ex}
The tensor category of Hilbert spaces and bounded linear maps is a bi-involutive tensor category, where the involution $\overline{\,\cdot\,}$ sends a Hilbert space to its complex conjugate.
The subcategories of finite dimensional Hilbert spaces, and of separable Hilbert spaces are similarly bi-involutive tensor categories.
\end{ex}

\begin{ex}\label{ex: bi-involutive structure on multifusion category}
Every unitary multifusion category is a bi-involutive tensor category.
More generally, any semisimple rigid $\Cstar$-tensor category is a bi-involutive tensor category.
The conjugate is described in Lemma \ref{lem : BDH Theorem 4.12 and Theorem 4.22}, and the structure map $\varphi_x$ is given by
\[
\varphi_x:=(\id\,\otimes\, \mathrm{ev}_x)\circ (\mathrm{ev}_{\overline x}^*\otimes \id):x\to \overline{\overline{x}}.
\]
By \cite[Thm.~4.22]{MR3342166}, the isomorphism $\varphi_x$ is unitary;
it is therefore also given by the formula $\varphi_x=(\mathrm{coev}_{x}^*\otimes \id)\circ (\id\,\otimes\, \mathrm{coev}_{\overline x})$.
\end{ex}

In the graphical calculus for rigid tensor categories \cite{MR2767048}, 
objects are commonly depicted by oriented strands which may bend up and down.
The strands could equally well have been chosen \emph{cooriented} given that, in the presence of an ambient orientation (an orientation of the plane in which the string diagrams are drawn), an orientation 
is equivalent to a coorientation.
However, when dealing with unitary (multi)fusion categories, or, more generally, when dealing with
with bi-involutive tensor categories, \emph{it is preferable to use coorientations}.
The involutions $f\mapsto f^*$ and $f\mapsto \overline f$ are then conveniently encoded by the reflections along the coordinate axes:
\begin{equation}\label{eq: graphical for star and bar}
\begin{tikzpicture}[baseline=-.1cm]
	\draw (0,-.9) --  (0,.9);
	\foreach \p in {0,.081,...,.9}{\path (0,-.9) -- node[sloped, rotate=-90, pos=\p, xshift=2, yshift=3]{-} (0,.9);}
	\roundNbox{unshaded}{(0,0)}{.3cm}{0}{0}{$f$};
	\node at (.2,-.92) {$\scriptstyle x$};
	\node at (.2,.92) {$\scriptstyle y$};
\end{tikzpicture}
\,\,\,\mapsto\,\,\,
\begin{tikzpicture}[baseline=-.1cm]
	\draw (0,-.9) --  (0,.9);
	\foreach \p in {0,.081,...,.9}{\path (0,-.9) -- node[sloped, rotate=-90, pos=\p, xshift=2, yshift=3]{-} (0,.9);}
	\roundNbox{unshaded}{(0,0)}{.3cm}{0}{0}{$f^*$};
	\node at (.2,-.92) {$\scriptstyle y$};
	\node at (.2,.92) {$\scriptstyle x$};
\end{tikzpicture}
\qquad\qquad\qquad
\begin{tikzpicture}[baseline=-.1cm]
	\draw (0,-.9) --  (0,.9);
	\foreach \p in {0,.081,...,.9}{\path (0,-.9) -- node[sloped, rotate=-90, pos=\p, xshift=2, yshift=3]{-} (0,.9);}
	\roundNbox{unshaded}{(0,0)}{.3cm}{0}{0}{$f$};
	\node at (.2,-.92) {$\scriptstyle x$};
	\node at (.2,.92) {$\scriptstyle y$};
\end{tikzpicture}
\,\,\,\mapsto\,\,\,
\begin{tikzpicture}[baseline=-.1cm]
	\draw (0,-.9) --  (0,.9);
	\foreach \p in {0,.081,...,.9}{\path (0,-.9) -- node[sloped, rotate=-90, pos=\p, xshift=-1.2, yshift=3]{-} (0,.9);}
	\roundNbox{unshaded}{(0,0)}{.3cm}{0}{0}{$\overline f$};
	\node at (.15,-.92) {$\scriptstyle \overline x$};
	\node at (.15,.92) {$\scriptstyle \overline y$};
\end{tikzpicture}\,\,.\!\!\!
\end{equation}
When 
$x$ and $y$ are dualizable objects in a semisimple rigid $\Cstar$-tensor category and their conjugates $\overline x$ and $\overline y$ are given by 
\eqref{eq: duality equations (unnormalized)} and \eqref{eq: duality equations (balancing condition)},
then, by \cite[(4.17)]{MR3342166}, the two involutions \eqref{eq: graphical for star and bar} are related by
\begin{equation}\label{eq: formula for f bar}
\begin{tikzpicture}[baseline=-.1cm]
	\draw (0,-.9) --  (0,.9);
	\foreach \p in {0,.081,...,.9}{\path (0,-.9) -- node[sloped, rotate=-90, pos=\p, xshift=-1.2, yshift=3]{-} (0,.9);}
	\roundNbox{unshaded}{(0,0)}{.3cm}{0}{0}{$\overline f$};
	\node at (.15,-.92) {$\scriptstyle \overline x$};
	\node at (.15,.92) {$\scriptstyle \overline y$};
\end{tikzpicture}
\,=\,
\begin{tikzpicture}[baseline=-.1cm]
	\draw (-.6,.9) -- (-.6,-.3) 
	\foreach \p in {-.005,0.12,...,.9} {node[pos=\p, xshift=-1.3, yshift=-3]{-}} 
	arc (-180:0:.3) 
	\foreach \p in {.1,.22,...,1} {node[pos=\p, sloped, rotate=-90, xshift=1.8]{-}} 
	-- (0,.3) arc (180:0:.3) 
	\foreach \p in {.1,.27,...,1} {node[pos=\p, sloped, rotate=-90, xshift=1.8]{-}}
	-- (.6,-.9) 
	\foreach \p in {0,0.125,...,.9} {node[pos=\p, xshift=-1.2, yshift=-3]{-}};
	\roundNbox{unshaded}{(0,0)}{.3cm}{0}{0}{$f^*$};
	\node at (.6+.15,-.92) {$\scriptstyle \overline x$};
	\node at (-.6+.15,.92) {$\scriptstyle \overline y$};
\end{tikzpicture}
\!=
\begin{tikzpicture}[baseline=-.1cm]
	\draw (.6,.9) -- (.6,-.3) 
	\foreach \p in {-.005,0.12,...,.9} {node[pos=\p, xshift=-1.2, yshift=-3]{-}} 
	arc (0:-180:.3) 
	\foreach \p in {.09,.25,...,1} {node[pos=\p, sloped, rotate=-90, xshift=-1.2]{-}}
	-- (0,.3) arc (0:180:.3) 
	\foreach \p in {.09,.215,...,1} {node[pos=\p, sloped, rotate=-90, xshift=-1.2]{-}} 
	-- (-.6,-.9) 
	\foreach \p in {0,0.125,...,.9} {node[pos=\p, xshift=-1.2, yshift=-3]{-}};
	\roundNbox{unshaded}{(0,0)}{.3cm}{0}{0}{$f^*$};
	\node at (-.6+.15,-.92) {$\scriptstyle \overline x$};
	\node at (.6+.15,.92) {$\scriptstyle \overline y$};
\end{tikzpicture}
\qquad
\text{and}
\qquad
\begin{tikzpicture}[baseline=-.1cm]
	\draw (0,-.9) --  (0,.9);
	\foreach \p in {0,.081,...,.9}{\path (0,-.9) -- node[sloped, rotate=-90, pos=\p, xshift=2, yshift=3]{-} (0,.9);}
	\roundNbox{unshaded}{(0,0)}{.3cm}{0}{0}{$f^*$};
	\node at (.2,-.92) {$\scriptstyle y$};
	\node at (.2,.92) {$\scriptstyle x$};
\end{tikzpicture}
\,=\,
\begin{tikzpicture}[baseline=-.1cm]
	\draw (-.6,.9) -- (-.6,-.3) 
	\foreach \p in {-.005,0.12,...,.9} {node[pos=\p, xshift=1.8, yshift=-3]{-}} 
	arc (-180:0:.3) 
	\foreach \p in {.09,.25,...,1} {node[pos=\p, sloped, rotate=-90, xshift=-1.2]{-}}
	-- (0,.3) arc (180:0:.3) 
	\foreach \p in {.09,.215,...,1} {node[pos=\p, sloped, rotate=-90, xshift=-1.2]{-}} 
	-- (.6,-.9) 
	\foreach \p in {0,0.125,...,.9} {node[pos=\p, xshift=1.8, yshift=-3]{-}};
	\roundNbox{unshaded}{(0,0)}{.3cm}{0}{0}{$\overline f$};
	\node at (.6+.22,-.92) {$\scriptstyle y$};
	\node at (-.6+.2,.92) {$\scriptstyle x$};
\end{tikzpicture}
\!=
\begin{tikzpicture}[baseline=-.1cm]
	\draw (.6,.9) -- (.6,-.3) 
	\foreach \p in {-.005,0.12,...,.9} {node[pos=\p, xshift=1.8, yshift=-3]{-}} 
	arc (0:-180:.3) 
	\foreach \p in {.1,.22,...,1} {node[pos=\p, sloped, rotate=-90, xshift=1.8]{-}} 
	-- (0,.3) arc (0:180:.3) 
	\foreach \p in {.1,.27,...,1} {node[pos=\p, sloped, rotate=-90, xshift=1.8]{-}}
	-- (-.6,-.9) 
	\foreach \p in {0,0.125,...,.9} {node[pos=\p, xshift=1.8, yshift=-3]{-}};
	\roundNbox{unshaded}{(0,0)}{.3cm}{0}{0}{$\overline f$};
	\node at (.6+.2,.92) {$\scriptstyle x$};
	\node at (-.6+.22,-.92) {$\scriptstyle y$};
\end{tikzpicture}\!.\!
\end{equation}

\subsection{The bi-involutive structure on $\Bim(R)$}
\label{sec: The bi-involutive structure on Bim(R)}

The tensor category of bimodules over a von Neumann algebra $R$ is naturally a bi-involutive tensor category.
The involution $\overline{\,\cdot\,}:\Bim(R)\to \Bim(R)$ sends a bimodule to its complex conjugate, with left and right actions given by $a\overline\xi b:=\overline{b^*\xi a^*}$.

The definition of the coherence
\[
\nu:\overline X\boxtimes_R \overline Y \to \overline{Y\boxtimes_R X}
\]
on $\Bim(R)$, and more generally on the bicategory of von Neumann algebras, involves three instances of the modular conjugation $J$. It is given by
\begin{align}
\notag
\nu\,:\,\,\, \Hom_{\;\!\text{-}R}(L^2R, \overline{X})\otimes L^2R &\otimes \Hom_{R\text{-}}(L^2R ,\overline{Y}) \\ \longrightarrow&\,\,
\overline{\Hom_{\;\!\text{-}R}(L^2R, Y)\otimes L^2R \otimes \Hom_{R\text{-}}(L^2R ,X)}
\notag
\\[1mm]
f \otimes \xi \otimes g
\,\longmapsto&\,\,
\overline{g^\star \otimes J\xi \otimes f^\star}
\label{eq:Bim(R)DefinitionOfNu}
\end{align}
where $g^\star:=\bar g\circ J$.

It was shown in \cite[\S6]{MR3342166} that when $A$ and $B$ are von Neumann algebras with atomic centers,
the dual of a dualizable bimodule ${}_AX_B$ is canonically identified with its complex conjugate ${}_B\overline X_A$.\footnote{
The results in \cite{MR3342166} are phrased in the context of von Neumann algebras with finite dimensional centers but extend verbatim to the case of von Neumann algebras with atomic centers.}
We recall the construction of standard solutions $\ev_X : \overline{X}\boxtimes_B X \to L^2B$ and $\coev_X : L^2A \to X\boxtimes_B \overline{X}$
of the conjugate equations \eqref{eq: duality equations (unnormalized)} and \eqref{eq: duality equations (balancing condition)}.

Let $B'$ denote the commutant of the right $B$-action on $X$.
The bimodule $Y:={}_AL^2(B')_{B'}$ is dualizable;
let $\widetilde{Y}$ be its dual, and let
$\mathrm{r}: L^2A \to Y \boxtimes_{B'} \widetilde{Y}$
and
$\mathrm{s}^*: \widetilde{Y} \boxtimes_A Y \to L^2(B')$
be standard solutions of the conjugate equations.
By \cite[Prop.~3.1]{MR703809}, there exists a canonical $A$-$A$ bimodule isomorphism $X\boxtimes_B \overline{X} \cong L^2(B')$.
The non-normalised minimal conditional expectation \cite[(6.8)]{MR3342166} (see also \cite{MR1172035,MR976765}) 
$$
E: B' \to A
$$ reads
$E(b) := \mathrm{r}^*\circ  (b\boxtimes_{B'} \id_{\widetilde{Y}}) \circ  \mathrm{r}\,\in\Hom_{\;\!\text{-}A}(L^2A,L^2A)=A$,
and the standard evaluation and coevaluation morphisms are given by
\begin{equation}\label{eq: standard evaluation and coevaluation morphisms}
\coev_X := 
\left(\begin{matrix}
L^2A \to L^2(B') \cong X\boxtimes_B \overline{X}\\[1mm] \textstyle\sqrt{\psi}\,\mapsto\;\!\! \sqrt{\psi\circ E}\qquad
\end{matrix}\right)
\quad\qquad \ev_X:=\coev^*_{\overline X},
\end{equation}
where $\psi$ and $\psi\circ E$ are positive elements of the preduals $L^1A$ and $L^1B'$, respectively.

\subsection{Representations of bi-involutive categories}
\label{sec: Representations of tensor categories}

In the presence of bi-involutive structures, the notion of $\Cstar$-representation (Definition~\ref{def:Cstar-representation}) can be enhanced in the following way:

\begin{defn}\label{def: rep of a bi-involutive tensor category}
Let $\cC$ be a bi-involutive tensor category.
A \emph{bi-involutive representation} of $\cC$ is a bi-involutive functor $\cC \to \Bim(R)$,
for some von Neumann algebra $R$.
\end{defn}

Given two bi-involutive representations $\alpha:\cC \to \Bim(R)$ and $\beta:\cC \to \Bim(S)$, and
a morphism $(\Phi,\phi):\alpha\to \beta$ in the sense of Definition~\ref{def:Cstar-representation},
it would be nice if we could formulate a compatibility condition with the bi-involutive structures,
i.e., a compatibility between $(\Phi,\phi)$ and the isomorphisms $\chi^\alpha$ and $\chi^\beta$.
Unfortunately, we do not know how to formulate such a condition.

However, if $\Phi$ is dualizable, then there does exist an additional condition that one can impose (the diagram
in Definition \ref{defn:IsomorphismBetweenRepresentations}), and which is not a consequence of the previous requirements.
The possibility/necessity of this further coherence was missed in our earlier paper \cite{MR3663592}.
We are thus in a position to define a notion of a dualizable morphism from $\alpha$ to $\beta$, which is not the same as that of a morphism which happens to be dualizable.
We call it a \emph{dualizable-morphism} for lack of a better name:

\begin{defn}
\label{defn:IsomorphismBetweenRepresentations}
Let $\alpha: \cC \to \Bim(R)$ and $\beta: \cC\to \Bim(S)$ be bi-involutive representations of a bi-involutive tensor category $\cC$.
A dualizable-morphism from $\alpha$ to $\beta$ is a morphism
\[
(\Phi,\phi):\alpha\to\beta
\]
of the underlying $\Cstar$-representations, where $\Phi$ is dualizable,
such that the following diagram commutes for all $c\in\cC$:
\begin{equation}\label{forgotten coherence}
\begin{tikzcd}
\Phi \boxtimes_R \alpha(\overline c)
\arrow[d, "\id_{\Phi}\boxtimes \chi_c^\alpha"]
\arrow[r, "\phi_{\overline{c}}"]
&
\beta(\overline{c})\boxtimes_S \Phi
\arrow[r, "\chi^\beta_c \boxtimes \id_{\Phi}"]
&
\overline{\beta(c)}\boxtimes_S \Phi
\\
\Phi\boxtimes_R \overline{\alpha(c)}
\arrow[d, "\id\boxtimes \ev^*_\Phi"]
&
&
\Phi\boxtimes_R \overline{\Phi}\boxtimes_S \overline{\beta(c)}\boxtimes_S \Phi
\arrow[u ,"\coev^*_\Phi \boxtimes \id"']
\\
\Phi\boxtimes_R \overline{\alpha(c)}\boxtimes_R \overline{\Phi}\boxtimes_S \Phi
\arrow[r, "\nu_{\alpha(c), \Phi}"]
&
\Phi\boxtimes_R \overline{\Phi\boxtimes_S \alpha(c)}\boxtimes_S \Phi
\arrow[r,"\overline{\phi_c}"]
&
\Phi\boxtimes_R \overline{\beta(c)\boxtimes_S \Phi}\boxtimes_S \Phi.\!
\arrow[u,"\nu^{-1}_{\Phi, \beta(c)}"']
\end{tikzcd}
\end{equation}
Here, $\ev_\Phi$ and $\coev_\Phi$ refer to the standard evaluation and coevaluation morphisms \eqref{eq: standard evaluation and coevaluation morphisms}.

Suppressing the $\nu$'s (and the associators), the coherence \eqref{forgotten coherence} is described by the following diagrammatic equation,
where the crossing on the left is $\phi_{\overline{c}}$ and the crossing on the right is~$\overline{\phi_c}$:
$$
\begin{tikzpicture}[baseline=-.4cm]
  \draw (.4,-1.5) node [right, yshift = .1cm, xshift = -.02cm] {$\scriptstyle\alpha(\overline{c})$} -- (.4,-1.3) .. controls ++(90:.5cm) and ++(270:.5cm) .. (-.4,-.3) node [left, yshift = -.25cm, xshift = .03cm] {$\scriptstyle\beta(\overline{c})$} -- (-.4,.7) node [left, yshift = .05cm, xshift = .03cm] {$\scriptstyle\overline{\beta(c)}$};  
  \draw[super thick, white] (-.4,-1.5) -- (-.4,-1.3) .. controls ++(90:.5cm) and ++(270:.5cm) .. (.4,-.3) -- (.4,.7);  
  \draw (-.4,-1.5) node [left, yshift = .12cm, xshift = .02cm] {$\scriptstyle\Phi$} -- (-.4,-1.3) .. controls ++(90:.5cm) and ++(270:.5cm) .. (.4,-.3) -- (.4,.7);  
  \roundNbox{unshaded}{(-.4,0)}{.27}{0}{0}{$\scriptstyle {}_{\phantom{i}}\chi^\beta_{c\phantom{\beta}}$};
\end{tikzpicture}
=\,
\begin{tikzpicture}[baseline=.4cm]
  \draw (-.4,-.7) node [right, yshift = .1cm, xshift = -.03cm] {$\scriptstyle\alpha(\overline{c})$} -- (-.4,.3) node [left, yshift = .35cm, xshift = .2cm] {$\scriptscriptstyle\overline{\alpha(c)}$} .. controls ++(90:.5cm) and ++(270:.5cm) .. (.4,1.3) -- (.4,1.7) node [left, yshift = -.35cm, xshift = .07cm] {$\scriptscriptstyle\overline{\beta(c)}$};  
  \draw[super thick, white] (1,1.7) -- (1,.3) arc (0:-180:.3cm) -- (.4,.3) .. controls ++(90:.5cm) and ++(270:.5cm) .. (-.4,1.3) arc (0:180:.3cm) -- (-1,-.7);  
  \draw (1,1.7) node [right, yshift = -.12cm, xshift = -.02cm] {$\scriptstyle\Phi$} -- (1,.3) arc (0:-180:.3cm) -- (.4,.3) node [right, yshift=.1cm, xshift=-.05cm] {$\scriptstyle\overline{\Phi}$} .. controls ++(90:.5cm) and ++(270:.5cm) .. (-.4,1.3) arc (0:180:.3cm) -- (-1,-.7) node [left, yshift = .12cm, xshift = .02cm] {$\scriptstyle\Phi$} ;  
  \roundNbox{unshaded}{(-.4,0)}{.27}{0}{0}{$\scriptstyle \chi^\alpha_c$};
\end{tikzpicture}
$$

\noindent
Note that, when $\cC$ is rigid, that condition is equivalent to 
\begin{equation}\label{eq: when C is rigid, that condition is equivalent to...}
\begin{tikzpicture}[baseline=-.4cm]
  \draw (.4,-1.5) node [right, yshift = .1cm, xshift = -.02cm] {$\scriptstyle\alpha(\overline{c})$} -- (.4,-1.3) .. controls ++(90:.5cm) and ++(270:.5cm) .. (-.4,-.3) node [left, yshift = -.25cm, xshift = .03cm] {$\scriptstyle\beta(\overline{c})$} -- (-.4,.7) node [left, yshift = .05cm, xshift = .03cm] {$\scriptstyle\overline{\beta(c)}$};  
  \draw[super thick, white] (-.4,-1.5) -- (-.4,-1.3) .. controls ++(90:.5cm) and ++(270:.5cm) .. (.4,-.3) -- (.4,.7);  
  \draw (-.4,-1.5) node [left, yshift = .12cm, xshift = .02cm] {$\scriptstyle\Phi$} -- (-.4,-1.3) .. controls ++(90:.5cm) and ++(270:.5cm) .. (.4,-.3) -- (.4,.7);  
  \roundNbox{unshaded}{(-.4,0)}{.27}{0}{0}{$\scriptstyle {}_{\phantom{i}}\chi^\beta_{c\phantom{\beta}}$};
\end{tikzpicture}
=\,
\begin{tikzpicture}[baseline=.4cm, xscale=-1]
  \draw (1,1.7) node [left, yshift = -.14cm, xshift = .00cm] {$\scriptstyle\overline{\beta(c)}$} -- (1,.3) arc (0:-180:.3cm) -- (.4,.3)node [left, yshift = .35cm, xshift = .2cm, scale=1.1] {$\scriptscriptstyle \beta(c)$} .. controls ++(90:.5cm) and ++(270:.5cm) .. (-.4,1.3)node [left, yshift = .0cm, xshift = .1cm, scale=1.1] {$\scriptscriptstyle\alpha(c)$} arc (0:180:.3cm) node [right, yshift = -.6cm, xshift = -.02cm] {$\scriptstyle\overline{\alpha(c)}$} -- (-1,-.7) node [right, yshift = .12cm, xshift = -.02cm] {$\scriptstyle\alpha(\overline{c})$} ;  
  \draw[super thick, white] (-.4,-.7) -- (-.4,.3) .. controls ++(90:.5cm) and ++(270:.5cm) .. (.4,1.3) -- (.4,1.7);  
  \draw (-.4,-.7) node [left, yshift = .1cm, xshift = .03cm] {$\scriptstyle\Phi$} -- (-.4,.3) .. controls ++(90:.5cm) and ++(270:.5cm) .. (.4,1.3) -- (.4,1.7);  
  \roundNbox{unshaded}{(-1,0)}{.27}{0}{0}{$\scriptstyle \chi^\alpha_c$};
\end{tikzpicture},
\end{equation}
where the crossing on the right is $\phi_c^{-1}$.
The latter condition makes sense even when $\Phi$ is not dualizable.
\end{defn}

When $\Phi$ is invertible, the maps $\phi$ in Definition~\ref{def:Cstar-representation} can be re-expressed as a unitary monoidal natural isomorphism
\begin{equation}\label{forgotten coherence - invertible case - A}
\begin{matrix}\begin{tikzpicture}
\node (1) at (-2,0) {$\cC$};
\node[inner ysep=2] (2) at (1,.7) {$\Bim(R)$};
\node[inner ysep=2] (3) at (1,-.7) {$\Bim(S)$};
\draw[->] ($(1.east)+(0,.07)$) --node[above]{$\scriptstyle\alpha$} ($(2.west)+(0,-.12)$);
\draw[->] ($(1.east)+(0,-.07)$) --node[below]{$\scriptstyle\beta$} ($(3.west)+(0,.12)$);
\draw[->] (2) --node[right]{$\scriptstyle\Ad(\Phi)$} (3);
\node[rotate=55] at (.3,0) {$\Leftarrow$};
\node at (-.05,0) {$\scriptstyle\phi$};
\end{tikzpicture}\end{matrix},
\end{equation}
in which case the coherence \eqref{forgotten coherence} becomes easier to display:
\begin{equation}\label{forgotten coherence - invertible case}
\begin{tikzcd}
\Phi \boxtimes_R \alpha(\overline c) \boxtimes_R \overline \Phi
\ar[rr, "\phi_{\overline c}"]
\ar[d, "\id \boxtimes\chi_c^{\alpha}\boxtimes \id"]
&&
\beta(\overline c)
\ar[d, "\chi_c^{\beta}"]
\\
\Phi \boxtimes_R \overline {\alpha(c)} \boxtimes_R \overline \Phi
\ar[r]&
\overline{\Phi \boxtimes_R \alpha(c) \boxtimes_R \overline \Phi}
\ar[r, "\overline{ \phi_c}"]
&
\overline {\beta(c)\,.\!\!}
\end{tikzcd}
\end{equation}


\section{Positive structures}
\label{sec: Positive structures and bicommutant categories}

As explained in Example~\ref{ex: bi-involutive structure on multifusion category}, every rigid $\Cstar$-tensor category $\cC$ has a canonical bi-involutive structure $x\mapsto \bar x$; 
it is furthermore equipped with distinguished evaluation and coevaluation morphisms $\ev_x:\bar x\otimes x\to 1$ and $\coev_x:1\to x\otimes \bar x$.
However, if we just start with $\cC$ as a bi-involutive tensor category (i.e., if the involution $x\mapsto \overline x$ is provided as part of the data, as opposed to constructed from the $\Cstar$-tensor structure), then
there is no way of knowing which maps $\bar x\otimes x\to 1$ and $1\to x\otimes \bar x$ to call the evaluation and coevaluation morphisms,
even when $\cC$ is unitary fusion.

A \emph{positive structure} on $\cC$ determines those morphisms up to positive scalar (when $x$ is irreducible).
For bi-involutive tensor $\Cstar$-categories which are not rigid, such as $\Bim(R)$, then the evaluation and coevaluation morphisms typically fail to exist. 
But the notion of positive structure is still meaningful.


\subsection{Positive structures on bi-involutive tensor categories}
\label{sec:PositiveStructures}

Let $\cC$ be a bi-involutive $\Cstar$-tensor category.

\begin{defn}\label{def positive structure on bi-involutive tensor category}
A \emph{positive structure} on  $\cC$ is a collection of subsets 
\[
\qquad\quad \cP_{a,b}\,\subset\,\Hom_\cC(a\otimes\bar a, b\otimes\bar b)\qquad \text{for every}\,\, a,b\in\cC
\]
called \emph{cp maps}\footnote{The letters \emph{cp} stands for ``completely positive''.
We warn the reader that positive maps $a\otimes\bar a\to a\otimes\bar a$ (i.e., maps which can be written as $f^*{\circ} f$) are typically not cp.}
that satisfy the following axioms:
\begin{itemize}
\item
$0\in \cP_{a,b}$
and
$\id_{a\otimes\bar a}\in \cP_{a,a}$, 
\item
the $\cP_{a,b}$ are closed under addition, positive scaling, composition, and adjoints, 
\item
(cp maps are real)
every cp map $\theta\in \cP_{a,b}$ satisfies $\bar\theta=\theta$, i.e., the following diagram commutes
\begin{equation}\label{eq: meaning of phi = phi bar}
\begin{tikzcd}
a\otimes \overline{a}
\ar[d, "\tilde\nu"]
\ar[r, "\theta"]
&
b\otimes\overline{b} \ar[d, "\tilde\nu"]
\\
\overline{a\otimes\overline{a}} 
\ar[r, "\overline{\theta}"]
&
\overline{b\otimes\overline b}
\end{tikzcd}
\end{equation}
where $\tilde \nu := \nu\circ (\varphi\boxtimes \id)$, and
\item
(cp maps are closed under amplification)
for every cp map $\theta$ and every morphism $f$ the map $f\otimes\theta\otimes\bar f$ is also cp.
\end{itemize}
\end{defn}

\begin{ex}
\label{ex:PositiveStructureOnHilb}
The category of Hilbert spaces is equipped with a canonical positive structure, by declaring a linear map $\theta:H\otimes \overline H\to K\otimes \overline K$ to be cp if for every Hilbert space $L$, the map
\[
\id_L\otimes\,\theta\,\otimes\,\id_{\bar L}:\cJ_2(L\otimes H)\cong L\otimes H\otimes \overline H\otimes \overline L\rightarrow
L\otimes K\otimes \overline K\otimes \overline L\cong \cJ_2(L\otimes K)
\]
sends positive Hilbert-Schmidt operators to positive Hilbert-Schmidt operators.
Here, $\cJ_2(H) := H\otimes \overline H\subset B(H)$ denotes the ideal of Hilbert-Schmidt operators.
\end{ex}

More generally, we have:

\begin{ex}
\label{ex:PosiviteStructureOnBim(M)}
For every von Neumann algebra $M$, the bi-involutive tensor category $\Bim(M)$ has a canonical positive structure, explained in more generality in Section \ref{ex:PosiviteStructureOnBim} below.
\end{ex}


If $\cC$ is a bi-involutive $\Cstar$-tensor category equipped with a positive structure $\cP$,
then one can form a new category whose objects are in bijection with those of $\cC$, and whose hom-sets are the cones $\cP_{a,b}$.
If $a$ is dualizable with dual $\overline a$, then the cones $\cP_{a,b}$ satisfy a version of Frobenius reciprocity:

\begin{lem}\label{lem: one sided Frobenius reciprocity in C_+}
Let $\cC$ be a bi-involutive $\Cstar$-tensor category equipped with a positive structure,
and let $a,b,c\in\cC$ be objects.
If $a$ is dualizable in $\cC$ and satisfies {\rm$\overline a\cong\! {a\!\!\;\text{\v{}}}$}\!,
then we have an isomorphism
\begin{equation}\label{eq: Frobenius reciprocity for P}
\cP_{a\otimes b,c}\stackrel{\cong}\longrightarrow \cP_{b,\overline{a}\otimes c}
\end{equation}
given by $\theta \mapsto (\id_{\bar a} \otimes\, \theta \otimes \id_a)\circ (\ev_a^* \otimes \id_{b\otimes \overline{b}}\otimes\, \overline{\ev_a^*}\,)$.
\end{lem}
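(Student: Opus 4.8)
\emph{Plan of proof.} The plan is to produce an explicit inverse to the assignment~\eqref{eq: Frobenius reciprocity for P} and to check that both it and its inverse preserve the property of being a cp map; bijectivity will then follow from the duality equations. As throughout the paper, I suppress the coherence isomorphisms $\nu$ and $\varphi$, so that $\overline{\overline a}=a$, $\overline{\id_{\bar a}}=\id_a$, and the conjugate of $\overline a\otimes a$ reads $\overline a\otimes a$. Since $a$ is dualizable with $\overline a$ isomorphic to its dual, I fix morphisms $\ev_a\colon\overline a\otimes a\to 1$ and $\coev_a\colon 1\to a\otimes\overline a$ satisfying the duality equations~\eqref{eq: duality equations (unnormalized)}, write $\Phi$ for the assignment~\eqref{eq: Frobenius reciprocity for P}, and introduce the candidate inverse $\Psi\colon\cP_{b,\overline a\otimes c}\to\cP_{a\otimes b,c}$, $\sigma\mapsto(\coev_a^*\otimes\id_{c\otimes\bar c}\otimes\overline{\coev_a^*})\circ(\id_a\otimes\sigma\otimes\id_{\bar a})$.

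First I would check that $\Phi$ and $\Psi$ take cp maps to cp maps, using only the amplification axiom and closure under composition from Definition~\ref{def positive structure on bi-involutive tensor category}. For $\Phi$, I read $\Phi(\theta)$ as a composite of two amplifications: with $f=\ev_a^*\colon 1\to\overline a\otimes a$ and the cp map $\id_{b\otimes\bar b}\in\cP_{b,b}$, the amplification $f\otimes\id_{b\otimes\bar b}\otimes\bar f=\ev_a^*\otimes\id_{b\otimes\bar b}\otimes\overline{\ev_a^*}$ lies in $\cP_{b,\overline a\otimes a\otimes b}$; while with $f=\id_{\bar a}$ (so $\bar f=\id_a$) the amplification of $\theta\in\cP_{a\otimes b,c}$ gives $\id_{\bar a}\otimes\theta\otimes\id_a\in\cP_{\overline a\otimes a\otimes b,\overline a\otimes c}$. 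Composing the two yields $\Phi(\theta)\in\cP_{b,\overline a\otimes c}$. The same argument, now with $f=\coev_a^*\colon a\otimes\overline a\to 1$ and $f=\id_a$, shows $\Psi$ lands in $\cP_{a\otimes b,c}$. Since both assignments are built from tensoring with fixed morphisms and pre/post-composition, they are additive and positively homogeneous, hence morphisms of cones.

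Next I would show $\Psi\circ\Phi=\id$ and $\Phi\circ\Psi=\id$. Unwinding $\Psi(\Phi(\theta))$ and using functoriality of $\otimes$, the two $a$-coloured strands created by $\ev_a^*$ and annihilated by $\coev_a^*$ assemble into $(\coev_a^*\otimes\id_a)\circ(\id_a\otimes\ev_a^*)$, the adjoint of the first equation in~\eqref{eq: duality equations (unnormalized)}, hence equal to $\id_a$; the mirror $\overline a$-strands give the conjugate of this identity, equal to $\id_{\bar a}$. Thus $\Psi(\Phi(\theta))=\theta$. Symmetrically, unwinding $\Phi(\Psi(\sigma))$ produces $(\id_{\bar a}\otimes\coev_a^*)\circ(\ev_a^*\otimes\id_{\bar a})$, the adjoint of the second equation in~\eqref{eq: duality equations (unnormalized)}, equal to $\id_{\bar a}$, together with its conjugate; hence $\Phi(\Psi(\sigma))=\sigma$. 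Therefore $\Phi$ is a bijection, and an isomorphism of cones.

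The main obstacle I anticipate is purely bookkeeping: one must keep careful track of the suppressed unitaries $\nu$ and $\varphi$ so that $\overline{\ev_a^*}$ and $\overline{\coev_a^*}$ are correctly identified with the conjugated duality morphisms, and so that each conjugated zigzag genuinely reads as $\id_{\bar a}$ rather than merely as $\overline{\id_a}$. Once these identifications are pinned down, every step reduces to the amplification axiom, closure under composition, and the two duality equations~\eqref{eq: duality equations (unnormalized)}.
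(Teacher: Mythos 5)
Your proof is correct and takes essentially the same route as the paper: the paper's entire proof consists of exhibiting the very same inverse map $\sigma \mapsto (\coev_a^*\otimes\id_{c\otimes\bar c}\otimes\overline{\coev_a^*})\circ(\id_a\otimes\sigma\otimes\id_{\bar a})$, leaving the remaining checks to the reader. The details you supply — well-definedness on the cones via the amplification and composition axioms, and mutual inversion via the adjointed duality equations together with their conjugates — are exactly the routine verifications the paper omits.
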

\begin{proof}
The inverse map sends
$\theta\in \cP_{b,\overline{a}\otimes c}$ to
$(\coev^*_a \otimes \id_{c\otimes \overline{c}} \otimes\, \overline{\coev^*_a})\circ
(\id_a \otimes\, \theta \otimes \id_{\overline{a}})$.
\end{proof}

Note that the isomorphism \eqref{eq: Frobenius reciprocity for P} does depend on the choice of identification $\overline a\cong\! {a\!\!\;\text{\v{}}}$.

\subsection{Positive structure on the 2-category of von Neumann algebras}
\label{ex:PosiviteStructureOnBim}

In this section, we describe the canonical positive structure on the bi-involutive 2-category $\vN$ of von Neumann algebras.
One recovers the positive structure mentioned in Example \ref{ex:PosiviteStructureOnBim(M)} by restricting to the full sub 2-category whose only object is $M$
(and, by taking $M=\bbC$, one obtains Example \ref{ex:PositiveStructureOnHilb}).

Let $A$ and $B$ be von Neumann algebras.
For ${}_AX_B$ a bimodule, and $n\in \bbN$, we define the cone
\[
\mathbf{P}_{X,n}\,\subset\,\bbC^n \otimes {}_AX \boxtimes_B \overline{X}_A \otimes \bbC^n=X^{\oplus n}\boxtimes_B \overline X^{\oplus n}
\]
to be the closed positive span of vectors of the form
\[
g \otimes \xi \otimes g^\star
\in
\Hom_{\;\!\text{-}B}(L^2B, X^{\oplus n}) \otimes_B L^2B \otimes_B \Hom_{B\text{-}}(L^2B, \overline{X}^{\oplus n})
\subset X^{\oplus n}\boxtimes_B \overline X^{\oplus n},
\]
where $g\in\Hom_{\;\!\text{-}B}(L^2B, X^{\oplus n})$, $\xi \in L^2_+B$, and $g^\star=\bar g\circ J$.

\begin{defn}\label{def: cp morphisms in Bim(R)}
Let $A$, $B_1$, and $B_2$ be von Neumann algebras, and let ${}_A X_{B_1}$ and ${}_A Y_{B_2}$ be bimodules.
A map
\[
\theta : {}_A X \boxtimes_{B_1} \overline{X}_A \to {}_A Y\boxtimes_{B_2} \overline{Y}_A
\]
is called cp if 
$\id_{\bbC^n} \otimes \theta \otimes \id_{\bbC^n}$ sends $\mathbf{P}_{X,n}$ to $\mathbf{P}_{Y,n}$ for every $n\in \bbN$.
The collection of all cp maps $\theta$ as above
is denoted $\cP_{X,Y}$.
\end{defn}

We claim that the cones $\cP_{X,Y}\subset \Hom_{A\text{-}A}(X\boxtimes_{B_1} \overline{X}, Y\boxtimes_{B_2} \overline{Y})$ equip $\vN$ with a positive structure in the sense of Definition~\ref{def positive structure on bi-involutive tensor category}.
It is clear from the definition that $\id_{X\boxtimes_{B_1} \overline{X}}\in \cP_{X,X}$, and that cp maps are closed under addition and composition.
The fact that $(\cP_{X,Y})^*=\cP_{Y,X}$ is a consequence of the cones $\mathbf{P}_{X,n}$ and $\mathbf{P}_{Y,n}$ being self-dual \cite[Prop.~3.1]{MR703809}.
The remaining two axioms are checked in Propositions~\ref{prop: check positive structure (1)} and~\ref{prop: check positive structure (2)} below.

\begin{prop}\label{prop: check positive structure (1)}
Let ${}_AX_{B_1}, {}_AY_{B_2} \in \vN$ be bimodules.
Then for every cp map $\theta \in \cP_{X,Y}$, we have $\bar \theta = \theta$ (i.e., the diagram \eqref{eq: meaning of phi = phi bar} commutes).
\end{prop}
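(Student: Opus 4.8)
The plan is to rephrase the reality condition \eqref{eq: meaning of phi = phi bar} as an intertwining relation between a pair of antilinear maps attached to the two cones, and then to exploit the fact that $\theta$ preserves the cones. For any bimodule $M$ write $c_M\colon M\to\overline M$, $\xi\mapsto\overline\xi$, for the tautological antilinear isomorphism, so that by definition of the conjugate functor $\overline\theta=c_{Y\boxtimes_{B_2}\overline Y}\circ\theta\circ c_{X\boxtimes_{B_1}\overline X}^{-1}$. Introduce the bounded antilinear endomorphisms
\[
\Theta_X:=c_{X\boxtimes_{B_1}\overline X}^{-1}\circ\tilde\nu_X\colon X\boxtimes_{B_1}\overline X\to X\boxtimes_{B_1}\overline X,
\qquad
\Theta_Y:=c_{Y\boxtimes_{B_2}\overline Y}^{-1}\circ\tilde\nu_Y,
\]
where $\tilde\nu_X=\nu\circ(\varphi\boxtimes\id)$ as in Definition~\ref{def positive structure on bi-involutive tensor category}. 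A one-line diagram chase, multiplying the identity $\tilde\nu_Y\circ\theta=\overline\theta\circ\tilde\nu_X$ on the left by $c_{Y\boxtimes_{B_2}\overline Y}^{-1}$ and substituting the formula for $\overline\theta$, shows that \eqref{eq: meaning of phi = phi bar} is equivalent to the intertwining relation $\Theta_Y\circ\theta=\theta\circ\Theta_X$. So it suffices to prove this relation.

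Next I would evaluate $\Theta_X$ on the generators of the cone $\mathbf{P}_{X,1}$. Applying $\varphi\boxtimes\id$ and then the explicit formula \eqref{eq:Bim(R)DefinitionOfNu} for $\nu$ to a generator $g\otimes\xi\otimes g^\star$, and using $g^\star=\bar g\circ J$ together with $J^2=\id$, the three modular conjugations appearing in $\nu$, the conjugations contributed by $\varphi$, and those hidden in the $\star$-operation all cancel, leaving
\[
\Theta_X\bigl(g\otimes\xi\otimes g^\star\bigr)=g\otimes J\xi\otimes g^\star .
\]
This is the computational heart of the argument, and it is the step I expect to be the main obstacle: one must track the interplay of the $J$'s, of $\varphi$, and of $\star$ carefully in the three-completions description of the Connes fusion. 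The key payoff is that the generating vectors of $\mathbf{P}_{X,1}$ have $\xi\in L^2_+B_1$, and the modular conjugation fixes the positive cone, so $J\xi=\xi$. Hence $\Theta_X$ fixes every generator of $\mathbf{P}_{X,1}$, and, being bounded and antilinear, fixes the closed cone $\mathbf{P}_{X,1}$ pointwise; the identical computation gives that $\Theta_Y$ fixes $\mathbf{P}_{Y,1}$ pointwise.

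To conclude, take $n=1$ in Definition~\ref{def: cp morphisms in Bim(R)}: since $\theta\in\cP_{X,Y}$, it maps $\mathbf{P}_{X,1}$ into $\mathbf{P}_{Y,1}$. For a generator $v\in\mathbf{P}_{X,1}$ we then have $\theta(\Theta_X v)=\theta(v)$ because $\Theta_X v=v$, and $\Theta_Y(\theta v)=\theta v$ because $\theta v\in\mathbf{P}_{Y,1}$ is $\Theta_Y$-fixed; thus the two bounded antilinear maps $\Theta_Y\circ\theta$ and $\theta\circ\Theta_X$ agree on all generators of $\mathbf{P}_{X,1}$. Finally, the complex-linear span of these generators is dense in $X\boxtimes_{B_1}\overline X$: polarizing the quadratic expression $g\mapsto g\otimes\xi\otimes g^\star$ in $g$ recovers all vectors $g\otimes\xi\otimes h^\star$, and since $L^2_+B_1$ spans $L^2B_1$ over $\bbC$ these are dense by the three-completions description (consistent with the self-duality of $\mathbf{P}_{X,1}$ from \cite[Prop.~3.1]{MR703809}). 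Two bounded antilinear maps agreeing on a set with dense complex span coincide, so $\Theta_Y\circ\theta=\theta\circ\Theta_X$, which by the first paragraph is exactly the commutativity of \eqref{eq: meaning of phi = phi bar}.
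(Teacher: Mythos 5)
Your proof is correct and takes essentially the same route as the paper's: both reduce to the cone generators $g\otimes\xi\otimes g^\star$ via the spanning property of $\mathbf{P}_X$ from \cite[Prop.~3.1]{MR703809}, use $J\xi=\xi$ for $\xi\in L^2_+B$ to see that $\tilde\nu$ acts as the tautological conjugation on the cone, and invoke the cp hypothesis (with $n=1$) to transfer this to the target cone $\mathbf{P}_{Y,1}$. Your reformulation of the reality condition as the intertwining relation $\Theta_Y\circ\theta=\theta\circ\Theta_X$ for antilinear operators is just a notational repackaging of the paper's direct chain $\overline\theta(\tilde\nu(\eta))=\overline\theta(\overline\eta)=\overline{\theta(\eta)}=\tilde\nu(\theta(\eta))$.
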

\begin{proof}
Since $\mathbf{P}_X$ linearly spans ${}_AX\boxtimes_{B_1}\overline{X}_A$ \cite[Prop.~3.1]{MR703809}, and since all morphisms are $\bbC$-linear, it is enough to check that this diagram commutes when applied to vectors $\eta:=f\otimes \xi\otimes f^\star \in\mathbf{P}_X$.
By Definition \eqref{eq:Bim(R)DefinitionOfNu}, $\tilde\nu(\eta)=\overline\eta$.
Hence
\begin{equation*}
\overline\theta(\tilde\nu(\eta))=\overline\theta(\overline\eta)=
\overline{\theta(\eta)}=\tilde\nu(\theta(\eta)).
\qedhere
\end{equation*}
\end{proof}

\begin{lem}
\label{lem:AmplificationPreservesCP}
Let ${}_AX_{B_1}, {}_AY_{B_2} \in \vN$ be bimodules, and let $\theta \in \Hom_{A\text{-}A}(X\boxtimes_{B_1} \overline X, Y\boxtimes_{B_2} \overline Y)$. 
Then
\[
\theta\in \cP_{X,Y}
\qquad
\Leftrightarrow
\qquad
\id_{\ell^2}\otimes\, \theta\otimes \id_{\ell^2}\in\cP_{\ell^2\otimes X,\ell^2\otimes Y}.
\]
\end{lem}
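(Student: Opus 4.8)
The plan is to prove both implications by directly manipulating the defining cones $\mathbf{P}_{\bullet,n}$, exploiting that the cp condition is tested only against the finite amplifications $\id_{\bbC^n}\otimes(-)\otimes\id_{\bbC^n}$. Throughout I would write $\Theta := \id_{\ell^2}\otimes\theta\otimes\id_{\ell^2}$ and use the identification $(\ell^2\otimes X)^{\oplus n}\cong (\bbC^n\otimes\ell^2)\otimes X$, so that $\id_{\bbC^n}\otimes\Theta\otimes\id_{\bbC^n}$ is simply $\theta$ amplified by the separable Hilbert space $K := \bbC^n\otimes\ell^2$, and $\mathbf{P}_{\ell^2\otimes X,n}$ is the closed positive span of the vectors $g\otimes\xi\otimes g^\star$ with $g\in\Hom_{\text{-}B_1}(L^2B_1,K\otimes X)$. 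The one routine fact I would invoke repeatedly is that the operation $g\mapsto g^\star=\overline{g}\circ J$ commutes with post-composition by (the conjugate of) any bounded operator on the external Hilbert-space leg; consequently, applying a bounded map $T$ to that leg sends a generator $g\otimes\xi\otimes g^\star$ to $(Tg)\otimes\xi\otimes(Tg)^\star$.

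For the implication ``$\Leftarrow$'', I would compress by the isometry $w=|e_0\rangle\otimes\id_X\colon X\to\ell^2\otimes X$ (with $e_0\in\ell^2$ a unit vector, and likewise $w_Y$ for $Y$), which satisfies the algebraic identity $\theta=(w_Y^*\otimes\overline{w_Y}^*)\circ\Theta\circ(w\otimes\overline{w})$. Fixing $n$ and a generator $g\otimes\xi\otimes g^\star\in\mathbf{P}_{X,n}$, the inclusion $\id_{\bbC^n}\otimes w$ sends it to a generator of $\mathbf{P}_{\ell^2\otimes X,n}$; the hypothesis $\Theta\in\cP_{\ell^2\otimes X,\ell^2\otimes Y}$ carries this into $\mathbf{P}_{\ell^2\otimes Y,n}$; and compression by $\id_{\bbC^n}\otimes w_Y^*$ sends generators of $\mathbf{P}_{\ell^2\otimes Y,n}$ to generators of $\mathbf{P}_{Y,n}$. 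As all three maps are bounded and linear, and the first and last send generators to generators by the routine fact above, the composite $\id_{\bbC^n}\otimes\theta\otimes\id_{\bbC^n}$ sends $\mathbf{P}_{X,n}$ into $\mathbf{P}_{Y,n}$, which is exactly $\theta\in\cP_{X,Y}$.

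The implication ``$\Rightarrow$'' is the substantive one, and its subtlety is the heart of the lemma. Fix $n$, set $K=\bbC^n\otimes\ell^2$, and take a generator $g\otimes\xi\otimes g^\star\in\mathbf{P}_{\ell^2\otimes X,n}$ with $g\colon L^2B_1\to K\otimes X$; write $v:=(\id_K\otimes\theta\otimes\id_{\overline K})(g\otimes\xi\otimes g^\star)$ for its image. The naive idea of approximating $g$ in norm by the truncations $g^{(m)}:=(\Pi_m\otimes\id_X)g$, where $\Pi_m\uparrow\id_K$ is an increasing sequence of finite-rank projections, fails, because $\Pi_m\otimes\id\to\id$ only strongly and not in norm when $g$ has infinite rank. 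The fix is to truncate the image rather than the operator: set $Q_m:=\Pi_m\otimes\id\otimes\overline{\Pi_m}$, an increasing sequence of projections with supremum the identity, so that $Q_m v\to v$ in norm for the fixed vector $v$. By the compatibility of the $\star$-operation with the external leg, one checks $Q_m v=(\id\otimes\theta\otimes\id)\bigl(g^{(m)}\otimes\xi\otimes (g^{(m)})^\star\bigr)$; since $g^{(m)}$ has finite rank, this is $\theta$ amplified by a finite-dimensional space applied to an element of a finite amplification of $\mathbf{P}_X$, hence lands in a finite amplification of $\mathbf{P}_Y$ by the hypothesis $\theta\in\cP_{X,Y}$, and therefore in $\mathbf{P}_{\ell^2\otimes Y,n}$ via the isometric inclusion of $\Pi_m K$ into $K$. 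As $\mathbf{P}_{\ell^2\otimes Y,n}$ is closed, $v\in\mathbf{P}_{\ell^2\otimes Y,n}$; linearity and closedness then upgrade this from generators to all of $\mathbf{P}_{\ell^2\otimes X,n}$, giving $\Theta\in\cP_{\ell^2\otimes X,\ell^2\otimes Y}$. The main obstacle, as indicated, is precisely circumventing the failure of norm convergence at the operator level, which is resolved by moving the truncation onto the already-computed image vector, where increasing projections do converge in norm.
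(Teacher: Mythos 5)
Your proposal is correct and takes essentially the same route as the paper's proof: both directions work by truncating the $\ell^2$ leg with finite-rank projections (or isometries), using that post-composition by a bounded operator on the external leg sends generators $g\otimes\xi\otimes g^\star$ to generators $(Tg)\otimes\xi\otimes(Tg)^\star$, that increasing projections applied to a \emph{fixed} Hilbert-space vector converge in norm, and that the cones $\mathbf{P}_{\bullet,n}$ are closed. Your ``truncate the image rather than the operator'' move is precisely the paper's step $(p_n\otimes\theta\otimes p_n)\eta\to(\id_{\ell^2}\otimes\theta\otimes\id_{\ell^2})\eta$ for $\eta\in\mathbf{P}_{X,\infty}$; the remaining differences (keeping $K=\bbC^n\otimes\ell^2$ explicit rather than identifying it with $\ell^2$, arguing on generators plus linearity and closedness rather than on arbitrary cone elements) are cosmetic.
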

\begin{proof}
Let $\mathbf{P}_{X, \infty}\subset \ell^2 \otimes {}_AX\boxtimes_{B_1}\overline{X}_A \otimes \ell^2$ be the closed positive span of
$$
g \otimes \xi \otimes g^\star
\in
\Hom_{\;\!\text{-}B}(L^2B, X^{\oplus \infty}) \otimes L^2B \otimes \Hom_{B\text{-}}(L^2B, \overline{X}^{\oplus \infty})\,\,\quad \text{for}\,\,\xi \in L^2_+B.
$$
By definition, $\id_{\ell^2}\otimes \theta\otimes \id_{\ell^2}$ is cp if and only if it sends $\mathbf{P}_{X, \infty}$ to $\mathbf{P}_{Y, \infty}$.
Let $p_n\in B(\ell^2)$ be the projection onto the span of the $n$ first basis vectors of $\ell^2=\ell^2(\bbN)$.
Then $\mathbf{P}_{X, \infty}$ is the closure of $\bigcup_{n\in\bbN}\mathbf{P}_{X,n}$ inside $\ell^2 \otimes {}_AX\boxtimes_{B_1}\overline{X}_A \otimes \ell^2$.
If $\id_{\ell^2} \otimes\, \theta \otimes \id_{\ell^2}$ maps $\mathbf{P}_{X, \infty}$ to $\mathbf{P}_{Y, \infty}$, then $p_n \otimes \theta \otimes p_n$ maps $\mathbf{P}_{X,n}$ to $\mathbf{P}_{Y, n}$ for all $n\in \bbN$, so $\theta \in \cP_{X,Y}$.

Conversely, assume $\theta \in \cP_{X,Y}$.
If $\eta \in \mathbf{P}_{X, \infty}$, then $(p_n \otimes \id_{X\boxtimes_{B_1}\overline{X}} \otimes p_n)\eta \in \mathbf{P}_{X,n}$ for all $n\in \bbN$.
Since $\id_{\bbC^n }\otimes\,\theta\otimes \id_{\bbC^n}$ maps $\mathbf{P}_{X,n}$ to $\mathbf{P}_{Y,n}$,
we have $(p_n \otimes \theta \otimes p_n)\eta \in \mathbf{P}_{Y,n}$.
Now $(p_n \otimes \theta \otimes p_n)\eta \to (\id_{\ell^2} \otimes \theta \otimes \id_{\ell^2})\eta$ as $n\to \infty$, and thus $\id_{\ell^2} \otimes \theta \otimes \id_{\ell^2}$ maps $\mathbf{P}_{X, \infty}$ to $\mathbf{P}_{Y, \infty}$.
\end{proof}

\begin{lem}
\label{lem:Z_A Amplification is CP}
Let ${}_AX_{B_1}$, ${}_AY_{B_2}$, and ${}_CZ_A$ be bimodules between von Neumann algebras.
Then, for every cp map $\theta \in \cP_{X,Y}$, we have
\[
\id_Z \boxtimes \theta \boxtimes \id_{\overline{Z}} \,\in\, \cP_{Z\boxtimes_A X, Z\boxtimes_A Y}.
\]
\end{lem}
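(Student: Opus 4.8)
The plan is to reduce the statement to the already-proven amplification Lemma~\ref{lem:AmplificationPreservesCP}, by realizing $Z$ inside a free module and exhibiting $\id_Z\boxtimes\theta\boxtimes\id_{\overline Z}$ as a compression of the map $\id_{\ell^2\otimes L^2A}\boxtimes\theta\boxtimes\id$. First I would forget the left $C$-action and, using separability, choose an isometry of right $A$-modules $\iota:Z\to \ell^2\otimes L^2A$ (every separable right $A$-module is a summand of an amplification of the standard module). Fusing with the identities on $X$ and on $Y$ produces right-module isometries
\[
w_X:=\iota\boxtimes_A\id_X:Z\boxtimes_A X\to \ell^2\otimes X,\qquad w_Y:=\iota\boxtimes_A\id_Y:Z\boxtimes_A Y\to \ell^2\otimes Y,
\]
where I have used $L^2A\boxtimes_A X\cong X$, and where $w_Y^*w_Y=(\iota^*\iota)\boxtimes\id=\id$ by functoriality of Connes fusion.

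Next, writing $\Theta:=\id_{\ell^2\otimes L^2A}\boxtimes\theta\boxtimes\id_{\overline{\ell^2\otimes L^2A}}$ and invoking the interchange law (the morphisms $\iota,\overline\iota$ and $\theta$ act on disjoint tensorands, together with the order reversal $\overline{Z\boxtimes_A X}\cong\overline X\boxtimes_A\overline Z$), one obtains $\Theta\circ(w_X\boxtimes\overline{w_X})=(w_Y\boxtimes\overline{w_Y})\circ(\id_Z\boxtimes\theta\boxtimes\id_{\overline Z})$. Composing with $w_Y^*\boxtimes\overline{w_Y}^*$ and using $w_Y^*w_Y=\id$ (hence $\overline{w_Y}^*\overline{w_Y}=\overline{w_Y^*w_Y}=\id$), this telescopes to
\[
\id_Z\boxtimes\theta\boxtimes\id_{\overline Z}=(w_Y^*\boxtimes\overline{w_Y}^*)\circ\Theta\circ(w_X\boxtimes\overline{w_X}).
\]
Under the identification $(\ell^2\otimes L^2A)\boxtimes_A X\cong\ell^2\otimes X$ the map $\Theta$ becomes $\id_{\ell^2}\otimes\theta\otimes\id_{\ell^2}$, so Lemma~\ref{lem:AmplificationPreservesCP} gives $\Theta\in\cP_{\ell^2\otimes X,\ell^2\otimes Y}$.

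Finally I would check that conjugation by $w_X\boxtimes\overline{w_X}$ and by $w_Y^*\boxtimes\overline{w_Y}^*$ carries the generating cones into one another at every amplification level $m$. Because $\overline{(\cdot)}$ is a dagger functor and $g^\star=\bar g\circ J$, one has $(u\circ h)^\star=\overline u\circ h^\star$ for any right-module map $u$; consequently $\id_{\bbC^m}\otimes(w_X\boxtimes\overline{w_X})\otimes\id_{\bbC^m}$ sends a generator $h\otimes\xi\otimes h^\star$ of $\mathbf{P}_{Z\boxtimes_A X,m}$ to the generator $g\otimes\xi\otimes g^\star$ of $\mathbf{P}_{\ell^2\otimes X,m}$ with $g=(\id_{\bbC^m}\otimes w_X)\circ h$, and likewise $w_Y^*\boxtimes\overline{w_Y}^*$ sends $\mathbf{P}_{\ell^2\otimes Y,m}$ into $\mathbf{P}_{Z\boxtimes_A Y,m}$. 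Chaining these two cone inclusions with the cp-ness of $\Theta$ in the middle shows that $\id_{\bbC^m}\otimes(\id_Z\boxtimes\theta\boxtimes\id_{\overline Z})\otimes\id_{\bbC^m}$ maps $\mathbf{P}_{Z\boxtimes_A X,m}$ into $\mathbf{P}_{Z\boxtimes_A Y,m}$ for all $m$, which is exactly the assertion $\id_Z\boxtimes\theta\boxtimes\id_{\overline Z}\in\cP_{Z\boxtimes_A X,Z\boxtimes_A Y}$.

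The main obstacle is the bookkeeping in the interchange identity: one must correctly track the order reversal in $\overline{Z\boxtimes_A X}\cong\overline X\boxtimes_A\overline Z$, the interaction of $\overline{(\cdot)}$ with the operations $\star$ and $J$ appearing in the definition \eqref{eq:Bim(R)DefinitionOfNu} of $\nu$, and verify that $w_X$ and $w_Y$ are genuine isometries so that the compression telescopes to the identity. Once these functoriality facts are in place, the remaining steps are immediate from the definitions of the cones $\mathbf{P}_{\bullet,m}$ and of $\cP$.
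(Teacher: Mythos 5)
Your proof is correct, and it hinges on the same key ingredient as the paper's proof (Lemma~\ref{lem:AmplificationPreservesCP}), but the mechanics are genuinely different. The paper first reduces to the case where $Z_A$ is faithful (by cutting $X$ and $Y$ down by the central support projection of $Z$), then uses the module isomorphism $\ell^2\otimes Z_A\cong(\ell^2\otimes L^2A)_A$ valid for faithful modules, and concludes by a chain of \emph{equivalences}: two applications of Lemma~\ref{lem:AmplificationPreservesCP} sandwiching the transport of cp-ness across this unitary identification (the cone-invariance under the unitary is left implicit there). You instead work with an arbitrary $Z$, using only the weaker fact that any separable right $A$-module embeds isometrically into $\ell^2\otimes L^2A$; you then exhibit $\id_Z\boxtimes\theta\boxtimes\id_{\overline Z}$ as a compression $(w_Y^*\boxtimes\overline{w_Y}^*)\circ\Theta\circ(w_X\boxtimes\overline{w_X})$ of the amplified map, invoke Lemma~\ref{lem:AmplificationPreservesCP} only once and in one direction, and verify explicitly that isometric right-module maps $u$ intertwine the cones via $(u\circ g)^\star=\bar u\circ g^\star$. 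That last computation is exactly the one the paper uses afterwards in the proof of Proposition~\ref{prop: check positive structure (2)} (for the $f\boxtimes\id\boxtimes\overline f$ part), and since that part of the proposition does not rely on the present lemma, there is no circularity. In summary: the paper's route is shorter once one grants the classification fact for faithful modules and yields an if-and-only-if statement; your route avoids the faithfulness reduction and needs less module theory, at the price of carrying out the cone bookkeeping by hand.
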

\begin{proof}
The action of $C$ is irrelevant to the statement of the lemma, so we may treat $Z$ as a mere right $A$-module.
Without loss of generality, we may assume that the right $A$-action is faithful (otherwise, letting $q\in A$ be the central projection onto the support of $Z$, we may replace $X$ and $Y$ by the corresponding summands $qX$ and $qY$, on which $qA$ acts).

Since $Z_A$ is a faithful module, we may identify $\ell^2 \otimes Z_A$ with $(\ell^2 \otimes L^2A)_A$.
By two applications of Lemma \ref{lem:AmplificationPreservesCP}, we then have:
\begin{align*}
\theta
\text{ is cp }
&\Longleftrightarrow
\id_{\ell^2} \otimes \;\!\theta \otimes \id_{\ell^2}
\text{ is cp}
\\&
\Longleftrightarrow
(\id_{\ell^2 \otimes L^2A})\boxtimes_A \theta \boxtimes_A (\id_{L^2A\otimes \ell^2})
\text{ is cp}
\\&
\Longleftrightarrow
(\id_{\ell^2\otimes Z})\boxtimes_A \theta \boxtimes_A (\id_{\overline{Z}\otimes \ell^2})
\text{ is cp}
\\
&\Longleftrightarrow
\id_Z \boxtimes_A \theta \boxtimes_A \id_{\overline{Z}}
\text{ is cp.}
\qedhere
\end{align*}
\end{proof}

\begin{prop}\label{prop: check positive structure (2)}
For every cp map $\theta \in \cP_{X,Y}$ and $f\in \Hom_{C\text{-}A}(W,Z)$, the linear map
$f \boxtimes_A \theta \boxtimes_A \overline{f}$ is cp. 
\end{prop}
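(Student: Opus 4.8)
The plan is to factor $f\boxtimes_A\theta\boxtimes_A\overline f$ as a composite of two maps that we can each recognise as cp, and then invoke closure of cp maps under composition (noted below Definition~\ref{def: cp morphisms in Bim(R)}). Concretely, I would write
\[
f\boxtimes_A\theta\boxtimes_A\overline f
=\big(f\boxtimes_A\id_{Y\boxtimes_{B_2}\overline Y}\boxtimes_A\overline f\big)
\circ\big(\id_W\boxtimes_A\theta\boxtimes_A\id_{\overline W}\big).
\]
The second factor is cp by Lemma~\ref{lem:Z_A Amplification is CP} applied with the bimodule ${}_CZ_A$ there replaced by ${}_CW_A$; it lies in $\cP_{W\boxtimes_A X,\,W\boxtimes_A Y}$. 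Thus everything reduces to showing that the first factor is cp.

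For the first factor I would set $F:=f\boxtimes_A\id_Y:W\boxtimes_A Y\to Z\boxtimes_A Y$, a morphism of $C$-$B_2$-bimodules. Using the conjugation coherences $\nu$ from \eqref{eq:Bim(R)DefinitionOfNu} to identify $\overline{W\boxtimes_A Y}\cong\overline Y\boxtimes_A\overline W$ (and likewise for $Z$), the conjugate $\overline F$ corresponds to $\id_{\overline Y}\boxtimes_A\overline f$, and regrouping the legs gives
\[
F\boxtimes_{B_2}\overline F=f\boxtimes_A\id_{Y\boxtimes_{B_2}\overline Y}\boxtimes_A\overline f .
\]
Hence the whole proposition reduces to the following sub-claim: for \emph{any} morphism $F:U\to V$ of $C$-$B$-bimodules, the map $F\boxtimes_B\overline F$ lies in $\cP_{U,V}$.

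To prove the sub-claim I would check directly that the amplification $\id_{\bbC^n}\otimes(F\boxtimes_B\overline F)\otimes\id_{\bbC^n}=F^{\oplus n}\boxtimes_B\overline{F}^{\oplus n}$ carries $\mathbf{P}_{U,n}$ into $\mathbf{P}_{V,n}$. On a spanning vector $g\otimes\xi\otimes g^\star\in\mathbf{P}_{U,n}$, with $g\in\Hom_{\text{-}B}(L^2B,U^{\oplus n})$, $\xi\in L^2_+B$, and $g^\star=\overline g\circ J$, the map $F\boxtimes_B\overline F$ acts by postcomposing the two outer legs, producing $(F^{\oplus n}\circ g)\otimes\xi\otimes(\overline{F}^{\oplus n}\circ g^\star)$. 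The one computation that does the work is
\[
\overline{F}^{\oplus n}\circ g^\star
=\overline{F^{\oplus n}}\circ\overline g\circ J
=\overline{F^{\oplus n}\circ g}\circ J
=(F^{\oplus n}\circ g)^\star,
\]
where I use that $\overline{\,\cdot\,}$ is a covariant (antilinear) functor, so $\overline{F^{\oplus n}}\circ\overline g=\overline{F^{\oplus n}\circ g}$. The image is therefore $h\otimes\xi\otimes h^\star$ with $h:=F^{\oplus n}\circ g\in\Hom_{\text{-}B}(L^2B,V^{\oplus n})$, a spanning vector of $\mathbf{P}_{V,n}$. Since $F\boxtimes_B\overline F$ is bounded and linear, it maps the closed positive span $\mathbf{P}_{U,n}$ into $\mathbf{P}_{V,n}$, establishing the sub-claim and hence the proposition.

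The step requiring the most care — and the main obstacle — is the identification $f\boxtimes_A\id_{Y\boxtimes_{B_2}\overline Y}\boxtimes_A\overline f=F\boxtimes_{B_2}\overline F$, as it hinges on correctly tracking the conjugation coherences $\nu$ and on checking that $\overline F$ matches $\id_{\overline Y}\boxtimes_A\overline f$ under $\overline{W\boxtimes_A Y}\cong\overline Y\boxtimes_A\overline W$. Once that bookkeeping is settled, the remaining content is the short generator-level computation above together with the already-established Lemma~\ref{lem:Z_A Amplification is CP}.
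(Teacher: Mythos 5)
Your proof is correct and takes essentially the same route as the paper's: both factor $f\boxtimes_A\theta\boxtimes_A\overline f$ into an amplification of $\theta$ by the extra bimodule (cp by Lemma~\ref{lem:Z_A Amplification is CP}) composed with $f\boxtimes_A\id_{Y\boxtimes_{B_2}\overline Y}\boxtimes_A\overline f$, and both verify that the latter is cp by acting on the spanning vectors $g\otimes\xi\otimes g^\star$, whose image $(f\circ g)\otimes\xi\otimes(f\circ g)^\star$ visibly lies in the positive cone. Your additional bookkeeping (the reformulation as $F\boxtimes_{B_2}\overline F$ and the identity $\overline F^{\oplus n}\circ g^\star=(F^{\oplus n}\circ g)^\star$) just spells out what the paper's terser ``simply note'' leaves implicit.
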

\begin{proof}
The relevant map is the composite of
$\id_{Z}\boxtimes_A \theta \boxtimes_A \id_{\overline{Z}}$
and
$f\boxtimes_A \id_{Y\boxtimes_{B_2}\overline{Y}} \boxtimes_A \overline{f}$.
The former is cp by Lemma \ref{lem:Z_A Amplification is CP}.
To see that the latter is cp, simply note that the image of $g\otimes \xi \otimes g^\star \in \mathbf{P}_{Z\boxtimes_A Y, n}$ under
$\id_{\bbC^n} \otimes f\boxtimes_A \id_{Y\boxtimes_{B_2}\overline{Y}} \boxtimes_A \overline{f} \otimes \id_{\bbC^n}$ 
is 
$(f\circ g)\otimes \xi \otimes (f\circ g)^\star$, which is visibly in $\mathbf{P}_{W\boxtimes_A Y, n}$.
\end{proof}

Recall from \eqref{eq: standard evaluation and coevaluation morphisms} that for a bimodule ${}_AX_B$ between von Neumann algebras with atomic centers,
the standard solutions  $\ev_X : \overline{X}\boxtimes_A X \to L^2B$ and $\coev_X : L^2A \to X\boxtimes_B \overline{X}$
of the conjugate equations are given by
\begin{equation*}
\coev_X : {\textstyle\sqrt{\psi}\,\mapsto\;\!\! \sqrt{\psi\circ E}}
\quad\text{and}\quad \ev_X=\coev^*_{\overline X},
\end{equation*}
where $E$ is the non-normalised minimal conditional expectation \cite[(6.8)]{MR3342166}.
Given an inclusion of von Neumann algebras $A\subset B$ and a conditional expectation $E:B\to A$, let us write $L^2E$ for the corresponding linear map $L^2A\to L^2B:\sqrt{\psi}\,\mapsto\;\!\! \sqrt{\psi\circ E}$.

\begin{prop}\label{prop: ev and coev are positive}
Let $A, B$ be von Neumann algebras with atomic centers, and let ${}_AX_B$ be a dualizable bimodule.
Then the standard solutions
\[
\ev_X : \overline{X}\boxtimes_A X\to L^2B \qquad\text{and}\qquad \coev_X: L^2A \to X\boxtimes_B \overline{X}
\]
of the conjugate equations satisfy
$\ev_X \in \cP_{\overline{X},L^2B}$ and $\coev_X\in \cP_{L^2A, X}$. 
\end{prop}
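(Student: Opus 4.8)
The plan is to reduce both statements to the complete positivity of a conditional expectation. First I would observe that the two claims are adjoint to one another: by \eqref{eq: standard evaluation and coevaluation morphisms} we have $\ev_X=\coev_{\overline X}^*$, and the cones are closed under adjoints with $(\cP_{U,V})^*=\cP_{V,U}$. Hence it suffices to prove the single statement $\coev_X\in\cP_{L^2A,X}$ for an arbitrary dualizable bimodule ${}_AX_B$ between von Neumann algebras with atomic centers. Applying this to the (also dualizable) bimodule ${}_B\overline X_A$ gives $\coev_{\overline X}\in\cP_{L^2B,\overline X}$, and taking adjoints yields $\ev_X=\coev_{\overline X}^*\in\cP_{\overline X,L^2B}$, as required.

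To prove $\coev_X$ is cp, I would pass through Sauvageot's identification \cite[Prop.~3.1]{MR703809}, namely $X\boxtimes_B\overline X\cong L^2(B')$, where $B'=\End_{-B}(X)$ is the commutant of the right $B$-action and $A\hookrightarrow B'$ via the left action. Under this isomorphism two things happen. First, the cone $\mathbf P_{X,n}$ is, by construction, the closed positive span of the generating vectors $g\otimes\xi\otimes g^\star$; this is exactly the standard self-dual positive cone $L^2_+(M_n(B'))$, the same identification of cones already invoked in the text (for self-duality and for spanning). Second, by \eqref{eq: standard evaluation and coevaluation morphisms} and under the unit identification $L^2A\boxtimes_A\overline{L^2A}\cong L^2A$, the map $\coev_X$ becomes $L^2E:L^2A\to L^2B'$, $\sqrt\psi\mapsto\sqrt{\psi\circ E}$, where $E:B'\to A$ is the minimal conditional expectation of \cite[(6.8)]{MR3342166}. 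Specializing to $X=L^2A$ gives $\mathbf P_{L^2A,n}=L^2_+(M_n(A))$.

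The key step is then to match the bilateral amplification from Definition~\ref{def: cp morphisms in Bim(R)} with the matrix amplification of $E$: under the identifications above, $\id_{\bbC^n}\otimes\coev_X\otimes\id_{\bbC^n}$ becomes $L^2(\id_{M_n}\otimes E):L^2(M_n(A))\to L^2(M_n(B'))$, $\sqrt\Psi\mapsto\sqrt{\Psi\circ(\id_{M_n}\otimes E)}$. Since a conditional expectation is completely positive, the amplification $\id_{M_n}\otimes E:M_n(B')\to M_n(A)$ is a positive map, so precomposition carries positive normal functionals on $M_n(A)$ to positive normal functionals on $M_n(B')$; equivalently, $L^2(\id_{M_n}\otimes E)$ sends $L^2_+(M_n(A))$ into $L^2_+(M_n(B'))$. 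This is precisely the statement that $\id_{\bbC^n}\otimes\coev_X\otimes\id_{\bbC^n}$ maps $\mathbf P_{L^2A,n}$ into $\mathbf P_{X,n}$, so $\coev_X$ is cp.

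The main obstacle is the bookkeeping in the previous paragraph: verifying carefully that the amplification $\id_{\bbC^n}\otimes(-)\otimes\id_{\bbC^n}$ of the definition corresponds, under the Sauvageot isomorphisms for $X^{\oplus n}$ and $(L^2A)^{\oplus n}$, to $L^2$ of the $n\times n$ matrix amplification of $E$, and that $\mathbf P$ genuinely coincides with the standard cone $L^2_+$ rather than some a priori different self-dual cone. Once these identifications are pinned down, the positivity input is entirely standard. An alternative route, avoiding the global identification, would be to check directly on generators $g\otimes\sqrt\psi\otimes g^\star$ of $\mathbf P_{L^2A,n}$ that their images lie in $\mathbf P_{X,n}$, using $\coev_X(\sqrt\psi)=\sqrt{\psi\circ E}$ together with the multiplicativity of $g\mapsto g^\star=\overline g\circ J$; but the conditional-expectation formulation is cleaner.
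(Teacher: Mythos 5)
Your proposal is correct and takes essentially the same route as the paper's own proof: reduce to $\coev_X$ by taking adjoints, use Sauvageot's isomorphism $X\boxtimes_B\overline X\cong L^2(B')$ to identify $\coev_X$ with $L^2E$ for the minimal conditional expectation $E:B'\to A$, and conclude by positivity of the matrix amplification $\id_{M_n(\bbC)}\otimes E$. The bookkeeping you flag as the main obstacle is exactly what the paper settles with a short commuting-square argument: the two notions of amplification agree on vectors of the form $\xi\otimes e_{11}$ with $\xi\in L^2_+A$, and since all four maps in the square are $M_n(A)$-$M_n(A)$-bilinear, they agree everywhere.
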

\begin{proof}
We only treat the case of $\coev$ (the proof for $\ev$ follows by taking adjoints).
We have canonical isomorphisms
$L^2A^{\oplus n} \boxtimes_A \overline{L^2A}{}^{\oplus n}\cong L^2(A)\otimes M_n(\bbC)$
and
$$
X^{\oplus n} \boxtimes_B \overline{X}{}^{\oplus n}
\cong 
(X\boxtimes_B \overline{X})\otimes M_n(\bbC)
\cong L^2B' \otimes M_n(\bbC),
$$
where the isomorphism $X\boxtimes_B \overline{X}\cong L^2B'$ is as in \cite[Prop.~3.1]{MR703809}.
Under this identification, the map $\id_{\bbC^n}\otimes \coev_X \otimes \id_{\bbC^n}$ corresponds to $(L^2E) \otimes \id_{M_n(\bbC)}$.
Next, we note that the following square of $A$-$A$ bimodule maps commutes: 
\[
\begin{tikzcd}
L^2A \otimes M_n(\bbC)
\ar[rr, "L^2E \otimes \id_{M_n(\bbC)}"]
\ar[d,"\cong"]
&&
L^2B' \otimes M_n(\bbC)
\ar[d,"\cong"]
\\
L^2(A \otimes M_n(\bbC))
\ar[rr, "L^2(E\otimes \id_{M_n(\bbC)})"]
&&
L^2(B' \otimes M_n(\bbC)).
\end{tikzcd}
\]
Indeed, for an element of the form $\xi \otimes e_{11}\in L^2A \otimes M_n(\bbC)$
with $\xi\in L^2_+A$, the two maps visibly agree.
The commutativity follows as all four maps are $M_n(A)$-$M_n(A)$-bilinear.

Finally, since the bottom arrow $L^2(E\otimes \id_{M_n(\bbC)})$ maps $L^2_+(A\otimes M_n(\bbC))$ to $L^2_+(B'\otimes M_n(\bbC))$,
the map $\id_{\bbC^n}\otimes \coev_X \otimes \id_{\bbC^n}=\id_{\bbC^n}\otimes L^2E \otimes \id_{\bbC^n}=L^2E \otimes \id_{M_n(\bbC)}$ sends $\mathbf{P}_{L^2A,n}$ to $\mathbf{P}_{X,n}$.
\end{proof}

\subsection{Positive structure on rigid $\Cstar$-tensor categories}
\label{ex:PosiviteStructureOnMultifusion}

We show that every unitary multifusion category, indeed any semisimple rigid $\Cstar$-tensor category, admits canonical bi-involutive and positive structures.
The notion of cp map presented in this section is originally due to Selinger \cite[Cor.~4.13 \& \S4.4]{10.1016/j.entcs.2006.12.018}.

Let $\cC$ be a semisimple rigid $\Cstar$-tensor category.
Recall from \cite{1808.00323} that a \emph{unitary dual functor} is an assignment $c\mapsto (\overline c,\ev_c,\coev_c)$ of a dual object plus duality data to every object $c\in\cC$,
in such a way that the canonical isomorphisms
\[
\nu_{a,b}:=\big((\ev_a\circ(\id_{\overline a}\otimes\ev_b\otimes\id_a))\otimes\id_{\overline{b\otimes a}}\big)\circ\big(\id_{\overline a\otimes \overline b}\otimes\coev_{b\otimes a}\big)
: \, \overline a\otimes \overline b\to\overline{b\otimes a}
\]
are unitary, and such that for every $f:a\to b$ the conjugate morphism
\[
\overline f:=(\id_{\overline b}\otimes \coev_a^*)\circ(\id_{\overline b}\otimes f^*\otimes\id_{\overline a})\circ(\ev_b^*\otimes\id_{\overline a})\,=\,
\begin{tikzpicture}[baseline=-.1cm]
	\draw (-.6,.9) -- (-.6,-.3) 
	\foreach \p in {-.005,0.12,...,.9} {node[pos=\p, xshift=-1.3, yshift=-3]{-}} 
	arc (-180:0:.3) 
	\foreach \p in {.1,.22,...,1} {node[pos=\p, sloped, rotate=-90, xshift=1.8]{-}} 
	-- (0,.3) arc (180:0:.3) 
	\foreach \p in {.1,.27,...,1} {node[pos=\p, sloped, rotate=-90, xshift=1.8]{-}}
	-- (.6,-.9) 
	\foreach \p in {0,0.125,...,.9} {node[pos=\p, xshift=-1.2, yshift=-3]{-}};
	\roundNbox{unshaded}{(0,0)}{.3cm}{0}{0}{$f^*$};
	\node at (.6+.17,-.85) {$\scriptstyle \overline a$};
	\node at (-.6+.14,.85) {$\scriptstyle \overline b$};
\end{tikzpicture}
\]
satisfies
$\overline f=(\ev_a\otimes\id_{\overline b})\circ(\id_{\overline a}\otimes f^*\otimes\id_{\overline b})\circ(\id_{\overline a}\otimes \coev_b)$
(ensuring the consistency of the graphical calculus \eqref{eq: formula for f bar}).
A unitary dual functor induces a bi-involutive structure on $\cC$ via the structure maps
\begin{equation}\label{eq: bi-involutive structure from unitary dual functor}
\overline{\,\cdot\,}:\cC\to \cC,
\qquad
\nu_{a,b}: \, \overline b\otimes \overline a\to\overline{a\otimes b},
\qquad
r:1\to \overline 1,
\qquad
\varphi_c:c\to\overline{\overline c},
\end{equation}
where $\nu_{a,b}$ is as above, $r$ is $\coev_1$ (followed by a left unitor isomorphism), and 
\[
\varphi_c
:=
(\coev_c^*\otimes \id_{\overline{\overline c}})\circ(\id_{c}\otimes \coev_{\overline{c}})
=
(\id_{\overline{\overline c}}\otimes \ev_c)\circ(\ev_{\overline c}^*\otimes\id_{c}),
\]
where the second equality is \cite[Cor.~3.10]{1808.00323}.

\begin{rem}
As explained in Example~\ref{ex: bi-involutive structure on multifusion category}, any semisimple rigid $\Cstar$-tensor category (for instance a unitary multifusion category) admits a distinguished unitary dual functor, characterised by the conjugate equations \eqref{eq: duality equations (unnormalized)} and \eqref{eq: duality equations (balancing condition)}.
A typical unitary dual functor does not, however, satisfy the balancing condition~\eqref{eq: duality equations (balancing condition)}.
\end{rem}

By \cite[Cor.~B]{1808.00323}, any two unitary dual functors yield canonically equivalent bi-involutive structures.
Specifically, if $(\overline{\,\cdot\,}^1,\ev^1,\coev^1)$ and $(\overline{\,\cdot\,}^2,\ev^2,\coev^2)$ are unitary dual functors,
letting
\begin{equation}\label{eq: chi_c:overline c^1to overline c^2}
\chi_c\,:\,\overline{\;\!c\;\!}^1\,\to\, \overline{\;\!c\;\!}^2
\end{equation}
be the unitary in the polar decomposition of 
$\,\tilde\chi_c :=\, (\ev_{c}^1\otimes \id_{\overline{\;\!c\;\!}^2})\circ (\id_{\overline{\;\!c\;\!}^1}\otimes \coev_c^2)$,
then ${(\mu\equiv \id,i\equiv \id,\chi)}$ is an equivalence of bi-involutive tensor categories.

Given a unitary dual functor on $\cC$, let
\begin{equation}\label{eq: positive cones for multifusion}
\cP_{a,b}:=\big\{\,\theta_f:a\otimes \overline{a}\to b\otimes \overline{b}\;\big|\; c\in \cC,\,\,f:a\otimes c\to b\,\big\},
\end{equation}
\begin{equation}\label{eq: two pics for cp maps in a rigid category}
\hspace{-3.55cm}\text{where}\hspace{.2cm}
\theta_f\,:= (f\otimes \bar f)\circ(\id_a\otimes \coev_c\otimes \id_{\bar a})=\!
\,\begin{tikzpicture}[baseline=-.1cm]
	\draw (-1,-.3) -- (-1,.9)
	\foreach \p in {-.005,0.12,...,.9} {node[pos=\p, xshift=1.8, yshift=2.5]{-}};
	\draw (0,-.3) -- (0,.9)
	\foreach \p in {-.005,0.12,...,.9} {node[pos=\p, xshift=-1.3, yshift=2.5]{-}};
	\draw (-.85,-.3) arc (-180:0:.35) 
	\foreach \p in {.1,.26,...,1} {node[pos=\p, sloped, rotate=-90, xshift=-1.3]{-}};
	\draw (.15,-.9) -- (.15,.3) 
	\foreach \p in {0,0.125,...,.9} {node[pos=\p, xshift=-1.3, yshift=2.2]{-}};
	\draw (-1.15,-.9) -- (-1.15,.3) 
	\foreach \p in {0,0.125,...,.9} {node[pos=\p, xshift=1.8, yshift=2.2]{-}};
	\roundNbox{unshaded}{(0,0)}{.3cm}{.07}{.07}{$\overline f$};
	\roundNbox{unshaded}{(-1,0)}{.3cm}{.07}{.07}{$f$};
	\node at (-1.35,-.7) {$\scriptstyle a$};
	\node at (-.82,-.7) {$\scriptstyle c$};
	\node at (-1.18,.7) {$\scriptstyle b$};
	\node at (.35,-.7) {$\scriptstyle \overline{a}$};
	\node at (.18,.7) {$\scriptstyle \overline{b}$};
\end{tikzpicture}
\;\!=\!\!
\begin{tikzpicture}[baseline=.6cm]
	\roundNbox{unshaded}{(-.03,0)}{.3cm}{.07}{.07}{$f^*$};
	\roundNbox{unshaded}{(-.45,1.5)}{.3cm}{.07}{.07}{$f$};
	\draw (0,-.3) arc (-180:0:.3) 
	\foreach \p in {.13,.29,...,1} {node[pos=\p, sloped, rotate=-90, xshift=-1.2]{-}}
	(.6,-.3) -- (.6,2.13)
	\foreach \p in {0,0.058,...,.98} {node[pos=\p, xshift=-1.2, yshift=2.5]{-}};
	\draw (-.2,.3) arc (0:180:.2)
	\foreach \p in {.13,.29,...,1} {node[pos=\p, sloped, rotate=-90, xshift=-1.2]{-}}
	(-.6,.3) -- (-.6,-.6)
	\foreach \p in {.2,0.36,...,1.1} {node[pos=\p, xshift=-1.3, yshift=2.8]{-}};
	\draw (-.45,1.8) -- (-.45,2.13)
	\foreach \p in {.35,.75} {node[pos=\p, xshift=1.7, yshift=0]{-}};
	\draw (-.3,1.2) arc (-180:-135:.63)
	\foreach \p in {.16,.42,...,1} {node[pos=\p, sloped, rotate=-90, xshift=-1.3]{-}}
	arc (45:0:.63)
	\foreach \p in {.15,.37,...,1} {node[pos=\p, sloped, rotate=-90, xshift=-1.3]{-}};
	\draw (-.6,1.2) arc (0:-45:.63)
	\foreach \p in {.15,.38,...,1} {node[pos=\p, sloped, rotate=-90, xshift=1.7]{-}}
	arc (135:180:.63)
	\foreach \p in {.12,.395,...,1} {node[pos=\p, sloped, rotate=-90, xshift=1.7]{-}};
	\draw (-.969,.31) -- (-.969,-.6)
	\foreach \p in {.2,0.36,...,1.1} {node[pos=\p, xshift=1.7, yshift=2.3]{-}};
	\node at (-1.18,-.51) {$\scriptstyle a$};
	\node at (-.41,-.49) {$\scriptstyle \overline{a}$};
	\node at (-.64,2.05) {$\scriptstyle b$};
	\node at (.77,2.05) {$\scriptstyle \overline{b}$};
	\node at (.08,.9) {$\scriptstyle c$};
\end{tikzpicture}\!\;
\end{equation}
We call the elements of $\cP_{a,b}$ cp maps.
Our next goal is to prove that this is a positive structure on~$\cC$, and that it is independent of the choice of unitary dual functor.

\begin{lem}
\label{lem:RotationOfPositiveOperatorIsCP}
A map $\theta\in \cC(a\otimes \overline{a}, b\otimes \overline{b})$ is cp as in \eqref{eq: positive cones for multifusion} if and only if its one-click rotation
\begin{equation*} 
\rho(\theta)\,
:=\!
\begin{tikzpicture}[baseline=-.1cm]
	\draw (-.6,.9) -- (-.6,-.3) 
	\foreach \p in {-.005,0.12,...,.9} {node[pos=\p, xshift=-1.2, yshift=-3]{\rm -}} 
	arc (-180:0:.2) 
	\foreach \p in {.09,.25,...,1} {node[pos=\p, sloped, rotate=-90, xshift=1.7]{\rm -}}
	(-.2,-.3) -- (-.2,.9)
	\foreach \p in {-.005,0.12,...,.9} {node[pos=\p, xshift=1.7, yshift=2.5]{\rm -}};
	\draw (.2,-.9) -- (.2,.3) 
	\foreach \p in {0,0.125,...,.9} {node[pos=\p, xshift=-1.2, yshift=2.2]{\rm -}}
	arc (180:0:.2) 
	\foreach \p in {.1,.26,...,1} {node[pos=\p, sloped, rotate=-90, xshift=-1.2]{\rm -}} 
	-- (.6,-.9) 
	\foreach \p in {0,0.125,...,.9} {node[pos=\p, xshift=1.8, yshift=-1.5]{\rm -}};
	\roundNbox{unshaded}{(0,0)}{.3cm}{.1}{.1}{$\theta$};
	\node at (.6-.22,-.7) {$\scriptstyle \overline{a}$};
	\node at (.6+.22,-.7) {$\scriptstyle b$};
	\node at (-.6-.26,.7) {$\scriptstyle \overline{a}$};
	\node at (-.6+.22,.7) {$\scriptstyle b$};
\end{tikzpicture}
=
(\id_{\overline{a}\otimes b}\otimes \ev_b)
\circ
(\id_{\overline{a}}\otimes\, \theta \otimes \id_b)
\circ
(\ev_a^*\otimes \id_{\overline{a}\otimes b})
\end{equation*}
is a positive operator in the $\Cstar$-algebra $\cC(\overline{a}\otimes b , \overline{a}\otimes b)$.
Similarly, $\theta$ 
is cp if and only if $\rho^{-1}(\theta):=
(\ev_b\otimes \id_{\overline{b}\otimes a})
\circ
(\id_{\overline{b}}\otimes\, \theta \otimes \id_a)
\circ
(\id_{\overline{b}\otimes a}\otimes \ev^*_a)
$ is positive.
\end{lem}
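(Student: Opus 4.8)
The plan is to exhibit the one-click rotation $\rho$ as a linear isomorphism $\cC(a\otimes\overline a,b\otimes\overline b)\to\cC(\overline a\otimes b,\overline a\otimes b)$ that carries the cone $\cP_{a,b}$ of \eqref{eq: positive cones for multifusion} exactly onto the cone of positive elements of the $\Cstar$-algebra $\cC(\overline a\otimes b,\overline a\otimes b)$. That $\rho$ is a bijection is immediate: its two-sided inverse is the evident opposite rotation $\cC(\overline a\otimes b,\overline a\otimes b)\to\cC(a\otimes\overline a,b\otimes\overline b)$, the composite collapsing by the zig-zag identities \eqref{eq: duality equations (unnormalized)}. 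Given injectivity of $\rho$, the two implications in the lemma reduce to the single statement that $\rho(\cP_{a,b})$ is precisely the positive cone.

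First I would show that cp maps rotate to positive operators. By definition every $\theta\in\cP_{a,b}$ is of the form $\theta_f$ for some $f\colon a\otimes c\to b$. I introduce the Frobenius-adjoint
\[
\hat f:=(\id_{\overline a}\otimes f)\circ(\ev_a^*\otimes\id_c)\colon c\to\overline a\otimes b,
\]
the assignment $f\mapsto\hat f$ being the Frobenius reciprocity isomorphism $\cC(a\otimes c,b)\cong\cC(c,\overline a\otimes b)$. The key computation is the diagrammatic identity
\[
\rho(\theta_f)=\hat f\circ\hat f^*.
\]
To verify it, I substitute the picture \eqref{eq: two pics for cp maps in a rigid category} for $\theta_f$ into the definition of $\rho$. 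The ``top half'' of the resulting diagram is visibly $\hat f$, while the ``bottom half'' is $(\id_c\otimes\ev_b)\circ(\id_c\otimes\overline f\otimes\id_b)\circ(\coev_c\otimes\id_{\overline a}\otimes\id_b)\colon\overline a\otimes b\to c$; rewriting the conjugate morphism $\overline f$ in terms of $f^*$ via \eqref{eq: formula for f bar} and collapsing the remaining $\coev_c$ against the $\overline c$-leg using the snake identities shows this bottom half equals $\hat f^*$. Since $\cC(\overline a\otimes b,\overline a\otimes b)$ is a $\Cstar$-algebra, $\hat f\circ\hat f^*\ge 0$, so $\rho(\theta)$ is positive.

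For the converse, suppose $\theta\in\cC(a\otimes\overline a,b\otimes\overline b)$ has $\rho(\theta)$ positive. Setting $c:=\overline a\otimes b$ and using the $\Cstar$-functional calculus in $\cC(\overline a\otimes b,\overline a\otimes b)$, let $\hat f:=\rho(\theta)^{1/2}$, viewed as a morphism $c\to\overline a\otimes b$; then $\hat f^*=\rho(\theta)^{1/2}$ as well and $\hat f\circ\hat f^*=\rho(\theta)$. Let $f\colon a\otimes c\to b$ be the image of $\hat f$ under the inverse Frobenius isomorphism. The identity of the previous paragraph gives $\rho(\theta_f)=\hat f\circ\hat f^*=\rho(\theta)$, and since $\rho$ is injective we conclude $\theta=\theta_f\in\cP_{a,b}$, i.e.\ $\theta$ is cp. This establishes that $\theta$ is cp if and only if $\rho(\theta)$ is positive; the statement for the opposite one-click rotation $\rho^{-1}(\theta)\in\cC(\overline b\otimes a,\overline b\otimes a)$ is proved identically, bending the legs in the opposite direction.

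The main obstacle is the diagrammatic identity $\rho(\theta_f)=\hat f\circ\hat f^*$, and within it the equality of the bottom half with $\hat f^*$: this is exactly the point where the precise compatibility \eqref{eq: formula for f bar} between $\overline f$ and $f^*$ for the chosen unitary dual functor, together with the snake identities \eqref{eq: duality equations (unnormalized)}, must be used with care. Once this identity is in hand both implications are short, the converse being a one-line application of functional calculus and the Frobenius bijection.
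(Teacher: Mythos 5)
Your proof is correct and follows essentially the same route as the paper's: both hinge on the identity $\rho(\theta_f)=\hat f\circ\hat f^*$ with $\hat f=(\id_{\overline a}\otimes f)\circ(\ev_a^*\otimes\id_c)$ the Frobenius adjoint, giving positivity in one direction, and on factoring a positive $\rho(\theta)$ as $g\circ g^*$ and passing back through Frobenius reciprocity in the other. The only cosmetic difference is that you close the converse by invoking injectivity of $\rho$ (un-rotation via the zig-zag identities), whereas the paper directly asserts $\theta=\theta_f$ for $f:=(\coev_a^*\otimes\id_b)\circ(\id_a\otimes g)$ -- the same computation.
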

\begin{proof}
If $\theta = \theta_f$ for some $f: a\otimes c\to b$, then
$
\rho(\theta)
= (\id_{\overline{a}}\otimes f) \circ ((\ev_a^*\circ \ev_a)\otimes \id_b) \circ (\id_{\overline{a}}\otimes f^*) \geq 0.
$
Conversely, if $\theta\in \cC(a\otimes \overline{a}, b\otimes \overline{b})$ is such that $\rho(\theta)\geq 0$, 
then we may write $\rho(\theta)$ as $g\circ g^*$ for some $g:c\to \bar a\otimes b$.
Setting $f:= (\coev_a^* \otimes \id_b) \circ (\id_a \otimes g)$,
we then have $\theta = \theta_f \in \cP_{a,b}$.
The second statement is similar.
\end{proof}

The subsets $\cP_{a,b}\subset \cC(a\otimes \overline{a}, b\otimes \overline{b})$ 
defined in
\eqref{eq: positive cones for multifusion} form a positive structure in the sense of Definition~\ref{def positive structure on bi-involutive tensor category}.
It is straightforward to verify that the $\cP_{a,b}$ contain identities, are closed under composition, closed under adjoints,
and that $f\otimes \theta \otimes \overline{f}$ is cp whenever $\theta$ is cp.
To see that $\cP_{a,b}$ is closed under addition we note that,
by Lemma~\ref{lem:RotationOfPositiveOperatorIsCP}, if $\theta, \psi \in \cP_{a,b}$, then $\rho(\theta)$ and $\rho(\psi)$ are positive, and thus so is
$\rho(\theta)+\rho(\psi)= \rho(\theta+\psi)$.
Hence $\theta + \psi \in \cP_{a,b}$.

Our next task is to show that this positive structure is independent of the choice of unitary dual functor.
Let $(\overline{\,\cdot\,}^1,\ev^1,\coev^1)$ and $(\overline{\,\cdot\,}^2,\ev^2,\coev^2)$ be two unitary dual functors on $\cC$.
Then the equivalence \eqref{eq: chi_c:overline c^1to overline c^2} between the corresponding bi-involutive structures sends cp maps to cp maps:

\begin{prop}\label{prop: independent of the choice of unitary dual functor}
Let $\cC$ be a semisimple rigid $\Cstar$-tensor category equipped with two unitary dual functors.
Let $\cP^1$ and $\cP^2$ be the corresponding sets of cp maps, as defined in \eqref{eq: positive cones for multifusion}.
Then
\[
\theta \in \cP^1_{a,b}
\quad\Longleftrightarrow\quad
(\id_b \otimes \chi_b)\circ \theta \circ (\id_a \otimes \chi^{-1}_a) \in \cP^2_{a,b},
\]
where $\chi_a$ and $\chi_b$ are as in \eqref{eq: chi_c:overline c^1to overline c^2}.
\end{prop}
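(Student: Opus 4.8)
\emph{The plan.} I would prove the forward implication by showing that conjugation by $\chi$ carries each generating cp map of $\cP^1$, as defined in \eqref{eq: positive cones for multifusion}, into $\cP^2$; running the identical argument with the two unitary dual functors interchanged (so that $\chi$ gets replaced by its inverse) then yields $C_{\chi^{-1}}(\cP^2_{a,b})\subseteq\cP^1_{a,b}$, and the two inclusions combine to the asserted equivalence. Since $\theta_f=\sum_k\theta_{f_k}$ whenever $c=\bigoplus_k c_k$ and $f_k:=f|_{a\otimes c_k}$, and since both cones are closed under addition, I may assume throughout that the auxiliary object $c$ occurring in $\theta_f$ is \emph{simple}. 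So fix $f:a\otimes c\to b$ with $c$ simple, let $\theta^1_f\in\cP^1_{a,b}$ denote the cp map computed with the first dual functor, and set $\theta':=(\id_b\otimes\chi_b)\circ\theta^1_f\circ(\id_a\otimes\chi^{-1}_a)$; the goal is to identify $\theta'$ with a (scalar multiple of a) map $\theta^2_f\in\cP^2_{a,b}$.

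The computation rests on two ingredients. First, unwinding the definition of $\tilde\chi_c$ from \eqref{eq: chi_c:overline c^1to overline c^2} and applying the zig-zag identities for the second dual functor yields the intertwining relations
\[
\ev^1_c=\ev^2_c\circ(\tilde\chi_c\otimes\id_c),\qquad \coev^2_c=(\id_c\otimes\tilde\chi_c)\circ\coev^1_c,
\]
which I would establish first as a short lemma; equivalently $\coev^1_c=(\id_c\otimes\tilde\chi^{-1}_c)\circ\coev^2_c$. Second, because $(\id,\id,\chi)$ is an equivalence of bi-involutive structures, the unitary $\chi$ is natural with respect to the conjugation functors, so $\overline f^{\,1}=\chi^{-1}_b\circ\overline f^{\,2}\circ\chi_{a\otimes c}$ for every $f:a\otimes c\to b$, where $\chi_{a\otimes c}$ is expressed through $\chi_c\otimes\chi_a$ by the monoidal coherence of $\chi$ (up to the unitors $\nu^1,\nu^2$).

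Substituting these two relations into $\theta^1_f=(f\otimes\overline f^{\,1})\circ(\id_a\otimes\coev^1_c\otimes\id_{\overline a})$ and then conjugating by $\chi_a,\chi_b$, the outer copy of $\chi_b$ cancels the $\chi^{-1}_b$ in $\overline f^{\,1}$, the input twist $\chi^{-1}_a$ cancels the $\chi_a$ hidden in $\chi_{a\otimes c}$, and on the $\overline c$-leg the $\chi_c$ hidden in $\chi_{a\otimes c}$ meets the $\tilde\chi^{-1}_c$ produced by the coevaluation relation. Writing the polar decomposition $\tilde\chi_c=\chi_c\,h_c$ with $h_c:=|\tilde\chi_c|\in\cC(\overline c,\overline c)$, that leftover equals $\chi_c\tilde\chi^{-1}_c=h^{-1}_c$. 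Since $c$ is simple, $\overline c$ is simple, $\cC(\overline c,\overline c)=\bbC\,\id_{\overline c}$, and $h_c$ is a positive scalar. Hence everything collapses to
\[
\theta'=h^{-1}_c\,\theta^2_f=\theta^2_{\,h^{-1/2}_c f}\in\cP^2_{a,b},
\]
using the anti-linearity identity $\theta_{\lambda f}=|\lambda|^2\theta_f$ with $\lambda=h^{-1/2}_c$. This gives $\theta'\in\cP^2_{a,b}$, as desired.

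\emph{The main obstacle.} The delicate point is exactly the discrepancy between the \emph{unitary} $\chi_c$ that appears in the statement and the generally non-unitary map $\tilde\chi_c$ that governs the clean evaluation/coevaluation intertwining: the coevaluation relation is forced to involve $\tilde\chi_c$, whereas the naturality of the conjugation functors supplied by the bi-involutive equivalence involves the unitary $\chi_c$, and these two facts disagree precisely by the positive part $|\tilde\chi_c|$. The resolution, and the reason the reduction to simple $c$ is not a mere convenience, is that on a simple object this positive part is a positive scalar and therefore cannot push the image out of the cone, while every other occurrence of $\chi$ cancels in pairs. The only remaining work is the routine coherence bookkeeping comparing $\chi_{a\otimes c}$ with $\chi_c\otimes\chi_a$ and tracking the unitors $\nu^1,\nu^2$, which is of the kind suppressed by the graphical calculus.
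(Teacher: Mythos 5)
Your argument is correct, but it takes a genuinely different route from the paper's. The paper decomposes \emph{all three} objects into simples, writing $f=\sum_{ijk}f_{ijk}$ with $f_{ijk}:a_i\otimes c_k\to b_j$; since on the simple outer objects $a_i,b_j$ the unitary $\chi$ equals $\tilde\chi$ up to a positive scalar, conjugation by the $\chi$'s may be replaced by conjugation by the $\tilde\chi$'s, and the exact identity $(\id_{b_j}\otimes\tilde\chi_{b_j})\circ\theta^1_{f_{ijk}}\circ(\id_{a_i}\otimes\tilde\chi_{a_i}^{-1})=\theta^2_{f_{ijk}}$ is then read off from the second graphical expression for $\theta_f$ in \eqref{eq: two pics for cp maps in a rigid category}, in which only the $a$- and $b$-duality data appear. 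In particular, the paper uses nothing about $\chi$ beyond its definition as the unitary part of the polar decomposition of $\tilde\chi$. You instead decompose only $c$ and keep the unitaries $\chi_a,\chi_b$ on the (possibly non-simple) outer legs, paying for this by importing the naturality and monoidal coherence of $\chi$ from the statement that $(\id,\id,\chi)$ is an equivalence of bi-involutive structures, \cite[Cor.~B]{1808.00323}; this is legitimate and non-circular, since the paper records exactly that fact just before the proposition. The $\chi$-versus-$\tilde\chi$ mismatch is then confined to the internal $\overline{c}$-leg, where simplicity of $c$ forces it to be a positive scalar (strictly, your leftover is $\chi_c\circ\tilde\chi_c^{-1}=\chi_c h_c^{-1}\chi_c^{-1}$ rather than $h_c^{-1}$ itself, but on a simple object both are the same positive number), which the cone absorbs. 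Both proofs share the same skeleton---one implication plus symmetry, additivity of $\theta_f$ under direct sum decompositions, polar decomposition plus simplicity---but yours trades the paper's full decomposition into matrix elements for a stronger imported coherence result, with the benefit of isolating exactly where the unitary/positive discrepancy lives; the paper's version is the more self-contained of the two.
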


\begin{proof}
We only prove the ``$\Rightarrow$'' implication
(the other one follows by exchanging the roles of $\cP^1$ and $\cP^2$).
Given $f:a\otimes c\to b$, let
$\theta^\varepsilon_f:=(f\otimes \bar f)\circ(\id_a\otimes \coev^\varepsilon_c\otimes \id_{\bar a})\in\cP^i_{a,b}$, for $\varepsilon=1,2$.
We need to show that
\[
\forall f\;\!{:}\;\!a\otimes c\to b\qquad
(\id_b \otimes \chi_b)\circ \theta^1_f \circ (\id_a \otimes \chi^{-1}_a) \in \cP^2_{a,b}\;\!.
\]
Pick orthogonal direct sum decompositions into simples
$a=\bigoplus a_i$, $b=\bigoplus b_j$, $c=\bigoplus c_k$.
Letting $f_{ijk}:a_i\otimes c_k\to b_j$ be the matrix elements of $f$, so that $f=\sum f_{ijk}$,
we then have $\theta^\varepsilon_f=\sum_{ijk} \theta^\varepsilon_{f_{ijk}}$.
Similarly,  
$\chi_a=\sum_i \chi_{a_i}$ and $\chi_b=\sum_j \chi_{b_j}$.
It follows that 
\begin{alignat*}{1}
(\id_b \otimes \chi_b)\circ \theta^1_f \circ (\id_a \otimes \chi^{-1}_a)&\,=\,
\sum_{ijk} \;(\id_{b_j} \otimes \chi_{b_j})\circ \theta^1_{f_{ijk}} \circ (\id_{a_i} \otimes \chi^{-1}_{a_i})\\
&\,=\, \sum_{ijk} \;[\begin{matrix}\text{\tiny positive}\\[-3mm]\text{\tiny number}\\[.8mm]\end{matrix}]\cdot (\id_{b_j} \otimes \tilde\chi_{b_j})\circ \theta^1_{f_{ijk}} \circ (\id_{a_i} \otimes \tilde\chi^{-1}_{a_i})\\
&\,=\, \sum_{ijk} \;[\begin{matrix}\text{\tiny positive}\\[-3mm]\text{\tiny number}\\[.8mm]\end{matrix}]\,{\cdot}\, \theta^2_{f_{ijk}}\in\, \cP^2_{a,b}
\end{alignat*}\vspace{-6mm}

\noindent
where the last equality is most easily checked by using the definition
$\theta_f=\begin{tikzpicture}[baseline=-.3cm]\node[scale=.5]{\begin{tikzpicture}
	\roundNbox{unshaded}{(-.03,0)}{.3cm}{.07}{.07}{$f^*$};
	\roundNbox{unshaded}{(-.45,1.5)}{.3cm}{.07}{.07}{$f$};
	\draw (0,-.3) arc (-180:0:.3) 
	\foreach \p in {.13,.29,...,1} {node[pos=\p, sloped, rotate=-90, xshift=-1.2]{-}}
	(.6,-.3) -- (.6,2.13)
	\foreach \p in {0,0.058,...,.98} {node[pos=\p, xshift=-1.2, yshift=2.5]{-}};
	\draw (-.2,.3) arc (0:180:.2)
	\foreach \p in {.13,.29,...,1} {node[pos=\p, sloped, rotate=-90, xshift=-1.2]{-}}
	(-.6,.3) -- (-.6,-.6)
	\foreach \p in {.2,0.36,...,1.1} {node[pos=\p, xshift=-1.3, yshift=2.8]{-}};
	\draw (-.45,1.8) -- (-.45,2.13)
	\foreach \p in {.35,.75} {node[pos=\p, xshift=1.7, yshift=0]{-}};
	\draw (-.3,1.2) arc (-180:-135:.63)
	\foreach \p in {.16,.42,...,1} {node[pos=\p, sloped, rotate=-90, xshift=-1.3]{-}}
	arc (45:0:.63)
	\foreach \p in {.15,.37,...,1} {node[pos=\p, sloped, rotate=-90, xshift=-1.3]{-}};
	\draw (-.6,1.2) arc (0:-45:.63)
	\foreach \p in {.15,.38,...,1} {node[pos=\p, sloped, rotate=-90, xshift=1.7]{-}}
	arc (135:180:.63)
	\foreach \p in {.12,.395,...,1} {node[pos=\p, sloped, rotate=-90, xshift=1.7]{-}};
	\draw (-.969,.31) -- (-.969,-.6)
	\foreach \p in {.2,0.36,...,1.1} {node[pos=\p, xshift=1.7, yshift=2.3]{-}};
\end{tikzpicture}};\end{tikzpicture}
$
of $\theta_f$.
\end{proof}

The cone $\cP_{1,a}\subset \cC(1, a\otimes\overline a)$ is self-dual in the following sense.
Let $\cC$ be a semisimple rigid $\Cstar$-tensor category, and let $\varphi$ be a faithful state on the finite dimensional abelian $\Cstar$-algebra $\End_\cC(1)$.

\begin{lem}
\label{lem:SelfDualityOfCPCone}
Let $\theta : 1 \to a\otimes\overline a $ be any morphism.
Assume that for all $\theta' \in \cP_{a,1}$ we have $\varphi(\theta' \circ \theta) \ge 0$.  Then $\theta \in \cP_{1,a}$.
\end{lem}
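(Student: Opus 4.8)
The plan is to reduce the statement to the self-duality of the positive cone in the finite-dimensional $\Cstar$-algebra $\End_\cC(a)$, using the rotation isomorphisms of Lemma~\ref{lem:RotationOfPositiveOperatorIsCP}.

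First I would observe that the statement is intrinsic to the positive structure: by Proposition~\ref{prop: independent of the choice of unitary dual functor} the cones $\cP_{1,a}$ and $\cP_{a,1}$ do not depend on the choice of unitary dual functor, and the pairing $(\theta',\theta)\mapsto\varphi(\theta'\circ\theta)$ uses only composition and $\varphi$. I may therefore fix the balanced unitary dual functor of Lemma~\ref{lem : BDH Theorem 4.12 and Theorem 4.22}, for which the associated categorical trace $\tr_a:\End_\cC(a)\to\End_\cC(1)$ is two-sided, faithful, positive, and tracial. Using the balanced $\ev$'s and $\coev$'s, the two one-click rotations of Lemma~\ref{lem:RotationOfPositiveOperatorIsCP} give linear isomorphisms
\[
\rho:\cC(1,a\otimes\overline a)\xrightarrow{\ \cong\ }\End_\cC(a),
\qquad
\rho':\cC(a\otimes\overline a,1)\xrightarrow{\ \cong\ }\End_\cC(a),
\]
which, by that same lemma, restrict to bijections $\cP_{1,a}\cong\End_\cC(a)_+$ and $\cP_{a,1}\cong\End_\cC(a)_+$ onto the positive cone.

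The key bookkeeping step is the graphical identity
\[
\theta'\circ\theta\;=\;\tr_a\!\big(\rho'(\theta')\circ\rho(\theta)\big)\qquad\text{in }\End_\cC(1),
\]
obtained by sliding the two boxes around the circle traced out when the $\overline a$-strands of $\theta$ and $\theta'$ are capped off by the balanced evaluations. I expect verifying this identity, together with the correct placement of adjoints and duals, to be the one genuinely fiddly point, although it is a routine application of the graphical calculus once the balanced dual functor is fixed. Applying $\varphi$ and writing $\tau:=\varphi\circ\tr_a$, the hypothesis becomes
\[
\tau\big(\rho'(\theta')\circ\rho(\theta)\big)\ge 0\quad\text{for all }\theta'\in\cP_{a,1},
\]
that is, $\tau(p\,x)\ge 0$ for every $p\in\End_\cC(a)_+$, where $x:=\rho(\theta)$. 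Here $\tau$ is a faithful positive trace, since $\varphi$ is a faithful state on the abelian $\Cstar$-algebra $\End_\cC(1)$ and $\tr_a$ is a faithful positive trace.

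It remains to invoke the self-duality of the positive cone of a finite-dimensional $\Cstar$-algebra, which I would prove by hand rather than citing the bipolar theorem. Writing $\End_\cC(a)=\bigoplus_k M_{n_k}(\bbC)$ and $\tau=\sum_k\lambda_k\Tr_k$ with all $\lambda_k>0$, and taking $p$ to be a rank-one projection $\lvert v\rangle\langle v\rvert$ supported in the $k$-th block, the inequality $\tau(p\,x)\ge 0$ reads $\lambda_k\langle v,x_k v\rangle\ge 0$. Letting $v$ and $k$ vary forces each block $x_k$, and hence $x$, to be a positive operator (positivity of all numerical values $\langle v,x_kv\rangle$ already entails self-adjointness). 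Thus $\rho(\theta)=x\ge 0$, and by Lemma~\ref{lem:RotationOfPositiveOperatorIsCP} we conclude $\theta\in\cP_{1,a}$, as desired.
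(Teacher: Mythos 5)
Your proposal is correct and follows essentially the same route as the paper's proof: both convert $\theta$ and $\theta'$ into elements of $\End_\cC(a)$ via the one-click rotations of Lemma~\ref{lem:RotationOfPositiveOperatorIsCP}, rewrite the pairing $\varphi(\theta'\circ\theta)$ as the faithful trace $\mathrm{Tr}(x,y)=\varphi(\tr_\cC(x\circ y))$ applied to $\rho(\theta)\circ\rho^{-1}(\theta')$, and conclude by self-duality of the positive cone in the finite-dimensional $\Cstar$-algebra $\End_\cC(a)$. The only differences are elaborations: you prove the cone self-duality by hand via the block decomposition (the paper simply asserts it), and your preliminary reduction to the balanced dual functor is harmless but not needed, since pivotality already makes $\mathrm{Tr}$ tracial for any unitary dual functor.
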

\begin{proof}
The map $\mathrm{Tr}:x,y\mapsto\varphi(\tr_\cC(x\circ y))$ is a faithful trace on $\End(a)$.
By Lemma~\ref{lem:RotationOfPositiveOperatorIsCP}, the one-click rotation $\rho^{-1}:\cC(a\otimes\overline a, 1)\to \End(a)$
induces a bijection between $\cP_{a,1}$ and the set of positive elements of $\End(a)$.
By assumption,
\[
\varphi(\theta' \circ \theta) = \mathrm{Tr}\big( \rho(\theta) \circ \rho^{-1}(\theta') \big) \ge 0
\]
for all $\theta'\in\cP_{a,1}$, i.e., $\mathrm{Tr}( \rho(\theta) \circ x)\ge 0$ for all positive $x\in \End(a)$.
It follows that $\rho(\theta)$ is positive in $\End(a)$.
Hence, by Lemma~\ref{lem:RotationOfPositiveOperatorIsCP}, $\theta$ is cp.
\end{proof}

\begin{rem}
The above results are all formulated in the context of semisimple rigid $\Cstar$-tensor categories.
However, they apply verbatim to any ``unitary 2-category'' (the many-object version of a semisimple rigid $\Cstar$-tensor category).
\end{rem}

Fix a von Neumann algebra $A$ with atomic center.
If we apply the construction described in this section to the dualizable subcategory $\Bim_d(A)\subset\Bim(A)$, then the resulting positive structure agrees with the one inherited from $\Bim(A)$:

\begin{prop}
\label{prop:PositiveStructureOnDualizableSubcategoryOfBim(R)}
The positive structure \eqref{eq: positive cones for multifusion} on the dualizable subcategory
$\Bim_d(A)\subset \Bim(A)$ agrees with the restriction of the positive structure on $\Bim(A)$ from Definition~\ref{def: cp morphisms in Bim(R)}.
\end{prop}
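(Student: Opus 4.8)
The plan is to prove the two inclusions of cones separately, computing the cones $\cP_{X,Y}$ of \eqref{eq: positive cones for multifusion} with respect to the \emph{standard} unitary dual functor on $\Bim_d(A)$, i.e.\ the one determined by \eqref{eq: standard evaluation and coevaluation morphisms}; this is harmless by Proposition~\ref{prop: independent of the choice of unitary dual functor}. Write $\cP^{\mathrm{rig}}_{X,Y}$ for the cone \eqref{eq: positive cones for multifusion} and $\cP^{\mathrm{Bim}}_{X,Y}$ for the cone of Definition~\ref{def: cp morphisms in Bim(R)}. Since $A$ has atomic center, Proposition~\ref{prop: ev and coev are positive} applies and tells us that the standard $\ev_X$ and $\coev_X$ already lie in $\cP^{\mathrm{Bim}}$; this is what glues the two descriptions together.

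For the inclusion $\cP^{\mathrm{rig}}_{X,Y}\subseteq\cP^{\mathrm{Bim}}_{X,Y}$, I would observe that the generator $\theta_f=(f\boxtimes_A\overline f)\circ(\id_X\boxtimes_A\coev_c\boxtimes_A\id_{\overline X})$ of \eqref{eq: positive cones for multifusion} is a composite of three $\cP^{\mathrm{Bim}}$-maps: $\coev_c\in\cP^{\mathrm{Bim}}_{L^2A,c}$ by Proposition~\ref{prop: ev and coev are positive}; its amplification $\id_X\boxtimes_A\coev_c\boxtimes_A\id_{\overline X}$ is $\cP^{\mathrm{Bim}}$ by Lemma~\ref{lem:Z_A Amplification is CP} (with $Z=X$); and $f\boxtimes_A\overline f$, being the amplification $f\boxtimes_A\id_{L^2A}\boxtimes_A\overline f$ of the cp map $\id_{L^2A}$, is $\cP^{\mathrm{Bim}}$ by Proposition~\ref{prop: check positive structure (2)}. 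As $\cP^{\mathrm{Bim}}$ is closed under composition, $\theta_f\in\cP^{\mathrm{Bim}}_{X,Y}$.

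The reverse inclusion is the harder direction, and I would handle it by a self-duality argument after reducing to the tensor unit. Both positive structures satisfy Frobenius reciprocity (Lemma~\ref{lem: one sided Frobenius reciprocity in C_+}, applicable since over a von Neumann algebra with atomic center the conjugate of a dualizable bimodule is its dual \cite[\S6]{MR3342166}), and the isomorphism \eqref{eq: Frobenius reciprocity for P} is implemented by one and the same linear map $\Theta$ built from the standard $\ev_X,\coev_X$, which are cp in both structures. Taking $a=X$, $b=L^2A$, $c=Y$, the map $\Theta$ is a linear bijection of the ambient hom-space carrying $\cP^{\mathrm{rig}}_{X,Y}$ onto $\cP^{\mathrm{rig}}_{L^2A,\overline X\boxtimes_A Y}$ and $\cP^{\mathrm{Bim}}_{X,Y}$ onto $\cP^{\mathrm{Bim}}_{L^2A,\overline X\boxtimes_A Y}$. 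Hence it suffices to prove $\cP^{\mathrm{rig}}_{L^2A,Z}=\cP^{\mathrm{Bim}}_{L^2A,Z}$ for every dualizable $A\text{-}A$-bimodule $Z$. These cones live in $\Hom_{A\text{-}A}(L^2A,Z\boxtimes_A\overline Z)$, and $\cP^{\mathrm{rig}}_{L^2A,Z}$ is self-dual with respect to the pairing induced, via the one-click rotation $\rho$ of Lemma~\ref{lem:RotationOfPositiveOperatorIsCP}, by the faithful trace $x,y\mapsto\varphi(\tr_\cC(x\circ y))$ on $\End_\cC(Z)$ (this is exactly Lemma~\ref{lem:SelfDualityOfCPCone}), while $\cP^{\mathrm{Bim}}_{L^2A,Z}$ should inherit self-duality from the self-duality of the concrete cones $\mathbf{P}_{Z,n}$ \cite[Prop.~3.1]{MR703809}. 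Given the inclusion proved above, equality of two self-dual cones for the same pairing is then formal: $K_1\subseteq K_2$ with $K_i=K_i^\vee$ dualizes to $K_2\subseteq K_1$.

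The main obstacle is this last step: verifying that the two self-dualities are taken with respect to the \emph{same} pairing on $\Hom_{A\text{-}A}(L^2A,Z\boxtimes_A\overline Z)$. Concretely, one must identify $Z\boxtimes_A\overline Z\cong L^2(B')$ (with $B'$ the commutant of the right $A$-action on $Z$), show that under this identification $\cP^{\mathrm{Bim}}_{L^2A,Z}$ is the intersection of the hom-space with the self-dual cone $\mathbf{P}_{Z,1}=L^2_+(B')$ — in particular that complete positivity (all $n$) reduces, for maps out of the tensor unit, to the $n=1$ positivity condition — and finally that the spatial $L^2$-pairing on $L^2(B')$ matches the categorical trace pairing $\varphi\circ\tr_\cC$ for a suitable faithful state $\varphi$ on $\End_\cC(L^2A)=Z(A)$. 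This matching of the Tomita--Takesaki inner product with the categorical trace is where the atomic-center hypothesis and the explicit description \eqref{eq: standard evaluation and coevaluation morphisms} of $\coev$ via the minimal conditional expectation do the real work.
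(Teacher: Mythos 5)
Your first inclusion $\cP^{\mathrm{rig}}_{X,Y}\subseteq\cP^{\mathrm{Bim}}_{X,Y}$ and the Frobenius-reciprocity reduction to the cones at the unit object are correct, and they coincide with the paper's argument: the paper compresses your three-step factorization of $\theta_f$ into the remark that $\cP^{\mathrm{rig}}$ is the smallest positive structure containing the maps $\coev_X$, which lie in $\cP^{\mathrm{Bim}}$ by Proposition~\ref{prop: ev and coev are positive}, and it performs the same reduction via Lemma~\ref{lem: one sided Frobenius reciprocity in C_+}, noting that the isomorphism is implemented by the same map for both structures.

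The reverse inclusion, however, is not proved. You reduce it to the claim that $\cP^{\mathrm{Bim}}_{L^2A,Z}$ is self-dual with respect to the \emph{same} pairing that makes $\cP^{\mathrm{rig}}_{L^2A,Z}$ self-dual, and you yourself flag this as ``the main obstacle'' without carrying it out; in particular you would need to show that complete positivity of a morphism $L^2A\to Z\boxtimes_A\overline Z$ reduces to the $n=1$ condition, and that the spatial pairing on $L^2(B')$ matches the categorical trace pairing $\varphi\circ\tr_\cC$. Neither claim is established, and neither is needed: the paper's proof exploits an asymmetry that your symmetric ``two self-dual cones'' argument misses. Lemma~\ref{lem:SelfDualityOfCPCone} only asks you to test $\theta\in\cP^{\mathrm{Bim}}_{L^2A,Z}$ against morphisms $\theta'$ in the \emph{rigid} cone $\cP^{\mathrm{rig}}_{Z,L^2A}$, and by the inclusion you already proved, every such $\theta'$ lies in $\cP^{\mathrm{Bim}}_{Z,L^2A}$. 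Hence $\theta'\circ\theta\in\cP^{\mathrm{Bim}}_{L^2A,L^2A}$, i.e., it is an element of $\End_{A\text{-}A}(L^2A)=Z(A)$ preserving the standard cone $L^2_+A$. Self-duality of the standard cone (a classical Tomita--Takesaki fact, invoked only at the unit object) gives $\langle(\theta'\circ\theta)\xi,\xi\rangle\ge0$ for all $\xi\in L^2_+A$, and varying the central support of $\xi$ yields $\theta'\circ\theta\ge0$ in $Z(A)$, hence $\varphi(\theta'\circ\theta)\ge0$, so Lemma~\ref{lem:SelfDualityOfCPCone} concludes that $\theta\in\cP^{\mathrm{rig}}_{L^2A,Z}$. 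This bypasses entirely the self-duality of $\cP^{\mathrm{Bim}}$, the reduction of cp to $n=1$, and the matching of pairings that your sketch leaves open.
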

\begin{proof}
Let $\cP^{1}$ denote the positive structure \eqref{eq: positive cones for multifusion}, and $\cP^{2}$ the one from Definition~\ref{def: cp morphisms in Bim(R)}.
The former is generated by the maps $\coev_X$ in the sense that it is the smallest positive structure which contains those maps.
By Proposition~\ref{prop: ev and coev are positive}, $\coev_X\in\cP^{2}$. Hence $\cP^{1} \subseteq \cP^{2}$.

By Lemma~\ref{lem: one sided Frobenius reciprocity in C_+}, we have
$\cP^{i}_{X,Y}\cong \cP^{i}_{L^2A,\overline{X}\otimes Y}$ for both $i=1,2$, and the isomorphism is provided by the same map.
It is therefore enough to show that
\[
\cP^{2}_{L^2A,X}\subset \cP^{1}_{L^2A,X}
\]
for every $X\in \Bim_d(A)$.

Fix $\theta \in \cP^{2}_{L^2A,X}$.
By Lemma \ref{lem:SelfDualityOfCPCone}, it suffices to show that $\forall\theta' \in \cP^{1}_{X,L^2A}$
the composite $\theta' \circ \theta$ is positive in $\End(L^2A)=Z(A)$.
We'll show that the inequality
\[
\theta' \circ \theta \geq 0
\]
holds true for every $\theta \in \cP^{2}_{L^2A,X}$ and $\theta' \in \cP^{2}_{X,L^2A}$.
Since $(\theta' \circ \theta)L^2_+A\subset L^2_+A$, we have
\(
\langle (\theta' \circ \theta)\xi,\xi\rangle\ge 0
\)
for every $\xi \in L^2_+A$.
By varying the central support of $\xi$, it follows that $\theta' \circ \theta \ge 0$.
\end{proof}

We finish the section by noting that
our notion \eqref{eq: positive cones for multifusion} of cp morphism agrees with the 
notion of cp map between $\rm C^*$-algebra objects/Q-systems introduced in 
\cite[Def.~4.20]{MR3687214}, generalizing the cp multipliers of~\cite{MR3406647}.
Recall that a map $\theta : a\otimes \overline{a} \to b \otimes \overline{b}$ is cp in the sense of \cite{MR3687214} if for every $d\in \cC$ and every positive map $g\in \cC(d\otimes a, d\otimes a)$, the morphism
\begin{equation}
\label{3rd def of cp maps}
\begin{tikzpicture}[baseline=.4cm]
	\draw (-.2,-.7) -- (-.2,1.7)
	\foreach \p in {0,0.06,...,1} {node[pos=\p, xshift=1.8, yshift=1.5]{-}} ;
	\draw (.2,1.7) -- (.2,-.4)
	\foreach \p in {0,0.06,...,1} {node[pos=\p, xshift=1.7, yshift=-1.7]{-}}
	arc (-180:0:.2cm) 
	\foreach \p in {.19,.42,...,1} {node[pos=\p, sloped, rotate=-90, xshift=-1.2]{-}}
	-- (.6,1.4) 
	\foreach \p in {0,0.06,...,1} {node[pos=\p, xshift=-1.2, yshift=1.5]{-}}
	arc (180:0:.2cm) 
	\foreach \p in {.12,.28,...,1} {node[pos=\p, sloped, rotate=-90, xshift=-1.2]{-}}
	-- (1,-.7)
	\foreach \p in {0,0.06,...,1} {node[pos=\p, xshift=1.8, yshift=-1.5]{-}};
	\roundNbox{unshaded}{(0,0)}{.3cm}{.1}{.1}{$g$};
	\roundNbox{unshaded}{(.4,1)}{.3cm}{.1}{.1}{$\theta$};
	\node at (-.35,-.5) {$\scriptstyle d$};
	\node at (.05,-.5) {$\scriptstyle a$};
	\node at (.85,-.5) {$\scriptstyle b$};
	\node at (.05,.5) {$\scriptstyle a$};
	\node at (.75,.5) {$\scriptstyle \overline{a}$};
	\node at (-.35,1.5) {$\scriptstyle d$};
	\node at (.05,1.5) {$\scriptstyle b$};
\end{tikzpicture}
=
(\id_{d\otimes b} \otimes \ev_b)
\circ
(\id_d \otimes\;\! \theta \otimes \id_b)
\circ
(g\otimes \id_{\overline{a}\otimes b})
\circ
(\id_d \otimes \coev_a \otimes \id_b)
\end{equation}
is positive.
If $\theta = \theta_f 
\in \cP_{a,b}$,
then for every $g\ge 0$ the morphism \eqref{3rd def of cp maps} can be written as 
$
(\id_d\otimes f) \circ (\sqrt g\otimes \id_c)
\circ
(\sqrt g\otimes \id_c)
\circ
(\id_d \otimes f^*)
$,
and is thus clearly positive.
Conversely, if $\theta \in \cC(a\otimes \overline{a}, b\otimes \overline{b})$ is cp in the sense of \cite{MR3687214}, 
then setting $d=\overline{a}$ and $g = \ev_a^* \circ \ev_a$ in \eqref{3rd def of cp maps}, we get $\rho(\theta)\geq 0$, thus $\theta\in \cP_{a,b}$ by Lemma \ref{lem:RotationOfPositiveOperatorIsCP}.

\subsection{Positive representations}
\label{sec:PositiveRepresentations}

Let $\cC$ and $\cD$ be bi-involutive tensor categories equipped with positive structures.
A bi-involutive functor $F:\cC\to \cD$ is said to \emph{respect the positive structures} if for every cp map $\theta:a\otimes\bar a\to b\otimes\bar b$ in $\cC$,
the following composite is cp in $\cD$:
\begin{equation}
\label{eq: respect the positive structures}
F(a)\otimes \overline{F(a)} \xrightarrow{\!\id\otimes \chi_a^{-1}\!\!}
F(a)\otimes F(\bar a) \xrightarrow{\!\mu_{a,\bar a}\!}
F(a\otimes \bar a) \xrightarrow{\!F(\theta)\!}
F(b\otimes \bar b) \xrightarrow{\!\mu^{-1}_{b,\bar b}\!}
F(b)\otimes F(\bar b) \xrightarrow{\!\id\otimes \chi_b}
F(b)\otimes \overline{F(b)}.
\end{equation}

\begin{defn}\label{def: positive representation}
Let $\cC$ be as above.
A \emph{positive representation} of $\cC$ is a bi-involutive functor $\cC\to \Bim(R)$ (as in Definition~\ref{def: rep of a bi-involutive tensor category})
which respects the positive structures.
Here, $\Bim(R)$ is equipped with the positive structure described in Section~\ref{ex:PosiviteStructureOnBim}.
\end{defn}

By the results of the previous section, a semisimple rigid $\Cstar$-tensor category $\cC$ admits canonical bi-involutive and positive structures \eqref{eq: bi-involutive structure from unitary dual functor} and \eqref{eq: positive cones for multifusion}
(remember that, by \eqref{eq: chi_c:overline c^1to overline c^2} and Proposition~\ref{prop: independent of the choice of unitary dual functor}, these are independent of the choice of unitary dual functor on $\cC$).
As we have seen, given a tensor functor $\alpha:\cC\to \Bim(R)$, there are many layers of structure that one may or may not require this functor to preserve.
Specifically, one could require $\alpha$ to be:
\begin{itemize}
\item[(i)] a $\Cstar$-representation (Definition~\ref{def:Cstar-representation}),
\item[(ii)] a bi-involutive representation (Definition~\ref{def: rep of a bi-involutive tensor category}),
\item[(iii)] or, finally, a positive representation (Definition~\ref{def: positive representation}).
\end{itemize}

Surprisingly, at least when $\alpha$ is fully faithful, options (i) and (iii) yield equivalent notions.
In contrast, option (ii) yields a non-equivalent, and less well-behaved notion.
The equivalence between (i) and (iii) is formalised in the following theorem:
\begin{thm}[Thm.~\ref{THM-A}]
\label{thm: (i) and (iii) are equivalent} 
Let $\cC$ be a semisimple rigid $\Cstar$-tensor category.
Then every fully faithful $\Cstar$-representation $\alpha:\cC\to \Bim(R)$ extends uniquely to a positive representation.

Given fully faithful $\Cstar$-representation $\alpha_i:\cC\to \Bim(R_i)$, $i=1,2$, then every isomorphism $\alpha_1\cong \alpha_2$ of $\Cstar$-representations
(an invertible bimodule ${}_{R_2}\Phi_{R_1}$ together with a unitary monoidal natural isomorphism $\phi$ as in \eqref{forgotten coherence - invertible case - A})
is an isomorphism of positive representations. I.e., the coherence~\eqref{forgotten coherence - invertible case} is automatically satisfied.
\end{thm}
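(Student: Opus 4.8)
The plan is to prove the two assertions in turn, deriving everything from Lemma~\ref{lem:FullyFaithfulPreservesStandardPairings} and the positivity of the standard solutions (Proposition~\ref{prop: ev and coev are positive}). For the existence of a positive extension, the first step is to produce the comparison isomorphisms $\chi_x\colon\alpha(\overline x)\to\overline{\alpha(x)}$. Since every object of $\cC$ is dualizable and $\alpha$ is a tensor functor, its image consists of dualizable bimodules; let $\cD\subset\Bim(R)$ be the rigid $\Cstar$-tensor subcategory generated by $\alpha(\cC)$ under $\boxtimes_R$, conjugation, direct sums and subobjects. Because $\alpha$ is fully faithful, $\cD$ is semisimple, and by the results of Section~\ref{sec: The bi-involutive structure on Bim(R)} its bi-involution (complex conjugation of bimodules) together with the standard solutions \eqref{eq: standard evaluation and coevaluation morphisms} realises the intrinsic balanced conjugate of $\cD$ as a semisimple rigid $\Cstar$-tensor category. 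I may then apply Lemma~\ref{lem:FullyFaithfulPreservesStandardPairings} to the full dagger tensor functor $\alpha\colon\cC\to\cD$: the pushed-forward pairings $\tilde\ev_{\alpha(x)},\tilde\coev_{\alpha(x)}$ are balanced, and the canonical comparison $\chi_x$ is unitary. The three coherences of Definition~\ref{def of bi-involutive tensor category} for $\chi$ follow from the explicit formula for $\chi_x$ and the uniqueness (up to unique unitary) of balanced conjugate data in Lemma~\ref{lem : BDH Theorem 4.12 and Theorem 4.22}.

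To see that this bi-involutive $\alpha$ respects positive structures, I would argue as in Proposition~\ref{prop:PositiveStructureOnDualizableSubcategoryOfBim(R)}: the positive structure \eqref{eq: positive cones for multifusion} on $\cC$ is the smallest one containing the coevaluations $\coev_c$, and by construction $\chi$ transports each $\coev_c$ to the standard coevaluation of $\Bim(R)$, which is cp by Proposition~\ref{prop: ev and coev are positive}. Since the cp maps of $\Bim(R)$ are closed under the operations of Propositions~\ref{prop: check positive structure (1)} and~\ref{prop: check positive structure (2)}, the image of every cp map of $\cC$ is cp. Uniqueness is then forced: any positive bi-involutive extension must send balanced solutions to balanced solutions (respecting the coevaluation cones allows only a positive rescaling, which the duality equations fix to be trivial), and a unitary intertwining two fixed balanced conjugate data is unique by Lemma~\ref{lem : BDH Theorem 4.12 and Theorem 4.22}.

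For the second assertion, using the invertibility of $\Phi$ I would rewrite $\phi$ as a unitary monoidal natural isomorphism $\phi\colon\Ad(\Phi)\circ\alpha_1\Rightarrow\alpha_2$ of dagger tensor functors $\cC\to\Bim(R_2)$; the half-braiding condition \eqref{eq: half-braiding condition} is precisely its monoidality, and monoidality also gives $\phi_{1}=\id$. The crux is the general fact that a monoidal natural isomorphism is automatically compatible with duals: applying naturality of $\phi$ to $\ev_c$ and $\coev_c$ and using monoidality shows that $\phi_{\overline c}$ is forced to be the mate of $\phi_c^{-1}$. I would verify this by a short string-diagram computation, sliding $\phi$ across the cup and cap of $c$. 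Feeding in the balanced data of $\cC$ and the standard data of $\Bim(R_1)$ and $\Bim(R_2)$ — which, by the first assertion, $\alpha_1$ and $\alpha_2$ match via $\chi^{\alpha_1},\chi^{\alpha_2}$, with $\Phi$ carrying its standard solutions \eqref{eq: standard evaluation and coevaluation morphisms} — turns this mate identity into exactly the diagram \eqref{forgotten coherence - invertible case}.

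The main obstacle, I expect, is the bookkeeping that identifies the abstract ``compatibility with duals'' with the specific balanced/standard data on both sides. This is exactly why the coherence was overlooked before: absent a positive structure there is no canonical $\chi$, and the identity only becomes forced once the balancing condition pins the comparison isomorphisms down. Concretely, the delicate point underlying both assertions is checking that the unitary $\chi_x$ furnished by Lemma~\ref{lem:FullyFaithfulPreservesStandardPairings} coincides with the comparison dictated by the minimal-conditional-expectation solutions on $\Bim(R)$, so that the conjugate intrinsic to $\cC$ and the one inherited from $\Bim(R)$ are identified compatibly throughout.
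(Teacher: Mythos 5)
Your proposal is correct and takes essentially the same route as the paper: you construct $\chi_c$ as the canonical unitary comparison of Lemma~\ref{lem:FullyFaithfulPreservesStandardPairings}, reduce positivity to the cp-ness of the standard solutions (Propositions~\ref{prop: ev and coev are positive} and~\ref{prop:PositiveStructureOnDualizableSubcategoryOfBim(R)}), and obtain uniqueness because positivity eliminates the residual $U(1)$-phase on simple objects, exactly as in Lemma~\ref{lem:UniquePositiveCoherenceForTensorDaggerFunctor}. Your ``mate'' argument for the second assertion is the same computation the paper performs diagrammatically in Lemma~\ref{lem: coherence automatically satisfied}: once the first assertion pins $\chi^{\alpha_1}$ and $\chi^{\alpha_2}$ down to the canonical comparisons, naturality and monoidality of $\phi$ applied to the evaluation and coevaluation morphisms force the coherence.
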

\begin{proof}
This is the content of Lemmata~\ref{lem:UniquePositiveCoherenceForTensorDaggerFunctor} and~\ref{lem: coherence automatically satisfied} below.
\end{proof}

It is natural to ask whether the statement of Theorem~\ref{thm: (i) and (iii) are equivalent} also holds true without the requirement of $\alpha$ being fully faithful.
We leave this as an open question (we do not know of any counterexamples).

\begin{rem}
Recall that an isomorphism of bi-involutive representations is a pair $(\Phi, \phi)$ as in Definition~\ref{def:Cstar-representation},
where $\Phi$ is an invertible 
bimodule, and $\phi$ satisfies the coherences \eqref{eq: half-braiding condition} and \eqref{forgotten coherence},
equivalently \eqref{forgotten coherence - invertible case - A} and~\eqref{forgotten coherence - invertible case}. 
One easly checks that if $\alpha: \cC \to \Bim(R)$ and $\beta: \cC\to \Bim(S)$ are isomorphic representations,
then $\alpha$ is positive if and only if $\beta$ is positive (the proof relies on the coherence~\eqref{forgotten coherence - invertible case}).
\end{rem}

Let now $\cC$ be a semisimple rigid $\Cstar$-tensor category (for example a unitary multifusion category), equipped with its canonical bi-involutive and positive structures \eqref{eq: bi-involutive structure from unitary dual functor} and \eqref{eq: positive cones for multifusion}.

\begin{lem}\label{lem:UniquePositiveCoherenceForTensorDaggerFunctor}
Let $\alpha:\cC\to\Bim(R)$ be a fully faithful $\Cstar$-representation. 
Then
\begin{alignat}{1}
\label{eq:FormulaForChi}
\chi_c &:= 
\big((\alpha(\ev^\cC_c)\circ \mu_{\overline{c},c})\otimes\id_{\overline{\alpha(c)}}\big)
\circ
\big(\id_{\alpha(\overline{c})}\otimes \coev^{\Bim(R)}_{\alpha(c)}\big)
\\
\label{eq:EquivalentFormulaForChi}
&\phantom{:}=\big(\id_{\overline{\alpha(c)}} \otimes (\alpha(\coev^\cC_c)^*\circ \mu_{c, \overline{c}})\big)
\circ
\big((\ev_{\alpha(c)}^{\Bim(R)})^*\otimes \id_{\alpha(\overline{c})}\big),
\end{alignat}
and these equip $\alpha$ with the structure of a positive representation.

Conversely, if $\alpha$ is a positive representation which is furthermore fully faithful,
then $\chi_c:\alpha(\overline c)\to\overline{\alpha(c)}$ is given by \eqref{eq:FormulaForChi}, equivalently~\eqref{eq:EquivalentFormulaForChi}.
\end{lem}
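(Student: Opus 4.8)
The plan is to identify the two displayed formulas for $\chi_c$ with the comparison unitary between balanced solutions of the duality equations that was produced in Lemma~\ref{lem:FullyFaithfulPreservesStandardPairings}, then to verify that this $\chi$ both makes $\alpha$ bi-involutive and respects the positive cones, and finally to show that positivity pins $\chi$ down uniquely.

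First, since $\alpha$ is fully faithful it is in particular full, so I would apply Lemma~\ref{lem:FullyFaithfulPreservesStandardPairings} with $F=\alpha$ and $\cD=\Bim(R)$. It tells me that $\tilde\ev_{\alpha(c)}=\alpha(\ev^\cC_c)\circ\mu_{\overline c,c}$ and $\tilde\coev_{\alpha(c)}=\mu^{-1}_{c,\overline c}\circ\alpha(\coev^\cC_c)$ are balanced solutions exhibiting $\alpha(\overline c)$ as a dual of $\alpha(c)$, that the resulting comparison isomorphism to the standard conjugate $\overline{\alpha(c)}$ in $\Bim(R)$ is exactly \eqref{eq:FormulaForChi}, that \eqref{eq:EquivalentFormulaForChi} is an equal expression for it, and that $\chi_c$ is unitary. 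Naturality of $\chi$ in $c$ together with the three coherences of Definition~\ref{def of bi-involutive tensor category} express the compatibility of $\chi$ with the unitary dual functors on $\cC$ and on $\Bim(R)$; I would check these by the snake identities in the graphical calculus (this is the content of the general formalism of \cite{1808.00323}). The one relation I would record for later use is the zig--zag identity $(\id_{\alpha(c)}\otimes\chi_c)\circ\tilde\coev_{\alpha(c)}=\coev^{\Bim(R)}_{\alpha(c)}$, which follows immediately from the first expression for $\chi_c$.

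Next I would verify that $(\alpha,\chi)$ respects the positive structures. The cone $\cP^\cC_{a,b}$ is by \eqref{eq: positive cones for multifusion} the positive span of the maps $\theta_f=(f\otimes\overline f)\circ(\id_a\otimes\coev^\cC_c\otimes\id_{\overline a})$ with $f\colon a\otimes c\to b$, so it suffices to show that the transported morphism \eqref{eq: respect the positive structures} sends each $\theta_f$ to a cp map of $\Bim(R)$. Substituting the zig--zag relation above and using naturality and monoidality of $\chi$ to rewrite $\alpha(\overline f)$ as $\overline{\alpha(f)}$, a direct graphical computation identifies the transported map with $\theta_{\alpha(f)}=(\alpha(f)\otimes\overline{\alpha(f)})\circ(\id_{\alpha(a)}\otimes\coev^{\Bim(R)}_{\alpha(c)}\otimes\id_{\overline{\alpha(a)}})$. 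Since $a,b,c$ are dualizable their images lie in $\Bim_d(R)$, so $\theta_{\alpha(f)}$ is cp for the positive structure \eqref{eq: positive cones for multifusion} of the dualizable subcategory; by Proposition~\ref{prop:PositiveStructureOnDualizableSubcategoryOfBim(R)} that structure is the restriction of the one of Definition~\ref{def: cp morphisms in Bim(R)}, hence $\theta_{\alpha(f)}$ is cp in $\Bim(R)$. I expect this to be the main step: it is where the abstract cones of $\cC$ must be matched with the Hilbert-space positivity of $\Bim(R)$, and Proposition~\ref{prop:PositiveStructureOnDualizableSubcategoryOfBim(R)} is exactly the bridge that makes the matching work.

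Finally, for the converse I would argue that positivity rigidifies $\chi$. Suppose $\alpha$ is fully faithful and positive with bi-involutive structure $\chi'$, and let $\chi$ be the map \eqref{eq:FormulaForChi}; both are natural unitaries $\alpha(\overline c)\to\overline{\alpha(c)}$, so $u_c:=\chi'_c\circ\chi_c^{-1}$ is a unitary automorphism of $\overline{\alpha(c)}$. Because $\coev^\cC_c\in\cP^\cC_{1,c}$ and $\alpha$ respects positive structures, the transported map $(\id\otimes\chi'_c)\circ\tilde\coev_{\alpha(c)}=(\id\otimes u_c)\circ\coev^{\Bim(R)}_{\alpha(c)}$ lies in $\cP^{\Bim(R)}_{1,\alpha(c)}$. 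For $c$ irreducible, $\alpha(c)$ is irreducible, so $\End(\overline{\alpha(c)})=\bbC\cdot\id$ and, by Lemma~\ref{lem:RotationOfPositiveOperatorIsCP} together with Proposition~\ref{prop:PositiveStructureOnDualizableSubcategoryOfBim(R)}, the cone $\cP^{\Bim(R)}_{1,\alpha(c)}$ is the ray $\{t\coev^{\Bim(R)}_{\alpha(c)}\mid t\ge 0\}$; writing $u_c=\lambda\,\id$ with $|\lambda|=1$, membership in this ray forces $\lambda\ge 0$ and hence $\lambda=1$, so $\chi'_c=\chi_c$. Decomposing an arbitrary $c$ into simples and invoking naturality of $\chi$ and $\chi'$ then gives $\chi'=\chi$ in general, which is the asserted uniqueness.
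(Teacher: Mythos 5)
Your proposal is correct and follows essentially the same route as the paper: the forward direction rests on Lemma~\ref{lem:FullyFaithfulPreservesStandardPairings}, the graphical identification of the transported $\theta_f$ with $\theta_{\alpha(f)}$, and Proposition~\ref{prop:PositiveStructureOnDualizableSubcategoryOfBim(R)}, exactly as in the paper's proof. Your converse, packaged via the unitary $u_c=\chi'_c\circ\chi_c^{-1}$ and the ray structure of $\cP^{\Bim(R)}_{1,\alpha(c)}$, is a mild reformulation (with a somewhat more explicit justification via Lemma~\ref{lem:RotationOfPositiveOperatorIsCP}) of the paper's argument that for simple $c$ the candidate coevaluations differ by a phase $\lambda\in U(1)$ of which exactly one is cp, followed by the same extension to direct sums by naturality.
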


\begin{proof}
Let $\chi$ be as in~\eqref{eq:FormulaForChi}.
By Lemma~\ref{lem:FullyFaithfulPreservesStandardPairings}, it is unitary and agrees with~\eqref{eq:EquivalentFormulaForChi}.
We show that $\alpha$ respects positive structures, i.e., that the morphism \eqref{eq: respect the positive structures} is cp.
Indeed, for any $f: a\otimes c \to b$ in $\cC$,
$$
(\id_{\alpha(b)}\otimes \chi_b)
\circ
\mu_{b, \overline{b}}^{-1}
\circ
\,\alpha\left(
\begin{tikzpicture}[baseline=.6cm]
\useasboundingbox (-1.3,-.6) rectangle (.9,2.1);
	\roundNbox{unshaded}{(-.03,0)}{.3cm}{.07}{.07}{$f^*$};
	\roundNbox{unshaded}{(-.45,1.5)}{.3cm}{.07}{.07}{$f$};
	\draw (0,-.3) arc (-180:0:.3) 
	\foreach \p in {.13,.29,...,1} {node[pos=\p, sloped, rotate=-90, xshift=-1.2]{-}}
	(.6,-.3) -- (.6,2.13)
	\foreach \p in {0,0.058,...,.98} {node[pos=\p, xshift=-1.2, yshift=2.5]{-}};
	\draw (-.2,.3) arc (0:180:.2)
	\foreach \p in {.13,.29,...,1} {node[pos=\p, sloped, rotate=-90, xshift=-1.2]{-}}
	(-.6,.3) -- (-.6,-.6)
	\foreach \p in {.2,0.36,...,1.1} {node[pos=\p, xshift=-1.3, yshift=2.8]{-}};
	\draw (-.45,1.8) -- (-.45,2.13)
	\foreach \p in {.35,.75} {node[pos=\p, xshift=1.7, yshift=0]{-}};
	\draw (-.3,1.2) arc (-180:-135:.63)
	\foreach \p in {.16,.42,...,1} {node[pos=\p, sloped, rotate=-90, xshift=-1.3]{-}}
	arc (45:0:.63)
	\foreach \p in {.15,.37,...,1} {node[pos=\p, sloped, rotate=-90, xshift=-1.3]{-}};
	\draw (-.6,1.2) arc (0:-45:.63)
	\foreach \p in {.15,.38,...,1} {node[pos=\p, sloped, rotate=-90, xshift=1.7]{-}}
	arc (135:180:.63)
	\foreach \p in {.12,.395,...,1} {node[pos=\p, sloped, rotate=-90, xshift=1.7]{-}};
	\draw (-.969,.31) -- (-.969,-.6)
	\foreach \p in {.2,0.36,...,1.1} {node[pos=\p, xshift=1.7, yshift=2.3]{-}};
	\node at (-1.18,-.51) {$\scriptstyle a$};
	\node at (-.41,-.49) {$\scriptstyle \overline{a}$};
	\node at (-.64,2.05) {$\scriptstyle b$};
	\node at (.75,2.05) {$\scriptstyle \overline{b}$};
	\node at (.08,.9) {$\scriptstyle c$};
\end{tikzpicture}
\right)
\circ \mu_{a,\overline{a}}
\circ
(\id_{\alpha(a)}\otimes \chi_a^{-1})
\,=
\begin{tikzpicture}[baseline=.6cm]
\useasboundingbox (-1.3,-.6) rectangle (.9,2.1);
	\roundNbox{unshaded}{(-.03,0)}{.3cm}{.11}{.11}{$\hspace{.3mm}\scriptstyle{\alpha(f)^*}$};
	\roundNbox{unshaded}{(-.45,1.5)}{.3cm}{.1}{.1}{$\scriptstyle{\alpha(f)}$};
	\draw (0,-.3) arc (-180:0:.3) 
	\foreach \p in {.13,.29,...,1} {node[pos=\p, sloped, rotate=-90, xshift=-1.2]{-}}
	(.6,-.3) -- (.6,2.13)
	\foreach \p in {0,0.058,...,.98} {node[pos=\p, xshift=-1.2, yshift=2.5]{-}};
	\draw (-.2,.3) arc (0:180:.2)
	\foreach \p in {.13,.29,...,1} {node[pos=\p, sloped, rotate=-90, xshift=-1.2]{-}}
	(-.6,.3) -- (-.6,-.6)
	\foreach \p in {.2,0.36,...,1.1} {node[pos=\p, xshift=-1.3, yshift=2.8]{-}};
	\draw (-.45,1.8) -- (-.45,2.13)
	\foreach \p in {.35,.75} {node[pos=\p, xshift=1.7, yshift=0]{-}};
	\draw (-.3,1.2) arc (-180:-135:.63)
	\foreach \p in {.16,.42,...,1} {node[pos=\p, sloped, rotate=-90, xshift=-1.3]{-}}
	arc (45:0:.63)
	\foreach \p in {.15,.37,...,1} {node[pos=\p, sloped, rotate=-90, xshift=-1.3]{-}};
	\draw (-.6,1.2) arc (0:-45:.63)
	\foreach \p in {.15,.38,...,1} {node[pos=\p, sloped, rotate=-90, xshift=1.7]{-}}
	arc (135:180:.63)
	\foreach \p in {.12,.395,...,1} {node[pos=\p, sloped, rotate=-90, xshift=1.7]{-}};
	\draw (-.969,.31) -- (-.969,-.6)
	\foreach \p in {.2,0.36,...,1.1} {node[pos=\p, xshift=1.7, yshift=2.3]{-}};
	\node at (-1.15,-.82) {$\scriptstyle \alpha(a)$};
	\node at (-.45,-.8) {$\scriptstyle \overline{\alpha(a)}$};
	\node at (-.78,2.05) {$\scriptstyle \alpha(b)$};
	\node at (.95,2.05) {$\scriptstyle \overline{\alpha(b)}$};
	\node at (.18,.95) {$\scriptstyle \alpha(c)$};
\end{tikzpicture}\,\,\in\;\! \cP^{\Bim(R)}_{\!\alpha(a),\alpha(b)}
$$
by Proposition \ref{prop:PositiveStructureOnDualizableSubcategoryOfBim(R)}.

Suppose now that $\alpha:\cC\to\Bim(R)$ is a fully faithful positive representation.
Since $\alpha$ respects the positive structures, for every object $c\in\cC$, we have
\[
\tilde\coev_{\alpha(c)}:=(\id_{\alpha(c)}\otimes \chi_c) \circ \mu^{-1}_{c,\overline{c}} \circ \alpha(\coev_c) \;\in\, \cP^{\Bim(R)}_{1,\alpha(c)}.
\]
By Lemma~\ref{lem:FullyFaithfulPreservesStandardPairings}, since $\chi_c$ is unitary, $\tilde\coev_{\alpha(c)}$ is one half of a standard duality pairing (one half of a balanced solution of the duality equations).

Let us assume for a moment that $c$ is simple.
Since $\alpha$ is fully faithful, $\alpha(c)$ is then also simple.
So the only morphisms $1\to \alpha(c)\otimes\overline{\alpha(c)}$
which fit into a standard duality pairing are those of the form $\lambda\cdot\coev_{\alpha(c)}$ for $\lambda\in U(1)$.
Exactly one of them is cp. So $\tilde\coev_{\alpha(c)}=\coev_{\alpha(c)}$.
Now, both $\tilde\coev$ and $\coev$ are compatible with direct sums.
So this last equation holds true for every $c\in\cC$, not just the simples.
Finally, using that $\tilde\coev_{\alpha(c)}=\coev_{\alpha(c)}$, we compute
\[
\id=(\ev_{\alpha(c)}\otimes \id) \circ (\id\otimes \tilde\coev_{\alpha(c)})=
\chi_c \circ \Big[\big(\ev_{\alpha(c)}\otimes \id\big) \circ \big(\id\otimes(\mu^{-1}_{c,\overline{c}} \circ \alpha(\coev_c))\big)\Big].
\]
It follows that $\chi_c=\eqref{eq:FormulaForChi}=\eqref{eq:EquivalentFormulaForChi}$.
\end{proof}

Let $\cC$ be a semisimple rigid $\Cstar$-tensor category.
Once again, we equip $\cC$ with its canonical bi-involutive and positive structures, described in Section~\ref{ex:PosiviteStructureOnMultifusion}.

\begin{lem}\label{lem: coherence automatically satisfied}
Let $\alpha : \cC \to \Bim(R)$ and $\beta: \cC \to \Bim(S)$ be positive fully faithful representations.
And let $\Phi$ be a morphism between the underlying $\Cstar$-representations (a bimodule ${}_S\Phi_R$ together with 
unitary natural isomorphisms $\phi_c: \Phi\boxtimes_R \alpha(c) \to \beta(c)\boxtimes_S \Phi$ satisfying the half-braiding condition \eqref{eq: half-braiding condition}).
Then the coherence \eqref{eq: when C is rigid, that condition is equivalent to...} is automatically satisfied.

In particular, under the above assumptions, when ${}_S\Phi_R$ is dualizable, the coherence \eqref{forgotten coherence} is automatically satisfied.
\end{lem}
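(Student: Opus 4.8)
The plan is to prove the coherence \eqref{eq: when C is rigid, that condition is equivalent to...} directly. Since $\cC$ is rigid, this is equivalent to \eqref{forgotten coherence} whenever ${}_S\Phi_R$ is dualizable (as recorded just below Definition~\ref{defn:IsomorphismBetweenRepresentations}), so the \emph{in particular} clause will be immediate once \eqref{eq: when C is rigid, that condition is equivalent to...} is established. Observe that \eqref{eq: when C is rigid, that condition is equivalent to...} is an equation between two morphisms $\Phi\boxtimes_R\alpha(\overline c)\to\overline{\beta(c)}\boxtimes_S\Phi$: its left-hand side is $(\chi^\beta_c\boxtimes\id_\Phi)\circ\phi_{\overline c}$, while its right-hand side is the ``mate'' of $\phi_c^{-1}$ formed using the structure map $\chi^\alpha_c$ together with the standard cups and caps $\coev^{\Bim(R)}_{\alpha(c)}$ and $\ev^{\Bim(S)}_{\beta(c)}$ of $\Bim(R)$ and $\Bim(S)$.

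The positivity hypothesis enters only through Lemma~\ref{lem:UniquePositiveCoherenceForTensorDaggerFunctor}: because $\alpha$ and $\beta$ are fully faithful \emph{and} positive, their bi-involutive structure maps $\chi^\alpha_c$ and $\chi^\beta_c$ are forced to be the canonical unitaries \eqref{eq:FormulaForChi}, rather than arbitrary unitaries. From the formula \eqref{eq:FormulaForChi} (equivalently \eqref{eq:EquivalentFormulaForChi}) and the conjugate equations \eqref{eq: duality equations (unnormalized)}, a routine zig-zag then yields the two identities
\[
(\id_{\alpha(c)}\otimes\chi^\alpha_c)\circ(\mu^\alpha_{c,\overline c})^{-1}\circ\alpha(\coev^\cC_c)=\coev^{\Bim(R)}_{\alpha(c)}
\qquad\text{and}\qquad
\ev^{\Bim(S)}_{\beta(c)}=\beta(\ev^\cC_c)\circ\mu^\beta_{\overline c,c}\circ\big((\chi^\beta_c)^{-1}\otimes\id_{\beta(c)}\big),
\]
together with their $\alpha\leftrightarrow\beta$ and $\ev\leftrightarrow\coev$ counterparts. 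In words: after twisting by $\chi^\alpha,\chi^\beta$, the images under $\alpha,\beta$ of the duality data of $\cC$ become \emph{exactly} the canonical duality data of $\Bim(R),\Bim(S)$. This is precisely the step that fails for a merely bi-involutive (non-positive) representation, where the two sides could differ by a unitary gauge.

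With these identities available, I would reduce \eqref{eq: when C is rigid, that condition is equivalent to...} to a statement about the half-braiding alone. Applying naturality of $\phi$ to the $\cC$-morphism $\coev^\cC_c:1_\cC\to c\otimes\overline c$ and combining it with the half-braiding condition \eqref{eq: half-braiding condition} (which expresses $\phi_{c\otimes\overline c}$ through $\phi_c$ and $\phi_{\overline c}$) gives, after cancelling the common factor $\mu^\beta_{c,\overline c}\boxtimes\id$, the single relation
\[
(\id_{\beta(c)}\boxtimes\phi_{\overline c})\circ(\phi_c\boxtimes\id_{\alpha(\overline c)})\circ\big(\id_\Phi\boxtimes(\mu^\alpha_{c,\overline c})^{-1}\alpha(\coev^\cC_c)\big)=\big((\mu^\beta_{c,\overline c})^{-1}\beta(\coev^\cC_c)\boxtimes\id_\Phi\big)\circ\phi_1.
\]
Because $(\mu^\alpha_{c,\overline c})^{-1}\alpha(\coev^\cC_c)$ and its $\beta$-analogue are balanced solutions of the duality equations, this relation determines $\phi_{\overline c}$ uniquely: capping off the $\alpha(c)$- and $\beta(c)$-legs against $\overline{\alpha(c)}$, $\overline{\beta(c)}$ via the conjugate equations rotates $\phi_c$ to the opposite strand and produces a closed expression for $\phi_{\overline c}$. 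Substituting the two displayed identities to rewrite the twisted $\coev$'s and $\ev$'s as the standard $\coev^{\Bim(R)}_{\alpha(c)}$, $\ev^{\Bim(S)}_{\beta(c)}$, and noting that the $\chi^\alpha_c$ produced by this rewriting combines with the $\chi^\beta_c$ appearing in $(\chi^\beta_c\boxtimes\id)\circ\phi_{\overline c}$, I would check that the rotated relation is \emph{literally} the drawn coherence \eqref{eq: when C is rigid, that condition is equivalent to...}, with the crossing on the right being $\phi_c^{-1}$.

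The main obstacle I anticipate is bookkeeping rather than conceptual. The delicate part is to confirm that the relation extracted from naturality and \eqref{eq: half-braiding condition} is \emph{exactly} the drawn coherence, and not merely an equation of the same shape: this requires careful tracking of the coherence isomorphisms $\mu^\alpha,\mu^\beta$, the unit identifications $\alpha(1_\cC)=L^2R$, $\beta(1_\cC)=L^2S$ and $\phi_1=\id$, the suppressed associators, and the precise orientations of all cups and caps. This is cleanest in the string-diagram calculus of Section~\ref{sec: Bi-involutive tensor categories}, where the ``mate'' manipulation and the cancellation of the $\chi$'s become zig-zag moves; the sole genuinely non-formal ingredient is the pair of displayed identities, which is where Lemma~\ref{lem:UniquePositiveCoherenceForTensorDaggerFunctor} — and hence the positivity of $\alpha$ and $\beta$ — is used.
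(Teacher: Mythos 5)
Your proposal is correct and takes essentially the same approach as the paper's proof: both hinge on Lemma~\ref{lem:UniquePositiveCoherenceForTensorDaggerFunctor} (positivity plus full faithfulness forcing $\chi^\alpha_c$, $\chi^\beta_c$ to be the canonical unitaries — equivalently, your two displayed identities matching the twisted duality data of $\alpha,\beta$ with the standard duality data of $\Bim(R)$, $\Bim(S)$), combined with naturality of $\phi$ and the half-braiding condition \eqref{eq: half-braiding condition} applied to the (co)evaluation morphisms of $\cC$. The paper merely packages the same zig-zag computation as a single chain of diagram equalities — rewriting $\chi^\alpha_c$ via \eqref{eq:EquivalentFormulaForChi}, sliding $\tilde\ev^\alpha:=\alpha(\coev_c)^*\circ\mu^\alpha_{c,\overline{c}}$ through the crossing to turn it into $\tilde\ev^\beta$, and reassembling $\chi^\beta_c$ — rather than first solving the rotated relation for $\phi_{\overline{c}}$ and then matching it against the coherence.
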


\begin{proof}
Let $\tilde\ev^\alpha:=\alpha(\coev_c)^*\circ\mu^\alpha_{c,\overline{c}}$, and $\tilde\ev^\beta:=\beta(\coev_c)^*\circ\mu^\beta_{c,\overline{c}}$. Then we have
\[
\;\!\!\!\begin{tikzpicture}[baseline=.4cm, xscale=-1, scale=.95]
  \draw (1,1.7) node [left, yshift = -.14cm, xshift = .00cm] {$\scriptstyle\overline{\beta(c)}$} -- (1,.3) arc (0:-180:.3cm) -- (.4,.3)node [left, yshift = .35cm, xshift = .22cm, scale=1.1] {$\scriptscriptstyle \beta(c)$} .. controls ++(90:.5cm) and ++(270:.5cm) .. (-.4,1.3)node [left, yshift = .0cm, xshift = .1cm, scale=1.1] {$\scriptscriptstyle\alpha(c)$} arc (0:180:.3cm) node [right, yshift = -.4cm, xshift = -.02cm, scale=.95] {$\scriptstyle\overline{\alpha(c)}$} -- (-1,-.7) node [right, yshift = .12cm, xshift = -.02cm] {$\scriptstyle\alpha(\overline{c})$} ;  
  \draw[super thick, white] (-.4,-.7) -- (-.4,.3) .. controls ++(90:.5cm) and ++(270:.5cm) .. (.4,1.3) -- (.4,1.7);  
  \draw (-.4,-.7) node [left, yshift = .1cm, xshift = .03cm] {$\scriptstyle\Phi$} -- (-.4,.3) .. controls ++(90:.5cm) and ++(270:.5cm) .. (.4,1.3) -- (.4,1.7);  
  \roundNbox{unshaded}{(-1,.15)}{.27}{0}{0}{$\scriptstyle \chi^\alpha_c$};
\end{tikzpicture}
=\!\!\!\!
\begin{tikzpicture}[baseline=.5cm, xscale=-1, scale=.95]
  \draw (1,1.8) node [left, yshift = -.14cm, xshift = -.00cm] {$\scriptstyle\overline{\beta(c)}$} -- (1,.3) arc (0:-180:.3cm) -- (.4,.3)node [left, yshift = .35cm, xshift = .22cm, scale=1.1] {$\scriptscriptstyle \beta(c)$} .. controls ++(90:.5cm) and ++(270:.5cm) .. (-.5,1.3)node [left, yshift = -.1cm, xshift = .05cm, scale=1.1] {$\scriptscriptstyle\alpha(c)$} arc (0:180:.2cm)
  -- (-.9,-.5) node [right, yshift = .12cm, xshift = -.02cm] {$\scriptstyle\alpha(\overline{c})$} ;  
  \draw[super thick, white] (-.4,-.7) -- (-.4,.3) .. controls ++(90:.5cm) and ++(270:.5cm) .. (.4,1.3) -- (.4,1.8);  
  \draw (-.4,-.5) node [left, yshift = .1cm, xshift = .03cm] {$\scriptstyle\Phi$} -- (-.4,.3) .. controls ++(90:.5cm) and ++(270:.5cm) .. (.4,1.3) -- (.4,1.8);  
  \roundNbox{unshaded}{(-.7,1.6)}{.27}{.07}{.07}{$\scriptstyle \tilde\ev^\alpha$};
\end{tikzpicture}
\!\!\!\!=\!\!\!\!
\begin{tikzpicture}[baseline=.4cm, xscale=-1, scale=.95]
  \draw (1,1.7) node [left, yshift = -.14cm, xshift = .00cm] {$\scriptstyle\overline{\beta(c)}$} -- (1,.3) arc (0:-180:.3cm) -- (.4,.3)node [left, yshift = .35cm, xshift = .22cm, scale=1.1] {$\scriptscriptstyle \beta(c)$} .. controls ++(90:.5cm) and ++(270:.4cm) .. (-.5,1.3)node [left, yshift = -.1cm, xshift = .05cm, scale=1.1] {$\scriptscriptstyle\alpha(c)$} arc (0:180:.2cm)
  .. controls ++(270:.6cm) and ++(90:.4cm) ..
  (-.02,.2) node [left, yshift = -.35cm, xshift = .22cm, scale=1.1] {$\scriptscriptstyle \beta(\overline c)$} .. controls ++(270:.4cm) and ++(90:.3cm) .. (-.8,-.7) node [right, yshift = .12cm, xshift = -.02cm] {$\scriptstyle\alpha(\overline{c})$} ;  
  \draw[super thick, white] (-.4,-.7) -- (-.4,.3) .. controls ++(90:.5cm) and ++(270:.5cm) .. (.4,1.3) -- (.4,1.7);  
  \draw (-.4,-.7) node [left, yshift = .1cm, xshift = .03cm] {$\scriptstyle\Phi$} -- (-.4,.3) .. controls ++(90:.5cm) and ++(270:.5cm) .. (.4,1.3) -- (.4,1.7);  
  \roundNbox{unshaded}{(-.7,1.6)}{.27}{.07}{.07}{$\scriptstyle \tilde\ev^\alpha$};
\end{tikzpicture}
\!\!\!\!=\!\!\!
\begin{tikzpicture}[baseline=.4cm, xscale=-1]
  \draw (1,1.5) node [left, yshift = -.14cm, xshift = .00cm] {$\scriptstyle\overline{\beta(c)}$} -- (1,.3) arc (0:-180:.3cm) -- (.4,.3)node [left, yshift = .05cm, xshift = .12cm, scale=.95] {$\scriptscriptstyle \beta(c)$} -- ++(0,.3) arc (0:180:.2cm) -- ++(0,-.3) node [left, yshift = -.42cm, xshift = .28cm, scale=1.05] {$\scriptscriptstyle \beta(\overline c)$} .. controls ++(270:.4cm) and ++(90:.3cm) .. (-.8,-.7) node [right, yshift = .12cm, xshift = -.02cm] {$\scriptstyle\alpha(\overline{c})$} ;  
  \draw[super thick, white] (-.4,-.7) -- (-.4,.3) -- (-.4,1.5);  
  \draw (-.4,-.7) node [left, yshift = .1cm, xshift = .03cm] {$\scriptstyle\Phi$} -- (-.4,.3) -- (-.4,1.5);  
  \roundNbox{unshaded}{(.2,.8)}{.27}{.07}{.07}{$\scriptstyle \tilde\ev^\beta$};
\end{tikzpicture}
\!\!\!\!\!\!=
\begin{tikzpicture}[baseline=-.4cm]
  \draw (.4,-1.5) node [right, yshift = .1cm, xshift = -.02cm] {$\scriptstyle\alpha(\overline{c})$} -- (.4,-1.3) .. controls ++(90:.5cm) and ++(270:.5cm) .. (-.4,-.3) node [left, yshift = -.25cm, xshift = .05cm] {$\scriptstyle\beta(\overline c)$} -- (-.4,.7) node [left, yshift = -.1cm, xshift = .00cm] {$\scriptstyle\overline{\beta(c)}$};  
  \draw[super thick, white] (-.4,-1.5) -- (-.4,-1.3) .. controls ++(90:.5cm) and ++(270:.5cm) .. (.4,-.3) -- (.4,.7);  
  \draw (-.4,-1.5) node [left, yshift = .12cm, xshift = .02cm] {$\scriptstyle\Phi$} -- (-.4,-1.3) .. controls ++(90:.5cm) and ++(270:.5cm) .. (.4,-.3) -- (.4,.7);  
  \roundNbox{unshaded}{(-.4,0)}{.27}{0}{0}{$\scriptstyle {}_{\phantom{i}}\chi^\beta_{c\phantom{\beta}}$};
\end{tikzpicture}
\]
where the first and last equalities hold by the second formula for $\chi$ in Lemma~\ref{lem:UniquePositiveCoherenceForTensorDaggerFunctor}, and
the third equality holds by the half-braiding condition \eqref{eq: half-braiding condition} followed by the naturality of $\phi$.
\end{proof}

From now on, given a semisimple rigid $\Cstar$-tensor category
$\cC$, by a \emph{fully faithful representation} of $\cC$, we shall always mean a fully faithful $\Cstar$-representation,
equivalently a fully faithful positive representation.
By Theorem~\ref{thm: (i) and (iii) are equivalent}, two fully faithful representations are equivalent as $\Cstar$-representations if and only if the are equivalent as positive representation.


\section{Commutant categories}

The present section is devoted to proof of Theorem \ref{thm:MoritaEquivalentCommutantsEquivalent}.
The latter states that if $\cC_0$ and $\cC_1$ are Morita equivalent unitary fusion categories
equipped with fully faithful representations $\cC_0\to \Bim(R_0)$ and $\cC_1\to \Bim(R_1)$, 
where $R_0$ and $R_1$ are hyperfinite factors which are either both of type ${\rm II}$ or both of type ${\rm III}_1$,
then the commutant categories $\cC_0'$ and $\cC_1'$ are equivalent as bi-involutive tensor categories with positive structures.

\subsection{Bicommutant categories}
\label{sec: Bicommutant categories}

We start by recalling and correcting the notion of bicommutant category from \cite{MR3747830} and \cite[\S3]{MR3663592} (the correction does not affect any of the previous results).
The new feature, which was not present in \cite{MR3747830, MR3663592}, is that of a positive structure (Def.~\ref{def positive structure on bi-involutive tensor category}).

Let $\cC$ be a bi-involutive tensor category equipped with a positive structure, and let $\alpha:\cC \to \Bim(R)$ be a positive representation.
The \emph{commutant category} $\cC'$ 
is the category whose objects are pairs $(X, e_X)$ with $X\in \Bim(R)$ and $e_X= (e_{X,c})_{c\in\cC}$ a unitary half-braiding
$$
\begin{tikzpicture}[baseline=-.1cm]
	\coordinate (a) at (-.4,-.4);
	\coordinate (b) at (.4,-.4);
	\coordinate (c) at (.4,.4);
	\coordinate (d) at (-.4,.4);
	\draw (b) .. controls ++(90:.2cm) and ++(-30:.1cm) .. (.1,-.05);
	\draw (d) .. controls ++(270:.2cm) and ++(150:.1cm) .. (-.1,.05);
	\draw[very thick] (a) .. controls ++(90:.4cm) and ++(270:.4cm) .. (c);
	\node at ($ (a) + (0,-.2) $) {\scriptsize{$X$}};
	\node at ($ (b) + (0,-.2) $) {\scriptsize{$c$}};
	\node at ($ (d) + (0,.2) $) {\scriptsize{$c$}};
	\node at ($ (c) + (0,.2) $) {\scriptsize{$X$}};
\end{tikzpicture}
\qquad\,\,\,
e_{X,c}:X\boxtimes \alpha(c) \to \alpha(c)\boxtimes X,
$$
natural in $c$, and subject to the `hexagon' axiom:
\[
\qquad
\begin{tikzpicture}[baseline=-.1cm]
	\coordinate (a) at (-.4,-.4);
	\coordinate (b) at (.4,-.4);
	\coordinate (c) at (.4,.4);
	\coordinate (d) at (-.4,.4);
	\draw (b) .. controls ++(90:.2cm) and ++(-30:.1cm) .. (.1,-.05);
	\draw (d) .. controls ++(270:.2cm) and ++(150:.1cm) .. (-.1,.05);
	\draw[very thick] (a) .. controls ++(90:.4cm) and ++(270:.4cm) .. (c);
	\node at ($ (a) + (0,-.2) $) {\scriptsize{$X$}};
	\node at ($ (b) + (0,-.2) $) {\scriptsize{$c\otimes d$}};
	\node at ($ (d) + (0,.2) $) {\scriptsize{$c\otimes d$}};
	\node at ($ (c) + (0,.2) $) {\scriptsize{$X$}};
\end{tikzpicture}
=
\begin{tikzpicture}[baseline=-.1cm]
	\coordinate (a1) at (-.8,-.8);
	\coordinate (a2) at (0,-.8);
	\coordinate (a3) at (.8,-.8);
	\coordinate (b1) at ($ (a1) + (0,.8) $);
	\coordinate (b2) at ($ (a2) + (0,.8) $);
	\coordinate (b3) at ($ (a3) + (0,.8) $);
	\coordinate (c1) at ($ (b1) + (0,.8) $);
	\coordinate (c2) at ($ (b2) + (0,.8) $);
	\coordinate (c3) at ($ (b3) + (0,.8) $);
	\draw (a3) -- (b3) .. controls ++(90:.2cm) and ++(-30:.1cm) .. (.5,.35);
	\draw (c2) .. controls ++(270:.2cm) and ++(150:.1cm) .. (.3,.45);
	\draw (c1) -- (b1) .. controls ++(270:.2cm) and ++(150:.1cm) .. (-.5,-.35);
	\draw (a2) .. controls ++(90:.2cm) and ++(-30:.1cm) .. (-.3,-.45);
	\draw[very thick] (a1) .. controls ++(90:.4cm) and ++(270:.4cm) .. (b2) .. controls ++(90:.4cm) and ++(270:.4cm) ..  (c3);
	\node at ($ (a1) + (0,-.2) $) {\scriptsize{$X$}};
	\node at ($ (a2) + (0,-.2) $) {\scriptsize{$c$}};
	\node at ($ (a3) + (0,-.2) $) {\scriptsize{$d$}};
	\node at ($ (c1) + (0,.2) $) {\scriptsize{$c$}};
	\node at ($ (c2) + (0,.2) $) {\scriptsize{$d$}};
	\node at ($ (c3) + (0,.2) $) {\scriptsize{$X$}};
\end{tikzpicture}
\qquad\,\,\,\,
\begin{matrix}\phantom{\Big|}
((\mu^\alpha_{c,d})^{-1}\boxtimes\id_X)\circ e_{X,c\otimes d}\circ(\id_X\boxtimes\mu^\alpha_{c,d})
\\
=(\id_{\alpha(c)}\boxtimes e_{X,d})\circ(e_{X,c}\boxtimes\id_{\alpha(d)})
\end{matrix}
\]
Morphisms $(X, e_X) \to (Y, e_Y)$ are morphisms $f: X\to Y$ in $\Bim(R)$ that satisfy
$$
\begin{tikzpicture}[baseline=.3cm]
	\coordinate (a) at (-.4,-.4);
	\coordinate (b) at (.4,-.4);
	\coordinate (c) at (.4,.4);
	\coordinate (d) at (-.4,.4);
	\coordinate (e) at (.4,1.2);
	\coordinate (f) at (-.4,1.2);
	\draw (b) .. controls ++(90:.2cm) and ++(-30:.1cm) .. (.1,-.05);
	\draw (f) -- (d) .. controls ++(270:.2cm) and ++(150:.1cm) .. (-.1,.05);
	\draw[very thick] (a) .. controls ++(90:.4cm) and ++(270:.4cm) .. (c) -- (e);
	\roundNbox{unshaded}{($ (c) + (0,.3) $)}{.3cm}{0}{0}{$f$};
	\node at ($ (a) + (0,-.2) $) {\scriptsize{$X$}};
	\node at ($ (b) + (0,-.2) $) {\scriptsize{$c$}};
	\node at ($ (f) + (0,.2) $) {\scriptsize{$c$}};
	\node at ($ (e) + (0,.2) $) {\scriptsize{$Y$}};
\end{tikzpicture}
\,\,=\,
\begin{tikzpicture}[baseline=-.5cm, xscale=-1, yscale=-1]
	\coordinate (a) at (-.4,-.4);
	\coordinate (b) at (.4,-.4);
	\coordinate (c) at (.4,.4);
	\coordinate (d) at (-.4,.4);
	\coordinate (e) at (.4,1.2);
	\coordinate (f) at (-.4,1.2);
	\draw (b) .. controls ++(90:.2cm) and ++(-30:.1cm) .. (.1,-.05);
	\draw (f) -- (d) .. controls ++(270:.2cm) and ++(150:.1cm) .. (-.1,.05);
	\draw[very thick] (a) .. controls ++(90:.4cm) and ++(270:.4cm) .. (c) -- (e);
	\roundNbox{unshaded}{($ (c) + (0,.3) $)}{.3cm}{0}{0}{$f$};
	\node at ($ (a) + (0,-.2) $) {\scriptsize{$Y$}};
	\node at ($ (b) + (0,-.2) $) {\scriptsize{$c$}};
	\node at ($ (f) + (0,.2) $) {\scriptsize{$c$}};
	\node at ($ (e) + (0,.2) $) {\scriptsize{$X$}};
\end{tikzpicture}
.
$$
The commutant $\cC'$ is a tensor category with $(X,e_X)\otimes (Y, e_Y)=(X\boxtimes Y, e_{X\boxtimes Y})$, where the half-braiding $e_{X\boxtimes Y, c}$ is given by
$$
e_{X\boxtimes Y, c} :=
\begin{tikzpicture}[baseline=-.1cm]
	\coordinate (a1) at (-.8,-.8);
	\coordinate (a2) at (0,-.8);
	\coordinate (a3) at (.8,-.8);
	\coordinate (b1) at ($ (a1) + (0,.8) $);
	\coordinate (b2) at ($ (a2) + (0,.8) $);
	\coordinate (b3) at ($ (a3) + (0,.8) $);
	\coordinate (c1) at ($ (b1) + (0,.8) $);
	\coordinate (c2) at ($ (b2) + (0,.8) $);
	\coordinate (c3) at ($ (b3) + (0,.8) $);
	\draw (a3) .. controls ++(90:.2cm) and ++(-30:.1cm) .. (.5,-.45);
	\draw (c1) .. controls ++(270:.2cm) and ++(150:.1cm) .. (-.5,.45);
	\draw (b2) .. controls ++(90:.2cm) and ++(-30:.1cm) .. (-.3,.35);
	\draw (b2) .. controls ++(270:.2cm) and ++(150:.1cm) .. (.3,-.35);
	\draw[very thick] (a1) -- (b1) .. controls ++(90:.4cm) and ++(270:.4cm) .. (c2);
	\draw[very thick] (a2) .. controls ++(90:.4cm) and ++(270:.4cm) .. (b3) -- (c3);
	\node at ($ (a1) + (0,-.2) $) {\scriptsize{$X$}};
	\node at ($ (a2) + (0,-.2) $) {\scriptsize{$Y$}};
	\node at ($ (a3) + (0,-.2) $) {\scriptsize{$c$}};
	\node at ($ (c1) + (0,.2) $) {\scriptsize{$c$}};
	\node at ($ (c2) + (0,.2) $) {\scriptsize{$X$}};
	\node at ($ (c3) + (0,.2) $) {\scriptsize{$Y$}};
\end{tikzpicture}
\,\,:\,X\boxtimes Y\boxtimes\alpha(c)\to \alpha(c)\boxtimes X\boxtimes Y.
$$ 

The commutant category is naturally a bi-involutive category.
The dagger structure inherited from $\Bim(R)$, and the conjugate of $(X, e_{X})$ given by
$\overline{X}\in \Bim(R)$ along with the unitary half-braiding
$$
e_{\overline X,c}:
\xymatrix@C=1.2cm{
\overline X \otimes c
\ar[r]^{\id \otimes \varphi_c} &
\overline X \otimes\overline{\overline{c}}
\ar[r]^{\nu_{X,\overline{c}}} &
\overline{\overline{c} \otimes X}
\ar[r]^{\overline{e_{X,\overline{c}}}^{-1}} &
\overline {X \otimes \overline{c}}
\ar[r]^{\nu_{\overline c,X}^{-1}} &
\overline{\overline{c}}\otimes \overline X
\ar[r]^{\varphi_c^{-1}\otimes \id} &
c\otimes \overline X
}
$$
(which is an abbreviation for
$$
\xymatrix@C=1.2cm@R=.3cm{
\overline X \otimes \alpha(c)
\ar[r]^{\id \otimes \alpha(\varphi_c)} &
\overline X \otimes \alpha(\overline{\overline c})
\ar[r]^{\id\otimes \chi_{\overline c}} &
\overline X \otimes\overline{\alpha(\overline c)}
\ar[r]^{\nu_{X,\alpha(\overline{c})}} &
\overline{\alpha(\overline{c}) \otimes X}
\ar[r]^{\overline{e_{X,\overline{c}}}^{-1}}&
\qquad\qquad
\\
\qquad\qquad
\ar[r] &
\overline {X \otimes \alpha(\overline{c})}
\ar[r]^{\nu_{\alpha(\overline c),X}^{-1}} &
\overline{\alpha(\overline{c})}\otimes \overline X
\ar[r]^{\overline{\chi_{c}}\otimes \id} &
\overline{\overline{\alpha(c)}}\otimes \overline X
\ar[r]^{\varphi_{\alpha(c)}^{-1}\otimes \id} &
c\otimes \overline X\hspace{3mm}).\!\!\hspace{-3mm}
}
$$
It is moreover equipped with a positive structure, once again inherited from $\Bim(R)$:
\[
\cP^{\cC'}_{(X, e_{X}),(Y, e_{Y})}\,:=\,\cP^{\Bim(R)}_{X,Y}\,\cap\, \Hom_{\cC'}\!\big((X, e_{X}),(Y, e_{Y})\big).
\]

The commutant category admits an evident positive representation $\alpha':\cC'\to\Bim(R)$ given by forgetting the half-braiding: $(X,e_X)\mapsto X$.
So we may iterate the commutant construction to obtain $\cC'' = (\cC')'$. 
Note that there is a natural inclusion functor $\iota : \cC \to \cC''$ given by $c\mapsto (\alpha(c), e_c)$, where for $\underline{X}= (X, e_X)\in \cC'$ the map $e_{c, \underline{X}}:\alpha(c)\boxtimes X\to X\boxtimes\alpha(c)$ is given by $e_{c, \underline{X}} := e_{X, c}^{-1}$.
Naturality and the hexagon axiom are easily verified, as is the fact that morphisms in $\cC$ give morphisms in $\cC''$.

The following is a slight modification of \cite[Def.~3.2]{MR3663592}:
\begin{defn}\label{def:bicommutant category}
A \emph{bicommutant category} is a bi-involutive tensor category with positive structure $\cC$ such that there exists a hyperfinite von Neumann algebra $R$ and a positive representation $\cC\to \Bim(R)$ for which the inclusion functor $\iota: \cC \hookrightarrow \cC''$ is an equivalence.

An equivalence of bicommutant categories is an equivalence of the underlying bi-involutive tensor categories, which respects the positive structures.
\end{defn}

By \cite[Lem~6.1 and Thm~A]{MR3663592}, for any fully faithful representation $\cC\to\Bim(R)$ of a unitary fusion category $\cC$, the commutant category $\cC'$ is a bicommutant category.

\begin{rem}
In \cite{MR3663592}, the definition of a bicommutant category demanded $R$ to be a hyperfinite factor.
In principle, we could allow $R$ to be any von Neumann algebra.
\end{rem}

\subsection{Description of the commutant}

Let $\cC$ be a unitary fusion category equipped with a fully faithful representation $\alpha:\cC\rightarrow \Bim(R)$, where $R$ is a factor.
We recall from \cite[\S4.1]{MR3663592} the definition of the functor $\Delta: \Bim(R) \to \cC'$:
\begin{equation}\label{The functor Delta}
\Delta(X) := \bigoplus_{c\in \Irr(\cC)} c\boxtimes X \boxtimes \overline{c},
\qquad\quad
e_{\Delta(X), a} 
\,:=\,
\sum_{b,c\in \Irr(\cC)} \!
\sqrt{d_a^{-1}}\,
\begin{tikzpicture}[baseline=-.1cm]
	\pgfmathsetmacro{\width}{.3}
	\pgfmathsetmacro{\height}{.6}
	\coordinate (a) at (-\width,.2);
	\coordinate (b) at (\width,-.2);
	\draw[very thick] (0,-\height) -- (0,\height);
	\draw (-\width,-\height) -- (-\width,\height);
	\draw (\width,-\height) -- (\width,\height);
	\draw ($ 3*(-\width, 0) + (0,\height) $) -- (a);
	\draw ($ 3*(\width, 0) + (0,-\height) $) -- (b);
	\draw[fill=\betacolor] (a) circle (.05cm);
	\draw[fill=\betacolor] (b) circle (.05cm);
	\node at ($ (0,-\height) + (0,-.2) $) {\scriptsize{$X$}};
	\node at ($ (-\width,-\height) + (0,-.2) $) {\scriptsize{$c$}};
	\node at ($ (\width,-\height) + (0,-.2) $) {\scriptsize{$\overline c$}};
	\node at  ($ 3*(\width,0) + (0,-\height) + (0,-.2) $) {\scriptsize{$a$}};
	\node at ($ (0,\height) + (0,.2) $) {\scriptsize{$X$}};
	\node at ($ (-\width,\height) + (0,.2) $) {\scriptsize{$b$}};
	\node at ($ (\width,\height) + (0,.2) $) {\scriptsize{$\overline b$}};
	\node at  ($ 3*(-\width,0) + (0,\height) + (0,.2) $) {\scriptsize{$a$}};
\end{tikzpicture}
\end{equation}
where we have used the graphical convention \eqref{eq:pair of shaded nodes} for pairs of colored nodes.
The functor \eqref{The functor Delta} is is a categorical version of a (non-normalised) conditional expectation.
Let $D=D(\cC)$ be the global dimension of $\cC$.

\begin{lem}
\label{lem:Projector}
If $(X, e_X)\in \cC'$, then the map $u_X : X \to \Delta(X)$ given by
\begin{align*}
u_X 
&:=
\frac{1}{\sqrt{D}}
\sum_{c\in \Irr(\cC)}
\sqrt{d_c}\,
\begin{tikzpicture}[baseline=.3cm, yscale=-1]
	\pgfmathsetmacro{\width}{.4}
	\pgfmathsetmacro{\height}{.6}
	\draw (-\width,-\height) arc (180:0:\width);
	\draw[white, super thick] (0,-\height) -- (0,.2);
	\draw[very thick] (0,-\height) -- (0,.2);
	\node at ($ (0,-\height) + (0,-.2) $) {\scriptsize{$X$}};
	\node at ($ (-\width,-\height) + (0,-.185) $) {\scriptsize{$c$}};
	\node at ($ (\width,-\height) + (0,-.2) $) {\scriptsize{$\overline c$}};
\end{tikzpicture}
\\
&\phantom{:}=
\frac{1}{\sqrt{D}}
\sum_{c\in \Irr(\cC)}
\textstyle \sqrt{d_c}\cdot
(e_{X,c}\otimes\id_{\overline{c}})\circ (\id_X\otimes \coev_c)
\end{align*}
is an isometry, and is a morphism in $\cC'$.
The projector $p_X:=u_Xu^*_X\in\End_{\cC'}(\Delta(X))$ is given by
$$
p_{X} 
=
\frac{1}{D}
\sum_{a,x,y\in \Irr(\cC)}
\sqrt{d_a}\,
\begin{tikzpicture}[baseline=-.1cm]
	\pgfmathsetmacro{\width}{.4}
	\pgfmathsetmacro{\height}{.6}
	\coordinate (a) at (-\width,.3);
	\coordinate (b) at (\width,-.3);
	\draw (-\width,-\height) -- (-\width,\height);
	\draw (\width,-\height) -- (\width,\height);
	\draw (a) -- (b);
	\draw[white, super thick] (0,-\height) -- (0,\height);
	\draw[very thick] (0,-\height) -- (0,\height);
	\draw[fill=\betacolor] (a) circle (.05cm);
	\draw[fill=\betacolor] (b) circle (.05cm);
	\node at ($ (0,-\height) + (0,-.2) $) {\scriptsize{$X$}};
	\node at ($ (-\width,-\height) + (0,-.2) $) {\scriptsize{$x$}};
	\node at ($ (\width,-\height) + (0,-.2) $) {\scriptsize{$\overline x$}};
	\node at ($ (0,\height) + (0,.2) $) {\scriptsize{$X$}};
	\node at ($ (-\width,\height) + (0,.2) $) {\scriptsize{$y$}};
	\node at ($ (\width,\height) + (0,.2) $) {\scriptsize{$\overline y$}};
	\node at ($ (b) + (-.15,.35) $) {\scriptsize{$a$}};
	\node at ($ (a) + (.2,.05) $) {\scriptsize{$a$}};
\end{tikzpicture}.
$$
\end{lem}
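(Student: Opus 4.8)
The plan is to verify the three assertions — that $u_X$ is an isometry, that it is a morphism in $\cC'$, and the explicit formula for $p_X=u_Xu_X^*$ — by direct computation in the graphical calculus, the only non-formal inputs being the unitarity and naturality of the half-braiding $e_X$, its hexagon axiom, the definition \eqref{The functor Delta} of $e_{\Delta(X)}$, and the relations of Lemma~\ref{lem: graph calc with pairs of nodes}.

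First I would prove that $u_X$ is an isometry. Write $u_X=\sum_c v_c$, where $v_c:=\sqrt{d_c/D}\,(e_{X,c}\otimes\id_{\overline c})\circ(\id_X\otimes\coev_c):X\to c\boxtimes X\boxtimes\overline c$ is the component landing in the $c$-summand of $\Delta(X)$. Since the summands $c\boxtimes X\boxtimes\overline c$ are mutually orthogonal, $u_X^*u_X=\sum_c v_c^*v_c$. For each $c$, unitarity of $e_{X,c}$ gives $(e_{X,c}\otimes\id)^*(e_{X,c}\otimes\id)=\id$, so that $v_c^*v_c=\tfrac{d_c}{D}\,(\id_X\otimes\coev_c^*\circ\coev_c)$; as the unit of $\cC$ is simple, $\coev_c^*\circ\coev_c=d_c\,\id_1$ by \eqref{eq: d is the number which satisfies...}, whence $v_c^*v_c=\tfrac{d_c^2}{D}\id_X$. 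Summing and using $\sum_{c\in\Irr(\cC)}d_c^2=D$ (the definition of the global dimension) yields $u_X^*u_X=\id_X$.

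Next I would show $u_X$ is a morphism $(X,e_X)\to(\Delta(X),e_{\Delta(X)})$ in $\cC'$, i.e.\ that $(\id_{\alpha(c)}\boxtimes u_X)\circ e_{X,c}=e_{\Delta(X),c}\circ(u_X\boxtimes\id_{\alpha(c)})$ for every $c$. Both sides are computed graphically after substituting the definition of $u_X$ and the definition \eqref{The functor Delta} of the half-braiding on $\Delta(X)$. Pulling the strand $\alpha(c)$ through the cups and half-braidings produced by $u_X$ requires rewriting a composite of two half-braidings $e_{X,-}$ as a single half-braiding, which is exactly the content of the hexagon axiom for $e_X$, used together with its naturality and the conventions \eqref{eq:pair of shaded nodes} for pairs of nodes. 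This reduces the desired identity to a relation among trivalent vertices in $\cC$, supplied by Lemma~\ref{lem: graph calc with pairs of nodes}.

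Finally, with $u_X$ an isometric morphism in $\cC'$, the operator $p_X=u_Xu_X^*$ is automatically a self-adjoint idempotent in $\End_{\cC'}(\Delta(X))$, so it remains only to identify its graphical form. I would compute the block of $u_Xu_X^*$ from the $x$-summand to the $y$-summand, namely $v_yv_x^*=\tfrac{\sqrt{d_xd_y}}{D}\,(e_{X,y}\otimes\id_{\overline y})\circ(\id_X\otimes\coev_y\circ\coev_x^*)\circ(e_{X,x}^{-1}\otimes\id_{\overline x})$. Applying the fusion relation of Lemma~\ref{lem: graph calc with pairs of nodes} to the pair of trivalent vertices arising from the cap--cup $\coev_y\circ\coev_x^*$ introduces the internal summation over $a\in\Irr(\cC)$ and the colored nodes of \eqref{eq:pair of shaded nodes}, while naturality of $e_X$ is used to slide the $X$-strand across the resulting $a$-line; tracking the dimension factors coming from $u_X$, $u_X^*$, the fusion relation, and the normalization \eqref{eq:pair of shaded nodes} of the nodes yields the stated coefficient $\sqrt{d_a}/D$. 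I expect the main obstacle to be these two graphical computations, and in particular the morphism property, where the hexagon axiom must be invoked in precisely the right form; the bookkeeping of dimension factors in the $p_X$ computation is the most error-prone but conceptually routine step.
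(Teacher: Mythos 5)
Your proposal is correct and follows essentially the same route as the paper's proof: the isometry claim via unitarity of $e_{X,c}$ together with $\sum_{c\in\Irr(\cC)}d_c^2=D$, the morphism property via the (Fusion) relation of Lemma~\ref{lem: graph calc with pairs of nodes} (with naturality and the hexagon absorbed into the graphical calculus), and the formula for $p_X=u_Xu_X^*$ by applying (Fusion) to the cap--cup $\coev_y\circ\coev_x^*$ and sliding the resulting $a$-strand under $X$. The only cosmetic differences are that you work block-by-block algebraically where the paper works graphically, and that you deduce idempotency formally from $u_X^*u_X=\id$, whereas the paper also records a direct graphical verification of $p_X^2=p_X$ for later reference.
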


\begin{proof}
We first check that $u_X$ is an isometry:
\[
u_X^*u_X 
=
\frac{1}{D}
\sum_{c\in \Irr(\cC)}
d_c\,
\begin{tikzpicture}[baseline=-.1cm]
	\pgfmathsetmacro{\width}{.25}
	\pgfmathsetmacro{\height}{.5}
	\draw (0,0) circle (\width);
	\draw[white, super thick] (0,-\height) -- (0,\height);
	\draw[very thick] (0,-\height) -- (0,\height);
	\node at ($ (0,-\height) + (0,-.2) $) {\scriptsize{$X$}};
	\node at ($ (-\width,0) + (-.2,0) $) {\scriptsize{$c$}};
\end{tikzpicture}
=
\frac{1}{D} \left( \sum_{c\in \Irr(\cC)} d_c^2\right) \id_X = \id_X.
\]
To see that $u_X$ is a morphism in $\cC'$, i.e., that it commutes with the half-braiding, we compute
\[
e_{\Delta(X),a} (u_X\otimes\id)  = 
\frac{1}{\sqrt{D}}
\sum_{x,y\in \Irr(\cC)}
\sqrt{\frac{d_y}{d_a}}
\begin{tikzpicture}[baseline=-.1cm, scale=-1]
	\pgfmathsetmacro{\width}{.4}
	\pgfmathsetmacro{\height}{.6}
	\coordinate (a) at ($ (-\width,0) + 1/2*(0,-\height) $);
	\coordinate (b) at ($ (\width,0) + 1/2*(0,-\height) $);
	\draw (-\width,-\height) -- ($ (-\width,0) + 1/2*(0,-\height) $) arc (180:0:\width) -- (\width,-\height);
	\draw ($ 2*(\width,0) + (0,-\height) $) -- (b);
	\draw ($ 2*(-\width,0) + (0,\height) $) -- (a);
	\draw[white, super thick] (0,-\height) -- (0,\height);
	\draw[very thick] (0,-\height) -- (0,\height);
	\draw[fill=\betacolor] (a) circle (.05cm);
	\draw[fill=\betacolor] (b) circle (.05cm);
	\node at ($ (0,-\height) + (0,-.2) $) {\scriptsize{$X$}};
	\node at ($ (\width,-\height) + (0,-.185) $) {\scriptsize{$x$}};
	\node at ($ (-\width,-\height) + (0,-.2) $) {\scriptsize{$\overline x$}};
	\node at ($ 2*(\width,0) + (0,-\height) + (0,-.2) $) {\scriptsize{$a$}};
	\node at ($ 2*(-\width,0) + (0,\height) + (0,.2) $) {\scriptsize{$a$}};
	\node at ($ (b) + (-.15,.55) $) {\scriptsize{$y$}};
	\node at ($ (a) + (.15,.55) $) {\scriptsize{$\overline y$}};
\end{tikzpicture}
\!=
\frac{1}{\sqrt{D}}
\sum_{x,y\in \Irr(\cC)}
\sqrt{d_x}
\begin{tikzpicture}[baseline=-.1cm, scale=-1]
	\pgfmathsetmacro{\width}{.4}
	\pgfmathsetmacro{\height}{.6}
	\draw ($ 2*(\width,0) + (0,-\height) $) -- ($ (-\width,0) + (0,\height) $);
	\draw (-\width,-\height) arc (180:0:\width);
	\draw[white, super thick] (0,-\height) -- (0,\height);
	\draw[very thick] (0,-\height) -- (0,\height);
	\node at ($ (0,-\height) + (0,-.2) $) {\scriptsize{$X$}};
	\node at ($ (\width,-\height) + (0,-.185) $) {\scriptsize{$x$}};
	\node at ($ (-\width,-\height) + (0,-.2) $) {\scriptsize{$\overline x$}};
	\node at ($ (0,\height) + (0,.2) $) {\scriptsize{$X$}};
	\node at ($ 2*(\width,0) + (0,-\height) + (0,-.185) $) {\scriptsize{$a$}};
	\node at ($ (-\width,0) + (0,\height) + (0,.2) $) {\scriptsize{$a$}};
\end{tikzpicture}
=
(\id\otimes u_X) e_{X,a},
\]
where we have used Lemma~\ref{lem: graph calc with pairs of nodes} (Fusion) for the second equality.
Finally, we check that
\[
u_Xu_X^*
=\,
\frac1D
\sum_{x,y\in \Irr(\cC)}
\sqrt{d_x d_y}\,
\begin{tikzpicture}[baseline=-.1cm]
	\pgfmathsetmacro{\width}{.4}
	\pgfmathsetmacro{\height}{.6}
	\draw (-\width,\height) arc (-180:0:\width);
	\draw (-\width,-\height) arc (180:0:\width);
	\draw[white, super thick] (0,-\height) -- (0,\height);
	\draw[very thick] (0,-\height) -- (0,\height);
	\node at ($ (0,-\height) + (0,-.2) $) {\scriptsize{$X$}};
	\node at ($ (-\width,-\height) + (0,-.2) $) {\scriptsize{$x$}};
	\node at ($ (\width,-\height) + (0,-.2) $) {\scriptsize{$\overline x$}};
	\node at ($ (0,\height) + (0,.2) $) {\scriptsize{$X$}};
	\node at ($ (-\width,\height) + (0,.2) $) {\scriptsize{$y$}};
	\node at ($ (\width,\height) + (0,.2) $) {\scriptsize{$\overline y$}};
\end{tikzpicture}
=\,
\frac1D
\sum_{\substack{x,y,a\\\in \Irr(\cC)}}
\sqrt{d_a}\,
\begin{tikzpicture}[baseline=-.1cm]
	\pgfmathsetmacro{\width}{.3}
	\pgfmathsetmacro{\height}{.8}
	\coordinate (f) at ($ (-\width,0) $);
	\coordinate (e) at ($ 2*(-\width,0) + (0,\width) $);
	\draw ($ 2*(-\width,0) + (0, -\height) $) -- ($ 2*(-\width,0) + (0, \height) $);
	\draw (\width,-\height) -- (f) -- (\width,\height);
	\draw (e) -- (f);
	\draw[white, super thick] (0,-\height) -- (0,\height);
	\draw[very thick] (0,-\height) -- (0,\height);
	\draw[fill=\betacolor] (e) circle (.05cm);
	\draw[fill=\betacolor] (f) circle (.05cm);
	\node at ($ (0,-\height) + (0,-.2) $) {\scriptsize{$X$}};
	\node at ($ 2*(-\width,0) + (0,-\height) + (0,-.2) $) {\scriptsize{$x$}};
	\node at ($ (\width,-\height) + (0,-.2) $) {\scriptsize{$\overline x$}};
	\node at ($ (0,\height) + (0,.2) $) {\scriptsize{$X$}};
	\node at ($ 2*(-\width,0) + (0,\height) + (0,.17) $) {\scriptsize{$y$}};
	\node at ($ (\width,\height) + (0,.2) $) {\scriptsize{$\overline y$}};
	\node at ($ 1/2*(e) + 1/2*(f) + (.05,.2) $) {\scriptsize{$a$}};
\end{tikzpicture}
=\,
\frac1D
\sum_{\substack{x,y,a\\\in \Irr(\cC)}}
\sqrt{d_a}\,
\begin{tikzpicture}[baseline=-.1cm]
	\pgfmathsetmacro{\width}{.4}
	\pgfmathsetmacro{\height}{.6}
	\coordinate (a) at (-\width,.3);
	\coordinate (b) at (\width,-.3);
	\draw (-\width,-\height) -- (-\width,\height);
	\draw (\width,-\height) -- (\width,\height);
	\draw (a) -- (b);
	\draw[white, super thick] (0,-\height) -- (0,\height);
	\draw[very thick] (0,-\height) -- (0,\height);
	\draw[fill=\betacolor] (a) circle (.05cm);
	\draw[fill=\betacolor] (b) circle (.05cm);
	\node at ($ (0,-\height) + (0,-.2) $) {\scriptsize{$X$}};
	\node at ($ (-\width,-\height) + (0,-.2) $) {\scriptsize{$x$}};
	\node at ($ (\width,-\height) + (0,-.2) $) {\scriptsize{$\overline x$}};
	\node at ($ (0,\height) + (0,.2) $) {\scriptsize{$X$}};
	\node at ($ (-\width,\height) + (0,.17) $) {\scriptsize{$y$}};
	\node at ($ (\width,\height) + (0,.2) $) {\scriptsize{$\overline y$}};
	\node at ($ (b) + (-.15,.35) $) {\scriptsize{$a$}};
	\node at ($ (a) + (.2,.05) $) {\scriptsize{$a$}};
\end{tikzpicture}
=
p_X.
\]
Once again, we have used Lemma~\ref{lem: graph calc with pairs of nodes} (Fusion) for the second equality.

The relation $p_X^2=p_X$ follows from the previous computations.
We present a second proof of this relation for the benefit of the reader, and for later reference:
\begin{align} \label{eq: p_X^2=p_X}
p_X^2\,
&= 
\frac{1}{D^2}
\sum_{\substack{x,y,z,a,b\\ \in\Irr(\cC)}}
\sqrt{d_a d_b}\,
\begin{tikzpicture}[baseline=-.1cm]
	\pgfmathsetmacro{\width}{.4}
	\pgfmathsetmacro{\height}{1}
	\coordinate (a) at (-\width,.6);
	\coordinate (b) at (\width,0);
	\coordinate (c) at (-\width,0);
	\coordinate (d) at (\width,-.6);
	\draw (-\width,-\height) -- (-\width,\height);
	\draw (\width,-\height) -- (\width,\height);
	\draw (a) -- (b);
	\draw (c) -- (d);
	\draw[white, super thick] (0,-\height) -- (0,\height);
	\draw[very thick] (0,-\height) -- (0,\height);
	\draw[fill=\alphacolor] (a) circle (.05cm);
	\draw[fill=\alphacolor] (b) circle (.05cm);
	\draw[fill=\betacolor] (c) circle (.05cm);
	\draw[fill=\betacolor] (d) circle (.05cm);
	\node at ($ (0,-\height) + (0,-.2) $) {\scriptsize{$X$}};
	\node at ($ (-\width,-\height) + (0,-.2) $) {\scriptsize{$x$}};
	\node at ($ (\width,-\height) + (0,-.2) $) {\scriptsize{$\overline x$}};
	\node at ($ (0,\height) + (0,.2) $) {\scriptsize{$X$}};
	\node at ($ (-\width,\height) + (0,.17) $) {\scriptsize{$z$}};
	\node at ($ (\width,\height) + (0,.2) $) {\scriptsize{$\overline z$}};
	\node at ($ (b) + (-.15,.35) $) {\scriptsize{$b$}};
	\node at ($ (a) + (.2,.05) $) {\scriptsize{$b$}};
	\node at ($ (d) + (-.15,.35) $) {\scriptsize{$a$}};
	\node at ($ (c) + (.2,.05) $) {\scriptsize{$a$}};
	\node at ($ (c) + (-.15,.35) $) {\scriptsize{$y$}};
	\node at ($ (b) + (.15,-.35) $) {\scriptsize{$\overline y$}};
\end{tikzpicture}
=
\frac{1}{D^2}
\sum_{\substack{x,y,z,a,b,c\\ \in\Irr(\cC)}}
\sqrt{d_c}\,
\begin{tikzpicture}[baseline=-.1cm]
	\pgfmathsetmacro{\width}{.6}
	\pgfmathsetmacro{\height}{1}
	\coordinate (a) at ($ 2*(-\width,0) + (0,.7) $);
	\coordinate (b) at (\width,-.3);
	\coordinate (c) at ($ 2*(-\width,0) + (0,.3) $);
	\coordinate (d) at (\width,-.7);
	\coordinate (e) at ($ (-\width,0) + (-.2, .2) $);
	\coordinate (f) at ($ (-\width,0) + (.2, 0) $);
	\draw ($ 2*(-\width,0) + (0, -\height) $) -- ($ 2*(-\width,0) + (0, \height) $);
	\draw (\width,-\height) -- (\width,\height);
	\draw (a) -- (e) -- (c);
	\draw (b) -- (f) -- (d);
	\draw (e) -- (f);
	\draw[white, super thick] (0,-\height) -- (0,\height);
	\draw[very thick] (0,-\height) -- (0,\height);
	\draw[fill=\alphacolor] (a) circle (.05cm);
	\draw[fill=\alphacolor] (b) circle (.05cm);
	\draw[fill=\betacolor] (c) circle (.05cm);
	\draw[fill=\betacolor] (d) circle (.05cm);
	\draw[fill=\gammacolor] (e) circle (.05cm);
	\draw[fill=\gammacolor] (f) circle (.05cm);
	\node at ($ (0,-\height) + (0,-.2) $) {\scriptsize{$X$}};
	\node at ($ 2*(-\width,0) + (0,-\height) + (0,-.2) $) {\scriptsize{$x$}};
	\node at ($ (\width,-\height) + (0,-.2) $) {\scriptsize{$\overline x$}};
	\node at ($ (0,\height) + (0,.2) $) {\scriptsize{$X$}};
	\node at ($ 2*(-\width,0) + (0,\height) + (0,.17) $) {\scriptsize{$z$}};
	\node at ($ (\width,\height) + (0,.2) $) {\scriptsize{$\overline z$}};
	\node at ($ (b) + (-.15,.25) $) {\scriptsize{$b$}};
	\node at ($ (a) + (.2,.05) $) {\scriptsize{$b$}};
	\node at ($ (d) + (-.2,-.05) $) {\scriptsize{$a$}};
	\node at ($ (c) + (.15,-.25) $) {\scriptsize{$a$}};
	\node at ($ (c) + (-.15,.2) $) {\scriptsize{$y$}};
	\node at ($ (b) + (.15,-.2) $) {\scriptsize{$\overline y$}};
	\node at ($ 1/2*(e) + 1/2*(f) + (.05,.2) $) {\scriptsize{$c$}};
\end{tikzpicture}
\\
\notag
&=
\frac{1}{D^2}
\sum_{\substack{x,y,z,a,b,c\\ \in\Irr(\cC)}}
\sqrt{d_c}\,
\begin{tikzpicture}[baseline=-.1cm]
	\pgfmathsetmacro{\width}{.8}
	\pgfmathsetmacro{\height}{1}
	\coordinate (a) at ($ (-\width,0) + (0,.7) $);
	\coordinate (b) at (\width,-.3);
	\coordinate (c) at ($ (-\width,0) + (0,.3) $);
	\coordinate (d) at (\width,-.7);
	\coordinate (e) at ($ 1/2*(-\width,0) + (0, .2) $);
	\coordinate (f) at ($ 1/2*(\width,0) + (0, -.2) $);
	\draw ($ (-\width,0) + (0, -\height) $) -- ($ (-\width,0) + (0, \height) $);
	\draw (\width,-\height) -- (\width,\height);
	\draw (a) -- (e) -- (c);
	\draw (b) -- (f) -- (d);
	\draw (e) -- (f);
	\draw[white, super thick] (0,-\height) -- (0,\height);
	\draw[very thick] (0,-\height) -- (0,\height);
	\draw[fill=\alphacolor] (a) circle (.05cm);
	\draw[fill=\alphacolor] (b) circle (.05cm);
	\draw[fill=\betacolor] (c) circle (.05cm);
	\draw[fill=\betacolor] (d) circle (.05cm);
	\draw[fill=\gammacolor] (e) circle (.05cm);
	\draw[fill=\gammacolor] (f) circle (.05cm);
	\node at ($ (0,-\height) + (0,-.2) $) {\scriptsize{$X$}};
	\node at ($ (-\width,0) + (0,-\height) + (0,-.2) $) {\scriptsize{$x$}};
	\node at ($ (\width,-\height) + (0,-.2) $) {\scriptsize{$\overline x$}};
	\node at ($ (0,\height) + (0,.2) $) {\scriptsize{$X$}};
	\node at ($ (-\width,0) + (0,\height) + (0,.17) $) {\scriptsize{$z$}};
	\node at ($ (\width,\height) + (0,.2) $) {\scriptsize{$\overline z$}};
	\node at ($ (b) + (-.15,.25) $) {\scriptsize{$b$}};
	\node at ($ (a) + (.2,.05) $) {\scriptsize{$b$}};
	\node at ($ (d) + (-.2,-.05) $) {\scriptsize{$a$}};
	\node at ($ (c) + (.15,-.25) $) {\scriptsize{$a$}};
	\node at ($ (c) + (-.15,.2) $) {\scriptsize{$y$}};
	\node at ($ (b) + (.15,-.2) $) {\scriptsize{$\overline y$}};
	\node at ($ (e) + (.2,.05) $) {\scriptsize{$c$}};
	\node at ($ (f) + (-.2,.25) $) {\scriptsize{$c$}};
\end{tikzpicture}
=
\frac{1}{D^2}
\sum_{\substack{x,y,z,a,b,c\\ \in\Irr(\cC)}}
\sqrt{d_c}\,
\begin{tikzpicture}[baseline=-.1cm]
	\pgfmathsetmacro{\width}{.5}
	\pgfmathsetmacro{\height}{1}
	\coordinate (a) at ($ (-\width,0) + (0,.8) $);
	\coordinate (b) at (\width,.2);
	\coordinate (c) at ($ (-\width,0) + (0,.2) $);
	\coordinate (d) at (\width,.8);
	\coordinate (e) at ($ (-\width,0) + (0, -.2) $);
	\coordinate (f) at ($ (\width,0) + (0, -.6) $);
	\draw ($ (-\width,0) + (0, -\height) $) -- (c);
	\draw (a) -- ($ (-\width,0) + (0, \height) $);
	\draw (\width,-\height) -- (b);
	\draw (d) -- (\width,\height);
	\draw (c) .. controls ++(45:.2cm) and ++(-45:.2cm) .. (a);
	\draw (c) .. controls ++(135:.2cm) and ++(225:.2cm) .. (a);
	\draw (b) .. controls ++(45:.2cm) and ++(-45:.2cm) .. (d);
	\draw (b) .. controls ++(135:.2cm) and ++(225:.2cm) .. (d);
	\draw (e) -- (f);
	\draw[white, super thick] (0,-\height) -- (0,\height);
	\draw[very thick] (0,-\height) -- (0,\height);
	\draw[fill=\alphacolor] (a) circle (.05cm);
	\draw[fill=\alphacolor] (d) circle (.05cm);
	\draw[fill=\betacolor] (c) circle (.05cm);
	\draw[fill=\betacolor] (b) circle (.05cm);
	\draw[fill=\gammacolor] (e) circle (.05cm);
	\draw[fill=\gammacolor] (f) circle (.05cm);
	\node at ($ (0,-\height) + (0,-.2) $) {\scriptsize{$X$}};
	\node at ($ (-\width,0) + (0,-\height) + (0,-.2) $) {\scriptsize{$x$}};
	\node at ($ (\width,-\height) + (0,-.2) $) {\scriptsize{$\overline x$}};
	\node at ($ (0,\height) + (0,.2) $) {\scriptsize{$X$}};
	\node at ($ (-\width,0) + (0,\height) + (0,.17) $) {\scriptsize{$z$}};
	\node at ($ (\width,\height) + (0,.2) $) {\scriptsize{$\overline z$}};
	\node at ($ (a) + (.25,-.3) $) {\scriptsize{$b$}};
	\node at ($ (d) + (-.25,-.3) $) {\scriptsize{$\overline b$}};
	\node at ($ (a) + (-.25,-.3) $) {\scriptsize{$y$}};
	\node at ($ (d) + (.25,-.3) $) {\scriptsize{$\overline y$}};
	\node at ($ (c) + (-.15,-.2) $) {\scriptsize{$a$}};
	\node at ($ (b) + (.15,-.4) $) {\scriptsize{$\overline a$}};
	\node at ($ (e) + (.2,-.25) $) {\scriptsize{$c$}};
	\node at ($ (f) + (-.25,-.05) $) {\scriptsize{$c$}};
\end{tikzpicture}
=\,p_X.
\end{align}
Here, we have used Lemma~\ref{lem: graph calc with pairs of nodes} (Fusion) for the second equality,
Lemma~\ref{lem: graph calc with pairs of nodes} (I=H) for the four equality,
and Lemma~\ref{lem:Sumdxdy} for the last equality.
\end{proof}

Recall that a functor $F: \cS\to \cT$ is called \emph{dominant} if every object of $\cT$ is isomorphic to a subobject of an object of the form $F(s)$, for some $s\in\cS$.
\begin{cor}
The functor $\Delta : \Bim(R) \to \cC'$ is dominant.
\end{cor}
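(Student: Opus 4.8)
The plan is to read off this corollary directly from Lemma~\ref{lem:Projector}, which has already done all the work. Recall that, by definition, $\Delta$ is dominant precisely when every object of $\cC'$ is isomorphic to a subobject of $\Delta(X)$ for some $X\in\Bim(R)$. So I would start by fixing an arbitrary object $(X,e_X)\in\cC'$ and taking $X\in\Bim(R)$ to be its own underlying bimodule; this is the candidate source object, and $\Delta(X)\in\cC'$ is the object in which I want to exhibit $(X,e_X)$ as a summand.

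Next I would invoke Lemma~\ref{lem:Projector}, which supplies a map $u_X:X\to\Delta(X)$ that is simultaneously an isometry ($u_X^*u_X=\id_X$) and a morphism in $\cC'$ (it intertwines the half-braidings $e_X$ and $e_{\Delta(X)}$). From these two properties it follows that $p_X:=u_Xu_X^*\in\End_{\cC'}(\Delta(X))$ is an idempotent in the category $\cC'$. Since $\cC'$ inherits idempotent completeness from $\Bim(R)$, this projector splits, and its image is canonically isomorphic to $(X,e_X)$. Hence $(X,e_X)$ is a direct summand, and in particular a subobject, of $\Delta(X)$.

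As $(X,e_X)$ was arbitrary, every object of $\cC'$ is a subobject of an object of the form $\Delta(X)$, which is exactly the assertion that $\Delta$ is dominant. There is essentially no obstacle left at this stage: the only nontrivial content---namely the verification that $u_X$ is an isometry compatible with the half-braidings, carried out through the (Fusion) relation of Lemma~\ref{lem: graph calc with pairs of nodes}---is precisely what Lemma~\ref{lem:Projector} establishes, so the corollary is purely formal given that lemma.
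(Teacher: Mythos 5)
Your proof is correct and follows exactly the paper's own argument: the paper also deduces dominance from Lemma~\ref{lem:Projector}, noting that each $(X,e_X)\in\cC'$ is a direct summand of $\Delta(X)$ via the isometry $u_X$ (the paper adds only the stronger remark that $\Delta(X)\cong \underline{X}\otimes\Delta(1)$). Your spelling out of the idempotent splitting is a fine elaboration of the same step.
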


\begin{proof}
If $\underline X=(X, e_X)\in \cC'$, then $\underline X$ is a direct summand of $\Delta(X)$.
Indeed, more is true. If $\underline X \in \cC'$, then $\Delta(X)\cong \underline X\otimes \Delta(1)$.
\end{proof}

\subsection{Commutants of multifusion categories}
\label{sec:CommutantsOfMultifusion}

Let $\cC$ be a $k\times k$ unitary multifusion category.
As in Section~\ref{sec: Unitary multifusion categories}, we write $\cC_{ij}$ for $1_i\otimes \cC \otimes 1_j$, and $\cC_{i}$ for $\cC_{ii}$.
Let $R_1,\ldots,R_k$ be factors, let $\cC\to \Bim(R_1\oplus \cdots \oplus R_k)$ be a fully faithful representation, and
let $\cC'$ be the corresponding commutant category.

Letting $z_i\in R_1\oplus \cdots \oplus R_k$ denote the $i$-th central projection, any $(R_1\oplus \cdots \oplus R_k)$-bimodule $X$ can be decomposed as
\[
X=\bigoplus_{i,j\in\{1,\ldots,k\}} X_{ij}
\]
with $X_{ij}=z_iX z_j$.
In that way, we may think of a bimodule $X\in \Bim(R_1\oplus \cdots \oplus R_k)$ as a matrix of Hilbert spaces $X=(X_{ij})$, where each $X_{ij}$ is an $R_i$-$R_j$ bimodule.

\begin{lem}
\label{lem:Diagonal}
Let $\cC$ be as above, and let $(X, e_X)\in \cC'$ be in of commutant category.
Then $X_{ij} = 0$ for all $i\neq j$.
\end{lem}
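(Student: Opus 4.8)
The plan is to extract the vanishing of the off-diagonal blocks from the single datum of the half-braiding $e_{X,1_i}$ against the simple summands $1_i$ of the unit object, with no appeal to the hexagon axiom. Write $R = R_1\oplus\cdots\oplus R_k$ and let $\alpha$ be the given fully faithful representation. First I would identify the images $\alpha(1_i)$. Since $\alpha$ is a dagger tensor functor, $\alpha(1_\cC)\cong L^2R$, and the orthogonal decomposition $1_\cC=\bigoplus_i 1_i$ is carried to an orthogonal decomposition of $L^2R$. Full faithfulness makes the induced map $\End_\cC(1_\cC)=\C^k\to \End_{\Bim(R)}(L^2R)=Z(R)=\C^k$ an isomorphism of $\Cstar$-algebras, so it sends the minimal idempotents $p_i$ (projecting onto $1_i$) to the minimal central projections $z_{\sigma(i)}$ for some permutation $\sigma$. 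Equivalently, each $\alpha(1_i)$ is one of the simple summands $L^2R_{\sigma(i)}$ of $L^2R$; here I use that each $R_i$ is a factor, so that $L^2R_i$ is a simple bimodule. (After relabelling one may take $\sigma=\id$, though this is not needed for the conclusion.)

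Next I would compute the two Connes fusion products that $e_{X,1_i}$ relates. Since $\alpha(1_i)=L^2R_{\sigma(i)}$ has both its left and right $R$-actions factoring through $z_{\sigma(i)}$, and $L^2R_{\sigma(i)}$ is the unit bimodule for $R_{\sigma(i)}$-modules, associativity of Connes fusion gives
\[
X \boxtimes_R \alpha(1_i) \cong X z_{\sigma(i)} = \bigoplus_{l} X_{l,\sigma(i)},
\qquad
\alpha(1_i) \boxtimes_R X \cong z_{\sigma(i)} X = \bigoplus_{r} X_{\sigma(i),r}.
\]
The first is an $R$-$R$-bimodule whose \emph{right} action is supported on $R_{\sigma(i)}$ and whose left action is unrestricted; the second has its \emph{left} action supported on $R_{\sigma(i)}$ and right action unrestricted. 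Tracking these bimodule structures correctly is the only delicate point, but it follows directly from the unit property of $L^2R_{\sigma(i)}$.

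Finally, the half-braiding supplies a unitary isomorphism of $R$-$R$-bimodules $e_{X,1_i}\colon X z_{\sigma(i)}\to z_{\sigma(i)}X$. I would compare the two sides via the bigrading of $\Bim(R)$ by central support: every $R$-$R$-bimodule map decomposes as a sum of maps between the homogeneous pieces $z_a(-)z_b$. The source $X z_{\sigma(i)}$ is concentrated in bidegrees $(l,\sigma(i))$ and the target $z_{\sigma(i)}X$ in bidegrees $(\sigma(i),r)$, and these overlap only at the single bidegree $(\sigma(i),\sigma(i))$. An isomorphism therefore forces $X_{l,\sigma(i)}=0$ for all $l\neq\sigma(i)$ and $X_{\sigma(i),r}=0$ for all $r\neq\sigma(i)$. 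Letting $i$ range over $\{1,\dots,k\}$, so that $\sigma(i)$ ranges over all indices, yields $X_{ij}=0$ for every $i\neq j$, as claimed. The main (and essentially only) obstacle is the bookkeeping in the middle step: one must verify that $e_{X,1_i}$ really is an isomorphism of full $R$-$R$-bimodules, so that the rigidity of the central-support bigrading applies and annihilates the off-diagonal blocks.
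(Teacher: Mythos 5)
Your proof is correct and is essentially the paper's argument: both use only the half-braiding at the simple summands $1_i$ of the unit, identify $\alpha(1_i)$ with a central summand $L^2R_{\sigma(i)}$, and read off the vanishing of off-diagonal blocks from central supports (the paper phrases this as $X_{ij}\cong X\boxtimes_A 1_i\boxtimes_A 1_j=0$, which is the same bigrading comparison you make). Your extra care with the permutation $\sigma$ coming from full faithfulness is a point the paper handles implicitly by relabelling the factors.
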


\begin{proof}
Let $A:=R_1\oplus \cdots \oplus R_k$, and write $1=\bigoplus_{i=1}^k 1_i\in\Bim(A)$, where $1_i=L^2R_i$.
Since $1_i\in\cC$ commutes with $X$, we have
$X_{ij}=1_i\boxtimes_AX_{ij}\boxtimes_A1_j\cong X_{ij}\boxtimes_A 1_i\boxtimes_A1_j = 0$.
\end{proof}

In the diagrams that follow, the shading of regions will correspond to the various $R_i$:
$$
\tikz[baseline=.1cm]{\draw[fill=\RTwoColor, rounded corners=5, very thin, baseline=1cm] (0,0) rectangle (.5,.5);}=R_i\,\,
\qquad
\tikz[baseline=.1cm]{\draw[fill=\RThreeColor, rounded corners=5, very thin, baseline=1cm] (0,0) rectangle (.5,.5);}=R_j\,\,
\qquad
\tikz[baseline=.1cm]{\draw[fill=\RFourColor, rounded corners=5, very thin, baseline=1cm] (0,0) rectangle (.5,.5);}=R_\ell\,\,
\qquad
\text{etc.}
$$
By the previous lemma, for any $(X, e_X)\in\cC'$, 
we can decompose $X$ as
\begin{equation}\label{eq: X = (+)X_i}
X= \bigoplus_{i\in\{1,\ldots,k\}} X_i,
\end{equation}
with $X_i = z_iX = Xz_i$.
Similarly, the half-braiding $e_X$ decomposes as a family of isomorphisms
\begin{equation}\label{eq: e_X_j,c}
e_{X,c}=\begin{tikzpicture}[baseline=-.1cm]
	\coordinate (a) at (-.4,-.4);
	\coordinate (b) at (.4,-.4);
	\coordinate (c) at (.4,.4);
	\coordinate (d) at (-.4,.4);
	\fill[\RThreeColor] ($ (b) + (.2,0) $) -- (b) .. controls ++(90:.4cm) and ++(270:.4cm) .. (d) -- ($ (c) + (.2,0) $);
	\fill[\RTwoColor] ($ (a) + (-.2,0) $) -- (b) .. controls ++(90:.4cm) and ++(270:.4cm) .. (d) -- ($ (d) + (-.2,0) $);
	\draw (b) .. controls ++(90:.2cm) and ++(-30:.1cm) .. (.1,-.05);
	\draw (d) .. controls ++(270:.2cm) and ++(150:.1cm) .. (-.1,.05);
	\draw[very thick] (a) .. controls ++(90:.4cm) and ++(270:.4cm) .. (c);
	\node at ($ (a) + (0,-.2) $) {\scriptsize{$X_i$}};
	\node at ($ (b) + (0,-.2) $) {\scriptsize{$c$}};
	\node at ($ (d) + (0,.2) $) {\scriptsize{$c$}};
	\node at ($ (c) + (0,.2) $) {\scriptsize{$X_j$}};
\end{tikzpicture}
:\,\, X_i \boxtimes_{R_i} c \,\to\, c\boxtimes_{R_j} X_j
\end{equation}
for every $c\in \cC_{ij}$, and $1\le i,j\le k$.
Here, the shadings $\tikz[baseline=0.07cm]{\draw[fill=\RTwoColor, rounded corners=5, very thin, baseline=1cm] (0,0) rectangle (.5,.5);}$
and $\tikz[baseline=0.07cm]{\draw[fill=\RThreeColor, rounded corners=5, very thin, baseline=1cm] (0,0) rectangle (.5,.5);}$
on the two sides of the strand $c$ represent $R_i$ and $R_j$, respectively.

Let $\cC_i'$ denote the commutant of $\cC_i$ inside $\Bim(R_i)$.
For every $i\in \{1,\dots, k\}$, there is an obvious forgetful/projection functor
\begin{equation}\label{eq: obvious forgetful functor}
\begin{split}
\Pi_i\,\,:\,\,\cC'\,\,&\to\,\,\,\cC_i'\\
(X,e_X) &\mapsto (X_i, e_{X_i}),
\end{split}
\end{equation}
where $X_i$ denotes the $i$-th summand in the decomposition \eqref{eq: X = (+)X_i} of $X$, and
\[
e_{X_i}=\big\{e_{X_i,c}:X_i \boxtimes_{R_i} c \to c\boxtimes_{R_i} X_i\,\big\}_{c\in\cC_i}
\]
is as in \eqref{eq: e_X_j,c}.
The functor $\Pi_i$ 
is bi-involutive (in particular it is a tensor functor), and respects the positive structures.

\begin{thm}[Thm.~\ref{thm:CommutantsEquivalent}]
\label{thm: C' --> C_i' is an equivalence of categories}
For every $i\in\{1,\dots,k\}$,
the functor 
$\Pi_i:\cC'\to\cC_i'$
is an equivalence of categories.
\end{thm}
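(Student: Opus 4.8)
The plan is to show that $\Pi_i$ is fully faithful and essentially surjective. The multifusion structure forces all the corners to interact, and the key technical tool will be the functor $\Delta$ and the isometry $u_X$ from Lemma~\ref{lem:Projector}, suitably adapted to the multifusion setting, which lets us reconstruct an entire object of $\cC'$ from its $i$-th corner.

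\textbf{Essential surjectivity.} Given $(Y, e_Y)\in\cC_i'$ with $Y\in\Bim(R_i)$, I would build an object of $\cC'$ mapping to it. Since $\cC$ is indecomposable, for each $j$ the bimodule category $\cC_{ij}$ is non-zero and, by Lemma~\ref{lem: Dim(C_ij)}, each corner has the same global dimension $D$. The natural guess is to set $X_j:=\bigoplus_{c\in\Irr(\cC_{ij})} \overline{\alpha(c)}\boxtimes_{R_i} Y \boxtimes_{R_i}\alpha(c)$ (an $R_j$-$R_j$-bimodule), mimicking the formula for $\Delta$ but using the Morita bimodule $\cC_{ij}$ to transport from corner $i$ to corner $j$; equivalently, $X:=\bigoplus_j X_j$ is a categorified induction of $Y$ along the representation. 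I would then equip $X$ with a half-braiding $e_X$ using the half-braiding $e_Y$ on the $i$-th corner together with the graphical calculus of Lemma~\ref{lem: graph calc with pairs of nodes}, and verify the hexagon axiom using (Fusion) and (I=H). By construction $\Pi_i(X,e_X)$ should be isomorphic to $(Y,e_Y)$, the isomorphism being built from the same $u_Y$-type isometry as in Lemma~\ref{lem:Projector}.

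\textbf{Full faithfulness.} For this I would show that a morphism $f\colon (X,e_X)\to(X',e_{X'})$ in $\cC'$ is determined by, and can be prescribed by, its $i$-th component $f_i\colon X_i\to X_i'$. The compatibility with the half-braiding \eqref{eq: e_X_j,c} is exactly what rigidly links the components $f_j$ across different corners: given $c\in\cC_{ij}$, the naturality square expresses $f_j$ in terms of $f_i$ via conjugation by $e_{X,c}$ and $e_{X',c}$. Because $\cC_{ij}\neq 0$ for all $j$ (indecomposability), every component $f_j$ is forced once $f_i$ is known, giving injectivity of $\Pi_i$ on hom-spaces; conversely any $\cC_i'$-morphism $f_i$ extends to a well-defined $\cC'$-morphism because the formulas for the other $f_j$ are consistent (two different choices of $c\in\cC_{ij}$ give the same $f_j$, which follows from naturality in $c$ and the hexagon axiom). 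This yields the bijection $\Hom_{\cC'}\bigl((X,e_X),(X',e_{X'})\bigr)\cong\Hom_{\cC_i'}\bigl(\Pi_i(X),\Pi_i(X')\bigr)$.

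\textbf{Main obstacle.} I expect the hard part to be the essential-surjectivity step: checking that the half-braiding $e_X$ constructed on the induced object genuinely satisfies the hexagon axiom and is unitary, and that the resulting object's $i$-th corner is honestly isomorphic in $\cC_i'$ to the given $(Y,e_Y)$ rather than merely to a direct sum of copies of it. This is where the normalizations by $\sqrt{D}$ and $\sqrt{d_c}$ matter, and where the (Fusion) and (I=H) relations of Lemma~\ref{lem: graph calc with pairs of nodes} together with Lemma~\ref{lem:Sumdxdy} must be invoked carefully to collapse the double sums to the identity, exactly as in the computation \eqref{eq: p_X^2=p_X} that $p_X^2=p_X$. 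The fact that $\Pi_i$ is automatically bi-involutive and positive-structure-preserving (noted just before the theorem) means that once it is an equivalence of plain categories, it is automatically an equivalence respecting all the extra structure, so no further work is needed there.
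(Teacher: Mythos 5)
The genuine gap is in your essential-surjectivity step, and it is not the verification problem you describe but a flaw in the construction itself. For $(Y,e_Y)\in\cC_i'$, the $i$-th corner of your induced object $X=\bigoplus_j X_j$, with $X_j=\bigoplus_{c\in\Irr(\cC_{ij})}\overline{\alpha(c)}\boxtimes Y\boxtimes\alpha(c)$, is $X_i=\bigoplus_{c\in\Irr(\cC_{i})}\overline{\alpha(c)}\boxtimes Y\boxtimes\alpha(c)$, i.e.\ (up to relabelling $c\leftrightarrow\overline c$) the object $\Delta(Y)$ of Lemma~\ref{lem:Projector}. This is never isomorphic to $Y$ in $\cC_i'$ unless $\cC_i$ is trivial: the map $u_Y$ of Lemma~\ref{lem:Projector} is only an \emph{isometry}, exhibiting $Y$ as a proper direct summand, and indeed $\Delta(Y)\cong Y\otimes\Delta(1)$. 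No choice of normalization by $\sqrt{D}$ or $\sqrt{d_c}$ can collapse this --- the induced object is simply too big --- so the claim ``$\Pi_i(X,e_X)$ should be isomorphic to $(Y,e_Y)$'' fails for the plain induction. The missing idea, which is the heart of the paper's proof, is to cut the induction down by the projector $p_Y:=u_Yu_Y^*$ as in \eqref{eq:ProjectorC1}: one checks, using only the corner half-braiding $e_Y$ together with Lemma~\ref{lem: graph calc with pairs of nodes} and Lemma~\ref{lem:Sumdxdy} exactly as in \eqref{eq: p_X^2=p_X}, that $p_Y$ is a self-adjoint idempotent which moreover commutes with the half-braiding of the \emph{induced} object, hence lies in $\End_{\cC'}(\Delta_i(Y))$; then, since $\cC'$ is idempotent complete, one defines $\Phi_i(Y):=p_Y(\Delta_i(Y))$, and $u_Y$ becomes a unitary $Y\to\Pi_i(\Phi_i(Y))$ in $\cC_i'$.

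Once $\Phi_i$ is in place, the paper does not prove full faithfulness separately at all: it shows $\Phi_i$ is a two-sided inverse, the natural isomorphism $\Phi_i\circ\Pi_i\simeq\id_{\cC'}$ being implemented by an isometry $u_X:X\to\Delta_i(X_i)$ built from the full half-braiding $e_X$, which one checks is a morphism in $\cC'$ and satisfies $u_Xu_X^*=p_{X_i}$. Your full-faithfulness sketch is, by contrast, workable but glossed: for $c\in\Irr(\cC_{ij})$ the relation $\id_{\alpha(c)}\boxtimes f_j=e_{X',c}\circ(f_i\boxtimes\id_{\alpha(c)})\circ e_{X,c}^{-1}$ does determine $f_j$ from $f_i$ (take a partial trace and divide by $d_c\neq 0$), but independence of the choice of $c$ and compatibility of the reconstructed $f$ with the half-braidings over the remaining corners $\cC_{jk}$, $j,k\neq i$, are not consequences of ``naturality in $c$'' (there are no nonzero morphisms between non-isomorphic simples); they require hexagon arguments of essentially the same graphical weight as the projector computations you would need anyway. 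So even if you repaired essential surjectivity this way, you would not have saved any work relative to constructing the explicit inverse $\Phi_i$.
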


\begin{proof}
We assume without loss of generality that $i=1$, and reserve the shading\;\!
$\tikz[baseline=0.07cm]{\draw[fill=\RTwoColor, rounded corners=5, very thin, baseline=1cm] (0,0) rectangle (.5,.5);}$\;\! for $R_1$.
Consider the following functor (which is a modification of the functor $\Delta$ from the previous section), given by
\begin{gather*}
\Delta_1: \Bim(R_1) \to \cC'\\
\Delta_1(X) 
:=\!
\bigoplus_{
j\in\{1,\dots, k\}
}\,
\bigoplus_{
c\in \Irr(\cC_{j1})
} 
c\boxtimes X \boxtimes \overline{c}
\end{gather*}
with half-braiding
\begin{equation}\label{eq: with half-braiding}
\,\,\,\,
e_{\Delta_1(X), a} 
\,:=\,
\sum_{\substack{
b\in \Irr(\cC_{i,1})
\\
c\in \Irr(\cC_{j1})
}} \!
\sqrt{d_a^{-1}}\;
\begin{tikzpicture}[baseline=-.1cm]
	\pgfmathsetmacro{\width}{.3}
	\pgfmathsetmacro{\height}{.6}
	\coordinate (a) at (-\width,.2);
	\coordinate (b) at (\width,-.2);
	\fill[fill=\RTwoColor] (-\width,-\height) rectangle (\width,\height);
	\fill[fill=\RThreeColor] ($ 7/2*(-\width, 0) + (0,\height) $) -- ($ 3*(-\width, 0) + (0,\height) $) -- (a) -- (-\width,-\height) -- ($ 7/2*(-\width,0) + (0,-\height) $); 
	\fill[fill=\RThreeColor] (\width, -\height) -- ($ 3*(\width, 0) + (0,-\height) $) -- (b);
	\fill[fill=\RFourColor] ($ 7/2*(\width, 0) + (0,-\height) $) -- ($ 3*(\width, 0) + (0,-\height) $) -- (b) -- (\width,\height) -- ($ 7/2*(\width,0) + (0,\height) $); 
	\fill[fill=\RFourColor] (-\width, \height) -- ($ 3*(-\width, 0) + (0,\height) $) -- (a);
	\draw[very thick] (0,-\height) -- (0,\height);
	\draw (-\width,-\height) -- (-\width,\height);
	\draw (\width,-\height) -- (\width,\height);
	\draw ($ 3*(-\width, 0) + (0,\height) $) -- (a);
	\draw ($ 3*(\width, 0) + (0,-\height) $) -- (b);
	\draw[fill=\betacolor] (a) circle (.05cm);
	\draw[fill=\betacolor] (b) circle (.05cm);
	\node at ($ (0,-\height) + (0,-.2) $) {\scriptsize{$X$}};
	\node at ($ (-\width,-\height) + (0,-.2) $) {\scriptsize{$b$}};
	\node at ($ (\width,-\height) + (0,-.17) $) {\scriptsize{$\overline b$}};
	\node at  ($ 3*(\width,0) + (0,-\height) + (0,-.22) $) {\scriptsize{$a$}};
	\node at ($ (0,\height) + (0,.2) $) {\scriptsize{$X$}};
	\node at ($ (-\width,\height) + (0,.2) $) {\scriptsize{$c$}};
	\node at ($ (\width,\height) + (0,.22) $) {\scriptsize{$\overline c$}};
	\node at  ($ 3*(-\width,0) + (0,\height) + (0,.2) $) {\scriptsize{$a$}};
\end{tikzpicture}
\,\,\,\,\,\,\text{for}\,\,\,\,
a\in \cC_{ij},
\end{equation}
where\;\!
$\tikz[baseline=0.07cm]{\draw[fill=\RThreeColor, rounded corners=5, very thin, baseline=1cm] (0,0) rectangle (.5,.5);}\,=R_i$
and\;\!
$\tikz[baseline=0.07cm]{\draw[fill=\RFourColor, rounded corners=5, very thin, baseline=1cm] (0,0) rectangle (.5,.5);}\,=R_j$.

Recall 
that $D:=\dim(\cC_i) = \dim(\cC_j)$ for all $1\le i, j\le k$.
Let $(X, e_X)$ be an object of $\cC_1'$.
We claim that, similarly to Lemma~\ref{lem:Projector}, the morphism
\begin{equation}
\label{eq:ProjectorC1}
p_X 
:=
\frac{1}{D}
\sum_{\substack{
j\in\{1,\dots, k\}
\\
\tikz[baseline=.07cm]{\draw[fill=\RThreeColor, rounded corners=3, very thin, baseline=1cm] (0,0) rectangle (.3,.3);}\,=R_j
}}
\sum_{\substack{
a\in \Irr(\cC_{1})
\\
x,y\in \Irr(\cC_{j1})
}}
\!\sqrt{d_a}\,\,
\begin{tikzpicture}[baseline=-.1cm]
	\pgfmathsetmacro{\width}{.4}
	\pgfmathsetmacro{\height}{.6}
	\coordinate (a) at (-\width,.3);
	\coordinate (b) at (\width,-.3);
	\fill[fill=\RTwoColor] (-\width,-\height) rectangle (\width,\height);
	\fill[fill=\RThreeColor] ($ 2*(-\width, 0) + (0,-\height) $) rectangle ($ (-\width, 0) + (0,\height) $);
	\fill[fill=\RThreeColor] ($ 2*(\width, 0) + (0,\height) $) rectangle ($ (\width, 0) + (0,-\height) $);
	\draw (-\width,-\height) -- (-\width,\height);
	\draw (\width,-\height) -- (\width,\height);
	\draw (a) -- ($ .58*(a) + .42*(b) $);
	\draw (b) -- ($ .42*(a) + .58*(b) $);
	\draw[very thick] (0,-\height) -- (0,\height);
	\draw[fill=\betacolor] (a) circle (.05cm);
	\draw[fill=\betacolor] (b) circle (.05cm);
	\node at ($ (0,-\height) + (0,-.2) $) {\scriptsize{$X$}};
	\node at ($ (-\width,-\height) + (0,-.2) $) {\scriptsize{$x$}};
	\node at ($ (\width,-\height) + (0,-.2) $) {\scriptsize{$\overline x$}};
	\node at ($ (0,\height) + (0,.2) $) {\scriptsize{$X$}};
	\node at ($ (-\width,\height) + (0,.2) $) {\scriptsize{$y$}};
	\node at ($ (\width,\height) + (0,.2) $) {\scriptsize{$\overline y$}};
	\node at ($ (b) + (-.15,.35) $) {\scriptsize{$a$}};
	\node at ($ (a) + (.2,.05) $) {\scriptsize{$a$}};
\end{tikzpicture}
\,=\,
\frac{1}{D}
\sum_{
\tikz[baseline=.07cm]{\draw[fill=\RThreeColor, rounded corners=3, very thin, baseline=1cm] (0,0) rectangle (.3,.3);}\,=R_j
}
\sum_{\substack{
a\in \Irr(\cC_{1})
\\
x,y\in \Irr(\cC_{j1})
}}
\!\sqrt{d_a}\,\,
\begin{tikzpicture}[baseline=-.1cm]
	\pgfmathsetmacro{\width}{.8}
	\pgfmathsetmacro{\height}{1}
	\coordinate (a) at (-\width,.6);
	\coordinate (b) at (\width,-.6);
	\fill[fill=\RTwoColor] (-\width,-\height) rectangle (\width,\height);
	\fill[fill=\RThreeColor] ($ 1.58*(-\width, 0) + (0,-\height) $) rectangle ($ (-\width, 0) + (0,\height) $);
	\fill[fill=\RThreeColor] ($ 1.58*(\width, 0) + (0,\height) $) rectangle ($ (\width, 0) + (0,-\height) $);
	\draw (-\width,-\height) -- (-\width,\height);
	\draw (\width,-\height) -- (\width,\height);
	\draw (a) -- (b);
	\draw[\RTwoColor, super thick] (0,-\height) -- (0,\height);
	\draw[very thick] (0,-\height) -- (0,\height);
	\draw[fill=\betacolor] (a) circle (.05cm);
	\draw[fill=\betacolor] (b) circle (.05cm);
	\roundNbox{unshaded}{(0,0)}{.4}{0}{0}{$e_{X,a}$}
	\node at ($ (0,-\height) + (0,-.2) $) {\scriptsize{$X$}};
	\node at ($ (-\width,-\height) + (0,-.2) $) {\scriptsize{$x$}};
	\node at ($ (\width,-\height) + (0,-.2) $) {\scriptsize{$\overline x$}};
	\node at ($ (0,\height) + (0,.2) $) {\scriptsize{$X$}};
	\node at ($ (-\width,\height) + (0,.2) $) {\scriptsize{$y$}};
	\node at ($ (\width,\height) + (0,.2) $) {\scriptsize{$\overline y$}};
	\node at ($ (b) + (-.15,.35) $) {\scriptsize{$a$}};
	\node at ($ (a) + (.2,.05) $) {\scriptsize{$a$}};
\end{tikzpicture}
\end{equation}
is a projector, and is an element of $\End_{\cC'}(\Delta_1(X))$.
The relation $p_X^*=p_X$ follows from the unitarity of the half-braiding, and is left to the reader as as exercise.
The relation $p_X^2=p_X$ is proven along the same lines as~\eqref{eq: p_X^2=p_X}, using Lemmata~\ref{lem: graph calc with pairs of nodes} and~\ref{lem:Sumdxdy}:
\begin{gather*} 
p_X^2\,
= 
\frac{1}{D^2}\,
\sum_{
\tikz[baseline=.07cm]{\draw[fill=\RThreeColor, rounded corners=3, very thin, baseline=1cm] (0,0) rectangle (.3,.3);}\,=R_j
}
\sum_{\substack{
a,b\in \Irr(\cC_{1})
\\
x,y,z\in \Irr(\cC_{j1})
}}
\sqrt{d_a d_b}\,\,
\begin{tikzpicture}[baseline=-.1cm]
	\pgfmathsetmacro{\width}{.4}
	\pgfmathsetmacro{\height}{1}
	\coordinate (a) at (-\width,.6);
	\coordinate (b) at (\width,0);
	\coordinate (c) at (-\width,0);
	\coordinate (d) at (\width,-.6);
	\fill[fill=\RTwoColor] (-\width,-\height) rectangle (\width,\height);
	\fill[fill=\RThreeColor] ($ 2.1*(-\width, 0) + (0,-\height) $) rectangle ($ (-\width, 0) + (0,\height) $);
	\fill[fill=\RThreeColor] ($ 2.1*(\width, 0) + (0,\height) $) rectangle ($ (\width, 0) + (0,-\height) $);
	\draw (-\width,-\height) -- (-\width,\height);
	\draw (\width,-\height) -- (\width,\height);
	\draw (a) -- (b);
	\draw (c) -- (d);
	\draw[\RTwoColor, super thick] (0,-\height) -- (0,\height);
	\draw[very thick] (0,-\height) -- (0,\height);
	\draw[fill=\alphacolor] (a) circle (.05cm);
	\draw[fill=\alphacolor] (b) circle (.05cm);
	\draw[fill=\betacolor] (c) circle (.05cm);
	\draw[fill=\betacolor] (d) circle (.05cm);
	\node at ($ (0,-\height) + (0,-.2) $) {\scriptsize{$X$}};
	\node at ($ (-\width,-\height) + (0,-.2) $) {\scriptsize{$x$}};
	\node at ($ (\width,-\height) + (0,-.2) $) {\scriptsize{$\overline x$}};
	\node at ($ (0,\height) + (0,.2) $) {\scriptsize{$X$}};
	\node at ($ (-\width,\height) + (0,.17) $) {\scriptsize{$z$}};
	\node at ($ (\width,\height) + (0,.2) $) {\scriptsize{$\overline z$}};
	\node at ($ (b) + (-.15,.35) $) {\scriptsize{$b$}};
	\node at ($ (a) + (.2,.05) $) {\scriptsize{$b$}};
	\node at ($ (d) + (-.15,.35) $) {\scriptsize{$a$}};
	\node at ($ (c) + (.2,.05) $) {\scriptsize{$a$}};
	\node at ($ (c) + (-.15,.35) $) {\scriptsize{$y$}};
	\node at ($ (b) + (.15,-.35) $) {\scriptsize{$\overline y$}};
\end{tikzpicture}
=
\frac{1}{D^2}\,
\sum_{
\tikz[baseline=.07cm]{\draw[fill=\RThreeColor, rounded corners=3, very thin, baseline=1cm] (0,0) rectangle (.3,.3);}\,=R_j
}
\sum_{\substack{
a,b,c\in \Irr(\cC_{1})
\\
x,y,z\in \Irr(\cC_{j1})
}}
\sqrt{d_c}\,\,
\begin{tikzpicture}[baseline=-.1cm]
	\pgfmathsetmacro{\width}{.8}
	\pgfmathsetmacro{\height}{1}
	\coordinate (a) at ($ (-\width,0) + (0,.7) $);
	\coordinate (b) at (\width,-.3);
	\coordinate (c) at ($ (-\width,0) + (0,.3) $);
	\coordinate (d) at (\width,-.7);
	\coordinate (e) at ($ 1/2*(-\width,0) + (0, .2) $);
	\coordinate (f) at ($ 1/2*(\width,0) + (0, -.2) $);
	\fill[fill=\RTwoColor] (-\width,-\height) rectangle (\width,\height);
	\fill[fill=\RThreeColor] ($ 1.58*(-\width, 0) + (0,-\height) $) rectangle ($ (-\width, 0) + (0,\height) $);
	\fill[fill=\RThreeColor] ($ 1.58*(\width, 0) + (0,\height) $) rectangle ($ (\width, 0) + (0,-\height) $);
	\draw ($ (-\width,0) + (0, -\height) $) -- ($ (-\width,0) + (0, \height) $);
	\draw (\width,-\height) -- (\width,\height);
	\draw (a) -- (e) -- (c);
	\draw (b) -- (f) -- (d);
	\draw (e) -- (f);
	\draw[\RTwoColor, super thick] (0,-\height) -- (0,\height);
	\draw[very thick] (0,-\height) -- (0,\height);
	\draw[fill=\alphacolor] (a) circle (.05cm);
	\draw[fill=\alphacolor] (b) circle (.05cm);
	\draw[fill=\betacolor] (c) circle (.05cm);
	\draw[fill=\betacolor] (d) circle (.05cm);
	\draw[fill=\gammacolor] (e) circle (.05cm);
	\draw[fill=\gammacolor] (f) circle (.05cm);
	\node at ($ (0,-\height) + (0,-.2) $) {\scriptsize{$X$}};
	\node at ($ (-\width,0) + (0,-\height) + (0,-.2) $) {\scriptsize{$x$}};
	\node at ($ (\width,-\height) + (0,-.2) $) {\scriptsize{$\overline x$}};
	\node at ($ (0,\height) + (0,.2) $) {\scriptsize{$X$}};
	\node at ($ (-\width,0) + (0,\height) + (0,.17) $) {\scriptsize{$z$}};
	\node at ($ (\width,\height) + (0,.2) $) {\scriptsize{$\overline z$}};
	\node at ($ (b) + (-.15,.25) $) {\scriptsize{$b$}};
	\node at ($ (a) + (.2,.05) $) {\scriptsize{$b$}};
	\node at ($ (d) + (-.2,-.05) $) {\scriptsize{$a$}};
	\node at ($ (c) + (.15,-.25) $) {\scriptsize{$a$}};
	\node at ($ (c) + (-.15,.2) $) {\scriptsize{$y$}};
	\node at ($ (b) + (.15,-.2) $) {\scriptsize{$\overline y$}};
	\node at ($ (e) + (.2,.05) $) {\scriptsize{$c$}};
	\node at ($ (f) + (-.2,.25) $) {\scriptsize{$c$}};
\end{tikzpicture}
\\
=
\frac{1}{D^2}\,
\sum_{
\tikz[baseline=.07cm]{\draw[fill=\RThreeColor, rounded corners=3, very thin, baseline=1cm] (0,0) rectangle (.3,.3);}\,=R_j
}
\sum_{\substack{
b,c\in \Irr(\cC_{1})
\\
a,x,y,z\in \Irr(\cC_{j1})
}}
\sqrt{d_c}\,\,
\begin{tikzpicture}[baseline=-.1cm]
	\pgfmathsetmacro{\width}{.5}
	\pgfmathsetmacro{\height}{1}
	\coordinate (a) at ($ (-\width,0) + (0,.8) $);
	\coordinate (b) at (\width,.2);
	\coordinate (c) at ($ (-\width,0) + (0,.2) $);
	\coordinate (d) at (\width,.8);
	\coordinate (e) at ($ (-\width,0) + (0, -.2) $);
	\coordinate (f) at ($ (\width,0) + (0, -.6) $);
	\fill[fill=\RTwoColor] (-\width,-\height) rectangle (\width,\height);
	\fill[fill=\RThreeColor] ($ 2*(-\width, 0) + (0,-\height) $) rectangle ($ (-\width, 0) + (0,\height) $);
	\fill[fill=\RThreeColor] ($ 2*(\width, 0) + (0,\height) $) rectangle ($ (\width, 0) + (0,-\height) $);
	\fill[fill=\RTwoColor] (c) .. controls ++(45:.2cm) and ++(-45:.2cm) .. (a)
	.. controls ++(225:.2cm) and ++(135:.2cm) .. (c);
	\fill[fill=\RTwoColor] (b) .. controls ++(45:.2cm) and ++(-45:.2cm) .. (d)
	.. controls ++(225:.2cm) and ++(135:.2cm) .. (b);
	\draw ($ (-\width,0) + (0, -\height) $) -- (c);
	\draw (a) -- ($ (-\width,0) + (0, \height) $);
	\draw (\width,-\height) -- (b);
	\draw (d) -- (\width,\height);
	\draw (c) .. controls ++(45:.2cm) and ++(-45:.2cm) .. (a);
	\draw (c) .. controls ++(135:.2cm) and ++(225:.2cm) .. (a);
	\draw (b) .. controls ++(45:.2cm) and ++(-45:.2cm) .. (d);
	\draw (b) .. controls ++(135:.2cm) and ++(225:.2cm) .. (d);
	\draw (e) -- (f);
	\draw[\RTwoColor, super thick] (0,-\height) -- (0,\height);
	\draw[very thick] (0,-\height) -- (0,\height);
	\draw[fill=\alphacolor] (a) circle (.05cm);
	\draw[fill=\alphacolor] (d) circle (.05cm);
	\draw[fill=\betacolor] (c) circle (.05cm);
	\draw[fill=\betacolor] (b) circle (.05cm);
	\draw[fill=\gammacolor] (e) circle (.05cm);
	\draw[fill=\gammacolor] (f) circle (.05cm);
	\node at ($ (0,-\height) + (0,-.2) $) {\scriptsize{$X$}};
	\node at ($ (-\width,0) + (0,-\height) + (0,-.2) $) {\scriptsize{$x$}};
	\node at ($ (\width,-\height) + (0,-.2) $) {\scriptsize{$\overline x$}};
	\node at ($ (0,\height) + (0,.2) $) {\scriptsize{$X$}};
	\node at ($ (-\width,0) + (0,\height) + (0,.17) $) {\scriptsize{$z$}};
	\node at ($ (\width,\height) + (0,.2) $) {\scriptsize{$\overline z$}};
	\node at ($ (a) + (.25,-.3) $) {\scriptsize{$b$}};
	\node at ($ (d) + (-.25,-.3) $) {\scriptsize{$\overline b$}};
	\node at ($ (a) + (-.25,-.3) $) {\scriptsize{$y$}};
	\node at ($ (d) + (.25,-.3) $) {\scriptsize{$\overline y$}};
	\node at ($ (c) + (-.15,-.2) $) {\scriptsize{$a$}};
	\node at ($ (b) + (.15,-.4) $) {\scriptsize{$\overline a$}};
	\node at ($ (e) + (.2,-.25) $) {\scriptsize{$c$}};
	\node at ($ (f) + (-.25,-.05) $) {\scriptsize{$c$}};
\end{tikzpicture}
\,=\,p_X.
\end{gather*}
Finally, to see that $p_X$ is a morphism of $\cC'$, i.e. that it commutes with the half-braiding \eqref{eq: with half-braiding}, we compute
\begin{align*}
e_{\Delta_1(X),a}\, (p_X\boxtimes \id_a)
\,&=\,
\frac{1}{D}
\sum_{\substack{
b\in \Irr(\cC_{1})
\\
x,y\in \Irr(\cC_{i1})
\\
z\in \Irr(\cC_{j1})
}}
\sqrt{
\frac{d_b}{d_a}
}\,\,\,\,
\begin{tikzpicture}[baseline=-.1cm]
	\pgfmathsetmacro{\width}{.4}
	\pgfmathsetmacro{\height}{1}
	\coordinate (c) at (-\width,.6);
	\coordinate (d) at (\width,-.1);
	\coordinate (a) at (-\width,.1);
	\coordinate (b) at (\width,-.7);
	\fill[fill=\RTwoColor] (-\width,-\height) rectangle (\width,\height);
	\fill[fill=\RThreeColor] ($ 7/2*(-\width, 0) + (0,\height) $) -- ($ 2*(-\width, 0) + (0,\height) $) -- (c) -- (-\width,-\height) -- ($ 7/2*(-\width,0) + (0,-\height) $); 
	\fill[fill=\RThreeColor] (\width, -\height) -- ($ 3*(\width, 0) + (0,-\height) $) -- (d);
	\fill[fill=\RFourColor] ($ 7/2*(\width, 0) + (0,-\height) $) -- ($ 3*(\width, 0) + (0,-\height) $) -- (d) -- (\width,\height) -- ($ 7/2*(\width,0) + (0,\height) $); 
	\fill[fill=\RFourColor] (-\width, \height) -- ($ 2*(-\width, 0) + (0,\height) $) -- (c);
	\draw (-\width,-\height) -- (-\width,\height);
	\draw (\width,-\height) -- (\width,\height);
	\draw (a) -- (b);
	\draw[\RTwoColor, super thick] (0,-\height) -- (0,\height);
	\draw[very thick] (0,-\height) -- (0,\height);
	\draw (d) -- ($ 3*(\width, 0) + (0, -\height) $);
	\draw (c) -- ($ 2*(-\width, 0) + (0,\height) $);
	\draw[fill=\betacolor] (a) circle (.05cm);
	\draw[fill=\betacolor] (b) circle (.05cm);
	\draw[fill=\alphacolor] (c) circle (.05cm);
	\draw[fill=\alphacolor] (d) circle (.05cm);
	\node at ($ (0,-\height) + (0,-.2) $) {\scriptsize{$X$}};
	\node at ($ (-\width,-\height) + (0,-.2) $) {\scriptsize{$x$}};
	\node at ($ (\width,-\height) + (0,-.2) $) {\scriptsize{$\overline x$}};
	\node at ($ (0,\height) + (0,.2) $) {\scriptsize{$X$}};
	\node at ($ (-\width,\height) + (0,.17) $) {\scriptsize{$z$}};
	\node at ($ (\width,\height) + (0,.2) $) {\scriptsize{$\overline z$}};
	\node at ($ (a) + (-.15,.2) $) {\scriptsize{$y$}};
	\node at ($ (b) + (.15,.2) $) {\scriptsize{$\overline y$}};
	\node at ($ (b) + (-.15,.35) $) {\scriptsize{$b$}};
	\node at ($ (a) + (.2,.05) $) {\scriptsize{$b$}};
	\node at ($ 2*(-\width,0) + (0,\height) + (0,.2) $) {\scriptsize{$a$}};
	\node at ($ 3*(\width,0) + (0,-\height) + (0,-.2) $) {\scriptsize{$a$}};
\end{tikzpicture}
\\
&=\,
\frac{1}{D}
\sum_{\substack{
b\in \Irr(\cC_{1})
\\
x\in \Irr(\cC_{i1})
\\
y,z\in \Irr(\cC_{j1})
}}
\sqrt{
\frac{d_b}{d_a}
}\,\,\,\,
\begin{tikzpicture}[baseline=-.1cm]
	\pgfmathsetmacro{\width}{.4}
	\pgfmathsetmacro{\height}{1}
	\coordinate (a) at (-\width,.7);
	\coordinate (b) at (\width,-.1);
	\coordinate (c) at (-\width,.1);
	\coordinate (d) at (\width,-.6);
	\fill[fill=\RTwoColor] (-\width,-\height) rectangle (\width,\height);
	\fill[fill=\RThreeColor] ($ 7/2*(-\width, 0) + (0,\height) $) -- ($ 3*(-\width, 0) + (0,\height) $) -- (c) -- (-\width,-\height) -- ($ 7/2*(-\width,0) + (0,-\height) $); 
	\fill[fill=\RThreeColor] (\width, -\height) -- ($ 2*(\width, 0) + (0,-\height) $) -- (d);
	\fill[fill=\RFourColor] ($ 7/2*(\width, 0) + (0,-\height) $) -- ($ 2*(\width, 0) + (0,-\height) $) -- (d) -- (\width,\height) -- ($ 7/2*(\width,0) + (0,\height) $); 
	\fill[fill=\RFourColor] (-\width, \height) -- ($ 3*(-\width, 0) + (0,\height) $) -- (c);
	\draw (-\width,-\height) -- (-\width,\height);
	\draw (\width,-\height) -- (\width,\height);
	\draw (a) -- (b);
	\draw[\RTwoColor, super thick] (0,-\height) -- (0,\height);
	\draw[very thick] (0,-\height) -- (0,\height);
	\draw (c) -- ($ 3*(-\width, 0) + (0, \height) $);
	\draw (d) -- ($ 2*(\width, 0) + (0,-\height) $);
	\draw[fill=\betacolor] (a) circle (.05cm);
	\draw[fill=\betacolor] (b) circle (.05cm);
	\draw[fill=\alphacolor] (c) circle (.05cm);
	\draw[fill=\alphacolor] (d) circle (.05cm);
	\node at ($ (0,-\height) + (0,-.2) $) {\scriptsize{$X$}};
	\node at ($ (-\width,-\height) + (0,-.2) $) {\scriptsize{$x$}};
	\node at ($ (\width,-\height) + (0,-.2) $) {\scriptsize{$\overline x$}};
	\node at ($ (0,\height) + (0,.2) $) {\scriptsize{$X$}};
	\node at ($ (-\width,\height) + (0,.17) $) {\scriptsize{$z$}};
	\node at ($ (\width,\height) + (0,.2) $) {\scriptsize{$\overline z$}};
	\node at ($ (a) + (-.15,-.2) $) {\scriptsize{$y$}};
	\node at ($ (b) + (.15,-.2) $) {\scriptsize{$\overline y$}};
	\node at ($ (b) + (-.15,.35) $) {\scriptsize{$b$}};
	\node at ($ (a) + (.2,.05) $) {\scriptsize{$b$}};
	\node at ($ 2*(\width,0) + (0,-\height) + (0,-.2) $) {\scriptsize{$a$}};
	\node at ($ 3*(-\width,0) + (0,\height) + (0,.2) $) {\scriptsize{$a$}};
\end{tikzpicture}
\,=\,
(\id_a\boxtimes p_X)\,e_{\Delta_1(X),a}
\end{align*}
for\;\!
$\tikz[baseline=0.07cm]{\draw[fill=\RThreeColor, rounded corners=5, very thin, baseline=1cm] (0,0) rectangle (.5,.5);}\,=R_i$,\;\!
$\tikz[baseline=0.07cm]{\draw[fill=\RFourColor, rounded corners=5, very thin, baseline=1cm] (0,0) rectangle (.5,.5);}\,=R_j$
and $a\in\cC_{ij}$,
where the middle equality holds by Lemma~\ref{lem: graph calc with pairs of nodes}~(I=H).

Since $\cC'$ is idempotent complete,
the above computations allow us to define, for every $\underline X=(X, e_X)\in \cC_1'$,
a new object $\Phi_1(\underline X):=p_X(\Delta_1(X))\in \cC'$, as the image of the projector~\eqref{eq:ProjectorC1}.
We claim that the resulting functor
\[
\begin{split}
\Phi_1 \,\,:\,\,\,\cC_1'\,\,\,\, &\!\longrightarrow\,\,\,\, \cC'\\
(X,e_X) &\mapsto p_X(\Delta_1(X))
\end{split}
\]
is an inverse of the functor $\Pi_1:\cC'\to \cC_1'$ defined in~\eqref{eq: obvious forgetful functor}.

The equation $\Pi_1\circ\Phi_1\simeq \id_{\cC_1}$ is a direct consequence of Lemma~\ref{lem:Projector}.
More precisely, for any object $\underline X=(X,e_X)$ in $\cC_1$ the isometry $u_X$ induces a unitary between $\underline X$ and $\Pi_1\circ\Phi_1(\underline X)$.
These morphisms assemble to a unitary natural transformation $\id_{\cC_1}\stackrel{\scriptscriptstyle\simeq\,}\to\Pi_1\circ\Phi_1 $.

It remains to show that $\Phi_1\circ \Pi_1\simeq \id_\cC$.
For $(X,e_X)\in \cC'$, let $X_1$ be as in \eqref{eq: obvious forgetful functor}.
Then, as in Lemma~\ref{lem:Projector}, the map
\[
u_X :=
\frac{1}{\sqrt{D}}
\sum_{
\tikz[baseline=.07cm]{\draw[fill=\RThreeColor, rounded corners=3, very thin, baseline=1cm] (0,0) rectangle (.3,.3);}\,=R_j
}
\sum_{\substack{
x\in \Irr(\cC_{j1})
}}
\sqrt{d_x}\,\,
\begin{tikzpicture}[baseline=.08cm, yscale=-1]
	\pgfmathsetmacro{\width}{.4}
	\pgfmathsetmacro{\height}{.6}
	\fill[\RTwoColor] (-\width,-\height) arc (180:0:\width);
	\fill[\RThreeColor] ($ 3/2*(-\width, 0) + (0,.2) $)  -- ($ 3/2*(-\width, 0) + (0,-\height) $) -- (-\width,-\height) arc (180:0:\width) -- ($ 3/2*(\width, 0) + (0,-\height) $) -- ($ 3/2*(\width, 0) + (0,.2) $);
	\draw (-\width,-\height) arc (180:98:\width);
	\draw (\width,-\height) arc (0:82:\width);
	\draw[very thick] (0,-\height) -- (0,.2);
	\node at ($ (0,-\height) + (0,-.2) $) {\scriptsize{$X_1$}};
	\node at ($ (0,.2) + (0,.2) $) {\scriptsize{$X_j$}};
	\node at ($ (-\width,-\height) + (0,-.19) $) {\scriptsize{$x$}};
	\node at ($ (\width,-\height) + (0,-.2) $) {\scriptsize{$\overline x$}};
\end{tikzpicture}\,: X\to \Delta_1(X_1)
\]
is an isometry, and an morphism in $\cC'$.
It is an isometry because
\[
u_X^* u_X
=
\frac{1}{D}
\sum_{
\tikz[baseline=.07cm]{\draw[fill=\RThreeColor, rounded corners=3, very thin, baseline=1cm] (0,0) rectangle (.3,.3);}\,=R_j
}
\sum_{\substack{
x\in \Irr(\cC_{j1})
}}\!
d_x\,
\begin{tikzpicture}[baseline=-.1cm]
	\pgfmathsetmacro{\width}{.5}
	\pgfmathsetmacro{\height}{.8}
	\fill[\RThreeColor] ($ 2*(-\width, 0) + (0, -\height) $) rectangle ($ 2*(\width, 0) + (0, \height) $);
	\fill[\RTwoColor] (0,0) circle (\width);
	\draw (-82:\width) arc (-82:82:\width);
	\draw (98:\width) arc (98:262:\width);
	\draw[very thick] (0,-\height) -- (0,\height);
	\node at ($ (0,\height) + (0,.2) $) {\scriptsize{$X_j$}};
	\node at ($ (0,-\height) + (0,-.2) $) {\scriptsize{$X_j$}};
	\node at ($ (-\width,0) + (-.12,0) $) {\scriptsize{$x$}};
	\node at ($ (0,0) + (.25,0) $) {\scriptsize{$X_1$}};
\end{tikzpicture}
\,=
\frac{1}{D} \sum_{j=1}^k \Bigg(\sum_{x\in \Irr(\cC_{j1})}\! d_x^2\Bigg) \id_{X_j} 
=
\frac{1}{D} \sum_{j=1}^k D\cdot \id_{X_j}
= 
\id_X.
\]
And it is a morphism in $\cC'$ because
\begin{align*}
e_{\Delta_1(X_1),a}(u_X\otimes\id)
&= 
\frac{1}{\sqrt{D}}
\sum_{\substack{
x\in \Irr(\cC_{j1})
\\
y\in \Irr(\cC_{i1})
}}\sqrt{\frac{d_y}{d_a}}\,\,
\begin{tikzpicture}[baseline=-.1cm, scale=-1]
	\pgfmathsetmacro{\width}{.4}
	\pgfmathsetmacro{\height}{.6}
	\coordinate (a) at ($ (-\width,0) + 1/2*(0,-\height) $);
	\coordinate (b) at ($ (\width,0) + 1/2*(0,-\height) $);
	\fill[\RTwoColor] ($ (\width,0) + (0,-\height) $) -- (b) arc (0:180:\width) -- ($ (-\width,0) + (0,-\height) $);
	\fill[\RThreeColor] ($ 2*(-\width,0) + (0,\height) $) -- ($ 3*(\width,0) + (0,\height) $) -- ($ 3*(\width,0) + (0,-\height) $) -- ($ 2*(\width,0) + (0,-\height) $) -- (b) arc (0:180:\width) -- ($ 2*(-\width,0) + (0,\height) $);
	\fill[\RFourColor] ($ 3*(-\width,0) + (0,\height) $) -- ($ 2*(-\width,0) + (0,\height) $) -- (a) -- ($ (-\width,0) + (0,-\height) $) -- ($ 3*(-\width,0) + (0,-\height) $);
	\fill[\RFourColor] ($ (\width,0) + (0,-\height) $) -- (b) -- ($ 2*(\width,0) + (0,-\height) $);
	\draw (-\width,-\height) -- ($ (-\width,0) + 1/2*(0,-\height) $);
	\draw ($ (-\width,0) + 1/2*(0,-\height) $) arc (180:98:\width);
	\draw ($ (\width,0) + 1/2*(0,-\height) $) arc (0:82:\width);
	\draw ($ (\width,0) + 1/2*(0,-\height) $) -- (\width,-\height);
	\draw ($ 2*(\width,0) + (0,-\height) $) -- (b);
	\draw ($ 2*(-\width,0) + (0,\height) $) -- (a);
	\draw[very thick] (0,-\height) -- (0,\height);
	\draw[fill=\betacolor] (a) circle (.05cm);
	\draw[fill=\betacolor] (b) circle (.05cm);
	\node at ($ (0,-\height) + (0,-.2) $) {\scriptsize{$X_1$}};
	\node at ($ (-\width,-\height) + (0,-.215) $) {\scriptsize{$\overline x$}};
	\node at ($ (\width,-\height) + (0,-.2) $) {\scriptsize{$x$}};
	\node at ($ (0,\height) + (0,.2) $) {\scriptsize{$X_i$}};
	\node at ($ 2*(\width,0) + (0,-\height) + (.1,-.2) $) {\scriptsize{$a$}};
	\node at ($ 2*(-\width,0) + (0,\height) + (-.1,.2) $) {\scriptsize{$a$}};
	\node at ($ (b) + (-.15,.5) $) {\scriptsize{$y$}};
	\node at ($ (a) + (.15,.5) $) {\scriptsize{$\overline y$}};
\end{tikzpicture}
\\&=
\frac{1}{\sqrt{D}}
\sum_{
x\in \Irr(\cC_{j1})
}
\sqrt{d_x}\;
\begin{tikzpicture}[baseline=-.1cm, scale=-1]
	\pgfmathsetmacro{\width}{.4}
	\pgfmathsetmacro{\height}{.6}
	\pgfmathsetmacro{\convOne}{.26}	
	\pgfmathsetmacro{\convTwo}{.4}	
	\fill[\RTwoColor] ($ (\width,0) + (0,-\height) $) arc (0:180:\width);
	\fill[\RFourColor] ($ 2.5*(-\width,0) + (0,\height) $) -- ($ (-\width,0) + (0,\height) $) -- ($ 2*(\width,0) + (0,-\height) $) -- ($ (\width,0) + (0,-\height) $) arc (0:180:\width) -- ($ 2.5*(-\width,0) + (0,-\height) $);
	\fill[\RThreeColor] ($ (-\width,0) + (0,\height) $) -- ($ 2.5*(\width,0) + (0,\height) $) -- ($ 2.5*(\width,0) + (0,-\height) $) -- ($ 2*(\width,0) + (0,-\height) $);
	\draw (-\width,0) + (0,\height) -- ($ \convOne*(\width,0) + \convOne*(\width,0) + \convOne*(0,-\height) + (-\width,0) - \convOne*(-\width,0) + (0,\height) - \convOne*(0,\height) $);
	\draw ($ 2*(\width,0) + (0,-\height) $) -- ($ \convTwo*(\width,0) + \convTwo*(\width,0) + \convTwo*(0,-\height) + (-\width,0) - \convTwo*(-\width,0) + (0,\height) - \convTwo*(0,\height) $);
	\draw (-\width,-\height) arc (180:98:\width);
	\draw (\width,-\height) arc (0:82:\width);
	\draw[very thick] (0,-\height) -- (0,\height);
	\node at ($ (0,-\height) + (0,-.2) $) {\scriptsize{$X_1$}};
	\node at ($ (-\width,-\height) + (0,-.2) $) {\scriptsize{$x$}};
	\node at ($ (\width,-\height) + (0,-.215) $) {\scriptsize{$\overline x$}};
	\node at ($ (0,\height) + (0,.2) $) {\scriptsize{$X_i$}};
	\node at ($ 2*(\width,0) + (0,-\height) + (.05,-.2) $) {\scriptsize{$a$}};
	\node at ($ (-\width,0) + (0,\height) + (-.1,.2) $) {\scriptsize{$a$}};
	\node at (-.25,0) {\scriptsize{$X_j$}};
\end{tikzpicture}\,
=
(\id\otimes u_X)e_{X,a}
\end{align*}
for all $a\in \cC_{ij}$ and $1\le i,j\le k$, where we have used Lemma~\ref{lem: graph calc with pairs of nodes} (Fusion) for the second equality.

Moreover, as in Lemma~\ref{lem:Projector}, one readily checks that $u_Xu_X^*=p_{X_1}$:
\begin{align*}
u_Xu_X^*
&=
\frac{1}{D}
\sum_{
\tikz[baseline=.07cm]{\draw[fill=\RThreeColor, rounded corners=3, very thin, baseline=1cm] (0,0) rectangle (.3,.3);}\,=R_j
}
\sum_{\substack{
x,y\in \Irr(\cC_{j1})
}}
\sqrt{d_x d_y}\,\,
\begin{tikzpicture}[baseline=-.1cm]
	\pgfmathsetmacro{\width}{.5}
	\pgfmathsetmacro{\height}{.8}
	\fill[\RTwoColor] (-\width,\height) arc (-180:0:\width);
	\fill[\RTwoColor] (-\width,-\height) arc (180:0:\width);
	\fill[\RThreeColor] ($ 3/2*(-\width, 0) + (0, -\height) $) -- ($ (-\width, 0) + (0, -\height) $) arc (180:0:\width) -- ($ 3/2*(\width, 0) + (0, -\height) $) -- ($ 3/2*(\width, 0) + (0, \height) $) -- ($ (\width, 0) + (0, \height) $) arc (0:-180:\width) -- ($ 3/2*(-\width, 0) + (0, \height) $);
	\draw (-\width,\height) arc (-180:-98:\width);
	\draw (\width,\height) arc (0:-82:\width);
	\draw (\width,-\height) arc (0:82:\width);
	\draw (-\width,-\height) arc (180:98:\width);
	\draw[very thick] (0,-\height) -- (0,\height);
	\node at ($ (0,-\height) + (0,-.2) $) {\scriptsize{$X_1$}};
	\node at ($ (-\width,-\height) + (0,-.2) $) {\scriptsize{$x$}};
	\node at ($ (\width,-\height) + (0,-.2) $) {\scriptsize{$\overline x$}};
	\node at ($ (0,\height) + (0,.2) $) {\scriptsize{$X_1$}};
	\node at ($ (-\width,\height) + (0,.2) $) {\scriptsize{$y$}};
	\node at ($ (\width,\height) + (0,.2) $) {\scriptsize{$\overline y$}};
	\node at (.25,0) {\scriptsize{$X_j$}};
\end{tikzpicture}
=
\frac{1}{D}
\sum_{
\tikz[baseline=.07cm]{\draw[fill=\RThreeColor, rounded corners=3, very thin, baseline=1cm] (0,0) rectangle (.3,.3);}\,=R_j
}
\sum_{\substack{
x,y\in \Irr(\cC_{j1})
\\
a\in \Irr(\cC_1)
}}
\sqrt{d_a}\,\,
\begin{tikzpicture}[baseline=-.1cm]
	\pgfmathsetmacro{\width}{.3}
	\pgfmathsetmacro{\height}{.9}
	\coordinate (f) at ($ (-\width,0) $);
	\coordinate (e) at ($ 2*(-\width,0) + (0,\width) $);
	\fill[\RTwoColor] ($ -2*(\width,0) + (0,-\height) $) -- ($ -2*(\width,0) + (0,\height) $) -- ($ (\width,0) + (0,\height) $) -- (f) -- ($ (\width,0) + (0,-\height) $);
	\fill[\RThreeColor] ($ 3*(-\width, 0) + (0, -\height) $) rectangle ($ 2*(-\width, 0) + (0, \height) $);
	\fill[\RThreeColor] ($ 2*(\width, 0) + (0, \height) $) -- ($ (\width, 0) + (0, \height) $) -- (f) -- ($ (\width, 0) + (0, -\height) $) -- ($ 2*(\width, 0) + (0, -\height) $);
	\draw ($ 2*(-\width,0) + (0, -\height) $) -- ($ 2*(-\width,0) + (0, \height) $);
	\draw (\width,-\height) -- ($ 3/8*(f) + 5/8*(\width,-\height) $);
	\draw (f) -- ($ 5/8*(f) + 3/8*(\width,-\height) $);
	\draw (\width,\height) -- ($ 3/8*(f) + 5/8*(\width,\height) $);
	\draw (f) -- ($ 5/8*(f) + 3/8*(\width,\height) $);
	\draw (e) -- (f);
	\draw[very thick] (0,-\height) -- (0,\height);
	\draw[fill=\betacolor] (e) circle (.05cm);
	\draw[fill=\betacolor] (f) circle (.05cm);
	\node at ($ (0,-\height) + (0,-.2) $) {\scriptsize{$X_1$}};
	\node at ($ 2*(-\width,0) + (0,-\height) + (0,-.2) $) {\scriptsize{$x$}};
	\node at ($ (\width,-\height) + (0,-.2) $) {\scriptsize{$\overline x$}};
	\node at ($ (0,\height) + (0,.2) $) {\scriptsize{$X_1$}};
	\node at ($ 2*(-\width,0) + (0,\height) + (0,.17) $) {\scriptsize{$y$}};
	\node at ($ (\width,\height) + (0,.2) $) {\scriptsize{$\overline y$}};
	\node at ($ 1/2*(e) + 1/2*(f) + (.05,.2) $) {\scriptsize{$a$}};
	\node at (.25,0) {\scriptsize{$X_j$}};
\end{tikzpicture}
\\&=
\frac{1}{D}
\sum_{
\tikz[baseline=.07cm]{\draw[fill=\RThreeColor, rounded corners=3, very thin, baseline=1cm] (0,0) rectangle (.3,.3);}\,=R_j
}
\sum_{\substack{
x,y\in \Irr(\cC_{j1})
\\
a\in \Irr(\cC_1)
}}
\sqrt{d_a}\,\,
\begin{tikzpicture}[baseline=-.1cm]
	\pgfmathsetmacro{\width}{.4}
	\pgfmathsetmacro{\height}{.6}
	\coordinate (a) at (-\width,.3);
	\coordinate (b) at (\width,-.3);
	\fill[\RTwoColor] (-\width,-\height) rectangle (\width,\height);
	\fill[\RThreeColor] ($ 2*(-\width, 0) + (0, -\height) $) rectangle ($ (-\width, 0) + (0, \height) $);
	\fill[\RThreeColor] ($ 2*(\width, 0) + (0, \height) $) rectangle ($ (\width, 0) + (0, -\height) $);
	\draw (-\width,-\height) -- (-\width,\height);
	\draw (\width,-\height) -- (\width,\height);
	\draw (a) -- ($ .58*(a) + .42*(b) $);
	\draw (b) -- ($ .42*(a) + .58*(b) $);
	\draw[very thick] (0,-\height) -- (0,\height);
	\draw[fill=\betacolor] (a) circle (.05cm);
	\draw[fill=\betacolor] (b) circle (.05cm);
	\node at ($ (0,-\height) + (0,-.2) $) {\scriptsize{$X_1$}};
	\node at ($ (-\width,-\height) + (0,-.2) $) {\scriptsize{$x$}};
	\node at ($ (\width,-\height) + (0,-.2) $) {\scriptsize{$\overline x$}};
	\node at ($ (0,\height) + (0,.2) $) {\scriptsize{$X_1$}};
	\node at ($ (-\width,\height) + (0,.17) $) {\scriptsize{$y$}};
	\node at ($ (\width,\height) + (0,.2) $) {\scriptsize{$\overline y$}};
	\node at ($ (b) + (-.15,.35) $) {\scriptsize{$a$}};
	\node at ($ (a) + (.2,.05) $) {\scriptsize{$a$}};
\end{tikzpicture}
=
p_{X_1}.
\end{align*}
For every object $X\in\cC'$, the isometry $u_X:X\to \Delta_1(X)$ therefore induces a unitary $X\to p_{X_1}(\Delta_1(X_1))=\Phi_1(X_1)=\Phi_1\circ\Pi_1(X)$.
The latter assemble to a unitary natural transformation $\id_\cC\stackrel{\scriptscriptstyle\simeq\,}\to \Phi_1\circ\Pi_1$.
\end{proof}

\begin{cor}
\label{cor:TwoCornersSameCommutant}
For any $i,j\in\{1,\ldots,k\}$, 
there is an equivalence $\cC_i'\simeq\cC_j'$ (equivalence of bi-involutive categories, respecting the positive structures).
\end{cor}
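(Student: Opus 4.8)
The plan is to deduce the corollary directly from Theorem~\ref{thm: C' --> C_i' is an equivalence of categories} by composing projection functors. For each index $i$, that theorem provides an equivalence of categories $\Pi_i:\cC'\to\cC_i'$, and, as observed just before its statement, each $\Pi_i$ is a bi-involutive functor which respects the positive structures. So I would fix a quasi-inverse $\Phi_i$ of $\Pi_i$ (for instance the explicit reconstruction functor $\underline X\mapsto p_X(\Delta_i(X))$ built in the proof of the theorem) and define the desired comparison as the composite
\[
\cC_i'\xrightarrow{\ \Phi_i\ }\cC'\xrightarrow{\ \Pi_j\ }\cC_j'.
\]
A composite of equivalences is an equivalence, so $\Pi_j\circ\Phi_i$ is automatically an equivalence of the underlying categories; the work is entirely in checking that it is an equivalence of bi-involutive categories respecting the positive structures.

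For the bi-involutive part, I would invoke the standard fact that a quasi-inverse of a bi-involutive equivalence between $\Cstar$-tensor categories can be chosen to be bi-involutive, with unitary tensor and conjugation coherences and unitary unit and counit. Hence $\Phi_i$ is bi-involutive, and $\Pi_j\circ\Phi_i$, being a composite of bi-involutive functors, is bi-involutive.

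The only genuinely nonformal point is compatibility with the positive structures. Since $\Pi_j$ respects them, it suffices to show that the quasi-inverse $\Phi_i$ does. This I would extract from its explicit form $\Phi_i(\underline X)=p_X(\Delta_i(X))$. The functor $\Delta_i$ is built from direct sums and the assignment $c\boxtimes(-)\boxtimes\overline c$, while passage to $p_X(\Delta_i(X))$ is restriction to a direct summand; both are instances of the amplification $f\boxtimes\theta\boxtimes\overline f$ (with $f$ an identity morphism on $c$, respectively the isometric inclusion of the summand) together with additivity, and the positive-structure axioms of Definition~\ref{def positive structure on bi-involutive tensor category} guarantee that such operations send cp maps to cp maps. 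An equivalent and perhaps more transparent route is to argue that $\Pi_i$ itself \emph{reflects} cp maps: by Lemma~\ref{lem:Diagonal} every morphism $\theta$ of $\cC'$ is block-diagonal, $\theta=\bigoplus_l\theta_l$ over the corners, and $\theta$ is cp in $\Bim(R)$ if and only if each block $\theta_l$ is cp; the isometries $u_X$ of the theorem, assembled from half-braidings and coevaluations, conjugate the corner-$i$ block $\theta_i$ onto the remaining blocks through cp-preserving moves, so that $\theta_i$ cp forces every $\theta_l$ cp.

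With these verifications in place, $\Pi_j\circ\Phi_i$ is an equivalence of bi-involutive tensor categories that respects the positive structures, which is exactly an equivalence $\cC_i'\simeq\cC_j'$ in the required sense. I expect the positive-structure step — precisely, showing that the reconstruction $\Phi_i$ (equivalently, the off-diagonal transport supplied by the half-braiding) carries the cp cones to the cp cones — to be the main obstacle; everything else reduces to the formal bookkeeping of composing structured equivalences once Theorem~\ref{thm: C' --> C_i' is an equivalence of categories} is available.
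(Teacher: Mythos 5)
Your proposal is correct and takes essentially the same route as the paper: the paper's proof likewise consists of invoking Theorem~\ref{thm: C' --> C_i' is an equivalence of categories} to get the span $\cC_i'\xleftarrow{\Pi_i}\cC'\xrightarrow{\Pi_j}\cC_j'$ of equivalences and noting that both legs are bi-involutive and respect the positive structures, so that composing a quasi-inverse of $\Pi_i$ with $\Pi_j$ gives the desired equivalence. The extra verifications you supply (that the quasi-inverse can be chosen bi-involutive, and that it preserves cp maps via the explicit form $\underline X\mapsto p_X(\Delta_i(X))$ or via cp-reflection of $\Pi_i$) are precisely the points the paper leaves implicit, and they are sound.
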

\begin{proof}
By Theorem~\ref{thm: C' --> C_i' is an equivalence of categories}, the two functors
\[
\cC'_i\stackrel{\Pi_i}\longleftarrow\cC'\stackrel{\Pi_j}\longrightarrow\cC'_j
\]
are equivalences of categories.
These functors are bi-involutive, and respect the positive structures.
\end{proof}

We are now in position to prove our main theorem.

\begin{thm}[Thm.~\ref{thm:MoritaEquivalentCommutantsEquivalent}]
\label{thm: main thm last section}
Let $\cC_0$ and $\cC_1$ be Morita equivalent unitary fusion categories, 
and let 
\[
\alpha_0:\cC_0\to \Bim(R_0),\qquad\,\,\alpha_1:\cC_1\to \Bim(R_1)
\]
be fully faithfully representations, where $R_0$ and $R_1$ are hyperfinite factors.
Assume that $R_0$ and $R_1$ are either both of type ${\rm II}$, or both of type ${\rm III}_1$.

Let $\cC_i'$ be the commutant category of $\cC_i$ inside $\Bim(R_i)$. 
Then $\cC_0'$ and $\cC_1'$ are equivalent as bicommutant categories
(equivalent as bi-involutive tensor categories with positive structures).
\end{thm}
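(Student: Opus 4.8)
The plan is to realise both $\cC_0$ and $\cC_1$ as the corners of a single $2\times2$ unitary multifusion category, to represent that multifusion category on bimodules over one factor, and then to play the uniqueness of representations off against Theorem~\ref{thm: C' --> C_i' is an equivalence of categories}.

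Since $\cC_0$ and $\cC_1$ are Morita equivalent, they are the two corners of a unitary $2\times2$ multifusion category $\cC$ as in \eqref{eq: 2x2 multifusion}. First I would fix a hyperfinite factor $R$ which is not of type $\rm I$ and which lies in the same class (type $\rm II$, or type $\rm III_1$) as $R_0$ and $R_1$; by Theorem~\ref{thm:ExistsRepresenation} there is a fully faithful representation $\alpha:\cC\to\Bim(R^{\oplus 2})$. Restricting $\alpha$ to the $i$-th corner and to the $i$-th summand of $R^{\oplus 2}$ produces fully faithful representations $\beta_i:=\alpha|_{\cC_i}:\cC_i\to\Bim(R)$. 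Now $\alpha_i$ and $\beta_i$ are two fully faithful representations of the unitary fusion category $\cC_i$ on hyperfinite factors of the same type, so Theorem~\ref{thm:UniqueRepresentation} provides an isomorphism of representations $(\Phi_i,\phi_i):\alpha_i\to\beta_i$ with $\Phi_i$ an invertible bimodule.

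The next ingredient is a lemma I would establish separately: isomorphic fully faithful representations have equivalent commutant categories, as bicommutant categories. Given the isomorphism $(\Phi_i,\phi_i)$, conjugation by $\Phi_i$ sends an object $(X,e_X)$ of the commutant $\cC_i'$ (taken inside $\Bim(R_i)$) to the bimodule $\Phi_i\boxtimes_{R_i} X\boxtimes_{R_i}\Phi_i^{-1}\in\Bim(R)$, equipped with the half-braiding obtained by conjugating $e_X$ through the isomorphisms $\phi_i$; the half-braiding condition \eqref{eq: half-braiding condition} for $\phi_i$ is exactly what makes the resulting family a coherent half-braiding for $\beta_i$. This defines a functor to the commutant of $\cC_i$ inside $\Bim(R)$, which is an equivalence because $\Phi_i$ is invertible. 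Conjugation by an invertible bimodule preserves the positive structure on $\vN$ (the amplified maps $\id_{\Phi_i}\boxtimes_{R_i}\theta\boxtimes_{R_i}\id_{\overline{\Phi_i}}$ are cp by Lemma~\ref{lem:Z_A Amplification is CP}), so the equivalence respects the inherited positive structures. The one genuinely delicate point — the step I expect to be the main obstacle — is checking that this equivalence is \emph{bi-involutive}, i.e.\ that it intertwines the complex-conjugation involutions together with their half-braidings; this is precisely where the extra coherence \eqref{forgotten coherence - invertible case} enters, and it is available to us at no cost because $\alpha_i$ and $\beta_i$ are fully faithful, by Theorem~\ref{thm: (i) and (iii) are equivalent} (concretely Lemma~\ref{lem: coherence automatically satisfied}). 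It is for exactly this reason that the positive structures cannot be dispensed with.

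Finally I would assemble the chain. Writing $(\cC_i')_R$ for the commutant of the corner $\cC_i$ inside $\Bim(R)$ with respect to $\beta_i$, the lemma gives equivalences $\cC_i'\simeq(\cC_i')_R$ of bicommutant categories. On the other hand, applying Theorem~\ref{thm: C' --> C_i' is an equivalence of categories} to the representation $\alpha:\cC\to\Bim(R^{\oplus 2})$ — concretely Corollary~\ref{cor:TwoCornersSameCommutant} — yields an equivalence $(\cC_0')_R\simeq(\cC_1')_R$ that is bi-involutive and respects positive structures. Composing,
\[
\cC_0'\;\simeq\;(\cC_0')_R\;\simeq\;(\cC_1')_R\;\simeq\;\cC_1'
\]
is an equivalence of bi-involutive tensor categories respecting positive structures. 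Since each $\cC_i'$ is a bicommutant category by \cite[Lem.~6.1 and Thm.~A]{MR3663592}, this is the desired equivalence of bicommutant categories, which in particular gives the tensor equivalence $\cC_0'\simeq_{\text{tensor}}\cC_1'$.
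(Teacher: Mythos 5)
Your proposal is correct and follows essentially the same route as the paper's proof: realise $\cC_0,\cC_1$ as corners of a $2\times 2$ unitary multifusion category, represent it on $\Bim(R^{\oplus 2})$ via Theorem~\ref{thm:ExistsRepresenation}, identify the corner restrictions with the given $\alpha_i$ using Theorem~\ref{thm:UniqueRepresentation} upgraded to positive representations by Theorem~\ref{thm: (i) and (iii) are equivalent}, and conclude with Corollary~\ref{cor:TwoCornersSameCommutant}. The only difference is that you spell out the transport-of-commutants step (conjugation by the invertible bimodule $\Phi_i$ and the role of the coherence \eqref{forgotten coherence - invertible case}), which the paper asserts without detail; that elaboration is consistent with the paper's argument.
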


\begin{proof}
Let $\cC=\big(\!\!\;\begin{smallmatrix}
\cC_0 & \cM
\\[.5mm]
\cM^* & \cC_1
\end{smallmatrix}\!\!\;\big)$
be a unitary $2\times 2$ multifusion category witnessing the Morita equivalence. 
By Theorem~\ref{thm:ExistsRepresenation}, there exists a fully faithful representation
\begin{equation}\label{eq: rep of multifusion category that witnesses the Morita equivalence}
\alpha:\cC \to\Bim(R^{\oplus 2}),
\end{equation}
where $R$ is a factor isomorphic to $R_1$.

Let $\tilde\alpha_i:=\alpha|_{\cC_i}:\cC_i\to\Bim(R)$ denote the restriction of $\alpha$ to $\cC_i\subset \cC$.
By Theorem \ref{thm:UniqueRepresentation}, $\tilde\alpha_i$ and $\alpha_i$ are isomorphic as $\Cstar$-representations:
\begin{equation*}
\begin{matrix}\begin{tikzpicture}
\node (1) at (-2,0) {$\cC_i$};
\node[inner ysep=2] (2) at (1,.7) {$\Bim(R)$};
\node[inner ysep=2] (3) at (1,-.7) {$\Bim(R_i)$};
\draw[->] ($(1.east)+(0,.07)$) --node[above]{$\scriptstyle \tilde\alpha_i$} ($(2.west)+(0,-.12)$);
\draw[->] ($(1.east)+(0,-.07)$) --node[below]{$\scriptstyle \alpha_i$} ($(3.west)+(0,.12)$);
\draw[->] (2) --node[right]{$\scriptstyle \simeq$} (3);
\node at (.1,0) {$\simeq$};
\end{tikzpicture}\end{matrix}
\end{equation*}
And by Theorem~\ref{thm: (i) and (iii) are equivalent}, they  are isomorphic as 
positive representations.
The commutant category of $\cC_i$ inside $\Bim(R)$ is therefore equivalent to the commutant category of $\cC_i$ inside $\Bim(R_i)$
(as as bi-involutive tensor categories with positive structures).
The result follows by applying Corollary \ref{cor:TwoCornersSameCommutant} to the representation \eqref{eq: rep of multifusion category that witnesses the Morita equivalence}.
\end{proof}


\bibliographystyle{amsalpha}
{\footnotesize{
\bibliography{../../../bibliography}
}}
\end{document}